\newenvironment{breakablealgorithm}{
   \begin{center}
        \vspace{3mm}
        \refstepcounter{algorithm}
        \hrule height.8pt depth0pt \kern2pt
        \renewcommand{\caption}[2][\relax]{
           {\raggedright\textbf{\fname@algorithm~\thealgorithm :} ##2\par}%
           \ifx\relax##1\relax 
             \addcontentsline{loa}{algorithm}{\protect\numberline{\thealgorithm}##2}%
           \else 
             \addcontentsline{loa}{algorithm}{\protect\numberline{\thealgorithm}##1}%
           \fi
           \kern2pt\hrule\kern2pt
        }
        }{
        \kern2pt\hrule\relax
        \vspace{3mm}
    \end{center}
}
\algnewcommand\algorithmicinputnew{\textbf{Input:}}
\algnewcommand\algorithmicoutputnew{\textbf{Output:}}
\algnewcommand\MyInput{\item[\algorithmicinputnew]}%
\algnewcommand\MyOutput{\item[\algorithmicoutputnew]}%
\crefname{algocfline}{Algorithm}{Algorithms}
\crefname{equation}{}{}
\crefname{prop}{Proposition}{Propositions}
\crefname{enumi}{}{}
\crefname{figure}{Figure}{Figures}
\crefname{claim}{Claim}{Claims}
\crefname{subsection}{Subsection}{Subsections}
\newtheorem{theorem}{Theorem}[section]
\newtheorem{corollary}[theorem]{Corollary}
\newtheorem{lemma}[theorem]{Lemma}
\newtheorem{claim}[theorem]{Claim}
\newtheorem{proposition}[theorem]{Proposition}
\newtheorem{observation}[theorem]{Observation}
\theoremstyle{definition}
\newcommand\bfrac[2]{\left(\frac{#1}{#2}\right)}
\newcommand{\prob}[1]{\mathbb{P}\left[#1\right]}
\newcommand{\variance}[1]{\mathbb{V}\left[#1\right]}
\newcommand{\expec}[1]{\mathbb{E}\left[#1\right]}
\newcommand{\condexpec}[2]{\mathbb{E}\left[#1 \;\middle|\; #2\right]}
\newcommand{\condprob}[2]{\mathbb{P}\left[#1\;\middle|\; #2\right]}
\def\cP{\mathcal{P}}
\def\cE{\mathcal{E}}
\def\cT{\mathcal{T}}
\def\cF{\mathcal{F}}
\def\cQ{\mathcal{Q}}
\def\tcF{\widetilde{\mathcal{F}}}
\def\cU{\mathcal{U}}
\def\tL{\widetilde{L}}
\def\tX{\widetilde{X}}
\def\tD{\widetilde{D}}
\def\tM{\widetilde{M}}
\def\tB{\widetilde{B}}
\def\tk{\widetilde{k}}
\def\tS{\widetilde{S}}
\def\tC{\widetilde{C}}
\def\tY{\widetilde{Y}}
\def\hL{\widehat{L}}
\def\hX{\widehat{X}}
\def\eps{\varepsilon}
\def\dist{\mathrm{dist}}
\def\gr#1{}
\renewcommand{\gr}[1]{\textcolor{teal}{#1}}
\newenvironment{proofclaim}[1][Proof of Claim]{\begin{proof}[#1]}{\end{proof}}
\newcommand{\ind}[1]{\mathds{1}_{\{#1\}}}
\newcommand{\indev}[1]{\mathds{1}_{#1}}
\newcommand{\tbound}{(10ck)^{2k}}
\newcommand{\tG}{\widetilde{G}}
\newcommand{\uc}{\mathrm{uc}}
\title{The law of the circumference of sparse binomial random graphs}
\author{Michael Anastos$^1$} 
\address{$^1$Institute of Science and Technology Austria (ISTA), 3400 Klosterneurburg, Austria}
\author{Joshua Erde$^2$ \and Mihyun Kang$^2$ \and Vincent Pfenninger$^2$}
\address{$^2$Institute of Discrete Mathematics, Graz University of Technology, Steyrergasse 30, 8010 Graz, Austria}
\email{michael.anastos@ist.ac.at}
\email{\hspace{-3pt}\{erde,kang,pfenninger\}@math.tugraz.at}
\date{\today}
\begin{document}
\begin{abstract}
There has been much interest in the distribution of the \emph{circumference}, the length of the longest cycle, of a random graph $G(n,p)$ in the \emph{sparse} regime, when $p = \Theta\left(\frac{1}{n}\right)$. Recently, the first author and Frieze established a scaling limit for the circumference in this regime, along the way establishing an alternative `structural' approximation for this parameter. In this paper, we give a central limit theorem for the circumference in this regime using a novel argument based on the Efron--Stein inequality, which relies on a combinatorial analysis of the effect of resampling edges on this approximation.
\end{abstract}

\maketitle

\section{Introduction}
\subsection{Motivation and results}
In this paper we will study the \emph{circumference}, the length of the longest cycle, of a \emph{sparse} random graph $G(n,p)$, where $p=\frac{c}{n}$ for some constant $c>0$. Answering a well-known conjecture of Erd\H{o}s~\cite{E75}, Ajtai, Koml\'{o}s and Szemer\'{e}di~\cite{AKS81} showed the existence of a function $\ell \colon (1,\infty)\to (0,\infty)$ such that if $c > 1$, then whp\footnote{With probability tending to $1$ as $n$ tends to infinity.} the circumference of $G\left(n,\frac{c}{n}\right)$ is at least $\ell(c)\cdot n$, where we note that a slightly weaker result was also proved independently by Fernandez de la Vega~\cite{FdlV79}. 
Since then, the problem of proving more precise results about the distribution of the circumference of sparse random graphs has attracted considerable interest. 

Given a graph $G$ let us write $L(G)$ for the circumference of $G$. We define
\[
\ell_{\textrm{min}}(c) ~\coloneqq~ \sup\left\{\alpha > 0 \colon L\left(G\left(n,\frac{c}{n}\right)\right) \geq \alpha n \quad \text{whp}\right\}.
\]
We note that it is easy to show that $\ell \to 1$ as $c \to \infty$ and $\ell \to 0$ as $c \to 1$.
Early work, of Bollob\'{a}s~\cite{B82}, of Bollob\'{a}s, Fenner and Frieze~\cite{BFF84} and of Frieze~\cite{F86}, focused on the asymptotic dependence of the function $1-\ell_{\textrm{min}}$ on $c$ as $c \to \infty$. In particular, Frieze~\cite{F86} determined the value of $1-\ell_{\textrm{min}}(c)$ up to a $1+o_c(1)$ multiplicative factor. On the other hand, the \emph{weakly supercritical regime}, where $\varepsilon \coloneqq c-1$ is such that $\varepsilon^3n \to \infty$ and $\varepsilon$ is sufficiently small, has also been studied~\cite{KW13,L91}. In particular, the current best known asymptotic bounds are
\[
1.581 \varepsilon^2 \leq \ell_{\textrm{min}}(1+\varepsilon) \leq 1.7395\varepsilon^2,
\]
where the lower bound is due to the first author \cite{A22} and the upper bound is due to Kemkes and Wormald \cite{KW13}.

A recent result of the first author and Frieze \cite{AF21} proved that, for large enough $c$, the circumference has a \emph{scaling limit}.
\begin{theorem}[{\cite[Theorem 1.6]{AF21}}]\label{t:AnastosFriezeScaling}
Let $c>1$ be a sufficiently large constant. Then there exists a function $f\colon (1,\infty)\to (0,\infty)$ such that
\[
\frac{L\left(G\left(n,\frac{c}{n}\right)\right)}{n}\quad\to\quad f(c) \qquad \text{almost surely, as $n \rightarrow \infty$.}
\]
\end{theorem}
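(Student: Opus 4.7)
The plan is to prove Theorem~\ref{t:AnastosFriezeScaling} in two stages: (i) concentration of $L = L(G(n,c/n))$ around $\mathbb{E}[L]$ at scale $o(n)$; and (ii) convergence of $\mathbb{E}[L]/n$ to some constant $f(c)$. Combined with the first Borel--Cantelli lemma, these two ingredients yield $L/n \to f(c)$ almost surely.

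For step (i), I would use a martingale concentration inequality. The circumference is not directly $1$-Lipschitz under vertex exposure, since resampling the edges incident to a single vertex can in principle destroy a unique longest cycle; however, in $G(n,c/n)$ with $c$ large this is an exceptional event, because whp the giant component contains many near-longest cycles passing through any fixed region. Working on a good event where no vertex is pivotal for the circumference (which holds whp by standard first-moment arguments in the dense $2$-core available for large $c$), one bounds the vertex-exposure differences by a constant and applies Azuma--Hoeffding to obtain
\[
\mathbb{P}\bigl(|L - \mathbb{E}[L]| > n^{3/4}\bigr) ~\leq~ \exp\bigl(-\Omega(\sqrt{n})\bigr),
\]
which is summable in $n$, so $L - \mathbb{E}[L] = o(n)$ almost surely.

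Step (ii) is the main obstacle. A naive superadditive splitting fails because restricting $G(N,c/N)$ to $n<N$ vertices produces $G(n,c/N)$, which has the wrong density $cn/N$ rather than $c/n$. My strategy would instead exploit the structural approximation advertised in the paper: identify a random variable $\widetilde{L}$, expressible as a near-sum of local contributions along an exploration of the $2$-core, such that $|L - \widetilde{L}| = o(n)$ whp. The locality of $\widetilde{L}$ would then permit a renewal-type argument — the exploration regenerates whenever it enters a fresh region whose marginal distribution is asymptotically a Poisson Galton--Watson branching process independent of $n$ — to show that $\mathbb{E}[\widetilde{L}]/n$ is Cauchy in $n$, and hence convergent.

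The hard part is engineering $\widetilde{L}$ so that it is simultaneously (a) combinatorially close to the true circumference, which genuinely requires the assumption that $c$ is sufficiently large (so that the $2$-core is dense enough for greedy long-path constructions to be near-optimal), and (b) an additive-in-$n$ functional of an asymptotically i.i.d.\ decomposition of the graph. Property (a) is combinatorial and likely the most delicate: one must show that no global rearrangement of cycles can beat the structurally-defined approximation by more than $o(n)$. Once (a) and (b) are secured, convergence of $\mathbb{E}[\widetilde{L}]/n$ — and hence of $\mathbb{E}[L]/n$ — follows from an approximate version of Fekete's lemma, and combining this with (i) completes the proof.
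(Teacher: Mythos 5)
The statement is quoted from Anastos and Frieze \cite[Theorem 1.6]{AF21}, so there is no proof of it in this paper for comparison; I assess your sketch on its own terms. Your two-phase plan --- concentration of $L$ around $\mathbb{E}[L]$ at scale $o(n)$, then convergence of $\mathbb{E}[L]/n$ --- is the right architecture, and you are right that the work must go through a structural proxy $\widetilde{L}$ for $L$. But both phases have genuine gaps as written.

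For phase (i), the circumference is not Lipschitz under edge or vertex exposure: adding a single chord to a Hamilton path raises $L$ by $\Theta(n)$, so the increments are truly unbounded. The assertion that whp ``no vertex is pivotal'' is not a known result, and even granting it, you cannot restrict to a good event and then run vanilla Azuma--Hoeffding --- conditioning on an event does not bound the martingale increments. You would need a typical-bounded-differences inequality of Warnke type (which this paper does invoke), together with a bad-event analysis that, for the circumference, is itself the hard work; indeed much of the present paper is devoted to exactly this kind of single-edge-resampling analysis, and it only handles the proxies $\widetilde{L},\widetilde{L}_k,\widehat{L}_k$, never $L$ directly. For phase (ii), the ``renewal-type argument'' and the appeal to an ``approximate Fekete's lemma'' are the wrong tools. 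As you note yourself, the natural vertex split of $G(N,c/N)$ changes the edge density and breaks subadditivity, and you offer no repair; nor does $G(n,c/n)$ carry a regeneration structure in the sense of renewal processes. The mechanism that actually works is local weak (Benjamini--Schramm) convergence of $G(n,c/n)$ to the Poisson Galton--Watson tree: after replacing $\widetilde{L}$ by a truncation $\widetilde{L}_k = \sum_v \phi_k(v)$ depending only on $k$-balls, $\frac{1}{n}\mathbb{E}[\widetilde{L}_k]$ converges, for each fixed $k$, to the expectation of $\phi_k$ at the root of the PGW tree, and the $k \to \infty$ limit is controlled by a uniform-in-$n$ tail bound on red--purple component sizes that makes $|\widetilde{L} - \widetilde{L}_k|$ small compared to $n$. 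Your sketch correctly singles out $|L - \widetilde{L}| = o(n)$ whp as the crucial hard input but offers no strategy for it, and neither renewal theory nor Fekete supplies the missing $k \to \infty$ step.
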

Whilst the function $f$ in \cref{t:AnastosFriezeScaling} is not explicit, the authors gave a method to compute $f$ to an arbitrary level of precision. Similarly, the bound on $c$ in \cref{t:AnastosFriezeScaling} was not made explicit, but parts of the arguments require that $c \geq 10^6$. However, the result was later shown to hold for all $c \geq 20$ by the first author \cite[Theorem 1]{A23}.

A natural question to ask, once the existence of a scaling limit has been established, is whether we can determine more precisely the distribution of $L\left(G\left(n,\frac{c}{n}\right)\right)$, and in particular whether it satisfies a \emph{central limit theorem}.
Here, we say that a sequence of random variables $(X_n)_{n \in \mathbb{N}}$ \emph{satisfies a central limit theorem} if 
\[
\frac{X_n - \expec{X_n}}{\sqrt{\variance{X_n}}} \quad\indist\quad \mathcal{N}(0,1),
\]
as $n \rightarrow \infty$, that is, if $\frac{X_n - \expec{X_n}}{\sqrt{\variance{X_n}}}$ converges in distribution to the standard normal distribution.

Establishing central limit theorems for various graph parameters of $G(n,p)$ has become an important area of research. A variety of tools have been used including the method of moments \cite{K84,KR83,R88}, Stein's method \cite{Barbour1989}, $U$-statistics and related methods \cite{J94,NW88}, and martingale methods \cite{BJKR90,dJ96}. For example, central limit theorems have been established for various types of subgraph counts \cite{GK16, K84, KR83, R88,S19}, for the order of the giant component \cite{BCOK14,BR12,PW05,PW08}
and for the order of the $k$-core \cite{COCKS19,JL08}. 

All of these parameters are in some sense `local', in that they can be determined by looking at bounded size subgraphs of $G(n,p)$, or have natural `local' approximations. More recently, central limit theorems have been proven for more `global' graph parameters, such as the rank \cite{GKSS23} and the matching number \cite{GKSS24,K17} by analysing the \emph{Karp--Sipser leaf-removal process} \cite{KS81}.

Our main result gives a central limit theorem for the circumference $L\left(G\left(n,\frac{c}{n}\right)\right)$, which has a similar `global' nature, in the sparse regime.
In fact, our methods allow us to prove slightly more, and to give quantitative bounds on the variance of $L\left(G\left(n,\frac{c}{n}\right)\right)$.

\begin{theorem} \label{thm:Law_of_longest_cycle}
    Let $c \geq 20$ be a fixed constant and let $G \sim G\left(n,\frac{c}{n}\right)$.
    Then 
    \[
    \frac{L(G)- \expec{L(G)}}{\sqrt{\variance{L(G)}}} \quad\indist\quad \mathcal{N}(0,1)
    \]
    as $n \to \infty$.
Furthermore, the limit $\displaystyle \sigma \coloneqq \lim_{n \to \infty} \left(\frac{\variance{L(G)}}{n}\right)^{\frac{1}{2}}$ exists and there exist constants $C_1,C_2 >0$ such that $C_1 e^{-10c} \leq \sigma \leq C_2 c^{\frac{1}{2}}$.
\end{theorem}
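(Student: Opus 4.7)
The plan is to work not with $L(G)$ directly but with a structural approximation $\tL$ introduced by Anastos and Frieze \cite{AF21}, which is defined combinatorially as a witness built from a controlled family of paths and local attachments, and which satisfies $L(G)-\tL(G)=O(1)$ with sufficiently high probability in a quantitative sense. Since this gap is negligible on the $\sqrt{n}$ scale that will govern the fluctuations, a central limit theorem and matching variance bounds for $\tL$ transfer to $L(G)$ via Slutsky. First I would record this reduction, verifying not merely a whp statement but an $L^2$ bound on $L-\tL$ so that the rare event where the two parameters diverge cannot corrupt the fluctuations.

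The heart of the argument is the Efron--Stein inequality applied to $\tL$ viewed as a function of the independent edge indicators $\{X_e : e \in \binom{[n]}{2}\}$. For each edge $e$, let $\tL^{(e)}$ be the value of $\tL$ after resampling $X_e$ independently, and set $\Delta_e \coloneqq \tL - \tL^{(e)}$. Efron--Stein then gives
\[
\variance{\tL} ~\leq~ \tfrac12 \sum_e \expec{\Delta_e^2}.
\]
The combinatorial core of the paper is the bound $\sum_e \expec{\Delta_e^2} = O(n)$: I would show that resampling a single edge $e$ typically changes $\tL$ by $O(1)$, because the witness defining $\tL$ is robust under local replacement of a bounded number of path segments, and that the rare events on which $|\Delta_e|$ is large contribute in aggregate only $O(n)$. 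This yields $\sigma \leq C_2 c^{1/2}$. The matching lower bound $\sigma \geq C_1 e^{-10c}$ I would establish by exhibiting $\Omega(n)$ nearly independent \emph{switching gadgets}---small edge configurations each of which, with probability $\Omega(e^{-O(c)})$, can be toggled between two states that alter $\tL$ by $\pm 1$---and summing their contributions via a conditional-variance argument.

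For the central limit theorem itself I would combine the resampling framework with a Stein-type normal approximation, for instance Chatterjee's exchangeable-pair method, which bounds the Wasserstein distance from $(\tL-\expec{\tL})/\sqrt{\variance{\tL}}$ to $\cN(0,1)$ in terms of quantities such as $\sum_e \expec{|\Delta_e|^3}$ and $\variance{\sum_e \Delta_e^2}$. Refinements of the same combinatorial estimate---now tracking the tail of $|\Delta_e|$ rather than just its second moment---together with the lower bound on $\variance{\tL}$, would then give a $o(1)$ rate. Existence of the limit $\sigma = \lim_{n\to\infty}(\variance{L(G)}/n)^{1/2}$ would follow from a coupling argument between $G(n,c/n)$ and $G(n+1,c/(n+1))$, once more exploiting the local nature of the resampling analysis.

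The principal obstacle is the combinatorial core itself: a priori, inserting or removing a single edge can drastically change the longest cycle, for instance by merging two nearly disjoint long paths into a substantially longer cycle, or by cutting the optimal cycle and forcing a long rerouting. The approximation $\tL$ is chosen precisely to rule out such cascading behaviour in typical configurations, but carrying this out requires a delicate case analysis of how the witness defining $\tL$ responds to resampling of each possible edge~$e$, uniformly over~$e$, in order to obtain the per-edge $O(1)$-on-average bound that underlies both the variance estimate and the central limit theorem.
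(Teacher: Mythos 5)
Your high-level architecture --- reduce to a structural approximation $\tL$, control fluctuations via Efron--Stein resampling, deduce normality via a Stein-type argument, and treat the variance bounds separately --- is the right shape, and the reduction from $L$ to $\tL$ and the Efron--Stein upper bound on the variance match the paper. But there is a genuine gap in the normal-approximation step. You propose applying Chatterjee's exchangeable-pair method \emph{directly} to $\tL$, which requires bounding a quantity of the type $\variance{\sum_e \Delta_e^2}$ (or, equivalently, the variance of the conditional expectation $\expec{\Delta^2 \mid \tL}$). That is not a per-edge estimate: it involves the joint law of the resampling increments across all $\binom{n}{2}$ edge positions, and for a non-monotone, globally defined quantity like $\tL$ (equivalently, $\Phi(G)$, which depends on the entire red–purple component structure) there is no obvious way to control it. The paper avoids this by never applying Stein's method to $\tL$: it introduces the hierarchy $\tL \to \tL_k \to \hL_k$, where $\tL_k$ is the local approximation read off from $k$-neighbourhoods and $\hL_k$ is its truncation to bounded-size rooted trees, and then applies the Barbour--Karo\'nski--Ruci\'nski version of Stein's method to $\hL_k$, which \emph{is} a sum of semi-induced, uniformly bounded local random variables indexed by vertex subsets of bounded size. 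The Efron--Stein work is used to show $\variance{\tX-\tX_k}$ and $\variance{\tX_k-\hX_k}$ are small, i.e.\ to transfer the CLT backwards along the approximation chain via Slutsky; it is not used as a direct input to Stein's method. Without some such device your route would stall at precisely this point.

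Two smaller divergences are worth flagging. For the variance lower bound you sketch ``switching gadgets,'' which is the right intuition, but the real obstruction is that the strong $4$-core is not monotone in the edge set, so a local toggle can change colourings far away; the paper's fix is the multi-round exposure producing sets $A,B$ with the \emph{robust sapphire} property, which localises the effect, and reduces the remaining randomness to a balls-in-bins count whose variance can be computed exactly. Your gadget sketch does not identify this obstruction, and a straightforward conditional-variance argument without the robustness control would not close it. Finally, you propose establishing existence of $\sigma$ by coupling $G(n,c/n)$ with $G(n+1,c/(n+1))$; the paper instead gets existence for free from the explicit form $\variance{\hL_k}=K(k,c)n+O(1)$, showing $(\sigma_k)_k$ is Cauchy, and passing to the limit with Cauchy--Schwarz. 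The coupling idea could conceivably be made to work, but it is a genuinely different and unsupplied argument, and it is not obviously easier than the computation it would replace.
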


We note that, as in the result of \cite{GKSS24} on a central limit theorem for the matching number of a sparse random graph, we prove our central limit theorem \emph{non-constructively}, in the sense that the best known bounds on $\expec{L\left(G\left(n,\frac{c}{n}\right)\right)}$ are much less accurate than the typical fluctuations of $L\left(G\left(n,\frac{c}{n}\right)\right)$ implied by \cref{thm:Law_of_longest_cycle}. 

For completeness, we also include the next proposition, whose proof is in \cref{a:generalp}, which describes how the law of the circumference behaves for larger~$p$. The logarithms here and in the remainder of the paper are natural logartithms.

\begin{proposition}\label{p:generalp}
Let $\lambda(n)$ be a sequence of real numbers, $c \coloneqq \log n +\log\log n +\lambda(n)$, and let $G \sim G\left(n,\frac{c}{n}\right)$.
\begin{enumerate}[label = \upshape{(\arabic*)}]
\item\label{i:small} If $c \to +\infty$ and $\lambda(n) \to -\infty $, then as $n \to \infty$
\[
    \frac{L(G)- \expec{L(G)}}{\sqrt{\variance{L(G)}}} \quad\indist\quad \mathcal{N}(0,1).
    \]
\item\label{i:medium} If $\lambda(n) \to \lambda \in \mathbb{R}$, then as $n \to \infty$
 \[
  n- L(G)\quad\indist\quad \mathrm{Poisson}\left(e^{-\lambda}\right).\]
  \item\label{i:large} If $\lambda(n) \to \infty$, then whp $L(G)=n$.
\end{enumerate}
\end{proposition}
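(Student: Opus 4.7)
The plan is to relate $L(G)$ to the number $X \coloneqq |\{v \in V(G) : \deg_G(v) \le 1\}|$ of vertices that cannot lie on any cycle, so that deterministically $L(G) \le n - X$. With $p = c/n$, standard first- and second-moment computations yield
\[
\expec{X} = n(1-p)^{n-1} + n(n-1)p(1-p)^{n-2} = (1+o(1))e^{-\lambda(n)},
\]
with the degree-$1$ contribution dominating. The method of factorial moments, combined with the fact that the joint dependence among the indicators $\indev{\deg(v) \le 1}$ is sparse (any two are independent unless the corresponding vertices share an edge), yields that $X$ is asymptotically Poisson with parameter $e^{-\lambda(n)}$: in particular $X = 0$ whp when $\lambda(n) \to \infty$; $X \indist \mathrm{Poisson}(e^{-\lambda})$ when $\lambda(n) \to \lambda$; and a CLT with $\variance{X} \sim e^{-\lambda(n)}$ when $e^{-\lambda(n)} \to \infty$.

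Parts~\ref{i:medium} and~\ref{i:large} follow by establishing the identity $L(G) = n - X$ whp. The matching lower bound uses the classical hitting-time theorem of Ajtai--Koml\'os--Szemer\'edi and Bollob\'as in its $G(n,p)$ formulation: whp the bad vertices of $G$ form an independent set, each attached to a distinct vertex of degree $\ge 3$, so that their deletion produces a graph of minimum degree $\ge 2$ to which the hitting-time Hamiltonicity applies, giving a cycle of length $n - X$ in $G$. In Part~\ref{i:large} this identity combined with $X = 0$ whp gives $L(G) = n$ whp; in Part~\ref{i:medium} it pushes the Poisson limit of $X$ forward to $n - L(G)$.

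Part~\ref{i:small} is subtler because $c$ may sit well below the Hamiltonicity threshold $(\log n + \log\log n)/n$ and the identity $L(G) = n - X$ need not hold. The strategy is to split the regime: when $-\lambda(n)$ is small enough for the hitting-time argument to still apply (so that deleting the bad vertices leaves a graph at or above the Hamiltonicity threshold on $n-X$ vertices), the CLT for $L(G)$ follows by transferring the Poissonian CLT for $X$ through the identity, using that $L(G),X \in [0,n]$ to conclude $\variance{L(G)} = (1+o(1))\variance{X}$. When $c$ is further from the threshold, one instead adapts the Efron--Stein argument proving \cref{thm:Law_of_longest_cycle} to $c = c(n) \to \infty$, whose combinatorial resampling analysis remains valid for growing $c$. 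On the overlap of the two sub-regimes both arguments must be checked to yield the same CLT.

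The main obstacle is Part~\ref{i:small}, in particular ensuring that the Efron--Stein approach of \cref{thm:Law_of_longest_cycle} extends robustly to growing $c$. The key technical point is that the bounds on the effect of resampling an edge, driven by combinatorial estimates on the structure of long cycles in $G(n, c/n)$, must be shown to remain uniform as $c \to \infty$. This will likely require small quantitative modifications to the arguments of \cref{thm:Law_of_longest_cycle}, together with a careful matching to the near-threshold regime where the degree-based analysis is used instead.
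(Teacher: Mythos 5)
Your outline for parts~\ref{i:medium} and~\ref{i:large} is a legitimate alternative route: the paper instead works through $\tL$ (the strong 4-core proxy) and \cref{thm:L_tL_equal}, showing whp $\tL(G)=n-n_0(G)-n_1(G)$ by noting that all red-purple components contain a single red vertex, whereas you invoke the classical hitting-time Hamiltonicity theorem to establish $L(G)=n-X$ whp. Both reach the same identity, so for~\ref{i:medium} and~\ref{i:large} the proposal is sound in spirit (modulo justifying that deleting the bad vertices really leaves a Hamiltonian graph, which is essentially Bollob\'as's theorem on the circumference at the threshold).

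Part~\ref{i:small} is where the proposal has genuine gaps. First, transferring the CLT from $X$ to $L$ via the identity $L(G)=n-X$ whp would require a bound of the form $\variance{L-(n-X)}=o(\variance{X})$. Since $|L-(n-X)|\le n$ pointwise, the na\"ive bound $\variance{L-(n-X)}\le n^2\,\prob{L\neq n-X}$ only suffices if $\prob{L\neq n-X}=o(\variance{X}/n^2)$; as $\variance{X}$ can grow arbitrarily slowly (e.g.\ like $\log\log n$), this is a very strong quantitative failure bound that the hitting-time theorem does not provide and that you have not sketched how to obtain. Second, the proposed fallback --- adapting the Efron--Stein argument of \cref{thm:Law_of_longest_cycle} to growing $c$ --- is the wrong move: that argument applies Stein's method (\cref{lem:normality}) to $\hL_k$, which requires $\variance{\hL_k}=\omega(n^{2/3})$, a condition that fails badly in the present regime where $\variance{L(G)}\asymp ce^{-c}n$ can be as small as any function tending to infinity. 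The paper's actual proof avoids Stein's method altogether for part~\ref{i:small}: it writes $\tL=Y+n_0+n_1$ with $Y\coloneqq\tL-n_0-n_1$, invokes the classical CLT for $n_1$, and then uses Efron--Stein \emph{only} to bound $\variance{Y}$ (together with $\variance{n_0}$) by $o(\variance{n_1})$, from which the CLT for $\tL$, and then $L$ (via $\prob{L\neq\tL}=O(n^{-2})$ from \cref{thm:L_tL_equal}), follows by Chebyshev and Slutsky. That "main term plus negligible-variance error" structure is essential here and your proposal does not capture it. A more minor issue: your formula $\expec{X}=(1+o(1))e^{-\lambda(n)}$ is only correct near the Hamiltonicity threshold; in the general part~\ref{i:small} regime $\expec{X}\sim cne^{-c}$, which differs from $e^{-\lambda(n)}$ by a factor $c/\log n$.
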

\smallskip

\subsection{Proof methods}\label{s:methods}
Let us fix some large constant $c$ and let $G \sim G\left(n,\frac{c}{n}\right)$. In \cite{A23} the first author describes an algorithmic process, similar in nature to the \emph{peeling process} used to produce the $4$-core of a graph, which produces a $3$-colouring\footnote{Note, this is not a \emph{proper} colouring of $V(G)$, that is, the colour classes are not necessarily independent.} of $V(G)$, whose colour classes we will later denote by $S$, $P$, and $R$ (we use the colours \emph{sapphire}, \emph{purple}, and \emph{red}, respecitvely).
The set $S$ is a `robust' version of the $4$-core of $G$, the set $P$ is the external neighbourhood of $S$, that is $P \coloneqq N(S)$, and the set $R$ is the remaining set of vertices, that is, $R \coloneqq V(G) \setminus (S \cup P)$. He goes on to show that there is some simple function $\Phi(G)$ which depends only on the induced subgraph $G[P \cup R]$ such that whp $L(G) = n - \Phi(G)$. 

Indeed, the rough idea of \cite{A23} is as follows : It can be shown that $G$ contains no edges from $S$ to $R$, and so given any cycle $\tC \subseteq G$, if we delete the vertices of $\tC$ which lie in $S$, what remains can be split into a vertex-disjoint union of paths, all of whose endpoints lie in $P$, which we call a \emph{family of $P$-paths}. Conversely, it is shown in \cite{A23} that whp the subgraph $G[S \cup P]$ has such a rich structure that we can extend \emph{any} family of $P$-paths in $G[P \cup R]$ into a cycle which contains every vertex of $S \cup P$. Hence, whp the length of the longest cycle in $G$ will be determined by the minimum number of vertices which are left \emph{uncovered} by a family of $P$-paths in $G[P \cup R]$, which is the quantity denoted by $\Phi(G)$. Since this function is determined by the component structure of $G[P \cup R]$ (as a coloured graph), it is relatively easy to show that, when $c \to \infty$, the only non-neglible contribution comes from vertices of degree at most one, which
can be controlled precisely using standard techniques, leading to \cref{p:generalp}. When $c$ is a fixed constant however, we have to take into account all possible components, which requires more novel ideas.

The colouring process that identifies the sets $S,P$ and $R$ will also be key to our proof, and will be described in more detail in \cref{s:strong4core}. It is shown in \cite{A23} that the probability that $L(G)$ is not equal to $n- \Phi(G)=: \tL$ is sufficiently small that it will be sufficient to prove a central limit theorem for $\tL$, which we will do so via a series of approximations.

For the first approximation, since the value of the function $\Phi$ can be split additively over the components of $G[P \cup R]$, we can think of each $v\in V(G)$ as having a contribution to $\Phi$ which is defined by splitting the contribution from each component equally between its vertices, which we denote by $\phi(v)$, so that $\tL = n- \sum_{v \in V(G)} \phi(v)$. 
We then approximate the colouring process \emph{locally} for each $v\in V$, by running a similar algorithm restricted to the $k$-th neighbourhood of $v$, which we denote by $B(v,k)$. We can consider then the contribution of $v$ to the value of $\Phi$ on $B(v,k)$ with respect to this local colouring, which we denote by $\phi_k(v)$. We will show that once $k$ is large enough so that the component $C_v$ of $G[P \cup R]$ that contains $v$ is a subset of $B(v,k-1)$, then $\phi_k(v) = \phi(v)$. 
Hence, if we expect the typical component in $G[P \cup R]$ to be small, then we should expect that as $k \to \infty$, the quanitity $n -\sum_{v \in V(G)} \phi_k(v) \eqqcolon \tL_k$ is a good approximation to $n-\sum_{v\in V(G)} \phi(v) = \tL$.

One of the key ideas in the paper will be to show that we can control quite precisely how the typical (global and local) colourings change when we resample an edge in $G\left(n,\frac{c}{n}\right)$. This will allow us to use the Efron--Stein inequality (\cref{thm:EfronStein}) to bound the variance of $\tL - \tL_k$, after which it will suffice to prove a central limit theorem for $\tL_k$.

In order to prove a central limit theorem for $\tL_k$, it will be useful to rewrite $\tL_k$ as follows. Since $\phi_k(v)$ is determined by $G[B(v,k)]$, we can view the random variable $\tL_k$ as a weighted sum of \emph{$k$-th neighbourhood counts}. Let $\mathcal{H}$ be the set of rooted graphs $(H,r)$ of \emph{radius} at most $k$, that is, such that $\dist_H(r,v) \leq k$ for all $v \in V(H)$. Then for every $(H,r) \in \mathcal{H}$, there is some constant $\alpha_{(H,r)}$ such that if $(G[B_G(v,k)],v)$ and $(H,r)$ are isomorphic as rooted graphs, then $1-\phi_k(v) = \alpha_{(H,r)}$. Hence, we can rewrite
\begin{align}
\tL_k &\coloneqq n -\sum_{v \in V(G)} \phi_k(v) = \sum_{v \in V(G)} (1 - \phi_k(v))  = \sum_{v\in V(G)} \sum_{(H,r) \in \mathcal{H}} \alpha_{(H,r)}\mathds{1}_{\{(G[B_{G}(v,k)],v) \cong (H,r)\}} \nonumber\\
&= \sum_{(H,r) \in \mathcal{H}} \alpha_{(H,r)} \sum_{v\in V(G)} \mathds{1}_{\{(G[B_{G}(v,k)],v) \cong (H,r)\}}\nonumber \\
&=  \sum_{(H,r) \in \mathcal{H}} \alpha_{(H,r)} \cdot \# \{ \text{$k$-th neighbourhoods isomorphic to $(H,r)$}\}. \label{eq:ballcount}
\end{align}

It is known, see for example \cite{R88}, that for a fixed graph $H$, the number of copies of $H$ in $G(n,p)$ satisfies a central limit theorem when $p$ is sufficiently large and not too close to one, more concretely when $n^2(1-p) \to \infty$ and $np^{m_H} \to \infty$ where
\[
m_H ~\coloneqq~ \max \left\{ \frac{e(H')}{v(H')} \colon H' \subsetneq H  \right\}
\]
is the \emph{maximum density} of $H$.
In fact, it can even be shown (see, e.g., \cite{J94}) that for any finite collections of graphs which simultaneously satisfy the above conditions, the corresponding subgraph counts satisfy a multidimensional central limit theorem, and hence, any linear combination of them satisfies a central limit theorem. Hence, it is not unreasonable to think that something similar might hold for the weighted sum of neighbourhood counts in \eqref{eq:ballcount}.

However, there are some difficulties here. Firstly the sum in \eqref{eq:ballcount} is not over a bounded number of rooted graphs,
and secondly there are rooted graphs $(H,r)$ which will contribute to this sum for which we should not expect the neighbourhood counts to satisfy a central limit theorem in this regime of $p$ as their maximum density $m_H$ is too large, for example whenever $H$ is not a tree.

For this reason we make a second approximation. Letting $\cT$ be the set of rooted trees $(T,r) \in \mathcal{H}$ with $1 \leq v(T) \leq \tbound$ we consider
\begin{equation}
\hL_k ~\coloneqq~ \sum_{(T,r) \in \mathcal{T}} \alpha_{(T,r)} \cdot \# \{ \text{$k$-th neighbourhoods isomorphic to $(T,r)$}\}. \label{eq:ballcounttree}
\end{equation}
It is reasonable to think that $\hL_k$ and $\tL_k$ are close, since our choice of $p = \frac{c}{n}$ means that for each $k \in \mathbb{N}$ and $v \in V(G)$ it is very likely that $(G[B(v,k)],v) \in \mathcal{T}$. Indeed, standard results imply that whp $G$ contains no cycle of length at most $k$, in which case $G[B_G(v,k)]$ is a tree, and since the expected degree of a vertex in $G$ is $\approx c$, the probability that $|B(v,k)|$ is significantly larger than $c^k$ will tend to $0$ as a function of $k$. Using this, together with an analysis of how $\tL_k$ and $\hL_k$ change under resampling edges, we will again be able to use the Efron--Stein inequality to bound the variance of $\tL_k - \hL_k$, after which it will suffice to prove a central limit theorem for $\hL_k$.

Finally, we will be able to show, using a version of Stein's method developed by Barbour, Karo\'nski and Ruci\'nski \cite{Barbour1989}, that $\hL_k$ satisfies a central limit theorem as $n \to \infty$ for any fixed $k \in \mathbb{N}$, from which we will be able to deduce a central limit theorem for the circumference (\cref{thm:Law_of_longest_cycle}) using our sequence of approximations.

However, a small technical issue with the application of Stein's method is that we will require a lower bound on the variance of $\hL_k$, which turns out to be deceptively difficult to prove. In order to do so we will first prove a lower bound on the variance of $\tL_k$ using a delicate partial exposure argument (see \cref{s:varlowerbound}), which we will be able to use to give a sufficient bound on the variance of $\hL_k$, as well as to give the quantitative lower bound on the variance of $L(G)$ in \cref{thm:Law_of_longest_cycle} (the upper bound is much simpler to show).

We note that these last two steps, approximating the weighted neighbourhood sum $\tL_k$ by an appropriate truncation $\widehat{L}_k$ and showing that this truncation satisfies a central limit theorem, do not require much about the specific form of $\tL_k$ beyond that it is a weighted neighbourhood sum whose variance is not too small.

\subsection{Main contributions of the paper}
Our aim in this paper is not just to prove a central limit theorem for the circumference (\cref{thm:Law_of_longest_cycle}), but also to demonstrate a novel method for proving central limit theorems for graph parameters in sparse random graphs. We believe this method will have applications more generally. The rough idea is to take a series of finer and finer \emph{local} appromixations to the parameter, each of which can be shown to satisfy a central limit theorem using Stein's Method, and then to transfer this information `through the limit' to the original parameter. This is achieved using the Efron-Stein inequality (\cref{thm:EfronStein}), which is one of the main novelties in the paper. The Efron--Stein inequality allows us to reduce this problem to a more combinatorial question about the \emph{robustness} of these local approximations under the \emph{resampling} of edges, that is, how do these local approximations change if we add or delete an edge. 
This question is addressed then in \cref{s:strong4coreproperties}. 

However, deducing a central limit theorem for these local approximations is also not entirely straightforward. In particular, one difficulty that arises here is that proving the lower bound on the variance required by Stein's method through direct computation does not appear feasible. 
This is probably typical for any application that is not amenable to simpler methods. To overcome this difficulty in our case, we show in \cref{s:varlowerbound} how we can obtain the desired lower bound through a subtle argument that involves revealing the random edges over several rounds.

\subsection{Structure of the paper}
The paper is structured as follows. In the next section we define precisely the global and local colouring process and our sequence of approximations and give a detailed outline of the proof of \cref{thm:Law_of_longest_cycle}. Then, in \cref{s:prelim} we introduce various notation and tools that we will use throughout the paper. In \cref{s:strong4coreproperties} we prove some results about how the global and local colourings are affected by resampling an edge. In \cref{s:ESvariance} we apply the Efron--Stein inequality, using the information from \cref{s:strong4coreproperties}, to bound the variances of the differences in our approximation steps. \cref{s:varlowerbound} is the dedicated to proving a lower bound on the variance of $\hL_k$. Finally, in \cref{s:Lkprop} we give a central limit theorem for $\hL_k$, together with some quantitative analysis of its variance which will imply the quantitative aspects of \cref{thm:Law_of_longest_cycle}. We finish in \cref{s:discussion} with a general discussion of our methods and some open problems and directions for future research.

\section{Proof of Theorem \ref{thm:Law_of_longest_cycle}}\label{s:Proof}
In this section we will formally define the three approximations we make for $L\left(G\left(n,\frac{c}{n}\right)\right)$ and give a detailed outline of the proof of \cref{thm:Law_of_longest_cycle}. During this section we will state without proof the auxiliary results (\Cref{lem:var_L_k_hL_k_diff_bound,lem:var_X_X_k_diff_bound,lem:conv_by_stein,lem:variancelimit}) which will allow us to prove \Cref{thm:Law_of_longest_cycle} and defer their proofs until later in the paper.

\subsection{The strong \texorpdfstring{$4$}{4}-core and a local approximation}\label{s:strong4core}
Let $G$ be a graph. For $A \subseteq V(G)$, we denote by $N_G(A)$ the \emph{external neighbourhood of $A$ in $G$}, that is, $N_G(A) \coloneqq \{x \in V(G) \setminus A \colon xy \in E(G) \text{ for some } y \in A\}$. For $x \in V(G)$, we also write $N_G(x)$ for $N_G(\{x\})$.
For a subgraph $H$ of $G$ and a set $A \subseteq V(G)$, we let $H - A$ be the subgraph of $H$ obtained by deleting all vertices in $V(H) \cap A$.
For $x, y \in V(G)$, we denote by $\dist_G(x,y)$ the \emph{graph-distance in $G$ from $x$ to $y$}, that is, the length of a shortest path from $x$ to $y$ (or $\infty$ if no such path exists). For $x \in V(G)$ and $k \in \mathbb{N}$, we let $\partial_G(x,k) \coloneqq \{y \in V(G) \colon \dist_G(x,y) = k\}$ and $B_{G}(x,k) \coloneqq \{y \in V(G) \colon \dist_G(x,y) \leq k\}$. 

We say that a set $A \subseteq V(G)$ has the \emph{strong $4$-core property for $G$} if 
\begin{align} \label{Strong_4-core_property}
    |N_G(v) \cap A| \geq 4 \quad \text{ for every } v \in A \cup N_G(A). 
\end{align}
We call the maximal set $A \subseteq V(G)$ that has the strong $4$-core property for $G$ the \emph{strong $4$-core of~$G$} and denote it by $S(G)$. Note that $S(G)$ is well-defined, since if two sets $A_1$ and $A_2$ have the strong $4$-core property for $G$, then so does $A_1 \cup A_2$.
Moreover, we define 
\[
    P(G) \coloneqq N_G(S(G)) \quad \text{ and } \quad R(G) \coloneqq V(G) \setminus (S(G) \cup P(G)). 
\]
If $G$ is clear from context, we simply write $S$, $P$, and $R$ for $S(G)$, $P(G)$, and $R(G)$, respectively. Inspired by the work in \cite{AF21}, the first author introduced the strong $4$-core to study long cycles in sparse random graphs \cite{A23}.

The strong $4$-core can also be obtained by a colouring process as follows.\footnote{We will call this the \emph{global colouring process}.}

\vbox{
\begin{breakablealgorithm}
    \caption{Global colouring process (\cite[Algorithm 1]{A23})} \label{alg:strong_4_core_alg}
\begin{algorithmic}[1] 
    \MyInput{A graph $G$}
    \MyOutput{$S(G)$, the strong $4$-core of $G$}
    \State Initiate the colour of all vertices to be sapphire.
    \State Set $t \coloneqq 1$.
    \While{there exists a sapphire or purple vertex $x$ with fewer than $4$ sapphire neighbours}
        \State Colour $x$ red and all its sapphire neighbours purple.
        \State Set $v_t \coloneqq x$ and $t \coloneqq t+1$.
    \EndWhile
    \State \Return the set of sapphire vertices.
\end{algorithmic}
\end{breakablealgorithm} 
}

Note that the final colouring of $G$ is determined by the sequence $v_1,v_2,\ldots,$ of vertices which are coloured red. Let us show that \cref{alg:strong_4_core_alg} does indeed return the strong $4$-core of $G$.
Let $S'$ be the output of \cref{alg:strong_4_core_alg}. 
Note that any vertex in the strong $4$-core $S$ will never be recoloured. Indeed, suppose for a contradiction that $x$ is the first vertex in $S$ that is recoloured. Then just before $x$ is recoloured either $x$ or one of its neighbours has fewer than $4$ sapphire neighbours, a contradiction to the fact that $S$ has the strong $4$-core property \cref{Strong_4-core_property} for $G$.  
Thus, $S \subseteq S'$. Moreover, since at any step in the algorithm the neighbourhood of the set of sapphire vertices is the set of purple vertices, which can be shown inductively, $S'$ satisfies the strong $4$-core property for $G$ and thus $S' \subseteq S$. Hence, $S' = S$.

We will on occasion make use of this alternative way of finding the strong $4$-core to prove a property about $S$, $P$, or $R$. 
In view of \cref{alg:strong_4_core_alg}, we call vertices in $S$, $P$, and $R$, {\em sapphire}, {\em purple}, and {\em red} vertices, respectively (see \cref{fig:4core}). This means that these expressions will mean, from now on, the colour of the vertices at the end of the global colouring process, unless otherwise stated. 

\begin{figure}[htp]
    \centering
\begin{tikzpicture}[scale=1]
\node at (-0.5,4) {$G$};
\filldraw[color=blue!100, fill=blue!5, rounded corners, very thick](0,1)   rectangle (6,2.5);
\filldraw[color=purple!100, fill=purple!5, rounded corners, very thick](0,2.6)   rectangle (6,3.5);
\filldraw[color=red!100, fill=red!5, rounded corners, very thick](0,3.6)   rectangle (6,4.2);

\draw[thick] (2,2)--(1.75,1.5);
\draw[thick] (2,2)--(1.5,1.5);
\draw[thick] (2,2)--(2.25,1.5);
\draw[thick] (2,2)--(2.5,1.5);
\draw [fill=blue,draw=blue] (1.75,1.5) circle [radius=.07];
\draw [fill=blue,draw=blue] (1.5,1.5) circle [radius=.07];
\draw [fill=blue,draw=blue] (2.25,1.5) circle [radius=.07];
\draw [fill=blue,draw=blue] (2.5,1.5) circle [radius=.07];
\draw node at (2,1.6) {...};

\draw[thick] (4,3)--(3.75,2);
\draw[thick] (4,3)--(3.5,2);
\draw[thick] (4,3)--(4.25,2);
\draw[thick] (4,3)--(4.5,2);
\draw[thick] (4,3)--(4,3.9);

\draw node at (4,2.2) {...};

\draw [decorate,decoration={brace,amplitude=3pt,mirror,raise=3pt},yshift=0pt] (6.05,3.6) -- (6.05,4.2) node [black,midway,xshift=0.5cm] {\footnotesize $R$};
\draw[text width=3cm] (8.55,3.9) node {\footnotesize (red vertices)};
\draw [decorate,decoration={brace,amplitude=3pt,mirror,raise=3pt},yshift=0pt] (6.05,2.6) -- (6.05,3.5) node [black,midway,xshift=0.5cm] {\footnotesize $P$};
\draw[text width=3cm] (8.55,3.05) node  {\footnotesize (purple vertices)};
\draw [decorate,decoration={brace,amplitude=3pt,mirror,raise=3pt},yshift=0pt] (6.05,1) -- (6.05,2.5) node [black,midway,xshift=0.5cm] {{\footnotesize $S$}};
\draw[text width=3cm] (8.55,1.75) node  {\footnotesize (sapphire vertices)};
\draw [fill=blue,draw=blue] (2,2) circle [radius=.07];
\draw [fill=purple,draw=purple] (4,3) circle [radius=.07];
\draw [fill=red,draw=red] (4,3.9) circle [radius=.07];
\draw [fill=blue,draw=blue] (3.75,2) circle [radius=.07];
\draw [fill=blue,draw=blue] (3.5,2) circle [radius=.07];
\draw [fill=blue,draw=blue] (4.25,2) circle [radius=.07];
\draw [fill=blue,draw=blue] (4.5,2) circle [radius=.07];
\end{tikzpicture}
 \caption{The $3$-colouring of a graph $G$ given by \cref{alg:strong_4_core_alg}. Note that every vertex in $S\cup P$ has at least $4$ neighbours in $S$ and there is no edge between $R$ and $S$.}
   \label{fig:4core}
\end{figure}
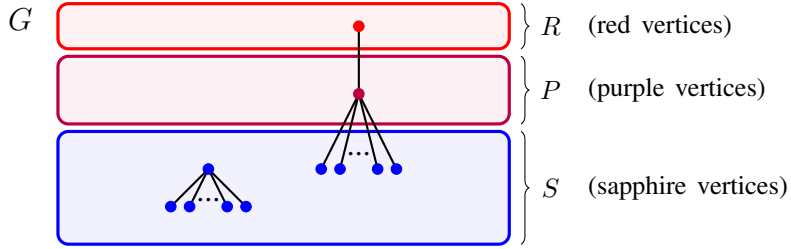

Given a graph $H$ and $W \subseteq V(H)$, we define $\uc(H;W)$ to be the minimal number of vertices in $V(H) \setminus W$ left \emph{uncovered} by a set $\cP$ of vertex-disjoint paths in $H$ all of whose endpoints are in $W$, which we call a \emph{family of $W$-paths in $H$}. Note that, by adding single vertices of $W$ (as paths of length $0$) to $\cP$ if needed, we may assume that $\cP$ covers all vertices in $W$. 
Let 
\[
    \Phi(G) ~\coloneqq~ \uc(G[P \cup R]; P).
\]
For any cycle $\tC$ in $G$ which contains a vertex of $S$, the components of $\tC - S$ form a family of $P$-paths in $G[P \cup R]$ and thus $\tC$ leaves at least $\Phi(G)$ vertices of $G$ uncovered. 
Perhaps surprisingly, the first author showed in \cite{A23} that in $G\sim G(n,p)$ this is essentially tight, by showing that with very high probability the longest cycle in $G$ covers all but $\Phi(G)$ many vertices, provided that $np\geq 20$. So, let us define $$\tL(G) ~\coloneqq~  n - \Phi(G),$$
where when $G$ is clear from context, we write $L$ and $\tL$ for $L(G)$ and $\tL(G)$, respectively.
\begin{theorem}[\cite{A23}, Theorem 11] \label{thm:L_tL_equal}
Let $p \coloneqq p(n) \in [0,1]$ be such that $20 \leq pn \leq 2 \log n$ and $G\sim G(n,p)$. Then, the equality $L(G) = \tL(G)$ holds with probability at least $1 - O(n^{-2})$.
\end{theorem}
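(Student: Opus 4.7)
The direction $L(G) \leq \tL(G)$ is deterministic and is in fact discussed in the excerpt: for any cycle $\tC$ in $G$, since there are no edges from $S$ to $R$, the graph $\tC - S$ decomposes into a family of $P$-paths in $G[P\cup R]$, which must leave at least $\Phi(G)$ vertices uncovered; this gives $|V(\tC)| \leq n - \Phi(G) = \tL(G)$. For the reverse direction $L(G) \geq \tL(G)$, the plan is to fix an optimal family $\cP^*$ of $P$-paths in $G[P\cup R]$ witnessing $\Phi(G)$, and splice the paths of $\cP^*$ together into a single cycle of length $n - \Phi(G)$ using connector paths through $S$ that together cover every vertex of $S$.

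Formally, let $W$ be the multiset of endpoints of paths in $\cP^*$, where a positive-length path contributes one copy of each of its two distinct endpoints and a length-$0$ path at $v$ contributes two copies of $v$. I would reduce the theorem to the following \emph{extension property} of $G[S\cup P]$: there exists a vertex-disjoint collection of paths in $G[S\cup P]$ whose internal vertices lie in $S$, whose endpoint multiset equals $W$, which together cover every vertex of $S$, and which, when concatenated with $\cP^*$, yield a single cycle rather than several. Given such an extension, the resulting cycle has length $|S| + |P| + (|R| - \Phi(G)) = n - \Phi(G)$, which finishes the argument.

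The probabilistic heart of the proof is then to show that this extension property holds with failure probability $O(n^{-2})$, uniformly over all possible optimal $\cP^*$. The natural strategy is to combine structural information about the triple $(S,P,R)$ in $G(n,c/n)$ for $c \geq 20$ --- namely, $|S| = (1-o(1))n$, $|P| = o(n)$, every vertex of $P$ having at least $4$ neighbours in $S$ (typically many more), and $G[S]$ being extremely well-connected --- with a Posa-style rotation-extension argument, carried out inside a two-round exposure: one would expose $G\sim G(n,p)$ as $G_1 \cup G_2$ with independent edges of roughly half the edge probability each, use $G_1$ to determine the colouring $(S,P,R)$, and use the fresh randomness in $G_2$ to route the connector paths. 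Sprinkling is crucial here because $(S,P,R)$ is a complicated function of the edges of $G$, so conditioning directly on it destroys the random structure of $G[S\cup P]$ needed for rotation-extension.

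The principal technical obstacle is the uniformity of this extension over all admissible anchor multisets $W$: since the number of admissible $W$ (arising from some optimal $\cP^*$) may grow super-polynomially in $n$, each individual $W$-configuration must fail with probability $n^{-\omega(1)}$ so that a union bound still delivers the required $O(n^{-2})$. Proving such a quantitative ``universal Hamilton-linkability'' of $G[S\cup P]$ --- that for \emph{every} admissible endpoint specification, a covering collection of paths exists whose combination with $\cP^*$ is a single cycle --- is where the bulk of the technical work resides, and one would likely need sharp expansion estimates for $G[S]$ to push the failure probability for each fixed $W$ well below $n^{-2}$ before aggregating.
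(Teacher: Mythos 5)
This result is imported from \cite{A23} (Theorem 11); the present paper does not prove it, so there is no proof here to compare against directly. Judged against the description the paper gives of what \cite{A23} does, your high-level architecture is correct: the easy inequality $L(G)\leq \tL(G)$ is deterministic as you say, and the hard direction reduces to showing that $G[S\cup P]$ is rich enough that any optimal family of $P$-paths can be spliced into a single cycle covering all of $S\cup P$. However, two of the steps you sketch would not survive contact with the details.

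First, the two-round exposure as you describe it does not work. You propose to ``use $G_1$ to determine the colouring $(S,P,R)$.'' But $(S,P,R)$ is the colouring of $G=G_1\cup G_2$, not of $G_1$, and the strong $4$-core is \emph{not} a monotone function of the edge set: the paper explicitly remarks that adding edges can either grow or shrink $S(G)$. So you cannot reveal $G_1$, compute a colouring, and treat it as (a subset of) the true colouring of $G$ while keeping $G_2$ fresh. Any sprinkling argument here must reveal the partition $(S,P,R)$ of the full $G$ while carefully quantifying what conditional randomness remains inside $G[S\cup P]$ --- a delicate conditioning that your sketch glosses over, and which is essentially the whole point of the argument.

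Second, your quantitative target for the union bound is off. For fixed $c\geq 20$, the sets $P$ and $R$ have size $\Theta(n)$ (e.g.\ a $\Theta(e^{-c})$ fraction of vertices have degree $<4$ and lie outside $S$), so the number of admissible anchor multisets $W$ is $2^{\Theta(n)}$, not merely super-polynomial. Requiring per-configuration failure probability $n^{-\omega(1)}$ is therefore nowhere near enough; you would need $e^{-\Omega(n)}$ per $W$, or --- more realistically --- a uniform linkage statement for $G[S\cup P]$ that holds simultaneously for all admissible $W$ and avoids the union bound altogether. Your closing remark about ``universal Hamilton-linkability'' gestures at the latter, but the preceding paragraph frames the problem as a union bound with the wrong exponent, so as written the argument does not close.

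Minor: for fixed $c$ you also have $|P|=\Theta(n)$ rather than $o(n)$, and $|S|=(1-\Theta(1))n$ rather than $(1-o(1))n$; the constants are small (of order $e^{-c}$) but they are constants, and this matters precisely for the union-bound issue above.
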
 

Since on occasion it will be convenient to talk about appropriate \emph{rescalings} of these random variables, let us also define
\[
 X(G) ~\coloneqq~ \frac{L(G) - \expec{L(G)}}{n^{\frac{1}{2}}} \quad \text{ and } \quad   \tX(G) ~\coloneqq~ \frac{\tL(G) - \expec{\tL(G)}}{n^{\frac{1}{2}}}.
\]
If $G$ is clear from context, we write $X$ and $\tX$ for $X(G)$ and $\tX(G)$, respectively. We note that an immediate consequence of \Cref{thm:L_tL_equal} is that $X - \tX \inprob 0$ as $n \to \infty$ for large enough $c$.

\begin{corollary}\label{cor:tXvsX}
For any $c \geq 20$, $\eps > 0$ and $\delta >0$ there exists $n_0\coloneqq n_0(c,\eps,\delta)$ such that for all $n \geq n_0$ the following holds. Let $G\sim G(n,\frac{c}{n})$, then
\[
\prob{|\tX(G) - X(G)| > \delta} \leq \frac{\eps}{6}.
\]
\end{corollary}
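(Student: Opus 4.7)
The plan is to reduce this to the raw coupling statement $L(G) = \tL(G)$ with very high probability, which is precisely Theorem~\ref{thm:L_tL_equal}. The only real content is to notice that $|\tL - L|$ is always bounded by $n$, which will control the expectation difference.

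First I would write out the difference of the rescaled quantities. Setting $D \coloneqq \tL(G) - L(G)$, one has
\[
\tX(G) - X(G) \;=\; \frac{D - \expec{D}}{n^{1/2}}.
\]
Since the longest cycle in $G$ has length at most $n$ and $\tL(G) = n - \Phi(G) \in [0, n]$ as well (because $0 \leq \Phi(G) \leq |P \cup R| \leq n$), the deterministic bound $|D| \leq n$ holds. Combined with \cref{thm:L_tL_equal}, which says $\prob{D \neq 0} = O(n^{-2})$, this gives
\[
|\expec{D}| \;\leq\; \expec{|D|} \;\leq\; n \cdot \prob{D \neq 0} \;=\; O(n^{-1}).
\]

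Consequently $|\expec{D}|/n^{1/2} = O(n^{-3/2})$, which is smaller than $\delta$ once $n$ is large enough in terms of $\delta$. For such $n$, the event $\{D = 0\}$ is contained in $\{|\tX - X| \leq \delta\}$, so
\[
\prob{|\tX(G) - X(G)| > \delta} \;\leq\; \prob{D \neq 0} \;=\; O(n^{-2}),
\]
and choosing $n_0 = n_0(c, \eps, \delta)$ large enough makes the right-hand side at most $\eps/6$.

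There is no real obstacle here: the deterministic bound $|D| \leq n$ is what lets the polynomially small probability from \cref{thm:L_tL_equal} dominate both the deviation term $D/\sqrt{n}$ on the bad event and the centering term $\expec{D}/\sqrt{n}$. The $\eps/6$ on the right-hand side is just the form in which the bound will later be convenient, and any polynomially small bound would do.
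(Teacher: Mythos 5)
Your proof is correct and essentially the same as the paper's: both rest on the deterministic bound $|\tL - L| \leq n$ combined with $\prob{L \neq \tL} = O(n^{-2})$ from \cref{thm:L_tL_equal} to show that both the bad-event contribution and the centering term $\expec{\tL - L}/\sqrt{n}$ are negligible. The only cosmetic difference is that the paper absorbs the expectation term inside a single probability bound $\prob{|\tL - L| > \delta\sqrt{n}/2}$, whereas you observe directly that the event $\{D=0\}$ already implies $|\tX - X| \leq \delta$ for large $n$.
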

\begin{proof}
Let $L=L(G)$ and $\tL=\tL(G)$. 
By \cref{thm:L_tL_equal}, we have
        \begin{align*}
            \prob{\left|\tX - X \right| > \delta} &= \prob{\left|\tL - L - \expec{\tL-L}\right| > \delta n^\frac{1}{2}} \leq \prob{\left|\tL - L\right| + \expec{\left|\tL-L\right|}  > \delta n^\frac{1}{2}} \\
            &\leq \prob{\left|\tL - L\right|  > \frac{\delta n^\frac{1}{2}}{2}} \leq \prob{L \neq \tL} = O(n^{-2}) \leq \frac{\eps}{6},
        \end{align*}
        where in the second inequality we used that $\expec{\left|\tL-L\right|} \leq n \prob{L \neq \tL} = O(n^{-1}) \leq \frac{\delta n^\frac{1}{2}}{2}$. 
\end{proof}

Since we want to approximate $\Phi(G)$ by considering $G$ \textit{locally} from the point of view of each vertex, it makes sense to rewrite $\Phi(G)$ as a sum over $V(G)$ as follows. 
First, we note that the minimisation giving $\Phi(G)$ can be done separately for each connected component of $G[P \cup R]$. Indeed, $\Phi(G) = \sum_C \uc(G[C];P\cap C)$ where the sum is taken over all components $C$ of $G[P \cup R]$.  
For a vertex $v \in P \cup R$, we let $C_v(G)$ be the connected component of $G[P \cup R]$ that contains $v$, where we write $C_v$ for $C_v(G)$ if $G$ is clear from context. For $v \in S$, we set $C_v \coloneqq \varnothing$.  We can now write 
\[
    \Phi(G) ~=~ \sum_{v \in V(G)} \phi_G(v),
\]
where 
\[
    \phi_G(v) ~\coloneqq~ 0, \text{ if } v \in S \quad\text{ and } \quad \phi_G(v) ~\coloneqq~ \frac{\uc(G[C_v];P \cap C_v)}{|C_v|}, \text{ if } v \in P \cup R.
\]

Our aim is now to find a local approximation for $\phi_G(v)$.
To that end, we define the local versions of $S, P, R,$ and $\phi_G(v)$ as follows.
Let $v \in V(G)$ and $k \in \mathbb{N}$.
Let $A \subseteq B_G(v,k-1)$ be the maximal set such that 
\begin{align}\label{e:localstrong4core}
    |N_G(v) \cap (A \cup \partial_G(v,k))| \geq 4 \quad \text{ for every } v \in (A \cup N_G(A)) \setminus \partial_G(v, k).
\end{align}
We then define the \emph{$k$-local strong $4$-core} $S_k(G,v)$ to be $A \cup \partial_{G}(v,k)$. 

We note that $S_k(G,v)$ can also be obtained through a colouring process on $B_G(v,k)$ as follows.\footnote{We will call this the \emph{local colouring process}.}

\vbox{
\begin{breakablealgorithm}
    \caption{Local colouring process} \label{alg:strong_4_core_alg_local}
\begin{algorithmic}[1] 
    \MyInput{A graph $G$, a vertex $v \in V(G)$, and an integer $k \geq 1$.}
    \MyOutput{$S_k(G,v)$, the $k$-local strong $4$-core of $G$ at $v$.}
    \State Initiate the colour of all vertices in $B_G(v,k)$ to be sapphire.
    \State Set $t \coloneqq 1$.
    \While{there exists a sapphire or purple vertex $x \in B_G(v,k-1)$ with fewer than $4$ sapphire \\ \hspace*{10.6mm} neighbours}
        \State Colour $x$ red and all its sapphire neighbours in $B_G(v,k-1)$ purple.
        \State Set $v_t \coloneqq x$ and $t \coloneqq t+1$.
    \EndWhile
    \State \Return the set of sapphire vertices in $B_G(v,k)$.
\end{algorithmic}
\end{breakablealgorithm} 
}

We further define 
\begin{align*}
    P_k(G, v) &~\coloneqq~ N_G\Big(S_k(G,v) \setminus \partial_G(v,k)\Big) \setminus \partial_G(v,k) \quad \text{and} \\ \quad R_k(G,v) &~\coloneqq~ B_G(v,k) \setminus \Big(S_k(G,v) \cup P_k(G,v)\Big).
\end{align*}
We can think of this as a `local colouring' of $B_G(v,k)$, where vertices in $S_k(G,v)$ are coloured `sapphire', vertices in $P_k(G,v)$ are coloured `purple', and vertices in $R_k(G,v)$ are coloured `red'. As with \cref{alg:strong_4_core_alg}, we note that the `local colouring' is determined by the sequence $v_1,v_2,\ldots$ of vertices which are coloured red. 

We will later show (\Cref{obs:C_small}) that whenever the component of $G[P \cup R]$ containing $v$ is a subset of $B_G(v,k-1)$, then it will agree with the component of $G[P_k(G,v) \cup R_k(G,v)]$ containing $v$ and the global and local colourings on this component will therefore be the same.  In this way, we can think of the local colouring as giving a local approximation to the global colouring at $v$ as $k \to \infty$.

If $v \in P_k(G,v) \cup R_k(G,v)$, we define $C_v^k(G)$ to be the component of $G[P_k(G,v) \cup R_k(G,v)]$ that contains $v$. If $v \in S_k(G,v)$, we set $C_v^{k}(G) \coloneqq \varnothing$. We also write $C_v^k$ for $C_v^k(G)$ if $G$ is clear from context. Define $\phi_{G,k}$ by 
\begin{align*}
    \phi_{G,k}(v) ~\coloneqq~ 0 \text{ if } v \in S_k(G,v) \quad \text{ and } \quad \phi_{G,k}(v) ~\coloneqq~ \frac{\uc(G[C_v^k]; P_k(G,v))}{|C_v^k|}, \text{ otherwise}.
\end{align*}

As above, \Cref{obs:C_small} will imply that $\phi_{G,k}(v) = \phi_G(v)$ whenever the component of $G[P \cup R]$ containing $v$ is contained in $B_G(v,k)$, and so we should expect that, for large $k$, 
\[\tL_k(G) ~\coloneqq~ n - \sum_{v \in V(G)} \phi_{G,k}(v)
\]
will be a good approximation to $\tL(G)   = n-\sum_{v \in V(G)} \phi_G(v)$, which itself should be a good approximation to $L(G)$. Let us also define the rescaling
\[
\tX_k (G) ~\coloneqq~ \frac{\tL_k(G) - \expec{\tL_k(G)}}{n^{\frac{1}{2}}}.
\]
When $G$ is clear from the context we write $\tL_k$ and $\tX_k$ for $\tL_k(G)$ and $\tX_k(G)$, respectively. 
A key step towards our result will be to bound the variance of $\tX - \tX_k$, by using the Efron--Stein inequality (\Cref{thm:EfronStein}). For this step we will need good control over the effect of resampling an edge on $\tX - \tX_k$. We will show the following in \cref{s:ESvariance}.
\begin{lemma} \label{lem:var_X_X_k_diff_bound}
    Let $k \geq 100$ and $c \geq 20$ be fixed. Let $G\sim G(n,\frac{c}{n})$. Then, 
    \[
        \variance{\tX(G)-\tX_k(G)} \leq 64ck^2 \left(\frac{3}{4}\right)^{k-2} + o(1),
    \]
    as $n \rightarrow \infty$.
\end{lemma}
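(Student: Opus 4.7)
The plan is to apply the Efron--Stein inequality (\cref{thm:EfronStein}) with the $\binom{n}{2}$ edge-indicator Bernoulli variables as the underlying independent coordinates. Writing $D(G) \coloneqq \tL(G) - \tL_k(G) = \sum_{v \in V(G)}\bigl(\phi_{G,k}(v) - \phi_G(v)\bigr)$ and observing that the centering constants cancel in any difference $D(G) - D(G^{(e)})$, one obtains
\[
\variance{\tX - \tX_k} \;\leq\; \frac{1}{2n} \sum_{e} \Exp\bigl[(D(G) - D(G^{(e)}))^2\bigr],
\]
where $G^{(e)}$ denotes $G$ with the status of the edge $e$ resampled independently, and the sum runs over all potential edges. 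The goal is then to bound the right-hand side by controlling, for each edge $e$, the set of vertices whose contribution to $D$ is altered by resampling.

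For each fixed $e = xy$, I would identify the set $A(e)$ of vertices $v$ where $\phi_{G,k}(v) - \phi_G(v) \neq \phi_{G^{(e)},k}(v) - \phi_{G^{(e)}}(v)$, and bound $|D(G) - D(G^{(e)})| \leq 2|A(e)|$ using that each $\phi$-value lies in $[0,1]$. For $v$ to lie in $A(e)$, the resampling must alter either the local or the global colouring at $v$. Since $\phi_{G,k}(v)$ depends only on $G[B_G(v,k)]$, any local change forces $v$ to lie within graph-distance $k$ of $\{x,y\}$ in $G$ or $G^{(e)}$; the combinatorial analysis of \cref{s:strong4coreproperties} should be invoked as a black box to show that the corresponding changes to the global colouring stay confined to a comparable neighbourhood. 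Crucially, by the observation that $\phi_{G,k}(v) = \phi_G(v)$ whenever $C_v(G) \subseteq B_G(v,k-1)$, membership in $A(e)$ additionally forces the purple/red component of $v$ (in $G$ or in $G^{(e)}$) to escape $B_G(v,k-1)$. The probability of such a \emph{non-local} component at a typical $v$ is precisely the event one expects to decay like $(3/4)^{k-2}$, reflecting the sparse branching of the peeling process in \cref{alg:strong_4_core_alg_local}, where at each step along a hypothetical propagation path the ``fewer than $4$ sapphire neighbours'' condition must be triggered.

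Combining these ingredients, I would bound $\Exp[|A(e)|^2] = O(k^2 (3/4)^{k-2})$, where the $k^2$ factor comes from pairwise counting of two vertices both within distance $O(k)$ of $e$ supporting the rare non-local event. Multiplying by the probability $2p(1-p) \leq 2c/n$ that the edge-status actually changes under resampling, and by the $\binom{n}{2}$ potential edges, the factors of $n$ cancel against the $1/n$ prefactor from Efron--Stein to yield the main term $64ck^2(3/4)^{k-2}$. The $o(1)$ correction absorbs the contribution of atypical configurations, such as edges sitting inside abnormally large $k$-balls or $k$-balls containing cycles, which can be controlled by standard concentration estimates for $|B_G(v,k)|$ in $G(n,\tfrac{c}{n})$. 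The main obstacle will be establishing the $(3/4)^{k-2}$ decay: this is not a statement about a single vertex failing to be in the strong $4$-core, but about a colouring discrepancy propagating through a long path, and it requires delicately coupling the local and global peeling procedures along this path while keeping the branching under control. The remaining steps, namely the Efron--Stein setup and the $k$-ball counting, are routine given this key probabilistic input.
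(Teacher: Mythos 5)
Your overall framework is correct — Efron--Stein, a change set $D$ contained in a region controlled by $e$, and the key observation $\phi_{G,k}(v)=\phi_G(v)$ when $C_v\subseteq B_G(v,k-1)$ — but the mechanism you propose for the $(3/4)^{k-2}$ decay is not the one the paper uses, and I don't think it works as you describe it.

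The paper does not couple the local and global peeling procedures along a propagation path, nor does it invoke any ``fewer than 4 sapphire neighbours'' event being triggered at each step of a chain. Instead it reduces everything to the \emph{size of a single red-purple component}: Lemmas 4.11 and 4.12 (in \cref{s:strong4coreproperties}) show that $D\subseteq W^*\coloneqq W^-\cup W^+\cup\{u,v\}$ (the red-purple components of $u$ and $v$ after the flip, not a distance-$O(k)$ ball around $e$ — an important distinction your sketch blurs), and that $D=\varnothing$ unless $|W^*|\geq k$. Hence $|D|\leq |W^*|\ind{|W^*|\geq k}$, and the decay is obtained from a bare first-moment bound on $\expec{|W^*|^2\ind{|W^*|\geq k}}$ (Lemma 4.15). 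That first moment uses only two structural facts about $W^*$ (Lemma 4.13): it is connected in $G^+$, and at least a $\tfrac14$-fraction of its vertices have no neighbours outside it. Counting connected sets of size $j$ with $\geq (j-2)/4$ ``internal-only'' vertices gives a per-vertex factor roughly $2ce^{1-c/4}$, which is $<3/4$ once $c\geq 20$; the $k^2$ comes from $\sum_{j\geq k} j^2(\cdot)^{j-2}$, not from ``pairwise counting of two vertices within distance $O(k)$ of $e$''. By contrast, your proposed path-propagation heuristic would naturally produce a base of order $\prob{\mathrm{Poisson}(c)<4}$, which for $c\geq 20$ is astronomically smaller than $3/4$ — a hint that it is not the right model for the bound being proved.

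So the gap is concrete: you have not identified the containment $D\subseteq W^*$, nor the reduction to $|W^*|\geq k$, nor the structural properties of $W^*$ that make the first-moment bound go through. Without these, ``invoking \cref{s:strong4coreproperties} as a black box to show changes stay confined to a comparable neighbourhood'' is circular, because the content of that section is precisely the $W^*$-containment, not a distance bound; and the quantitative $(3/4)^{k-2}$ decay does not follow from the peeling-propagation picture you sketch.
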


As with \Cref{thm:L_tL_equal}, it is a simple consequence of \Cref{lem:var_X_X_k_diff_bound} that $\tX-\tX_k \inprob 0$ as $k,n \to \infty$, in the sense that for every $\delta >0$,
\[
\lim_{k \to \infty} \lim_{n \to \infty} \prob{|\tX - \tX_k| > \delta} = 0.
\]
\begin{corollary}\label{cor:tXkvstX}
For any $c \geq 20$ and $\eps > 0$ there exist $\delta \coloneqq  \delta(c,\eps)$ and $k_0 \coloneqq  k_0(c,\eps,\delta)$ such that for all $k \geq k_0$ there exists $n_0\coloneqq n_0(c,\eps,\delta,k)$ such that for all $n \geq n_0$ the following holds. Let $G\sim G(n,\frac{c}{n})$, then
\[
\prob{|\tX(G) - \tX_k(G)| > \delta} \leq \frac{\eps}{6}.
\]
\end{corollary}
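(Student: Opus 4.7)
The plan is to derive the corollary directly from \cref{lem:var_X_X_k_diff_bound} via Chebyshev's inequality. By construction $\tX$ and $\tX_k$ are centred rescalings of $\tL$ and $\tL_k$, so their difference $\tX - \tX_k$ has mean zero, and hence for every $\delta > 0$,
\[
\prob{|\tX(G) - \tX_k(G)| > \delta} ~\leq~ \frac{\variance{\tX(G) - \tX_k(G)}}{\delta^2}.
\]
The task therefore reduces to making the right-hand side at most $\eps/6$, and this is exactly where \cref{lem:var_X_X_k_diff_bound} enters: for any fixed $k \geq 100$, as $n \to \infty$,
\[
\variance{\tX - \tX_k} ~\leq~ 64ck^2\left(\tfrac{3}{4}\right)^{k-2} + o(1).
\]

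Given $c \geq 20$ and $\eps > 0$, I would proceed as follows. First, fix any value $\delta \coloneqq \delta(c,\eps) > 0$; for the purposes of the corollary itself any constant works, so one may simply take $\delta \coloneqq 1$. Since $k^2 (3/4)^{k-2} \to 0$ as $k \to \infty$, choose $k_0 = k_0(c,\eps) \geq 100$ large enough that
\[
64c k^2 \left(\tfrac{3}{4}\right)^{k-2} ~\leq~ \frac{\eps \delta^2}{12} \qquad \text{for all } k \geq k_0.
\]
For each such $k$, \cref{lem:var_X_X_k_diff_bound} provides an $n_0 = n_0(c,\eps,\delta,k)$ such that for all $n \geq n_0$ the $o(1)$ tail in the variance bound is itself at most $\eps\delta^2/12$, yielding $\variance{\tX - \tX_k} \leq \eps\delta^2/6$.

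Feeding this estimate back into Chebyshev's inequality gives $\prob{|\tX - \tX_k| > \delta} \leq \eps/6$, as required, and the order of quantifiers $(c,\eps) \mapsto (\delta, k_0) \mapsto n_0$ matches the statement. There is no genuine obstacle at this stage: all of the real work has been absorbed into \cref{lem:var_X_X_k_diff_bound}, whose proof (developed in \cref{s:ESvariance} via the Efron--Stein inequality together with the combinatorial analysis of edge resampling carried out in \cref{s:strong4coreproperties}) is the substantial step.
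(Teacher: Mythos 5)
Your proof is correct and is essentially identical to the paper's: both apply Chebyshev's inequality to the mean-zero variable $\tX - \tX_k$ and then invoke \cref{lem:var_X_X_k_diff_bound}, choosing $k_0$ large enough to kill the exponential term and $n_0$ large enough to control the $o(1)$ tail. The only difference is that you spell out the quantifier bookkeeping (and note that any fixed $\delta$ works, which is true for this corollary read in isolation), whereas the paper leaves this implicit.
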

\begin{proof}
By Chebyshev's inequality and \cref{lem:var_X_X_k_diff_bound}, we have
        \begin{align*}
            \prob{|\tX - \tX_k| > \delta} \leq \frac{\variance{\tX-\tX_k}}{\delta^2} \leq \frac{64ck^2 \left(\frac{3}{4}\right)^{k-2} + o(1)}{\delta^2} \leq \frac{\eps}{6}.
        \end{align*}
\end{proof}
\subsection{Approximation by neighbourhood counts}\label{s:neighbourhoodcounts}

As in \cref{s:methods}, we now define an approximation to $\tL_k$ in terms of a weighted sum of neighbourhood counts. A \emph{rooted graph} is a pair $(H,r)$ where $H$ is a graph and $r \in V(H)$. Two rooted graphs $(H_1,r_1)$ and $(H_2,r_2)$ are \emph{isomorphic} (written $(H_1,r_1) \cong (H_2,r_2)$) if there exists a bijection $f \colon V(H_1) \rightarrow V(H_2)$ such that $f(r_1) = r_2$ and $f(u)f(v) \in E(H_2)$ if and only if $uv \in E(H_1)$ for all $u,v \in V(H_1)$. 

Let $\mathcal{H}$ be the set of rooted graphs $(H,r)$ of radius at most $k$. 
Note that 
\[
   \tL_k(G) ~\coloneqq~ \sum_{v \in V(G)} (1-\phi_{G,k}(v)) = \sum_{v \in V(G)} \sum_{(H,r) \in \mathcal{H}} \alpha_{(H,r)} \mathds{1}_{\{(G[B_G(v,k)],v) \cong (H,r)\}}
\] 
where $\alpha_{(H,r)} \in [0,1]$ is a constant depending only on $(H,r)$. 

We approximate $\tL_k(G)$ by the following `truncated' version.
Define 
\begin{align} \label{eq:t_k_def}
    t_k ~\coloneqq~ \tbound
\end{align}
and 
\[
    \hL_k(G) ~\coloneqq~ \sum_{v \in V(G)}\sum_{(T,r) \in \mathcal{T}} \alpha_{(T,r)} \mathds{1}_{\{(G[B_G(v,k)],v) \cong (T,r)\}},
\]
where $\mathcal{T}$ is the set of rooted trees $(T,r) \in \mathcal{H}$ with $1 \leq v(T) \leq t_k$. We will sometimes also consider the rescaled analogue of $\hL_k$
\[
\widehat{X}_k(G) ~\coloneqq~  \frac{\hL_k(G) - \expec{\hL_k(G)}}{n^{\frac{1}{2}}}
\]
When $G$ is clear from the context we write $\hL_k$ and $\widehat{X}_k$ for $\hL_k(G)$ and $\widehat{X}_k(G)$, respectively. 

As with \Cref{lem:var_X_X_k_diff_bound}, we use the Efron--Stein inequality (\Cref{thm:EfronStein}) to bound the variance of $\tL_k - \hL_k$, which we can then use to show that $\tX_k - \widehat{X}_k \inprob 0$ as $k,n \to \infty$, in the sense that for all $\delta >0$,
\[
\lim_{k \to \infty} \lim_{n \to \infty} \prob{|\tX_k - \widehat{X}_k| > \delta} = 0.
\]
We will show the following in \cref{s:ESvariance}.
\begin{lemma} \label{lem:var_L_k_hL_k_diff_bound}
   Let $k \geq 100$ and $c \geq 20$ be fixed. Let $G\sim G(n,\frac{c}{n})$. Then,
    \begin{align*}
        \variance{\tX_k(G) - \hX_k(G)} \leq \exp\left(-c^2k^2\right) + O\left(\frac{1}{n}\right),
    \end{align*}
    as $n \rightarrow \infty$.
\end{lemma}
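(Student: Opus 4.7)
The plan is to apply the Efron--Stein inequality (\cref{thm:EfronStein}) to $\tL_k - \hL_k$ viewed as a function of the independent edge indicators $(\xi_e)_{e \in \binom{[n]}{2}}$ of $G \sim G(n,\frac{c}{n})$. The basic identity to work with is
\[
\tL_k - \hL_k ~=~ \sum_{v \in V(G)} W_v, \qquad W_v ~\coloneqq~ (1-\phi_{G,k}(v)) \cdot \mathds{1}_{(G[B_G(v,k)],v) \notin \mathcal{T}},
\]
and the crucial feature is that $W_v \in [0,1]$ vanishes unless the ``bad'' event $\bar{\mathcal{B}}_v$ occurs, where $\bar{\mathcal{B}}_v$ is the union of $\bar{\mathcal{B}}_v^{\mathrm{cyc}} \coloneqq \{G[B_G(v,k)] \text{ contains a cycle}\}$ and $\bar{\mathcal{B}}_v^{\mathrm{big}} \coloneqq \{|B_G(v,k)| > t_k\}$. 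Writing $Z_e$ for the change in $\tL_k - \hL_k$ under resampling $\xi_e$ and using that for each $e$ only the mass $2p(1-p) = O(1/n)$ of the joint distribution of $(\xi_e,\xi_e')$ gives a non-zero contribution, Efron--Stein reduces the problem to showing
\[
\max_{e \in \binom{[n]}{2}} \expec{\Delta_e^2} ~\leq~ \exp\!\big(-c^2k^2\big) + O(1/n),
\]
where $\Delta_e$ is the change in $\tL_k - \hL_k$ when the edge $e$ is toggled in a graph sampled independently of $e$. After summing over $e$, the prefactor $\binom{n}{2} \cdot 2p(1-p) = O(n)$ and the division by $n$ (to pass from $\tL_k - \hL_k$ to $\tX_k - \hX_k$) combine to give the claimed bound.

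The key geometric observation is that a vertex $v$ contributes to $\Delta_e$ only if (i) $v$ lies in $B_{G}(e,k+1) \cup B_{G \triangle e}(e,k+1)$, so that its $k$-ball or its induced subgraph sees the toggled edge, and (ii) $\bar{\mathcal{B}}_v$ holds in at least one of the two graphs (otherwise $W_v = 0$ in both). This yields
\[
|\Delta_e| ~\leq~ \#\left\{v \in B(e,k+1) \colon \bar{\mathcal{B}}_v \text{ holds in } G \text{ or } G \triangle e\right\},
\]
so it suffices to estimate $\expec{|B(e,k+1)|^2 \mathds{1}_{\text{some such }v}}$. I split this according to which of $\bar{\mathcal{B}}_v^{\mathrm{cyc}}$ or $\bar{\mathcal{B}}_v^{\mathrm{big}}$ triggers the bad event. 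For $\bar{\mathcal{B}}_v^{\mathrm{cyc}}$, a standard first-moment count of a cycle of length at most $2k$ sitting within distance $2k+1$ of $e$ (enumerating by cycle length and by path length from $e$) gives a probability of order $c^{O(k)}/n$, and bounding $|B(e,k+1)|$ by its typical size $c^{O(k)}$ on this event produces the $O(1/n)$ error term. For $\bar{\mathcal{B}}_v^{\mathrm{big}}$, I would couple the BFS exploration at $v$ in $G$ to a Galton--Watson process with offspring distribution stochastically dominated by $\mathrm{Poisson}(c)$, and apply a Chernoff bound iteratively across the $k$ levels. Since $t_k = (10ck)^{2k}$ is super-exponentially larger than the expected $k$-ball size $\approx c^k$, this yields $\prob{|B_G(v,k)| > t_k} \leq \exp(-c^2 k^2)$ with plenty of room to spare for $k \geq 100$ and $c \geq 20$.

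The step I expect to be the main obstacle is controlling the multiplicative factor $|B(e,k+1)|^2$ in Cauchy--Schwarz on the large-ball event: a priori this ball could be enormous, and one has to argue that on $\bar{\mathcal{B}}_v^{\mathrm{big}}$ the resulting contribution to $\expec{\Delta_e^2}$ is still bounded by $\exp(-c^2 k^2)$. The strategy is to observe that the upper tail of $|B(e,2k+1)|$ is super-exponential in $t_k^{1/(2k+1)}$, so that for any polynomial (in $t_k$) multiplier the integrated second moment $\expec{|B(e,k+1)|^2 \mathds{1}_{|B(e,k+1)| > t_k}}$ is still dominated by $\exp(-c^2 k^2)$. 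This is where the generous choice of $t_k = (10ck)^{2k}$ is exploited: it is large enough that the Chernoff rate in the branching-process estimate comfortably beats both $c^2 k^2$ and any polynomial factor we need to absorb. A secondary technical point is handling $G$ and $G \triangle e$ simultaneously, but since these graphs differ in at most one edge, their ball sizes and bad events differ by at most constant blow-ups, and the estimates above transfer verbatim.
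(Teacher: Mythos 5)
Your plan mirrors the paper's proof (Lemma~\ref{lem:var_diff_with_trunc} via the Efron--Stein wrapper \cref{lem:ESapp} and the tail estimates of \cref{lem:NUNUCexpectation}): write $\tL_k-\hL_k$ as a vertex sum, apply Efron--Stein over the edge indicators, localise the affected vertices to the $k$-ball around the resampled edge intersected with those whose $k$-ball is non-tree or of size exceeding $t_k$, and control these via a first-moment cycle count and a Galton--Watson / Chernoff tail bound on ball sizes. One small bookkeeping note: your intermediate target $\max_e\expec{\Delta_e^2}\le\exp(-c^2k^2)+O(1/n)$ is off by the factor $\binom{n}{2}\cdot 2p(1-p)/n\approx c$ that Efron--Stein picks up, but, as you anticipate, the actual estimate is of order $\exp(-2c^2k^2)$, which absorbs this and the polynomial $t_k$ multiplier with plenty of room.
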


\begin{corollary}\label{cor:wideXvstXk}
For any $c \geq 20$ and $\eps > 0$ there exist $\delta \coloneqq  \delta(c,\eps)$ and $k_0 \coloneqq  k_0(c,\eps,\delta)$ such that for all $k \geq k_0$ there exists $n_0\coloneqq n_0(c,\eps,\delta,k)$ such that for all $n \geq n_0$ the following holds. Let $G\sim G(n,\frac{c}{n})$, then
\[
\prob{|\tX_k(G) - \widehat{X}_k(G)| > \delta} \leq \frac{\eps}{6}.
\]
\end{corollary}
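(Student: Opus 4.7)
The plan is to mirror exactly the proof of \cref{cor:tXkvstX}, replacing the variance bound from \cref{lem:var_X_X_k_diff_bound} with the sharper one provided by \cref{lem:var_L_k_hL_k_diff_bound}. Since both $\tX_k$ and $\hX_k$ are already centred (they have zero mean by construction), the difference $\tX_k - \hX_k$ is also centred, so Chebyshev's inequality applies directly and gives
\[
\prob{|\tX_k(G) - \hX_k(G)| > \delta} \;\leq\; \frac{\variance{\tX_k(G) - \hX_k(G)}}{\delta^2} \;\leq\; \frac{\exp(-c^2k^2) + O(1/n)}{\delta^2}.
\]

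The only thing to check is the order of the quantifiers, which matches that of \cref{cor:tXkvstX}. First, I would pick any fixed $\delta \coloneqq \delta(c,\eps)$ (the statement allows $\delta$ to depend on $c$ and $\eps$, but in fact any positive constant will work here since the variance bound decays to $0$). Then, given this $\delta$, I would choose $k_0 \coloneqq k_0(c,\eps,\delta)$ large enough that $\exp(-c^2 k_0^2)/\delta^2 \leq \eps/12$, so that for all $k \geq k_0$ the first term in the numerator is negligible. Finally, for any such $k$, the error term $O(1/n)$ in \cref{lem:var_L_k_hL_k_diff_bound} (which may depend on $k$ through hidden constants) can be absorbed by choosing $n_0 \coloneqq n_0(c,\eps,\delta,k)$ large enough that $O(1/n)/\delta^2 \leq \eps/12$ for all $n \geq n_0$. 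Adding the two contributions yields the desired bound $\eps/6$.

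There is no real obstacle: the only nontrivial ingredient is \cref{lem:var_L_k_hL_k_diff_bound} itself, which is assumed. The quantifier ordering is slightly finicky only because $\delta$ must be fixed before $k_0$ (so that $k_0$ can depend on $\delta$) and $n_0$ is chosen last (so that it may absorb the $k$-dependent implicit constant in $O(1/n)$), but this is precisely the structure used in the proof of \cref{cor:tXkvstX} and transfers verbatim. Thus the proof should be a two-line application of Chebyshev plus \cref{lem:var_L_k_hL_k_diff_bound}.
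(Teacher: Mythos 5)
Your proposal is correct and matches the paper's proof exactly: a direct application of Chebyshev's inequality to the (mean-zero) difference $\tX_k - \hX_k$ combined with the variance bound from \cref{lem:var_L_k_hL_k_diff_bound}. The paper gives the same one-line argument without spelling out the quantifier ordering or the centredness observation, both of which you handle correctly.
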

\begin{proof}
By Chebyshev's inequality and \cref{lem:var_L_k_hL_k_diff_bound},
        \begin{align*}
            \prob{|\widehat{X}_k-\tX_k| > \delta} \leq \frac{\variance{\widehat{X}_k-\tX_k}}{\delta^2}  \leq \frac{\exp\left(-c^2k^2\right)}{\delta^2} + O\left(\frac{1}{n}\right) \leq \frac{\eps}{6}.
        \end{align*}
\end{proof}

Finally, in \cref{sec:normalapprox} we will use Stein's method to show that $\hL_k$ satisfies a central limit theorem as $n \to \infty$.

\begin{lemma} \label{lem:conv_by_stein}
Let $k \geq 100$ and $c \geq 20$ be fixed and $G\sim G(n,\frac{c}{n})$. Then,
    \[
        \frac{\hL_k(G) - \expec{\hL_k(G)}}{\left(\variance{\hL_k(G)}\right)^{\frac{1}{2}}} \quad\indist\quad \mathcal{N}(0,1) \quad \text{ as } n \to \infty.
    \]
\end{lemma}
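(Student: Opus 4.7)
The plan is to apply the version of Stein's method for sums of locally dependent random variables developed by Barbour, Karo\'nski, and Ruci\'nski \cite{Barbour1989}. First, rewrite $\hL_k = \sum_{v \in V(G)} Y_v$, where
$Y_v \coloneqq \sum_{(T,r) \in \mathcal{T}} \alpha_{(T,r)} \mathds{1}_{\{(G[B_G(v,k)], v) \cong (T,r)\}}$
lies in $[0,1]$. Each $Y_v$ is a deterministic function of the local structure around $v$; more precisely, of the induced subgraph $G[B_G(v,k)]$ together with the presence or absence of edges from $B_G(v, k-1)$ to $V(G) \setminus B_G(v,k)$. Consequently, $Y_v$ and $Y_{v'}$ are independent whenever $B_G(v,k+1)$ and $B_G(v',k+1)$ are vertex-disjoint, which holds in particular whenever $\dist_G(v,v') > 2k+2$.

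To convert this random local dependency into a deterministic one, I would introduce a truncation: set $Y_v^\dagger \coloneqq Y_v \cdot \mathds{1}_{\{|B_G(v, 2k+2)| \leq D_0\}}$ for a suitable $D_0 = D_0(c,k,n)$, chosen for instance as a polylogarithmic-in-$n$ function. Standard tail bounds on the growth of local neighbourhoods in $G(n,c/n)$, obtained by coupling the BFS exploration with a Galton--Watson branching process with $\mathrm{Po}(c)$ offspring distribution, give $\prob{Y_v \neq Y_v^\dagger} \leq n^{-\omega(1)}$; a routine covariance calculation then yields $\variance{\sum_v (Y_v - Y_v^\dagger)} = o(n)$, so it suffices to prove the central limit theorem for $\hL_k^\dagger \coloneqq \sum_v Y_v^\dagger$.

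By construction, the random variables $(Y_v^\dagger)_{v \in V(G)}$ now form a collection of $[0,1]$-valued random variables with a deterministic dependency graph of maximum degree at most $D_0$: whenever $\dist_G(v,v') > 2k+2$ and both $|B_G(v, 2k+2)|, |B_G(v', 2k+2)| \leq D_0$, the variables $Y_v^\dagger$ and $Y_{v'}^\dagger$ depend on disjoint sets of edges. Applying the BKR normal approximation theorem for dependency graphs produces an upper bound on the Wasserstein distance between the normalised $\hL_k^\dagger$ and $\mathcal{N}(0,1)$ of the form $C \cdot n \cdot D_0^2 / (\variance{\hL_k^\dagger})^{3/2}$. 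Using the variance lower bound $\variance{\hL_k} \geq c_1(c,k) \cdot n$ established in \cref{s:varlowerbound} (which via the variance bound on the truncation error implies $\variance{\hL_k^\dagger} \geq c_1 n / 3$ for $n$ large), this quantity is $O(D_0^2 / n^{1/2}) = o(1)$, completing the proof.

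The main obstacle is the variance lower bound. Without a bound of the form $\variance{\hL_k} = \Omega(n)$, the Stein-type estimate above is vacuous, and a direct second-moment calculation appears prohibitively intricate because of the delicate cancellations between the positive (edge-presence) and negative (edge-absence) constraints encoded in the defining indicators. Circumventing this difficulty via the partial-exposure argument of \cref{s:varlowerbound} is the essential input to the present lemma; beyond this, the verification of the BKR moment conditions is routine given the local and uniformly bounded nature of the $Y_v^\dagger$.
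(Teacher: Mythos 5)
Your proposal correctly identifies the two main ingredients — a Stein-type normal approximation and the variance lower bound from \cref{s:varlowerbound} — but the central step, the application of a dependency-graph CLT, does not work as stated, and the issue is not a technicality but exactly the obstruction the paper's actual argument is designed to avoid.

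You decompose $\hL_k = \sum_{v \in V(G)} Y_v$ with $Y_v$ a function of the local structure around $v$, truncate to $Y_v^\dagger$, and claim the family $(Y_v^\dagger)_v$ has a \emph{deterministic} dependency graph of bounded degree. But the condition you give for independence of $Y_v^\dagger$ and $Y_{v'}^\dagger$ — that $\dist_G(v,v') > 2k+2$ and that both $(2k+2)$-balls are small — is an event of the random graph $G$, not a property of the labelled pair $(v,v')$. In $G(n,\frac{c}{n})$ any two vertices $v,v'$ have positive probability of being adjacent, so $Y_v^\dagger$ and $Y_{v'}^\dagger$ are \emph{never} unconditionally independent; the only dependency graph on the vertex set for which the defining requirement holds is the complete graph. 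Your extra truncation by $|B_G(v,2k+2)|\leq D_0$ controls the \emph{size} of the neighbourhood but not the fact that \emph{which} vertices lie in it is random, which is what breaks the dependency-graph framework. There is, moreover, no ``BKR normal approximation theorem for dependency graphs'' of the form you invoke; the Barbour--Karo\'nski--Ruci\'nski theorem (\cref{thm:Stein}) is formulated precisely for \emph{semi-induced} random variables $Y_U$ indexed by vertex \emph{subsets} $U$, where $Y_U$ is a deterministic function of the (deterministic) set of edges $\cF(U)$ incident to $U$. The residual dependence between $Y_{U_1}$ and $Y_{U_2}$ with $U_1\cap U_2=\varnothing$, coming from the edges between $U_1$ and $U_2$, is then quantified via the quantities $Z_{U_1U_2}$, $V_{U_1U_2}$, rather than assumed away.

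The paper's proof therefore rewrites $\hL_k = \sum_{U \in \cU_k} Y_U$ (as in \cref{eq:Y_U_Def}), where $U$ ranges over vertex sets of size at most $t_k$ and $Y_U$ collects the contribution of all $v\in U$ with $B_G(v,k)=U$; the bound $|U|\leq t_k$ comes for free from the definition of $\hL_k$ as a truncated sum over trees, so no additional truncation of the kind you introduce is needed. The key monotonicity input $0\leq Y_U(G)\leq Y_U(G\setminus\{e\})\leq T_U(G)$ for $|e\cap U|=1$ (condition \cref{eq:Y_U_ineq}) then lets one bound the $\delta$-quantity \cref{eq:delta} by $O(n/\sigma_Y^3)$, which is $o(1)$ precisely because $\variance{\hL_k}=\Theta(n)$ — the point you correctly highlighted. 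So you have the right input from \cref{s:varlowerbound}, but the Stein-method machinery needs to be the set-indexed, semi-induced BKR version, not a vertex-indexed dependency-graph version.
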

However, in order to apply Stein's method, we will require a lower bound on $\variance{\hL_k}$ -- for Stein's method, $\variance{\hL_k} = \omega(n^{\frac{2}{3}})$ would suffice. Furthermore, in order to prove the quantitative aspects of \cref{thm:Law_of_longest_cycle}, it will be important to have some control over the sequence $\sigma_k \coloneqq  \lim_{n \to \infty} \left(\frac{\variance{\hL_k}}{n}\right)^{\frac{1}{2}}$, which we summarise in the following lemma, whose proof is given in \cref{sec:variancelimit}:

\begin{lemma}\label{lem:variancelimit}
Let $c \geq 20$ and $G\sim G(n,\frac{c}{n})$. Then, there exist constants $C_1,C_2 >0$ and $k_0(c) \geq 100$ such that

\vbox{
\begin{enumerate}[label = \upshape{(L\arabic*)}, leftmargin= \widthof{R100000}]
    \item the limit $\sigma_k \coloneqq \lim_{n \to \infty} \left(\frac{\variance{\hL_k(G)}}{n}\right)^{\frac{1}{2}}$ exists for all fixed $k \geq k_0(c)$; \label{item:limit_in_n}
    \item the limit $\sigma \coloneqq \lim_{k \to \infty} \sigma_k$ exists and $\sigma = \lim_{n \to \infty} \left(\frac{\variance{L(G)}}{n}\right)^{\frac{1}{2}}$; and \label{item:limit_in_k}
    \item $C_1 e^{-10c} \leq \sigma  \leq C_2 c^{\frac{1}{2}}$. \label{item:limit_bounds}
\end{enumerate}
}
\end{lemma}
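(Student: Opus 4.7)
The plan is to establish (L1), (L2), and (L3) in sequence, with (L1) doing the technical work, (L2) following from a chain of $L^2$ triangle inequalities applied to the existing approximation scheme, and (L3) reducing to variance bounds on $\hL_k$ supplied by later sections.

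For (L1), I would exploit the vertex decomposition $\hL_k(G) = \sum_{v \in V(G)} g_k(v)$, where $g_k(v) := \sum_{(T,r)\in \mathcal{T}} \alpha_{(T,r)} \mathds{1}_{\{(G[B_G(v,k)],v)\cong(T,r)\}}$ is a bounded functional of the rooted $k$-ball at $v$. By symmetry,
\[
\frac{\variance{\hL_k(G)}}{n} \;=\; \variance{g_k(v_1)} \;+\; (n-1)\,\mathrm{Cov}(g_k(v_1), g_k(v_2))
\]
for any two distinct fixed vertices $v_1, v_2$. The first term converges as $n \to \infty$ by standard local weak convergence of $G(n, c/n)$ to the Poisson$(c)$ Galton--Watson tree truncated at depth $k$. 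To analyse the cross term, I would condition on whether the two $k$-balls share a vertex: the intersecting event has probability $O(c^{2k}/n)$ and contributes $O(c^{2k})$ to $(n-1)\mathrm{Cov}$, while the disjoint case captures only the $O(1/n)$ sampling-without-replacement dependence between two rooted balls drawn from a common pool of $n-2$ vertices. Since $v(T) \leq t_k$ uniformly bounds the ball sizes in $\mathcal{T}$, a careful combinatorial accounting establishes that both contributions converge to explicit limits.

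For (L2), I would chain together $L^2$ triangle inequalities. Setting $X := (L - \Exp[L])/\sqrt{n}$ and analogously for $\tX$, $\tX_k$, $\hX_k$, we have
\[
\bigl|\variance{X}^{\frac{1}{2}} - \variance{\hX_k}^{\frac{1}{2}}\bigr| \;\leq\; \variance{X - \tX}^{\frac{1}{2}} + \variance{\tX - \tX_k}^{\frac{1}{2}} + \variance{\tX_k - \hX_k}^{\frac{1}{2}}.
\]
\cref{thm:L_tL_equal} gives $\variance{X - \tX} = O(1/n)$, while \cref{lem:var_X_X_k_diff_bound} and \cref{lem:var_L_k_hL_k_diff_bound} bound the remaining two terms by an explicit quantity $\eps_k$ with $\eps_k \to 0$ as $k \to \infty$ (up to vanishing errors in $n$). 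Combined with (L1), an analogous triangle inequality applied to pairs $\hX_k, \hX_{k'}$ forces $\{\sigma_k\}_{k \geq k_0(c)}$ to be Cauchy in $\mathbb{R}$, so $\sigma := \lim_k \sigma_k$ exists; the same inequality then squeezes $\limsup_n (\variance{L}/n)^{\frac{1}{2}}$ and $\liminf_n (\variance{L}/n)^{\frac{1}{2}}$ to both lie within $\eps_k$ of $\sigma_k$ for every $k$, hence to coincide and equal $\sigma$.

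For (L3), by (L2) it suffices to establish $C_1^2 e^{-20c} \leq \variance{\hL_k(G)}/n \leq C_2^2\, c$ uniformly for $k \geq k_0(c)$ and large $n$. The upper bound is a direct Efron--Stein computation on edges: resampling an edge $e$ only affects $g_k(v)$ for $v$ within distance $k$ of an endpoint of $e$, and the combinatorial control over the strong $4$-core from \cref{s:strong4coreproperties} keeps the per-edge squared change under control, yielding a total contribution linear in $c$. The lower bound is the substantive task, and is precisely the content of \cref{s:varlowerbound}: the delicate partial-exposure argument there exhibits $\Omega(n)$ essentially independent contributions of expected squared size $\Omega(e^{-20c})$ to $\variance{\tL_k}$, which transfers to $\hL_k$ via \cref{lem:var_L_k_hL_k_diff_bound}. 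The main obstacle throughout is the cross-term covariance analysis in (L1), as controlling $(n-1)\mathrm{Cov}(g_k(v_1), g_k(v_2))$ to leading order requires tracking $1/n$-sized dependencies between two coupled neighbourhood explorations on a shared vertex pool; once this is in place, (L2) is a largely formal consequence of the Efron--Stein bounds already developed, and (L3) follows directly from the quantitative variance estimates of \cref{s:strong4coreproperties} and \cref{s:varlowerbound}.
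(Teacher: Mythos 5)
Your proposal is correct and follows essentially the same route as the paper: (L1) reduces to showing $\variance{\hL_k}/n$ converges (the paper's \cref{lem:Var_is_Kn,lem:var_asymptotics}); (L2) is obtained by chaining triangle-type variance inequalities through the approximation ladder $L \to \tL \to \tL_k \to \hL_k$, using \cref{thm:L_tL_equal,lem:var_X_X_k_diff_bound,lem:var_L_k_hL_k_diff_bound} to make the chain contract as $k \to \infty$ (the paper's \cref{lem:var_diff_limit,lem:limitsigmak}); and (L3) comes from the Efron--Stein upper bound (\cref{lem:Var_X_k_upper_bound}) and the partial-exposure lower bound (\cref{lem:L_k_var_lower_bound}), transferred to $\hL_k$ by the same approximation machinery. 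The one organizational difference is in (L1): you decompose by vertex pairs and invoke exchangeability, $\variance{\hL_k}/n = \variance{g_k(v_1)} + (n-1)\mathrm{Cov}(g_k(v_1),g_k(v_2))$, leaving the convergence of the $(n-1)\mathrm{Cov}$ term to "careful combinatorial accounting"; the paper instead decomposes over vertex \emph{sets} $U$ of size at most $t_k$ as $\hL_k = \sum_U Y_U$ and groups covariances by $|U_1 \cap U_2|$ (\cref{lem:Var_is_Kn_general} in \cref{a:VarhLk}). Both perform the same computation in substance, but the paper's set-based decomposition is the one that later plugs directly into the Stein's-method framework, so it is the more economical bookkeeping; your version would need to reproduce that accounting to track the $O(1/n)$-order dependence between the two coupled ball explorations to leading order, which is precisely where all the work of (L1) lives.
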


\subsection{Proof of Theorem \ref{thm:Law_of_longest_cycle}}
Fix $c \geq 20$ and recall that we are considering
\[
X(G) ~\coloneqq~ \frac{L(G) - \expec{L(G)}}{n^{\frac{1}{2}}}
\]
where $G \sim G\left(n,\frac{c}{n}\right)$. 

Let us define the approximations $\tL$, $\tL_k$ and $\hL_k$ and their rescalings $\tX$, $\tX_k$ and $\widehat{X}_k$ as in \Cref{s:strong4core,s:neighbourhoodcounts}. Let $\sigma \coloneqq \lim_{k \to \infty} \sigma_k$ be as in \Cref{lem:variancelimit} and let $Z\sim \mathcal{N}(0, 1)$ be a standard normal random variable. Our aim is to show that $X$ converges in distribution to $\sigma Z \sim \mathcal{N}(0, \sigma^2)$, that is,
\begin{equation*}
\text{$\lim_{n\rightarrow \infty}\prob{X \leq x} = \prob{\sigma Z \leq x}$ for all $x \in \mathbb{R}$.}
\end{equation*}
To that end, let $\eps > 0$ and $x \in \mathbb{R}$. It suffices to show that for large enough $n$, 
    \begin{align} \label{eq:goal}
        \prob{\sigma Z \leq x} - \eps \leq \prob{X \leq x} \leq \prob{\sigma Z \leq x} + \eps.
    \end{align}
    Let $\delta \coloneqq \delta(c,\eps)$ be small enough. Let $k_0=k_0(c,\eps,\delta)$ be large enough and let $k \geq k_0(c, \eps, \delta)$. Let $n_0=n_0(c,\eps,\delta,k)$ be large enough and let $n \geq n_0(c,\eps,\delta,k)$.

 By \cref{lem:conv_by_stein}, 
 \[
 \frac{\hL_k - \expec{\hL_k}}{\left(\variance{\hL_k}\right)^{\frac{1}{2}}} \quad\indist\quad Z \quad \text{ as } n \rightarrow \infty.
 \]
 Since by \Cref{lem:variancelimit} \cref{item:limit_in_n}, $\lim_{n \rightarrow \infty} \left(\frac{\variance{\hL_k}}{n}\right)^{\frac{1}{2}} = \sigma_k$, we have by Slutsky's theorem (\cref{thm:Slutsky}), that
 \[
 \widehat{X}_k \quad\indist\quad \sigma_k Z \quad \text{ as } n \rightarrow \infty.
\] 
Moreover, since by \Cref{lem:variancelimit} \cref{item:limit_in_k}, $\lim_{k \rightarrow \infty} \sigma_k = \sigma > 0$, we have, again by Slutsky's theorem (\cref{thm:Slutsky}), that 
\[
\sigma_k Z \quad\indist\quad \sigma Z \quad \text{ as } k \rightarrow \infty.
\]

Together with the continuity of the function $x \mapsto \prob{\sigma Z \leq x}$, and our choice of $\delta, k_0$ and $n_0$, it follows that
    \begin{align*}
        \prob{\widehat{X}_k \leq x + 3 \delta} \leq \prob{\sigma_k Z \leq x + 3 \delta} + \frac{\eps}{4} 
        \leq \prob{\sigma Z \leq x + 3 \delta} + \frac{\eps}{3}
        \leq \prob{\sigma Z \leq x} + \frac{\eps}{2}.
    \end{align*}

    Finally, note that if $X \leq x$, then either $\widehat{X}_k \leq x + 3 \delta$, or $\max \{|\widehat{X}_k-X_k|,|X_k - \tX|,|\tX - X|\} > \delta$. Thus, together with \Cref{cor:tXvsX,cor:tXkvstX,cor:wideXvstXk}, it follows that
    \begin{align*}
        \prob{X \leq x} &\leq \prob{\widehat{X}_k \leq x + 3 \delta} + \prob{|\widehat{X}_k-X_k| > \delta} + \prob{|X_k - \tX| > \delta} + \prob{|\tX - X| > \delta} \\
        &\leq \prob{\sigma Z \leq x} + \eps.
    \end{align*}
    This shows the upper bound in \cref{eq:goal}. The lower bound is shown in an analogous way, completing the proof of \cref{eq:goal}.

    In particular, by \Cref{lem:variancelimit} \cref{item:limit_in_k} and Slutsky's theorem (\cref{thm:Slutsky}) we have
\[
    \frac{L(G) - \expec{L(G)}}{\sqrt{\variance{L(G)}}} \quad\indist\quad \mathcal{N}(0,1), \quad \text{ as } n \rightarrow \infty.
\]

\section{Notation and Preliminaries}\label{s:prelim}
In this section we introduce basic notation and some preliminary lemmas.

For $j\in \mathbb{N}$, we let $[j] \coloneqq \{1, \dots, j\}$. For a set $B$ and $j\in \mathbb{N}$, we let $\binom{B}{j} \coloneqq \{A \subseteq B \colon |A| = j\}$. We often abuse notation and write $xy$ for the $2$-element set $\{x, y\}$.

Let $G$ be a graph. We denote by $V(G)$ and $E(G) \subseteq \binom{V(G)}{2}$ the set of vertices and the set of edges of $G$, respectively. For $U \subseteq V(G)$ and $F \subseteq E(G)$, we let $G - U \coloneqq \left(V(G)\setminus U, E(G) \cap \binom{V(G)\setminus U}{2}\right)$ and $G - F \coloneqq (V(G), E(G) \setminus F)$. For disjoint $A, B \subseteq V(G)$, we let $E_G(A) \coloneqq E(G) \cap \binom{A}{2}$, $e_G(A) \coloneqq |E_G(A)|$, $G[A] \coloneqq (A, E_G(A))$, $E_G(A,B) \coloneqq E(G) \cap \{xy \colon x \in A, y \in B\}$, $e_G(A,B) \coloneqq |E_G(A,B)|$, and $G[A,B] \coloneqq (A \cup B, E_G(A,B))$. 

For $A \subseteq V(G)$, we let $N_G(A) \coloneqq \{x \in V(G) \setminus A \colon xy \in E(G) \text{ for some } y \in A\}$, which is called the {\em external neighbourhood} of $A$ in $G$. For $x \in V(G)$, we also write $N_G(x)$ for $N_G(\{x\})$. For $x, y \in V(G)$, we denote by $\dist_G(x,y)$ the \emph{graph-distance in $G$ from $x$ to $y$}, that is, the length of a shortest path from $x$ to $y$ (or $\infty$ if no such path exists). For $x \in V(G)$ and $A \subseteq V(G)$, we let $\dist_G(x,A) \coloneqq \min_{y \in A} \dist_G(x,y)$ (and for convenience, we let $\dist_G(x,\varnothing) \coloneqq \infty$).
For $x \in V(G)$ and $k \in \mathbb{N}$, we let $\partial_G(x,k) \coloneqq \{y \in V(G) \colon \dist_G(x,y) = k\}$ and $B_{G}(x,k) \coloneqq \{y \in V(G) \colon \dist_G(x,y) \leq k\}$. 

Given $e \in \binom{V(G)}{2}$, we define 
\[
    G+e \coloneqq (V(G),E(G)\cup \{e\}) \quad \text{ and } \quad G-e 
 \coloneqq (V(G),E(G)\setminus \{e\}).
\]

A \emph{rooted graph} is a pair $(H,r)$ where $H$ is a graph and $r \in V(H)$. Two rooted graphs $(H_1,r_1)$ and $(H_2,r_2)$ are \emph{isomorphic} (written $(H_1,r_1) \cong (H_2,r_2)$) if there exists a bijection $f \colon V(H_1) \rightarrow V(H_2)$ such that $f(r_1) = r_2$ and $f(u)f(v) \in E(H_2)$ if and only if $uv \in E(H_1)$ for all $u,v \in V(H_1)$.

We say that a sequence of events $\cE_n$ holds \emph{with high probability} (or whp for short) if $\prob{\cE_n} \rightarrow 1$ as $n \rightarrow \infty$.

We use standard asymptotic notations $o(\cdot),O(\cdot),\Theta(\cdot),\omega(\cdot),\Omega(\cdot)$ all of which are taken as $n \rightarrow \infty$ (unless otherwise specified) and the implied constants may depend on any parameter other than $n$, and in particular may depend on $c$ and $k$. We will also at times use probabilistic asymptotic notations, in particular, given a sequence of random variables $X(n)$ and a function $f : \mathbb{N} \to \mathbb{R}$, we write $X = o_{\bm{p}}(f)$ to mean that for all $\eps, \delta >0$ there exists $n_0 \in \mathbb{N}$ such that for all $n \geq n_0$
\[
\prob{\left|\frac{X(n)}{f(n)} \right| \geq \delta} \leq \eps
\]
and $X = O_{\bm{p}}(f)$ to mean that for any $\eps >0$ there exist $M>0$ and $n_0\in \mathbb{N}$ such that for all $n \geq n_0$
\[
\prob{\left|\frac{X(n)}{f(n)} \right| \geq M} \leq \eps.
\]

\subsection{Probabilistic Preliminaries}

In this subsection we collect some probabilistic tools that we will need.

We will use a probabilistic form of the Cauchy--Schwarz inequality, see for example \cite{M00}.
\begin{theorem}[Probabilistic Cauchy--Schwarz inequality] \label{thm:prob_Cauchy_Schwarz}
    For jointly distributed random variables $Y$ and $Z$ with $\expec{Y^2},\expec{Z^2},|\expec{Y Z}| < \infty$,
    \[
        |\mathrm{cov}(Y,Z)| \leq (\variance{Y}\variance{Z})^{\frac{1}{2}}.
    \]
    In particular,
    \begin{align}
    \left| \variance{Y} - \variance{Z}\right| \leq  \variance{Y-Z} + 2\left(\variance{Y-Z}\variance{Z}\right)^{\frac{1}{2}}.
    \end{align}
\end{theorem}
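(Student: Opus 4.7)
The plan is to prove both inequalities by reducing everything to a straightforward application of the deterministic Cauchy--Schwarz inequality for the $L^2$ inner product $(U,V) \mapsto \expec{UV}$, which is well-defined on the centred versions of $Y$ and $Z$ thanks to the finiteness hypotheses on the second moments.

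First, I would prove the covariance bound. Set $Y' \coloneqq Y - \expec{Y}$ and $Z' \coloneqq Z - \expec{Z}$, so that $\mathrm{cov}(Y,Z) = \expec{Y'Z'}$, $\variance{Y} = \expec{(Y')^2}$ and $\variance{Z} = \expec{(Z')^2}$. The quickest route is to consider the real-valued quadratic
\[
q(t) \coloneqq \expec{(tY' + Z')^2} = t^2 \expec{(Y')^2} + 2t\, \expec{Y'Z'} + \expec{(Z')^2},
\]
which is non-negative for every $t \in \mathbb{R}$. Hence its discriminant satisfies $4(\expec{Y'Z'})^2 - 4\expec{(Y')^2}\expec{(Z')^2} \leq 0$, which rearranges to the claimed bound $|\mathrm{cov}(Y,Z)| \leq (\variance{Y}\variance{Z})^{1/2}$.

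For the second inequality, I would write $Y = (Y-Z) + Z$ and expand the variance bilinearly:
\[
\variance{Y} = \variance{Y-Z} + 2\,\mathrm{cov}(Y-Z,Z) + \variance{Z}.
\]
Rearranging gives $\variance{Y} - \variance{Z} = \variance{Y-Z} + 2\,\mathrm{cov}(Y-Z,Z)$. Applying the triangle inequality together with the covariance bound proved above to the pair $(Y-Z, Z)$ yields
\[
\bigl|\variance{Y} - \variance{Z}\bigr| \leq \variance{Y-Z} + 2\bigl(\variance{Y-Z}\variance{Z}\bigr)^{1/2},
\]
as required.

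There is no real obstacle here: the argument is entirely standard, and the only mild technical point is checking that the finiteness assumptions $\expec{Y^2}, \expec{Z^2}, |\expec{YZ}| < \infty$ ensure that $Y'$, $Z'$, and $Y-Z$ all lie in $L^2$ so that the quadratic $q(t)$ and all covariance terms are well-defined and finite; this is immediate from $\expec{(Y-Z)^2} \leq 2\expec{Y^2} + 2\expec{Z^2}$ and the analogous bounds for the centred variables.
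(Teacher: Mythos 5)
Your argument is correct, and both parts are proved cleanly: the discriminant argument for the covariance bound and the bilinear expansion $\variance{Y} = \variance{Y-Z} + 2\,\mathrm{cov}(Y-Z,Z) + \variance{Z}$ followed by the triangle inequality for the "in particular" clause. Note, though, that the paper does not actually prove this statement — it simply cites a reference (the text reads "see for example \cite{M00}") and takes it as a known tool, so there is no internal proof to compare against; your write-up supplies the standard argument that the citation is standing in for, and the one technical detail you flag (that the finiteness hypotheses put $Y'$, $Z'$, $Y-Z$ in $L^2$, via $\expec{(Y-Z)^2} \leq 2\expec{Y^2} + 2\expec{Z^2}$) is exactly the right thing to check.
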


We will also need Slutsky's theorem, sometimes also known as Cram\'{e}r's theorem, which gives conditions under which the product of a pair of real random variables converges in distribution.

\begin{theorem}[Slutsky's theorem \cite{S26}] \label{thm:Slutsky}
    Let $(X_n)_{n \in \mathbb{N}}$ and $(Y_n)_{n \in \mathbb{N}}$ be sequences of real random variables. Let $X$ be a random variable and let $b \in \mathbb{R}$ be a constant. If $X_n\indist X$ and $Y_n \inprob b$, then $$X_n\cdot Y_n\quad\indist\quad bX,$$
   that is, if $X_n$ converges in distribution to $X$ and $Y_n$ converges in probability to $b$, then $X_nY_n$ converges in distribution to $bX$. 
\end{theorem}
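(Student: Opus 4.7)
The plan is to decompose $X_n Y_n = bX_n + X_n(Y_n - b)$ and handle the two terms separately. First, I would observe that $bX_n \indist bX$ as $n \to \infty$ by applying the continuous mapping theorem to the continuous function $x \mapsto bx$. Next, I would show that $X_n(Y_n - b) \inprob 0$: since $X_n \indist X$, the sequence $(X_n)_{n \in \mathbb{N}}$ is tight, so for every $\eta > 0$ there exist $M > 0$ and $n_0 \in \mathbb{N}$ such that $\prob{|X_n| > M} < \eta$ for all $n \geq n_0$. On the complementary event one has $|X_n(Y_n - b)| \leq M|Y_n - b|$, which converges to $0$ in probability by the hypothesis $Y_n \inprob b$; combining the two estimates yields $X_n(Y_n - b) \inprob 0$.

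With these two facts in hand, the final step is to invoke the additive form of Slutsky's theorem: if $U_n \indist U$ and $V_n \inprob 0$, then $U_n + V_n \indist U$. I would prove this directly by fixing a continuity point $t$ of the cumulative distribution function $F_U$ of $U$ and any $\eps > 0$. The inclusion $\{U_n + V_n \leq t\} \subseteq \{U_n \leq t+\eps\} \cup \{|V_n| > \eps\}$ gives $\limsup_{n \to \infty} \prob{U_n + V_n \leq t} \leq F_U(t + \eps)$, and the symmetric inclusion $\{U_n \leq t-\eps\} \subseteq \{U_n + V_n \leq t\} \cup \{|V_n| > \eps\}$ gives $\liminf_{n \to \infty} \prob{U_n + V_n \leq t} \geq F_U(t - \eps)$. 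Letting $\eps \to 0$ along continuity points of $F_U$, and using continuity of $F_U$ at $t$, forces $\prob{U_n + V_n \leq t} \to F_U(t)$. Applying this with $U_n \coloneqq bX_n$, $U \coloneqq bX$, and $V_n \coloneqq X_n(Y_n - b)$ then gives $X_n Y_n \indist bX$.

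The main technical subtlety will be the tightness appeal used to bound the product term, as this is the only place where the hypothesis $X_n \indist X$ (rather than something weaker) is really exploited. The case $b = 0$ is in fact easier: the first term vanishes identically, and the conclusion $X_n Y_n \indist 0$ is exactly the statement $X_n Y_n \inprob 0$, which follows from the tightness argument alone without any appeal to an additive Slutsky step.
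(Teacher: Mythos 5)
Your statement is \cref{thm:Slutsky}, which the paper cites from \cite{S26} as a black-box classical result and does not prove, so there is no paper proof to compare against. Your argument is the standard textbook proof of Slutsky's theorem and is correct: the decomposition $X_nY_n = bX_n + X_n(Y_n-b)$, the continuous mapping theorem for the first term, the tightness-of-$(X_n)$ argument for showing the product term is $o_{\bm p}(1)$, and the additive Slutsky lemma proved by the $\eps$-sandwiching of distribution functions are all sound. One small point worth tightening: when you say ``letting $\eps\to 0$ along continuity points of $F_U$'' it is $t+\eps$ and $t-\eps$ (not $\eps$ itself) that should be taken to be continuity points of $F_U$, which is possible since $F_U$ has only countably many discontinuities; alternatively one can avoid this entirely by invoking the portmanteau inequalities $\limsup_n\prob{U_n\le s}\le F_U(s)$ and $\liminf_n\prob{U_n< s}\ge F_U(s^-)$, which hold for every $s$, and then use right-continuity of $F_U$ and continuity at $t$.
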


We will also need some concentration results. The first is a bound for the `extreme' upper tail of a binomial distribution

\begin{theorem}[Extreme tail Chernoff bound {\cite[Corollary 2.4]{Janson2000}}] \label{thm:ex_tail_Chernoff}
    Let $Z \sim \mathrm{Bin}(n,p)$. Then for any $t \geq 7np$,
    \begin{align*}
        \prob{Z \geq t} \leq \exp(-t).
    \end{align*}
\end{theorem}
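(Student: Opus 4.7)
The plan is to derive this from the standard multiplicative Chernoff bound on the upper tail of a binomial: writing $\mu \coloneqq np$, for any $t > \mu$ one has
\[
\prob{Z \geq t} \leq \exp\left(-\mu \cdot \varphi\!\left(\frac{t-\mu}{\mu}\right)\right),
\]
where $\varphi(x) \coloneqq (1+x)\log(1+x) - x$. This is obtained in the usual way by applying Markov's inequality to $e^{\lambda Z}$, using the identity $\expec{e^{\lambda Z}} = (1 - p + pe^{\lambda})^n \leq \exp(\mu(e^{\lambda}-1))$, and then optimising over $\lambda > 0$ (the optimum is $\lambda = \log(t/\mu)$). Expanding gives $\mu\varphi((t-\mu)/\mu) = t\log(t/\mu) - (t - \mu)$, so the bound takes the clean form
\[
\prob{Z \geq t} \leq \exp\!\left(-t\log\frac{t}{\mu} + t - \mu\right).
\]

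Next I would plug in the hypothesis $t \geq 7\mu$. Since $\log(t/\mu) \geq \log 7$, the exponent is at most $-t\log 7 + t - \mu$, and it suffices to verify
\[
-t\log 7 + t - \mu \;\leq\; -t, \qquad \text{i.e.,} \qquad t(2 - \log 7) \;\leq\; \mu.
\]
Here the numerical check $\log 7 > 13/7$ (equivalently $7^7 > e^{13}$, which one can confirm from $\log 7 \approx 1.9459 > 1.8571 \approx 13/7$) gives $2 - \log 7 < 1/7$, and together with $t \geq 7\mu$ this yields $t(2 - \log 7) < t/7 \leq \mu$, as required.

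The only step requiring any care is the numerical inequality $\log 7 > 13/7$; this is what makes the hypothesis $t \geq 7\mu$ exactly sufficient for the conclusion $\prob{Z \geq t} \leq e^{-t}$. If one replaced $7$ by a slightly smaller constant the clean form would fail and a multiplicative slack in the exponent would have to be introduced. The rest is routine manipulation of the standard Chernoff machinery and does not require any structural input beyond the independence of the Bernoulli summands.
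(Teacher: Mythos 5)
The paper does not prove this theorem itself; it cites it as \cite[Corollary~2.4]{Janson2000}, so any proof you give is a priori a different route. Your strategy of deriving it from the standard entropy-form Chernoff bound is a perfectly reasonable one, and the algebraic setup (the rewriting $\mu\varphi((t-\mu)/\mu) = t\log(t/\mu) - (t-\mu)$) is correct.

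However, the final verification step contains a genuine error. You reduce the goal to $t(2-\log 7) \leq \mu$ and then write ``together with $t \geq 7\mu$ this yields $t(2-\log 7) < t/7 \leq \mu$.'' But $t \geq 7\mu$ gives $t/7 \geq \mu$, not $t/7 \leq \mu$ — the inequality is the wrong way round. And indeed the intermediate claim $t(2-\log 7) \leq \mu$ is simply false for large $t$ (take $t = 1000\mu$: the left side is about $54\mu$). The culprit is that replacing $\log(t/\mu)$ by its lower bound $\log 7$ throws away exactly the growth you need when $t$ is large relative to $\mu$. The fix is to keep $\log(t/\mu)$ in play: dividing the target exponent inequality by $t$, you need
\[
\log\frac{t}{\mu} \;\geq\; 2 - \frac{\mu}{t}
\qquad\text{for all } \frac{t}{\mu} \geq 7,
\]
that is, $\log s \geq 2 - 1/s$ for all $s \geq 7$. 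At $s=7$ this is the numerical fact $\log 7 \geq 13/7$ that you already checked; and since $\frac{d}{ds}\log s = 1/s \geq 1/s^2 = \frac{d}{ds}(2-1/s)$ for $s \geq 1$, the gap only widens as $s$ grows. With that monotonicity argument in place of the last sentence, the proof is correct.
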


The second is a version of McDiarmid's inequality for biased distributions. 
\begin{lemma}[{\cite[Corollary 1.4]{Warnke2016}}] \label{Biased_McDiarmid_ineq} 
    Let $p \in (0,1)$, $n \in \mathbb{N}$, $M > 0$. 
    Let $X_1, \dots, X_n$ be independent $\mathrm{Bernoulli}(p)$ random variables. Let $f \colon \{0,1\}^n \rightarrow \mathbb{R}$ be a function such that for all $i \in [n]$ and $x_1, \dots, x_n \in \{0,1\}^n$, 
    \begin{align*}
        \max_{x_i' \in \{0,1\}} \left|f(x_1, \dots,x_{i-1}, x_i, x_{i+1}, \dots, x_n) - f(x_1 \dots, x_{i-1}, x_i', x_{i+1}, \dots, x_n) \right|  \leq M.
    \end{align*}
    Then, for all $\eps >0$,
    \begin{align*}
        \prob{\left|f(X_1, \dots, X_n) - \expec{f(X_1, \dots, X_n)}\right|>\eps} \leq 2\exp\left(-\frac{\eps^2}{2np(2-p)M^2+\frac{2M\eps}{3}}\right).
    \end{align*}
\end{lemma}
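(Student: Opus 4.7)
The plan is to prove this via a Doob martingale argument followed by a Bernstein/Freedman-type concentration inequality, in order to exploit the biasedness of the $X_i$ through their variance rather than just through their range. Write $Z \coloneqq f(X_1,\dots,X_n)$, let $\mathcal{F}_i \coloneqq \sigma(X_1,\dots,X_i)$ (with $\mathcal{F}_0$ trivial), and consider the Doob martingale $M_i \coloneqq \expec{Z \mid \mathcal{F}_i}$, so that $M_0 = \expec{Z}$ and $M_n = Z$.

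The first step is to analyse the martingale increments $D_i \coloneqq M_i - M_{i-1}$. Using the independence of the coordinates, a direct computation gives
\[
D_i = (X_i - p)\,\Delta_i, \qquad \text{where } \Delta_i \coloneqq \expec{Z \mid \mathcal{F}_{i-1}, X_i = 1} - \expec{Z \mid \mathcal{F}_{i-1}, X_i = 0}
\]
is $\mathcal{F}_{i-1}$-measurable. The bounded-differences hypothesis implies $|\Delta_i| \leq M$ almost surely, since $\Delta_i$ is an average of coordinate-swap differences over the future coordinates. Combined with $|X_i - p| \leq 1$, this yields both the pointwise bound $|D_i| \leq M$ and the conditional second-moment bound $\expec{D_i^2 \mid \mathcal{F}_{i-1}} = p(1-p)\Delta_i^2 \leq p(2-p)M^2$, so the predictable quadratic variation satisfies $V_n \coloneqq \sum_{i=1}^n \expec{D_i^2 \mid \mathcal{F}_{i-1}} \leq np(2-p)M^2$ almost surely.

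The second step is to apply Freedman's inequality, which asserts that for a martingale with increments bounded by $M$ in absolute value and predictable quadratic variation at most $V$ almost surely,
\[
\prob{|M_n - M_0| > \eps} \leq 2\exp\!\left(-\frac{\eps^2}{2V + \tfrac{2}{3}M\eps}\right).
\]
Specialising to $V = np(2-p)M^2$ yields exactly the claimed bound; the two-sided tail is obtained from Freedman's (one-sided) inequality by applying it both to $f$ and to $-f$ and taking a union bound, which is absorbed into the leading factor of $2$.

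The main obstacle here is not the martingale decomposition or the variance calculation, which are routine given the independence of the $X_i$ and the bounded-differences hypothesis, but rather the appeal to a Bernstein/Freedman-type martingale bound instead of the classical Azuma--Hoeffding inequality; the latter would discard the variance information and yield only the substantially weaker denominator $\sim nM^2$, losing the crucial biasedness factor $p(2-p)$ that makes this inequality useful in the sparse regime where $p = c/n$.
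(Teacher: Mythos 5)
Your proof is correct. The paper does not supply a proof of this lemma at all --- it is quoted directly as Corollary~1.4 of Warnke's paper on typical bounded differences --- so there is nothing in the paper to compare against; you are providing a self-contained derivation where the authors simply cite.

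Your route is a clean and standard one, and I would characterise it as \emph{more elementary} than what Warnke actually does. Warnke's Corollary~1.4 is deduced from a considerably more general ``typical bounded differences'' machinery (his Theorem~1.2), which is built to handle the situation where the Lipschitz bound $M$ only holds on a likely event and one must pay a small failure probability; in the degenerate case where the Lipschitz bound is global that framework collapses to essentially the Doob-martingale/Bernstein argument you give. You go directly: write $D_i = (X_i - p)\Delta_i$ with $\Delta_i$ the $\mathcal{F}_{i-1}$-measurable one-step influence, use independence to get $\expec{D_i^2 \mid \mathcal{F}_{i-1}} = p(1-p)\Delta_i^2 \leq p(1-p)M^2$, and feed the resulting a.s.\ bound on the predictable quadratic variation into Freedman's inequality in its Bernstein form. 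All the steps check out: the identity $D_i = (X_i-p)\Delta_i$ follows by conditioning on $X_i \in \{0,1\}$, $|\Delta_i| \leq M$ because $\Delta_i$ is an average of single-coordinate flips, and the two-sided bound comes from applying the one-sided Freedman bound to $f$ and $-f$. You correctly identify that the gain over Azuma--Hoeffding is entirely in the conditional variance step. One small remark: your argument in fact yields the strictly better variance proxy $np(1-p)M^2$; you then relax it to $np(2-p)M^2$ so as to match the stated constant. That relaxation is of course harmless, but it is worth being aware that Warnke's slightly lossier $p(2-p)$ factor is an artefact of his more general setup rather than something intrinsic.
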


Finally, we will make use of the Efron--Stein inequality to bound several variances.
\begin{theorem}[Efron--Stein inequality \cite{ES81,S86}]\label{thm:EfronStein} 
    Suppose that $X_1,\dots,X_m,X_1',\dots,X_m'$ are independent random variables where $X_i,X_i'$ have the same distribution for all $i \in [m]$. Let $\textbf{X}=(X_1,\dots,X_{i-1},X_i,X_{i+1},\allowbreak\dots,X_m)$ 
    and $\textbf{X}^{(i)}=(X_{1},\dots ,X_{i-1},X_{i}',X_{i+1},\dots ,X_{m})$. Then for any measurable function $g \colon \mathbb{R}^m \rightarrow \mathbb{R}$,
    $$ \variance{g(\textbf{X})} \leq \frac{1}{2} \sum_{i=1}^{m} \mathbb E[(g(\textbf{X})-g(\textbf{X}^{(i)}))^2].$$
\end{theorem}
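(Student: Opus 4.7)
The plan is to derive the Efron--Stein inequality in two steps: first establish a tensorization-of-variance inequality, then convert it into the resampling form using the elementary identity that for independent identically distributed $Y_1, Y_2$ one has $\mathbb{E}[(Y_1-Y_2)^2] = 2\variance{Y_1}$.

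For the first step, writing $X_{-i} \coloneqq (X_1,\dots,X_{i-1},X_{i+1},\dots,X_m)$, the target is the inequality
\[
\variance{g(\mathbf{X})} \leq \sum_{i=1}^m \mathbb{E}\bigl[\variance{g(\mathbf{X}) \mid X_{-i}}\bigr].
\]
I would set up the Doob martingale $M_i \coloneqq \mathbb{E}[g(\mathbf{X}) \mid X_1,\dots,X_i]$ for $0 \leq i \leq m$, so that $M_0 = \mathbb{E}[g(\mathbf{X})]$ and $M_m = g(\mathbf{X})$; by orthogonality in $L^2$ of the martingale differences $D_i \coloneqq M_i - M_{i-1}$,
\[
\variance{g(\mathbf{X})} \;=\; \sum_{i=1}^m \mathbb{E}[D_i^2] \;=\; \sum_{i=1}^m \mathbb{E}\bigl[\variance{M_i \mid X_1,\dots,X_{i-1}}\bigr].
\]
By independence of the coordinates, $M_i$ is obtained from $g$ by averaging out the tail variables $X_{i+1},\dots,X_m$. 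The crucial inequality, which follows from two applications of Jensen's inequality, is that for any independent random variables $Y$ and $Z$ and any measurable $h$,
\[
\variance{\mathbb{E}[h(Y,Z) \mid Y]} \leq \mathbb{E}\bigl[\variance{h(Y,Z) \mid Z}\bigr].
\]
Applied conditionally on $X_1,\dots,X_{i-1}$ with $Y = X_i$ and $Z = (X_{i+1},\dots,X_m)$, this yields $\variance{M_i \mid X_1,\dots,X_{i-1}} \leq \mathbb{E}\bigl[\variance{g(\mathbf{X}) \mid X_{-i}} \,\bigm|\, X_1,\dots,X_{i-1}\bigr]$; taking expectations and summing over $i$ gives the tensorization inequality.

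For the second step, since $X_i$ and $X_i'$ are iid conditional on $X_{-i}$, so are $g(\mathbf{X})$ and $g(\mathbf{X}^{(i)})$, and hence the elementary iid identity gives
\[
\mathbb{E}\bigl[(g(\mathbf{X}) - g(\mathbf{X}^{(i)}))^2 \,\bigm|\, X_{-i}\bigr] = 2\,\variance{g(\mathbf{X}) \mid X_{-i}}.
\]
Taking expectations, summing over $i$, and combining with the tensorization bound produces the Efron--Stein inequality with the factor $\tfrac{1}{2}$.

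The main obstacle is the Jensen-type comparison inside the tensorization step, which is what converts the Doob martingale increments (governed by conditional information on a prefix of the coordinates) into quantities involving only a single resampled coordinate; the remainder of the argument is essentially martingale orthogonality together with the elementary iid identity for squared differences.
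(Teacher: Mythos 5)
Your argument is correct and self-contained. Note, however, that the paper does not actually prove \cref{thm:EfronStein}; it invokes it as a classical result citing Efron--Stein and Steele. So there is no internal proof to compare against, but your route is the standard one found in, e.g., Boucheron--Lugosi--Massart: establish tensorization of variance via the Doob martingale $M_i=\mathbb{E}[g(\mathbf X)\mid X_1,\dots,X_i]$ and orthogonality of its increments, use a conditional Jensen step to pass from $\variance{M_i\mid X_1,\dots,X_{i-1}}$ to $\mathbb{E}\bigl[\variance{g(\mathbf X)\mid X_{-i}}\mid X_1,\dots,X_{i-1}\bigr]$, and then rewrite the conditional variance via the identity $\mathbb{E}[(Y_1-Y_2)^2]=2\variance{Y_1}$ for i.i.d.\ $Y_1,Y_2$ applied conditionally on $X_{-i}$. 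The one step worth spelling out is the Jensen inequality you invoke: writing $h(y,z)\coloneqq g(x_1,\dots,x_{i-1},y,z)$ (with the prefix frozen), centring so $\mathbb{E}[h]=0$, and setting $\bar h(y,z)\coloneqq h(y,z)-\mathbb{E}_Y[h(Y,z)]$, one has $\mathbb{E}[h\mid Y]=\mathbb{E}_Z[\bar h(Y,Z)]$ by independence, whence
\[
\variance{\mathbb{E}[h\mid Y]}=\mathbb{E}_Y\bigl[(\mathbb{E}_Z[\bar h(Y,Z)])^2\bigr]\le \mathbb{E}_{Y,Z}[\bar h(Y,Z)^2]=\mathbb{E}\bigl[\variance{h\mid Z}\bigr],
\]
which is a single application of Jensen (not two, as you say, though the extra application is harmless); everything else in your write-up is correct and the constants come out as stated.
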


In particular, we will apply the Efron--Stein inequality to graph theoretical functions which can be expressed as a sum of `local' quantities, where the expected effect of `resampling' an edge can be controlled. This is expressed in the next lemma which will be essential for the proofs of \cref{lem:var_X_X_k_diff_bound,lem:Var_X_k_upper_bound,lem:var_L_k_hL_k_diff_bound}, the last of which provides an upper bound on $\variance{\tX_k}$.

\begin{lemma}\label{lem:ESapp}
Let $G \sim G(n,p)$ be a random graph, let $(\psi_G(v) \colon v \in V(G))$ be a collection of random variables and let $Z \coloneqq \sum_{v \in V(G)} \psi_G(v)$. Let $f \in \binom{V(G)}{2}$ be arbitrary and let $G^+ = G+f$ and $G^-=G-f$. Suppose that $K>0$ is such that $|\psi_G(v)| \leq K$ for each $v \in V(G)$ and let 
\[
D \coloneqq \{ v \in V(G) \colon \psi_{G^+}(v) \neq \psi_{G^-}(v)\}.
\]
Then
\[
\variance{Z} \leq 2K^2 p(1-p) n^2 \expec{|D|^2}.
\]
\end{lemma}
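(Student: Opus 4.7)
The plan is to apply the Efron--Stein inequality (\cref{thm:EfronStein}), viewing $G \sim G(n,p)$ as the vector $(X_e)_{e \in \binom{V(G)}{2}}$ of independent Bernoulli$(p)$ edge indicators, so that $Z$ becomes a function of these indicators. Writing $X_e'$ for an independent resampled copy of $X_e$ and $Z^{(e)}$ for the corresponding value of $Z$, \cref{thm:EfronStein} yields
\[
\variance{Z} \leq \tfrac{1}{2} \sum_{e \in \binom{V(G)}{2}} \expec{(Z - Z^{(e)})^2},
\]
so the task reduces to bounding each summand $\expec{(Z - Z^{(e)})^2}$.

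For a fixed $e$, I would condition on $(X_h)_{h \neq e}$, under which $G+e$ and $G-e$ become deterministic. Then $Z = Z^{(e)}$ unless $X_e \neq X_e'$, an event of probability $2p(1-p)$ under which the unordered pair $\{Z, Z^{(e)}\}$ equals $\{Z(G+e), Z(G-e)\}$. Writing $G^+_e$ (resp.\ $G^-_e$) for the random graph in which $e$ is forced to be present (resp.\ absent) while the remaining edges are sampled as in $G(n,p)$, taking expectations first over $(X_e, X_e')$ gives
\[
\expec{(Z - Z^{(e)})^2} = 2p(1-p)\, \expec{(Z(G^+_e) - Z(G^-_e))^2}.
\]
By the edge-symmetry of $G(n,p)$, the pair $(G^+_e, G^-_e)$ has the same distribution as the pair $(G^+, G^-)$ in the statement for every $e$, so $|D_e| \coloneqq |\{v \colon \psi_{G^+_e}(v) \neq \psi_{G^-_e}(v)\}|$ is distributed as $|D|$.

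The key combinatorial step is to localise this difference to the vertices of $D_e$: expanding
\[
Z(G^+_e) - Z(G^-_e) = \sum_{v \in V(G)} \brac{\psi_{G^+_e}(v) - \psi_{G^-_e}(v)},
\]
each summand vanishes unless $v \in D_e$, and each surviving term has absolute value at most $2K$ by the uniform bound $|\psi_G(v)| \leq K$. The triangle inequality then gives $|Z(G^+_e) - Z(G^-_e)| \leq 2K|D_e|$ and hence $(Z(G^+_e) - Z(G^-_e))^2 \leq 4K^2|D_e|^2$.

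Summing over all $\binom{n}{2} \leq n^2/2$ potential edges yields
\[
\variance{Z} \leq \tfrac{1}{2} \cdot \tfrac{n^2}{2} \cdot 2p(1-p) \cdot 4K^2 \expec{|D|^2} = 2K^2 p(1-p) n^2 \expec{|D|^2},
\]
which is the claimed bound. The argument is essentially mechanical once the right encoding (one indicator per potential edge) and the right reduction (localising the effect of resampling to $D$) are in place; there is no serious obstacle here beyond these two bookkeeping steps, and the lemma is really a packaging tool for later applications where the genuine work lies in bounding $\expec{|D|^2}$ for specific choices of $\psi_G$.
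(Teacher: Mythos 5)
Your proof is correct and follows essentially the same route as the paper's: apply Efron--Stein to the Bernoulli edge indicators, note that the summand vanishes unless the resampled bit flips (an event of probability $2p(1-p)$), use edge-symmetry to reduce to a single pair $(G^+,G^-)$, and bound $|Z(G^+)-Z(G^-)|$ by $2K|D|$ via the triangle inequality. The only cosmetic difference is that you condition on the resampled bit before invoking symmetry, whereas the paper invokes symmetry first and then splits on $Y_f = Y_f'$ versus $Y_f \neq Y_f'$.
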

\begin{proof}
For an edge $e\in \binom{V(G)}{2}$, let $Y_e \coloneqq \ind{e \in E(G)}$. Furthermore, let $\textbf{Y}\coloneqq \left(Y_e\right)_{e\in\binom{V(G)}{2}}$ and $\textbf{Y}^{(e)}$ be the vector obtained by replacing $Y_e$ in $\textbf{Y}$ with a random variable $Y_e'$ which has the same distribution as $Y_e$ and is independent of $Y_{e'}$ for all $e' \in \binom{V(G)}{2}$. The random graph $G$ is fully determined by the random vector $\textbf{Y}$ and therefore, we can define $g(\textbf{Y})\coloneqq Z(G)$.

    Using the Efron--Stein inequality (\Cref{thm:EfronStein}) we get
\begin{align}\label{eq:Varsum}
\variance{g(\textbf{Y})} \leq
    \frac{1}{2}\sum_{e\in \binom{V(G)}{2}}\expec{\left(g(\textbf{Y})-g\left(\textbf{Y}^{(e)}\right)\right)^2}.
\end{align}
By symmetry, the value of $\expec{\left(g(\textbf{Y})-g\left(\textbf{Y}^{(e)}\right)\right)^2}$ is independent of the edge $e$. Let $f = \{u,v\}\in \binom{V(G)}{2}$ be a fixed edge as in the statement of the theorem, then, due to \eqref{eq:Varsum} and the law of total expectation, we have 
\begin{align}\label{eq:Varfixedfbound}
\variance{g(\textbf{Y})} 
&\leq
    \frac{n^2}{4} \expec{\left(g(\textbf{Y})-g\left(\textbf{Y}^{(f)}\right)\right)^2} \nonumber\\
    &= \frac{n^2}{4} \prob{Y_f=Y_f'}\condexpec{\left(g(\textbf{Y})-g\left(\textbf{Y}^{(f)}\right)\right)^2}{Y_f=Y_f'} \nonumber
   \\&\quad+\frac{n^2}{4}\prob{Y_f\neq Y_f'}\condexpec{\left(g(\textbf{Y})-g\left(\textbf{Y}^{(f)}\right)\right)^2}{Y_f\neq Y_f'} \nonumber
   \\
   &= \frac{n^2}{2} p(1-p) \condexpec{\left(g(\textbf{Y})-g\left(\textbf{Y}^{(f)}\right)\right)^2}{Y_f\neq Y_f'},
\end{align}
where we used in the last equality that if $Y_f=Y_f'$, then $\textbf{Y}=\textbf{Y}^{(f)}$ and therefore, 
\[
    \condexpec{\left(g(\textbf{Y})-g\left(\textbf{Y}^{(f)}\right)\right)^2}{Y_f=Y_f'}=0,
\]
and that $\prob{Y_f\neq Y_f'}=2p(1-p)$.

We note that, under the assumption $Y_f\neq Y_f'$, exactly one of the two vectors $\textbf{Y}$ and $\textbf{Y}^{(f)}$ \lq corresponds\rq\ to $G^+$ and the other to $G^-$. 
Hence, by the definition of $g$ we have 
\begin{align}
    \condexpec{\left(g(\textbf{Y})-g\left(\textbf{Y}^{(f)}\right)\right)^2}{Y_f\neq Y_f'}
    &=\expec{\left(Z(G^+)-Z(G^-)\right)^2} \nonumber
    \\
    &=\expec{\left(\sum_{w\in V(G)} \psi_{G^+}(w)-\psi_{G^-}(w) \right)^2}  \nonumber
    \\
    &\leq
    4 K^2 \cdot\expec{\left|\left\{w : \psi_{G^+}(w)\neq \psi_{G^-}(w\right\}\right|^2}  \nonumber
    \\
    &= 4 K^2 \expec{|D|^2}. \label{eq:Varfinalbound}
\end{align}
The result then follows from \eqref{eq:Varfixedfbound} and \eqref{eq:Varfinalbound}.
\end{proof}

\subsection{Stein's Method}\label{s:Stein} 

We will use the following version of Stein's method, developed by Barbour, Karo\'nski, and Ruci\'nski \cite{Barbour1989} (see \cref{thm:Stein}), to deduce \cref{lem:normality}, stated shortly. Barbour, Karo\'nski, and Ruci\'nski applied \cref{thm:Stein} in \cite{Barbour1989} to prove a cental limit theorem for the number of tree components of size $k\in \mathbb{N}$ in $G(n,p)$, in the regime where the expactation of this number tends to infinity. In comparison,  \cref{lem:normality} implies a central limit theorem for weighted sums of neighbourhood counts in the sparse regime. \cref{lem:normality} is used in the derivation of \cref{lem:conv_by_stein,lem:normality_truncatedneighbourhoods}.

Let $V \coloneqq [n]$ and $\cF \coloneqq \binom{V}{2}$. We think of $G(n,p)$ as a probability space $\mathscr{P}$ on $2^\cF$. For $U \subseteq V$, let $\cF(U) \coloneqq \{e \in \cF \colon e \cap U \neq \varnothing\}$. We say that for $U \subseteq V$, a random variable $Y\colon 2^\cF \rightarrow \mathbb{R}$ on $\mathscr{P}$ is \emph{semi-induced with respect to $U$} if for any $G_1, G_2 \in 2^\cF$ with $G_1 \cap \cF(U) = G_2 \cap \cF(U)$ we have $Y(G_1) = Y(G_2)$. 
Let $\cU \subseteq 2^V$ and for each $U \in \cU$ let $Y_U$ be a random variable on $\mathscr{P}$ that is semi-induced with respect to $U$. For $U \in \cU$ and $U' \subseteq V$ with $U' \cap U = \varnothing$, we let $Y_U^{U'}$ be the random variable on $\mathscr{P}$ defined by $Y_U^{U'}(G) \coloneqq Y_U(G \setminus \cF(U'))$ for all $G \in 2^\cF$. Let $Y \coloneqq \sum_{U \in \cU} Y_U$, $\sigma_Y \coloneqq (\variance{Y})^{\frac{1}{2}}$, and for every $U \in \cU$, $\mu_U \coloneqq \expec{Y_U}$ and $X_U \coloneqq \sigma_Y^{-1}(Y_U - \mu_U)$. For $U_1 \in \cU$, let $Z_{U_1} \coloneqq \sum_{U_2 \in \cU}Z_{U_1U_2}$, where 
\[
    Z_{U_1U_2} ~\coloneqq~ \begin{cases}
        \sigma_Y^{-1} Y_{U_2}, \text{ if } U_1 \cap U_2 \neq \varnothing \\
        \sigma_Y^{-1} (Y_{U_2} - Y_{U_2}^{U_1}), \text{ if } U_1 \cap U_2 = \varnothing.
    \end{cases}
\]
For $U_1, U_2 \in \cU$, let 
\[
    V_{U_1U_2} ~\coloneqq~ \sigma_Y^{-1}\sum_{\substack{U_3 \in \cU \\ U_3 \cap U_2 \neq \varnothing \\ U_3\cap U_1 = \varnothing}} Y_{U_3}^{U_1} + \sigma_Y^{-1} \sum_{\substack{U_3 \in \cU \\ U_3 \cap U_2 = \varnothing \\ U_3\cap U_1 = \varnothing}} (Y_{U_3}^{U_1} - Y_{U_3}^{U_1 \cup U_2}).
\]

Let 
\begin{align} \label{eq:delta}
    \delta \coloneqq \sum_{U \in \cU} \expec{|X_U| Z_U^2} + \sum_{U_1, U_2 \in \cU} \left(\expec{|X_{U_1} Z_{U_1U_2}V_{U_1U_2}|} + \expec{|X_{U_1}Z_{U_1U_2}|} \expec{|Z_{U_1} + V_{U_1U_2}|}\right).
\end{align}

\begin{theorem}[Barbour, Karo\'nski, Ruci\'nski \cite{Barbour1989}] \label{thm:Stein}
    In the above setting, if $\delta \rightarrow 0$ as $n \rightarrow \infty$, then 
    \[
        \frac{Y - \expec{Y}}{\sigma_Y} \quad\indist\quad \mathcal{N}(0,1) \quad \text{ as } n \rightarrow \infty,
    \]
    that is, $\sigma_Y^{-1}(Y - \expec{Y})$ converges in distribution to a standard normal random variable as $n \rightarrow \infty$. 
\end{theorem}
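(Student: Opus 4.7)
The plan is to apply the standard Stein's method framework for normal approximation. By Stein's characterisation, a random variable has distribution $\mathcal{N}(0,1)$ if and only if $\expec{h'(W) - Wh(W)} = 0$ for every sufficiently smooth bounded $h$. Writing $W \coloneqq \sigma_Y^{-1}(Y - \expec{Y})$ and letting $Z\sim\mathcal{N}(0,1)$, it is enough to show that for every thrice continuously differentiable test function $f$ with bounded derivatives, $\expec{f(W)} - \expec{f(Z)} \to 0$. For such $f$, the bounded solution $h=h_f$ to the Stein equation $h'(x) - xh(x) = f(x) - \expec{f(Z)}$ satisfies $\|h'\|_\infty, \|h''\|_\infty = O(\|f\|_\infty + \|f'\|_\infty + \|f''\|_\infty)$, so the problem reduces to bounding $\bigl|\expec{h'(W) - Wh(W)}\bigr|$.

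First I would expand $\expec{Wh(W)} = \sum_{U_1 \in \cU} \expec{X_{U_1} h(W)}$ using $W = \sum_{U_1} X_{U_1}$, and for each $U_1$ introduce the stripped quantity $W^{(U_1)} \coloneqq W - Z_{U_1}$. The definitions of $Z_{U_1U_2}$ are engineered so that $Z_{U_1}$ is exactly \emph{the part of $W$ depending on edges incident to $U_1$}; unwinding gives $W^{(U_1)} = \sigma_Y^{-1}\sum_{U_2 : U_2 \cap U_1 = \varnothing} Y_{U_2}^{U_1} - \text{const}$, which depends only on edges avoiding $U_1$. By the semi-induced property, $W^{(U_1)}$ is therefore independent of $X_{U_1}$. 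A first-order Taylor expansion gives
\[
h(W) = h\bigl(W^{(U_1)}\bigr) + Z_{U_1}\,h'\bigl(W^{(U_1)}\bigr) + R_{U_1}, \qquad |R_{U_1}| \leq \tfrac{1}{2}\|h''\|_\infty Z_{U_1}^2.
\]
The constant term vanishes after taking expectations since $\expec{X_{U_1}} = 0$ and $X_{U_1} \perp h(W^{(U_1)})$, while the remainder is absorbed into the first family $\sum_{U_1}\expec{|X_{U_1}|Z_{U_1}^2}$ appearing in $\delta$.

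The harder step is to match the linear piece $\sum_{U_1} \expec{X_{U_1} Z_{U_1}\, h'(W^{(U_1)})}$ with $\expec{h'(W)}$. The key observation is
\[
\sum_{U_1} \expec{X_{U_1} Z_{U_1}} = \sum_{U_1} \mathrm{cov}(X_{U_1},W) = \variance{W} = 1,
\]
where the first equality uses the independence of $W^{(U_1)}$ and $X_{U_1}$. Thus it suffices to control $\sum_{U_1} \expec{X_{U_1} Z_{U_1}(h'(W^{(U_1)}) - h'(W))}$, and for this I would further decompose $Z_{U_1} = \sum_{U_2} Z_{U_1U_2}$ and, for each pair $(U_1,U_2)$, introduce a second auxiliary variable $W^{(U_1,U_2)} \coloneqq W^{(U_1)} - V_{U_1U_2}$. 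The quantity $V_{U_1U_2}$ is designed precisely so that $W^{(U_1,U_2)}$ depends only on edges avoiding both $U_1$ and $U_2$; indeed, an analogous calculation to the one for $W^{(U_1)}$ gives $W^{(U_1,U_2)} = \sigma_Y^{-1}\sum_{U_3 \cap (U_1\cup U_2) = \varnothing} Y_{U_3}^{U_1\cup U_2} - \text{const}$. A Taylor expansion of $h'$ around $W^{(U_1,U_2)}$, followed by splitting expectations using the resulting independence, produces two types of remainder matching exactly the two remaining families in $\delta$: the Taylor remainder contributes a term bounded by $\expec{|X_{U_1} Z_{U_1U_2} V_{U_1U_2}|}$, and the difference between the conditional and unconditional expectations of $h'$ contributes $\expec{|X_{U_1} Z_{U_1U_2}|}\,\expec{|Z_{U_1} + V_{U_1U_2}|}$.

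The main obstacle, in my view, is the careful combinatorial bookkeeping required to justify these decompositions; in particular, one must handle the cases $U_i \cap U_j = \varnothing$ and $U_i \cap U_j \neq \varnothing$ separately (including for the third index $U_3$ hidden inside $V_{U_1U_2}$), corresponding to whether a given $Y_U$ is already independent of the desired edges or must be further stripped to $Y_U^{U'}$. These case splits are exactly the source of the piecewise definitions of $Z_{U_1U_2}$ and $V_{U_1U_2}$ in the statement. Once the decompositions are set up correctly, the remaining work is routine triangle inequality together with the Stein bounds on $\|h'\|_\infty$ and $\|h''\|_\infty$, and assembling the pieces yields $\bigl|\expec{h'(W) - Wh(W)}\bigr| = O(\delta)$, which tends to $0$ by hypothesis, completing the proof.
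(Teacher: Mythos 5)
The paper states this as a citation from Barbour, Karo\'nski and Ruci\'nski~\cite{Barbour1989} and gives no proof of its own, so there is no internal argument to compare against; instead your sketch should be judged as a reconstruction of the original argument, and on that score it is correct and faithful. Your identification of the auxiliary variables is exactly right: $W^{(U_1)} = W - Z_{U_1} = \sigma_Y^{-1}\sum_{U_2\cap U_1=\varnothing}Y_{U_2}^{U_1} - \text{const}$ depends only on edges in $\cF\setminus\cF(U_1)$, hence is independent of $X_{U_1}$; similarly $W^{(U_1,U_2)} = W^{(U_1)} - V_{U_1U_2}$ depends only on edges avoiding $U_1\cup U_2$. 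The telescoping identity $\sum_{U_1}\expec{X_{U_1}Z_{U_1}} = \sum_{U_1}\expec{X_{U_1}W} = \variance{W} = 1$ (using $\expec{X_{U_1}}=0$ and the independence just noted) is the crux and you have it. Your matching of the three families in $\delta$ to (a) the second-order Taylor remainder for $h$ around $W^{(U_1)}$, (b) the first-order Taylor remainder for $h'$ around $W^{(U_1,U_2)}$, and (c) the error in replacing $\expec{h'(W^{(U_1,U_2)})}$ by $\expec{h'(W)}$ after factoring the expectation is precisely how the bound assembles. The one point you flag as \lq\lq bookkeeping\rq\rq\ that genuinely needs care is the independence of $X_{U_1}Z_{U_1U_2}$ from $W^{(U_1,U_2)}$: it holds because in both the intersecting and non-intersecting cases $Z_{U_1U_2}$ is a function only of edges in $\cF(U_2)$ (for $U_1\cap U_2=\varnothing$, $Y_{U_2}-Y_{U_2}^{U_1}$ still only sees $\cF(U_2)$), so $X_{U_1}Z_{U_1U_2}$ sees only $\cF(U_1\cup U_2)$, disjoint from the edges $W^{(U_1,U_2)}$ sees. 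With that filled in the outline is complete.
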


The following lemma gives a central limit for a broad class of random variables in $G(n,p)$ with $p = \Theta\left(\frac{1}{n}\right)$. Since the proof consists of relatively standard combinatorial estimates to bound the quantity $\delta$ as in~\cref{eq:delta}, we defer the details to \cref{a:normality}.

\begin{lemma} \label{lem:normality}
    Suppose that we are in the above setting and for $U \in \cU$, let $T_U \coloneqq K\mathds{1}_{\{G[U] \text{ is connected}\}}$ where $K>0$ is a constant. Let $t > 0$ be such that $|U| \leq t$ for all $U \in \cU$
    and suppose that for all $U \in \cU$, $G \in 2^{\cF}$, and $e \in \cF$ with $|e \cap U| = 1$, we have
    \begin{align} \label{eq:Y_U_ineq}
        0 \leq Y_U(G) \leq Y_U(G \setminus \{e\}) \leq T_U(G).
    \end{align}
Suppose $p = \Theta\left(\frac{1}{n}\right)$ and $\sigma_Y = \omega(n^{\frac{1}{3}})$. Then
\[
    \frac{Y - \expec{Y}}{\sigma_Y} \quad\indist\quad \mathcal{N}(0,1) \quad \text{ as } n \rightarrow \infty,
\]    
that is, $\sigma_Y^{-1}(Y - \expec{Y})$ converges in distribution to a standard normal random variable as $n \rightarrow \infty$.
\end{lemma}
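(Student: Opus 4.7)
The plan is to apply \cref{thm:Stein} and verify that the quantity $\delta$ defined in \eqref{eq:delta} tends to $0$ as $n \to \infty$. Since every $Y_U$ satisfies $Y_U \le T_U = K \mathds{1}_{\{G[U] \text{ connected}\}}$ and $|U| \le t$ is bounded, every nonvanishing contribution to $\delta$ is supported on events of the form ``$G[U]$ is connected'' and ``there is an edge of $G$ joining two specified disjoint sets''. Both kinds of events are well understood in the sparse regime: for each $U$ with $|U| = s \le t$, Cayley's formula and a union bound over spanning trees give $\prob{G[U] \text{ connected}} \le s^{s-2} p^{s-1} = O(n^{-(s-1)})$, and for disjoint $A, B$ of bounded size the probability that $G$ contains an edge between $A$ and $B$ is $O(1/n)$.

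Unpacking the definitions of $Z_{U_1 U_2}$ and $V_{U_1 U_2}$, the random variable $Z_{U_1 U_2}$ vanishes unless $G[U_2]$ is connected \emph{and} either $U_1 \cap U_2 \ne \varnothing$ or (in the disjoint case) $G$ contains an edge between $U_1$ and $U_2$, while each summand of $V_{U_1 U_2}$ indexed by $U_3$ vanishes unless $G[U_3]$ is connected, $U_3 \cap U_1 = \varnothing$, and $U_3$ is linked to $U_2$ by a shared vertex or a $G$-edge. Consequently, for each fixed $r \in \{2,3\}$, any ``active'' $r$-tuple of index sets contributing to a summand of $\delta$ has the property that its union $W$ (of size $q \le rt$) induces a connected subgraph of $G$; summing over the $O(n^q)$ placements of $W$, the $O(n^{-(q-1)})$ connectedness probability, and the $O(1)$ ways to split $W$ into an ordered $r$-tuple with the prescribed incidence pattern yields
\[
\expec{\#\{\text{active $r$-tuples}\}} = O(n).
\]

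Feeding these estimates back into \eqref{eq:delta}, the first sum is bounded by
\[
\sum_{U \in \cU} \expec{|X_U| Z_U^2} \le \sigma_Y^{-3} \cdot O(K^3) \cdot \expec{\#\{\text{active triples}\}} = O\bigl(\sigma_Y^{-3} n\bigr).
\]
The same reasoning bounds $\sum_{U_1, U_2} \expec{|X_{U_1} Z_{U_1 U_2} V_{U_1 U_2}|}$ by $O(\sigma_Y^{-3} n)$, with the sum over $U_3$ inside $V_{U_1 U_2}$ providing the third index. For the third term, a per-$U_1$ calculation gives $\expec{|Z_{U_1} + V_{U_1 U_2}|} = O(\sigma_Y^{-1})$, since for each fixed $U_1$ or $U_2$ the expected number of active $U_3$ linked to it by intersection or a $G$-edge is $O(1)$; combined with $\sum_{U_1, U_2} \expec{|X_{U_1} Z_{U_1 U_2}|} = O(\sigma_Y^{-2} n)$ from the active-pair bound, this term is also $O(\sigma_Y^{-3} n)$. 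Hence $\delta = O(\sigma_Y^{-3} n) = o(1)$ under the hypothesis $\sigma_Y = \omega(n^{1/3})$, and \cref{thm:Stein} delivers the claimed central limit theorem.

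The main obstacle is the combinatorial bookkeeping required in the case analysis for the summands of $V_{U_1 U_2}$ and $Z_{U_1 U_2}$: one must separate intersecting tuples from disjoint-but-edge-connected ones, and verify in each case that the union of the active tuple induces a connected subgraph of $G$, so that the spanning-tree probability bound of order $n^{-(q-1)}$ absorbs the $O(n^q)$ placements. Once the cases are laid out and combined uniformly over the bounded collection of set-size profiles, the estimates themselves are routine, and the threshold $\sigma_Y = \omega(n^{1/3})$ in the hypothesis is exactly what is needed to convert the final bound $O(\sigma_Y^{-3} n)$ into $o(1)$.
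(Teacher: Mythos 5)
Your overall strategy is the same as the paper's: verify that $\delta \to 0$ so that \cref{thm:Stein} applies, and to do so bound each of the three sums in \eqref{eq:delta} by $O(\sigma_Y^{-3}n)$ via the observation that the probability of a set of size $s$ inducing a connected subgraph is $O(p^{s-1})$, and that disjoint sets are linked only through an additional $O(p)$ edge probability. The final threshold $\sigma_Y = \omega(n^{1/3})$ is used in exactly the same way.

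However, one of your framing claims is not literally correct, and this is precisely the place where the ``combinatorial bookkeeping'' you flag is most delicate. You assert that ``every nonvanishing contribution to $\delta$ is supported on events of the form `$G[U]$ is connected'\,'' and you then bound the first sum by $\sigma_Y^{-3}O(K^3)\expec{\#\{\text{active triples}\}}$. This overlooks the deterministic centring: $|X_U| = \sigma_Y^{-1}|Y_U - \mu_U|$ does \emph{not} vanish when $G[U]$ is disconnected, since in that case $Y_U = 0$ but $|X_U| = \sigma_Y^{-1}\mu_U > 0$. Thus not every contribution is supported on connectedness events, and the displayed inequality bounding the first sum by a count of active triples fails as stated. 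The correct move (and the one the paper makes) is to split $|X_U| \le \sigma_Y^{-1}(T_U + \mu_U)$ and to treat the $\mu_U$ terms separately, using $\mu_U = O(p^{|U|-1})$ together with independence of disjoint edge sets; one then checks case by case (intersecting versus disjoint pairs, each crossed with intersecting versus disjoint $U_1$) that the resulting products are still $O(p^{|U_1\cup U_2\cup U_3|-1})$, which is what yields the $O(n)$ total. The $\mu$-contributions land in the same place, but the mechanism is a deterministic small factor multiplying a convergent sum, not a vanishing indicator. The analogous caveat applies in your treatment of the third sum, where $|X_{U_1}|$ again carries a $\mu_{U_1}$ piece. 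None of this changes the conclusion, and a careful execution of your plan recovers the bound $\delta = O(\sigma_Y^{-3}n) = o(1)$ — but the ``count active tuples'' heuristic is not self-contained and needs to be supplemented by the $\mu_U$ estimates, which are not an afterthought but a genuine part of the case analysis.
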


In \cref{lem:normality}, the inequality in \cref{eq:Y_U_ineq} is equivalent to requiring $Y_{U}(G) = 0$ whenever $G[U]$ is not connected, and at most $K$ otherwise and moreover that deleting edges $e \in \cF$ with $|e \cap U| =1$ can only increase the value of $Y_U$.

\section{The effect of resampling edges on the global and local colourings}\label{s:strong4coreproperties}
In this section, with a view to using the Efron--Stein inequality (\Cref{thm:EfronStein}) to control the variance of $\tX - \tX_k$  (\cref{lem:var_X_X_k_diff_bound}), we will consider how resampling an edge of $G \sim G\left(n,\frac{c}{n}\right)$ effects the global and local colourings given by \Cref{alg:strong_4_core_alg,alg:strong_4_core_alg_local}. That is, given $e \in \binom{V(G)}{2}$, we are interested in how the global and local colourings in $G-e$ and $G+e$ can differ. A large difficulty in the analysis here comes from the fact that the strong $4$-core $S(G)$ is not a \emph{monotonic} function of the graph $G$ --- adding edges can either increase or decrease the set $S(G)$.

However, we will be able to show that any changes to the colouring (global or local) when adding an edge $e$ are in some sense \emph{localised}. In particular, we will show that only vertices that are in a red-purple component of $G-e$ or $G+e$ containing an endpoint of $e$ contribute to $\tX - \tX_k$ (see \cref{lem:D_cont_in_W}). We will show that these components are unlikely to be large in a strong sense (see \cref{lem:expec_W^*_bounds}) and also that if these components contain fewer than $k$ vertices, then $\tX = \tX_k$ (see \cref{lem:if_W_small_D_nothing}). These two facts allow us to bound the variance of $\tX - \tX_k$ by using the Efron--Stein inequality in the form of \cref{lem:ESapp}.

Before considering, in \cref{sec:resamp_typical_changes}, how resampling an edge affects the global and local colourings in the random graph $G\left(n, \frac{c}{n}\right)$ (in order to prove \cref{lem:expec_W^*_bounds}), we show, in \cref{sec:properties-colourings,sec:local_changes,sec:effect-resampling}, some properties about the global and local colourings for any graph and how adding or removing an edge can affect them. 

\subsection{Properties of the global and local colourings}\label{sec:properties-colourings}
In this section, let us fix a graph $G$, and let $V(G) = S \cup P \cup R$ be the partition given by \Cref{alg:strong_4_core_alg}.
We start with a few observations about the global colouring given by \Cref{alg:strong_4_core_alg} and its relation to the local colouring.

First, we note that in every red-purple component at least a $\frac{1}{4}$-fraction of the vertices are red.

\begin{observation} \label{obs:red_vxs_in_comp}
Let $P_t$ and $R_t$ be the sets of red and purple vertices after $t$ iterations of the while-loop in \cref{alg:strong_4_core_alg}, respectively.
Then for every component $C'$ of  $G[R_t \cup P_t]$, we have that $R_t\cap C'$ is incident to at most $3|R_t\cap C'|$ edges in $G$. Thus $|R_t \cap C'| \geq |C'|/4$. In particular, $R$ is incident to at most $3|R|$ edges in $G$ and if $C$ is a component of $G[R \cup P]$, then there are at least $|C|/4$ red vertices in $C$.
\end{observation}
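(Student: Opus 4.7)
The plan is to establish the key inequality $|P_t \cap C'| \leq 3 |R_t \cap C'|$ by double-counting, from which the stated conclusion $|R_t \cap C'| \geq |C'|/4$ (and the \lq In particular\rq\ statements for $S, P, R$) all follow. The driving observation is that \cref{alg:strong_4_core_alg} only colours a vertex red when it has at most three sapphire neighbours, so every vertex that ever becomes purple has a unique red \lq parent\rq{}, and no red vertex has more than three children.

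First, I will record the invariants of \cref{alg:strong_4_core_alg} that I need. For each $s \leq t$, let $c_s$ denote the number of sapphire neighbours of $v_s$ at the moment it was chosen; the while-loop condition guarantees $c_s \leq 3$, and these $c_s$ sapphire neighbours are exactly the vertices that change colour from sapphire to purple at step $s$. Since a vertex that becomes purple cannot revert to sapphire (it can only later transition to red), each vertex $u$ which has ever been purple has a well-defined purpling time $s_u$, and hence a unique red parent $\pi(u) \coloneqq v_{s_u} \in R_t$. The pre-image $\pi^{-1}(v)$ of any $v = v_s \in R_t$ has size exactly $c_s \leq 3$.

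Next, I will restrict to a component $C'$ of $H_t \coloneqq G[R_t \cup P_t]$. Because the turning edge $\{\pi(u), u\}$ is an edge of $G$ with both endpoints in $R_t \cup P_t$, it lies in $H_t$, so $\pi(u)$ and $u$ belong to the same component of $H_t$. Writing $Q_{C'}$ for the set of vertices in $C'$ that have been purple at some step $s' \leq t$, it follows that $\pi$ restricts to a map $\pi \colon Q_{C'} \to R_t \cap C'$, and a double count gives
\begin{equation*}
|Q_{C'}| \;=\; \sum_{v \in R_t \cap C'} c_{s_v} \;\leq\; 3|R_t \cap C'|.
\end{equation*}
Moreover, the turning edges $\{\pi(u), u\}$ for $u \in Q_{C'}$ are distinct (since $\pi(u_1) \in R_{s_{u_1}} \subseteq R_{s_{u_2}-1}$ rules out the only way two such edges could coincide), each lies in $G$, and each is incident to $R_t \cap C'$, which gives the first assertion of the observation.

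Finally, since $P_t \cap C' \subseteq Q_{C'}$, we deduce $|P_t \cap C'| \leq 3|R_t \cap C'|$, and the disjoint decomposition $C' = (R_t \cap C') \sqcup (P_t \cap C')$ yields $|C'| \leq 4|R_t \cap C'|$, as claimed; the \lq In particular\rq\ statements for the final colouring $S,P,R$ follow by applying the above to the terminal state of \cref{alg:strong_4_core_alg}. The only delicate point is justifying that $\pi$ is component-respecting, which is immediate from the fact that the turning edge realising each purpling event lies in $H_t$; once that is in place the rest is a one-line double count, so no substantial obstacle is expected.
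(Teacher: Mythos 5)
Your bookkeeping with the parent map $\pi$ and the set $Q_{C'}$ is sound, and the chain $|Q_{C'}| \leq 3|R_t\cap C'|$, $P_t\cap C'\subseteq Q_{C'}$, $|C'| = |R_t \cap C'| + |P_t \cap C'| \leq 4|R_t\cap C'|$ correctly establishes the vertex-count half of the observation, and does so arguably more cleanly than the paper, which routes the vertex bound through the edge bound. The problem is the sentence ``which gives the first assertion of the observation''. You have exhibited a particular collection of at most $3|R_t\cap C'|$ edges incident to $R_t\cap C'$, namely the turning edges $\{\pi(u),u\}$; but the first assertion is an upper bound on \emph{all} edges incident to $R_t\cap C'$, and you have not shown that every such edge is a turning edge. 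It is not: if $v\in R_t$ has a neighbour $w$ that had already been purpled, by some other red vertex, before $v$ was coloured red, then $\{v,w\}$ is incident to $R_t\cap C'$ but is not a turning edge and your count never sees it.

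This gap cannot be closed, because the claimed edge bound is in fact false. Take a $K_5$ on $\{a_1,\dots,a_5\}$, vertices $u_1,\dots,u_6$ each joined to $a_1,\dots,a_4$, a vertex $v$ joined to every $u_i$, and vertices $z_1$ (joined to $u_1,u_2,u_3$) and $z_2$ (joined to $u_4,u_5,u_6$). \cref{alg:strong_4_core_alg} colours $z_1,z_2$ red (degree $3$), all $u_i$ purple, and then $v$: at this point $v$ is sapphire with zero sapphire neighbours, so it is coloured red, adding no new purples but contributing $6$ edges to the count. The final state is $R=\{z_1,z_2,v\}$, $P=\{u_1,\dots,u_6\}$, $S=\{a_1,\dots,a_5\}$, with a single red-purple component $C'$ and $12$ edges of $G$ incident to $R\cap C'$, while $3|R\cap C'|=9$. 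The paper's own justification has the same flaw: the sentence ``the endpoint of $e$ that is coloured red first is a witness of the other endpoint'' fails for the edges $\{v,u_i\}$, since $u_i$ was already purple when $v$ turned red. What does survive, and what the downstream applications largely depend on, is the bound $|P_t\cap C'|\leq 3|R_t\cap C'|$ and hence $|R_t\cap C'|\geq |C'|/4$, which your $Q_{C'}$ argument proves correctly and without reference to any edge count.
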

\begin{proof}
   We first note that a vertex which is coloured red during the process will remain red. Furthermore, whenever a vertex $u$ changes its colour to purple, it is adjacent to a vertex $v$ which just changed its colour to red. We call $v$ a witness for $u$ and note that each vertex is a witness for at most $3$ vertices. 
   Note that for every edge $e$ incident to $R_t\cap C'$, the endpoint of $e$ that is coloured red first is a witness of the other endpoint. It follows that $R_t\cap C'$ is incident to at most $3|R_t \cap C'|$ edges in $G$. Thereafter as every vertex in $P_t\cap C'$ is adjacent to a vertex in $R_t\cap C'$ in $G$, we have $|P_t \cap C'| \leq 3|R_t \cap C'|$. Hence, $|R_t \cap C'| \geq |C'|/4$. 
\end{proof}

We will also need the following fact, which essentially says that the sets $S_k(G,w)$ approximate $S$ in a `monotone-decreasing' manner, in that, if a vertex is globally sapphire, then it is also locally sapphire from the point of view of every vertex that `sees' it.
\begin{observation} \label{obs:In_S_globally_implies_locally}
Let $w \in V(G)$, let $k \in \mathbb{N}$ and let $S_k(G,w)$ be the $k$-local strong $4$-core of $G$ at $w$. Then $S \cap B_G(w,k) \subseteq S_k(G, w)$. 
\end{observation}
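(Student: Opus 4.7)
My plan is to exploit the maximality built into the definition of the local strong $4$-core. Write $A' \coloneqq S \cap B_G(w, k-1)$, and let $A \subseteq B_G(w, k-1)$ be the maximal set satisfying \eqref{e:localstrong4core}, so that $S_k(G, w) = A \cup \partial_G(w, k)$. Since $\partial_G(w, k) \subseteq S_k(G, w)$ trivially, it suffices to show $A' \subseteq A$. By maximality of $A$, this will follow once I verify that $B \coloneqq A \cup A'$ itself satisfies the local strong $4$-core condition \eqref{e:localstrong4core}; note that $B \subseteq B_G(w,k-1)$ since both $A$ and $A'$ are.

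So I will fix an arbitrary $x \in (B \cup N_G(B)) \setminus \partial_G(w, k)$ and prove $|N_G(x) \cap (B \cup \partial_G(w, k))| \geq 4$, splitting into two cases. If $x \in A \cup N_G(A)$, the bound is immediate from the defining property of $A$ together with $A \subseteq B$. The interesting case is $x \in (A' \cup N_G(A')) \setminus (A \cup N_G(A))$; here I will invoke the global strong $4$-core property \eqref{Strong_4-core_property} of $S$. In the sub-case $x \in A'$, we have $x \in S$ directly, so $|N_G(x) \cap S| \geq 4$; in the sub-case $x \in N_G(A') \setminus A'$, we have $x \in N_G(S)$ (as $A' \subseteq S$), so again $|N_G(x) \cap S| \geq 4$.

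The one step that needs a little care is checking that every $S$-neighbour of $x$ actually lies inside $B \cup \partial_G(w, k)$. This reduces to a short distance chase. In the sub-case $x \in A'$ we have $x \in B_G(w, k-1)$ by definition. In the sub-case $x \in N_G(A') \setminus A'$, $x$ has a neighbour in $A' \subseteq B_G(w, k-1)$, hence $x \in B_G(w, k)$, and then $x \notin \partial_G(w, k)$ forces $x \in B_G(w, k-1)$. Either way any $u \in N_G(x) \cap S$ satisfies $\dist_G(w, u) \leq k$, so
\[
u \in S \cap B_G(w, k) = (S \cap B_G(w,k-1)) \cup (S \cap \partial_G(w,k)) = A' \cup (S \cap \partial_G(w, k)) \subseteq B \cup \partial_G(w, k),
\]
which gives the required inequality.

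I do not expect any genuine obstacle; the whole argument is essentially a one-line reduction (replace $A$ by $A \cup A'$ and appeal to maximality) followed by a routine case analysis. The only subtlety worth flagging is the distance bookkeeping in the second case, to ensure that the $S$-neighbours of boundary vertices land inside $B_G(w,k)$ rather than spilling outside the ball where the local colouring operates.
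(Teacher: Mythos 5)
Your proof is correct and takes essentially the same route as the paper: both reduce the claim to verifying that a set containing $S \cap B_G(w,k-1)$ satisfies the local defining condition~\eqref{e:localstrong4core}, then invoke maximality, with the core step in each being that the global property gives $|N_G(x)\cap S|\geq 4$ and a short distance argument places those $S$-neighbours inside $B_G(w,k)$. The only cosmetic difference is that you verify the condition for $A\cup A'$ rather than for $A'$ alone, which makes your Case 1 vacuous; the paper avoids it by working with $A'$ directly.
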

\begin{proof}
    Let $A \coloneqq S \cap B_G(w,k-1)$ and let $B = \partial_G(w,k)$. Note that since $B \subseteq S_k(G,w)$ by definition, it suffices to show that $A \subseteq S_k(G,w)$. Hence, it will be sufficient to show that $A$ satisfies \eqref{e:localstrong4core}, that is, $|N_G(v) \cap (A \cup B)| \geq 4$ for all $v \in (A \cup N_G(A)) \setminus B$.

    By the definition of the strong $4$-core $S$ of $G$, we have $|N_G(v) \cap S| \geq 4$ for every $v \in A \cup N_G(A)$. Furthermore, if $v \in (A \cup N_G(A)) \setminus B$, then $N_G(v) \subseteq B_G(w,k)$ and so $N_G(v) \cap S \subseteq N_G(v) \cap (A \cup B)$. Hence, for every $v \in (A \cup N_G(A)) \setminus B$, we have
$|N_G(v) \cap (A \cup B)| \geq |N_G(v) \cap S| \geq 4$ as claimed.
\end{proof}

Recall that, for a vertex $v \in V(G)$, $C_v$ is defined to be the component of $G[P \cup R]$ which contains $v$ (and $C_v = \varnothing$ if $v \in S$). We next note that if $k \in \mathbb{N}$ is such that $C_v \subseteq B_G(v,k-1)$, then in the local colouring the component $C_v^k$ of $G[P_k(G,v) \cup R_k(G,v)]$ which contains $v$ is equal to $C_v$, and the global and local colourings agree on this component.

\begin{observation} \label{obs:C_small}
Let $v \in V(G)$ and $k \in \mathbb{N}$. Suppose $C_v \subseteq B_G(v,k-1)$. Then $C_v^k = C_v$, $P \cap C_v = P_k(G,v) \cap C_v$, and $R \cap C_v = R_k(G,v) \cap C_v$. In particular, $\phi_{G,k}(v) = \phi_G(v)$.
\end{observation}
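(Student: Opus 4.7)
The plan is to establish the single combinatorial claim $C_v \cap S_k(G,v) = \varnothing$ and then derive the three conclusions cleanly from it together with \cref{obs:In_S_globally_implies_locally}. The case $v \in S$ is immediate: $C_v = \varnothing$ by definition, and \cref{obs:In_S_globally_implies_locally} gives $v \in S_k(G,v)$, so $C_v^k = \varnothing$ and the equalities are vacuous. I will therefore assume $v \in P \cup R$ throughout.

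For the key claim, my strategy is to show that enlarging $S$ by the locally-sapphire interior vertices inside $C_v$ still yields a set with the strong $4$-core property, so that the maximality of $S$ forces this enlargement to be trivial. Writing $A \coloneqq S_k(G,v) \setminus \partial_G(v,k)$, since $C_v \subseteq B_G(v,k-1)$ is disjoint from $\partial_G(v,k)$, it suffices to show $A \cap C_v = \varnothing$, and for this I will verify that $T \coloneqq S \cup (A \cap C_v)$ has the strong $4$-core property for $G$. Vertices $w \in T \cup N_G(T)$ that lie in $S \cup N_G(S) = S \cup P$ are handled at once by the strong $4$-core property of $S$ together with $S \subseteq T$. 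Otherwise $w \in R$ is adjacent to (or equal to) a vertex of $A \cap C_v \subseteq C_v$, which forces $w \in C_v \subseteq B_G(v,k-1) \setminus \partial_G(v,k)$ since $C_v$ is a component of $G[P \cup R]$. Then $w \in (A \cup N_G(A)) \setminus \partial_G(v,k)$, so the local strong $4$-core condition~\eqref{e:localstrong4core} applies and yields $|N_G(w) \cap (A \cup \partial_G(v,k))| \geq 4$. The crucial observation is that $N_G(w) \subseteq C_v \cup S$ (again because $C_v$ is a $G[P \cup R]$-component containing $w$); intersecting with $A \cup \partial_G(v,k)$ and using $C_v \cap \partial_G(v,k) = \varnothing$ gives $N_G(w) \cap (A \cup \partial_G(v,k)) \subseteq (A \cap C_v) \cup S \subseteq T$, hence $|N_G(w) \cap T| \geq 4$.

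Given the key claim, $C_v \subseteq P_k(G,v) \cup R_k(G,v)$, and \cref{obs:In_S_globally_implies_locally} supplies the reverse inclusion $P_k \cup R_k \subseteq P \cup R$ by taking complements of $S_k$ and $S$ inside $B_G(v,k)$. The connected component of $G[P_k \cup R_k]$ through $v$ is therefore identified with the component of $G[P \cup R]$ through $v$, giving $C_v^k = C_v$. For the partition agreement, I will argue that each $u \in P \cap C_v$, having at least four $S$-neighbours all lying in $S \cap B_G(v,k) \subseteq S_k(G,v)$, must in fact have at least one such neighbour inside $A$ rather than on $\partial_G(v,k)$, so that $u \in N_G(A) \setminus \partial_G(v,k) = P_k(G,v)$. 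The equality $\phi_{G,k}(v) = \phi_G(v)$ is then immediate, since $\uc$ depends only on the component and its designated purple endpoints, which coincide in the global and local pictures.

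I expect the hard part to be the final sub-step — ruling out that a globally-purple $u \in C_v$ has all four of its $S$-neighbours sitting on $\partial_G(v,k)$. The plan is to invoke the local strong $4$-core condition at these boundary $S$-vertices themselves (each must have enough sapphire neighbours within $B_G(v,k)$) in order to propagate $A$-membership inward, using the hypothesis $C_v \subseteq B_G(v,k-1)$ to control how close $u$ can sit to the boundary $\partial_G(v,k)$ and thereby exclude the only obstruction.
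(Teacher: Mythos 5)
Your route to the key claim $C_v \cap S_k(G,v) = \varnothing$ (enlarging $S$ to $T = S \cup (A \cap C_v)$, verifying that $T$ has the strong $4$-core property, and invoking the maximality of $S$) is correct, and it is a genuinely different argument from the paper's. The paper instead argues via \emph{confluence} of the peeling process: since $N_G(C_v) \subseteq S \cap B_G(v,k) \subseteq S_k(G,v)$ stays sapphire in both the global and local processes, and $C_v \subseteq B_G(v,k-1)$, one can make \cref{alg:strong_4_core_alg} and \cref{alg:strong_4_core_alg_local} recolour the vertices of $C_v$ in the same order, so the two colourings agree on $C_v$; this yields $C_v^k = C_v$, the agreement of the colour classes, and $\phi_{G,k}(v)=\phi_G(v)$ all at once. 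Your derivation of $C_v^k = C_v$ from the key claim together with \cref{obs:In_S_globally_implies_locally} is also fine.

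However, the ``hard sub-step'' you flag cannot be closed the way you propose. Vertices of $\partial_G(v,k)$ lie in $S_k(G,v)$ unconditionally: they are adjoined to $A$ by fiat, and \eqref{e:localstrong4core} explicitly exempts them — the neighbourhood constraint is imposed only on vertices of $(A \cup N_G(A)) \setminus \partial_G(v,k)$. There is therefore no ``local strong $4$-core condition at the boundary vertices'' to propagate inward. Moreover, the configuration you hope to exclude — a globally purple $u \in C_v$ at distance exactly $k-1$ from $v$, all of whose ($\geq 4$) $S$-neighbours lie on $\partial_G(v,k)$ — is perfectly consistent with the hypotheses: in that case every neighbour of $u$ is on $\partial_G(v,k)$ or in $C_v$, and by your own key claim $A \cap C_v = \varnothing$, so $u$ has \emph{no} neighbour in $A$ at all. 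The fix is to abandon the attempt to read off the colour agreement from the formula for $P_k(G,v)$ and reason instead about the local colouring process, as the paper does: in \cref{alg:strong_4_core_alg_local} the $\partial_G(v,k)$-neighbours of $u$ count as sapphire, so such a $u$ is never recoloured red locally, and the confluence argument — both processes see the same induced graph $G[C_v]$ surrounded by a permanently sapphire boundary $N_G(C_v)$ — shows the local and global recolourings of $C_v$ coincide vertex by vertex.
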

\begin{proof}
    Note that $N_G(C_v) \subseteq S$, since $C_v$ is a component of $G - S$. Moreover, since $C_v \subseteq B_G(v,k-1)$, we have $N_G(C_v) \subseteq B_G(v,k)$. By \cref{obs:In_S_globally_implies_locally}, we have $N_G(C_v) \subseteq S_k(G,v)$ and so in particular, $C_v^k \subseteq C_v$. On the other hand, we may assume that  \Cref{alg:strong_4_core_alg} starts by recolouring the red vertices in $C_v$ in some order $v_1,v_2, \dots$. Since $N_G(C_v) \subseteq S(G,v)$ and $C_v \subseteq B_G(v,k-1)$, we could equally start the local colouring process by recolouring the same vertices in the same order. It follows that $C_v^k = C_v$ and furthermore the global and local colourings agree, implying that $\phi_{G,k}(v) = \phi_G(v)$.
\end{proof}

\subsection{Localising the changes} \label{sec:local_changes}
We will find that the effect of adding or deleting an edge $e \in \binom{V(G)}{2}$ on the global and local colourings can be in some sense \emph{localised} by finding a `separating' set $A$ of vertices  which are sapphire in a particular `resilient' manner, in that they are locally/globally sapphire in both $G-e$ and $G+e$. We will show that in this case, the local/global colouring can only change in the component of $G-A$ which contains the endpoints of $e$.

So, in slightly more generality, let us suppose we have two graphs $G_1$ and $G_2$ on the same vertex set which differ only on some set of vertices $A \cup B$, where $A$ is a set of vertices that are in the strong $4$-core for $G_2$ and $B$ is cut off from the rest of the both graphs by $A$ in the sense that $N_{G_1}(B), N_{G_2}(B) \subseteq A$. Since $G_1$ and $G_2$ agree outside of $A \cup B$, and the set $A$ is fully sapphire in $G_2$, it will follow that vertices outside $B$ which are sapphire in $G_1$ must also be sapphire in $G_2$. \cref{fig:sapphire_cut} depicts this configuration.

\begin{lemma}\label{lem:sapphire_cut_onesided}
    Let $G_1, G_2$ be two graphs on the same vertex set $V$. Let $A \subseteq S(G_2)$ and $B \subseteq V$ be such that $N_{G_1}(B) \cup N_{G_2}(B) \subseteq A$. Suppose $E(G_1) \triangle E(G_2) \subseteq \binom{A \cup B}{2}$.\footnote{Here $\triangle$ denotes the symmetric difference of sets.} Then $S(G_1) \setminus B \subseteq S(G_2) \setminus B$.
\end{lemma}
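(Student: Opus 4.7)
The plan is to deduce the inclusion from the maximality of the strong $4$-core. Set
\[
    T^{*} \coloneqq (S(G_{1}) \setminus B) \cup S(G_{2}).
\]
It suffices to prove that $T^{*}$ satisfies the strong $4$-core property \eqref{Strong_4-core_property} for $G_{2}$, since maximality of $S(G_{2})$ would then give $T^{*} \subseteq S(G_{2})$, and hence $S(G_{1}) \setminus B \subseteq S(G_{2}) \setminus B$ because $S(G_{1}) \setminus B$ is disjoint from $B$ by construction. I would therefore fix an arbitrary $v \in T^{*} \cup N_{G_{2}}(T^{*})$ and verify $|N_{G_{2}}(v) \cap T^{*}| \geq 4$ by a short case analysis.

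The first easy case is $v \in S(G_{2}) \cup N_{G_{2}}(S(G_{2}))$: the strong $4$-core property of $S(G_{2})$ for $G_{2}$ supplies four neighbours of $v$ inside $S(G_{2}) \subseteq T^{*}$. If $v \in A$, then $v \in S(G_{2})$ and this case applies. If $v \in B$, I would note that any $u \in T^{*}$ adjacent to $v$ in $G_{2}$ must either lie outside $B$ (and then in $N_{G_{2}}(B) \subseteq A \subseteq S(G_{2})$) or lie in $B$ (and then in $T^{*} \cap B \subseteq S(G_{2})$); either way $v \in S(G_{2}) \cup N_{G_{2}}(S(G_{2}))$, so we are back in the first case.

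The genuine case is therefore $v \notin A \cup B$ and $v \notin S(G_{2}) \cup N_{G_{2}}(S(G_{2}))$. Here I would first use the symmetric-difference hypothesis: every edge in $E(G_{1}) \triangle E(G_{2})$ has both endpoints in $A \cup B$, so since $v \notin A \cup B$ no edge of the symmetric difference is incident to $v$, giving $N_{G_{1}}(v) = N_{G_{2}}(v)$. Next, because $v \in T^{*} \cup N_{G_{2}}(T^{*})$ while neither $v$ nor any $G_{2}$-neighbour of $v$ lies in $S(G_{2})$, the witness must come from $S(G_{1}) \setminus B$: either $v \in S(G_{1}) \setminus B$ itself, or $v$ has a $G_{2}$-neighbour $u \in S(G_{1}) \setminus B$, and in this latter case $vu \in E(G_{2}) = E(G_{1})$ puts $v \in N_{G_{1}}(S(G_{1}))$. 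The strong $4$-core property of $S(G_{1})$ in $G_{1}$ then yields $|N_{G_{1}}(v) \cap S(G_{1})| \geq 4$. Finally, since $v \notin A \cup B$ and $N_{G_{2}}(B) \subseteq A$, no $G_{2}$-neighbour of $v$ lies in $B$, so $N_{G_{2}}(v) \cap S(G_{1}) \subseteq S(G_{1}) \setminus B \subseteq T^{*}$, and the four neighbours transfer intact to $T^{*}$.

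The main obstacle is precisely the final case: one must simultaneously ensure that the four sapphire-in-$G_{1}$ witnesses of $v$'s membership are not altered by the resampled edges and do not escape into $B$. The two hypotheses $E(G_{1}) \triangle E(G_{2}) \subseteq \binom{A \cup B}{2}$ and $N_{G_{2}}(B) \subseteq A$ are exactly the right tools to rule out these two failure modes, but only after the preliminary case splits have reduced us to $v \notin A \cup B$; the real work of the proof is organising the case analysis so that this reduction is valid.
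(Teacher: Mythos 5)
Your proposal is correct and follows essentially the same route as the paper: both introduce the auxiliary set $(S(G_1)\setminus B)\cup S(G_2)$, verify that it has the strong $4$-core property for $G_2$, and conclude by the maximality of $S(G_2)$. The only difference is cosmetic in the organisation of the case analysis — the paper dispatches $v\in A\cup N_{G_2}(A)$ as a subcase of $v\in S(G_2)\cup N_{G_2}(S(G_2))$ and then observes in the remaining case that $v\notin A\cup B$, whereas you split directly on whether $v$ lies in $A$, in $B$, or outside $A\cup B$; the underlying observations (in particular that $v\in B$ forces $v\in S(G_2)\cup N_{G_2}(S(G_2))$ via $N_{G_2}(B)\subseteq A\subseteq S(G_2)$) are identical.
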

\begin{proof}
    We will show that $U \coloneqq (S(G_1) \setminus B) \cup S(G_2)$ has the strong $4$-core property \cref{Strong_4-core_property} for $G_2$, from which the result follows by the maximality of $S(G_2)$.

    By definition, every $v \in S(G_2) \cup N_{G_2}(S(G_2))$ satisfies 
    \[
    |N_{G_2}(v) \cap U| \geq |N_{G_2}(v) \cap S(G_2)| \geq 4 .
    \]
    In particular, since $A \subseteq S(G_2)$ this holds for every $v \in A \cup N_{G_2}(A)$.

    So, consider a vertex $v \in \big((S(G_1) \setminus B) \cup N_{G_2}( S(G_1) \setminus B)\big) \setminus (A \cup N_{G_2}(A))$. Note that, since $N_{G_2}(B) \subseteq A$, it follows that $v \not\in A \cup B$. Hence, $N_{G_1}(v) \cap B = \varnothing$ and since $E(G_1) \triangle E(G_2) \subseteq \binom{A \cup B}{2}$, we have $N_{G_2}(v) = N_{G_1}(v)$ and in particular
    $v \in S(G_1) \cup N_{G_1}(S(G_1))$. Thus, 
    \[
    |N_{G_2}(v) \cap U| \geq |N_{G_2}(v) \cap (S(G_1) \setminus B)| = |N_{G_1}(v) \cap (S(G_1) \setminus B)| = |N_{G_1}(v) \cap S(G_1)| \geq 4,
    \]
    where the last inequality follows from the definition of $S(G_1)$.
\end{proof}

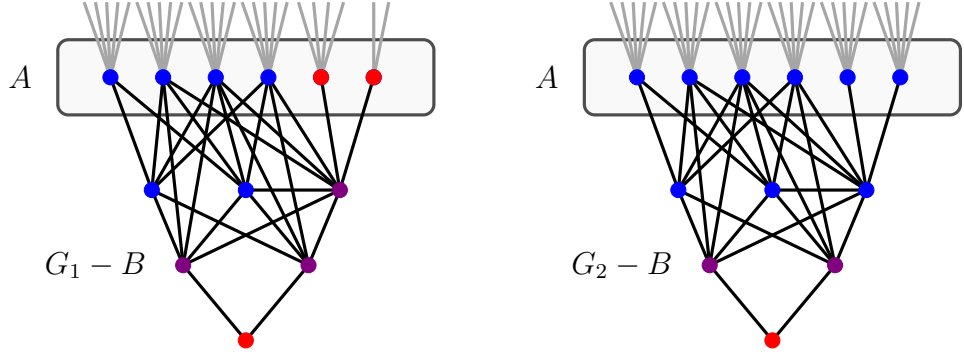
\begin{figure}
    \centering
\begin{tikzpicture}[scale=1]

\filldraw[color=black!70, fill=gray!5, rounded corners, very thick](0,0) rectangle (5,1);

\foreach \i in {1,2,3,4,5,6}{
    \foreach \j in {1,2,3,4,5}{
        \draw[very thick,black!35] (\i*0.7,0.5)--(\i*0.7-0.5 +0.15*\j,1.5);
    }
}

\draw[very thick] (2.5,-3)--(5/3,-2);
\draw[very thick] (2.5,-3)--(10/3,-2);
\foreach \i in {1,2}{
    \foreach \j in {1,2,3}{
        \draw[very thick] (\i*5/3,-2)--(\j*5/4,-1);
    }
}
\foreach \i in {1,2,3,4}{
    \foreach \j in {1,2}{
        \draw[very thick] (\i*0.7,0.5)--(\j*5/4,-1);
    }
}
\draw[very thick] (10/4,-1)--(15/4,-1);
\foreach \i in {2,3,4,5,6}{
    \draw[very thick] (\i*0.7,0.5)--(15/4,-1);
}
\draw[very thick] (1.4,0.5)--(5/3,-2);
\draw[very thick] (2.1,0.5)--(5/3,-2);
\draw[very thick] (2.1,0.5)--(10/3,-2);
\draw[very thick] (2.8,0.5)--(10/3,-2);
\foreach \i in {1,2,3,4,5,6}{
    \draw [fill=blue,draw=blue] (\i*0.7,0.5) circle [radius=.1];
}

\draw [fill=blue,draw=blue] (5/4,-1) circle [radius=.1];
\draw [fill=blue,draw=blue] (10/4,-1) circle [radius=.1];
\draw [fill=blue,draw=blue] (15/4,-1) circle [radius=.1];
\draw [fill=violet,draw=violet] (5/3,-2) circle [radius=.1];
\draw [fill=violet,draw=violet] (10/3,-2) circle [radius=.1];
\draw [fill=red,draw=red]  (2.5,-3) circle [radius=.1];

\newcommand\vara{-7}

\filldraw[color=black!70, fill=gray!5, rounded corners, very thick](0+\vara,0) rectangle (5+\vara,1);
\foreach \i in {1,2,3,4}{
    \foreach \j in {1,2,3,4,5}{
        \draw[very thick,black!35] (\i*0.7+\vara,0.5)--(\i*0.7-0.5 +0.15*\j +\vara,1.5);
    }
}
\foreach \j in {1,2,3,4}{
        \draw[very thick,black!35] (5*0.7+\vara,0.5)--(5*0.7-0.5 +0.2*\j +\vara,1.5);
    }
\foreach \j in {1,2}{
    \draw[very thick,black!35] (6*0.7+\vara,0.5)--(6*0.7-0.2 +0.2*\j +\vara,1.5);
}
\draw[very thick] (2.5+\vara,-3)--(5/3+\vara,-2);
\draw[very thick] (2.5+\vara,-3)--(10/3+\vara,-2);
\foreach \i in {1,2}{
    \foreach \j in {1,2,3}{
        \draw[very thick] (\i*5/3+\vara,-2)--(\j*5/4+\vara,-1);
    }
}
\foreach \i in {1,2,3,4}{
    \foreach \j in {1,2}{
        \draw[very thick] (\i*0.7+\vara,0.5)--(\j*5/4+\vara,-1);
    }
}
\draw[very thick] (10/4+\vara,-1)--(15/4+\vara,-1);
\foreach \i in {2,3,4,5,6}{
    \draw[very thick] (\i*0.7+\vara,0.5)--(15/4+\vara,-1);
}
\draw[very thick] (1.4+\vara,0.5)--(5/3+\vara,-2);
\draw[very thick] (2.1+\vara,0.5)--(5/3+\vara,-2);
\draw[very thick] (2.1+\vara,0.5)--(10/3+\vara,-2);
\draw[very thick] (2.8+\vara,0.5)--(10/3+\vara,-2);
\foreach \i in {1,2,3,4,5,6}{
    \draw [fill=blue,draw=blue] (\i*0.7+\vara,0.5) circle [radius=.1];
}
\draw [fill=red,draw=red] (5*0.7+\vara,0.5) circle [radius=.1];
\draw [fill=red,draw=red] (6*0.7+\vara,0.5) circle [radius=.1];
\draw [fill=blue,draw=blue] (5/4+\vara,-1) circle [radius=.1];
\draw [fill=blue,draw=blue] (10/4+\vara,-1) circle [radius=.1];
\draw [fill=violet,draw=violet] (15/4+\vara,-1) circle [radius=.1];
\draw [fill=violet,draw=violet] (5/3+\vara,-2) circle [radius=.1];
\draw [fill=violet,draw=violet] (10/3+\vara,-2) circle [radius=.1];
\draw [fill=red,draw=red]  (2.5+\vara,-3) circle [radius=.1];

\node at (-6.5,-2) {$G_1-B$};
\node at (-6.5+7,-2) {$G_2-B$};

\node at (-7.5,0.5) {$A$};
\node at (-0.5,0.5) {$A$};

\end{tikzpicture}
 \caption{Vertices in $V\setminus (A \cup B)$ have the same neighbourhoods in $G_1$ and in $G_2$ but in $G_2$ more of these neighbours will be sapphire.}
   \label{fig:sapphire_cut}
\end{figure}

It follows as a consequence of \Cref{lem:sapphire_cut_onesided} that if the set $A$ is sapphire in both $G_1$ and $G_2$, then the global colourings will agree outside of $B$.

\begin{lemma} \label{lem:sapphire_cut}
    Let $G_1, G_2$ be two graphs on the same vertex set $V$. Let $A \subseteq S(G_1) \cap S(G_2)$ and $B\subseteq V$ be such that $N_{G_1}(B) \cup N_{G_2}(B) \subseteq A$. Suppose $E(G_1) \triangle E(G_2) \subseteq \binom{A \cup B}{2}$. Then 
    \[
        S(G_1) \setminus B = S(G_2)\setminus B,\quad P(G_1) \setminus B = P(G_2) \setminus B,\quad \text{and} \quad R(G_1) \setminus B = R(G_2) \setminus B.
    \] 
    Moreover, for any vertex $w \in V \setminus B$, we have $C_w(G_1) = C_w(G_2) \subseteq V \setminus (A \cup B)$ and thus
    \[
        G_1[C_w(G_1)] = G_2[C_w(G_2)] \quad\text{and}\quad R(G_1) \cap C_w(G_1) = R(G_2) \cap C_w(G_2).
    \]
\end{lemma}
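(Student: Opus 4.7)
The plan is to derive the lemma as a straightforward two-sided application of \cref{lem:sapphire_cut_onesided}, together with a few short observations about where $P$, $R$, and the components $C_w$ live relative to $A \cup B$.

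First, since $A \subseteq S(G_1) \cap S(G_2)$, the hypotheses of \cref{lem:sapphire_cut_onesided} hold both when applied to the pair $(G_1, G_2)$ and when applied to $(G_2, G_1)$. So applying it in both directions immediately yields
\[
S(G_1) \setminus B = S(G_2) \setminus B.
\]
For the purple sets, I would take an arbitrary $v \in P(G_1) \setminus B$ and argue $v \in P(G_2) \setminus B$ (the reverse inclusion being symmetric). Since $v \in P(G_1) \subseteq V \setminus S(G_1)$ and $A \subseteq S(G_1)$, we have $v \notin A$, and by assumption $v \notin B$, so $v \notin A \cup B$. Using $E(G_1) \triangle E(G_2) \subseteq \binom{A \cup B}{2}$, the neighbourhoods of $v$ in $G_1$ and $G_2$ coincide. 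Moreover, any neighbour of $v$ in $G_1$ that lies in $B$ would force $v \in N_{G_1}(B) \subseteq A$, a contradiction; so $N_{G_1}(v) \subseteq V \setminus B$. Now $v$ has a sapphire neighbour $u$ in $G_1$, and $u \in S(G_1) \setminus B = S(G_2) \setminus B$, which gives $v \in P(G_2)$. Finally $R(G_i) \setminus B = V \setminus (B \cup S(G_i) \cup P(G_i))$ agrees for $i=1,2$ by the two equalities just proved.

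For the moreover part, fix $w \in V \setminus B$ and let me show $C_w(G_1) \subseteq V \setminus (A \cup B)$. If $w \in S(G_1)$ then $w \in S(G_2)$ and both components are $\varnothing$, so assume $w \in P(G_1) \cup R(G_1)$. Since $A \subseteq S(G_1)$, the set $A$ is disjoint from $P(G_1) \cup R(G_1)$ and hence from $C_w(G_1)$. For the $B$-part: every $u \in B$ satisfies $N_{G_1}(u) \subseteq A \subseteq S(G_1)$, so $u$ has no neighbour in $P(G_1) \cup R(G_1)$ and is therefore isolated in $G_1[P(G_1) \cup R(G_1)]$; since $w \notin B$, no element of $B$ can lie in the component $C_w(G_1)$. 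Thus $C_w(G_1) \subseteq V \setminus (A \cup B)$, and symmetrically $C_w(G_2) \subseteq V \setminus (A \cup B)$.

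Because $E(G_1)$ and $E(G_2)$ agree on pairs inside $V \setminus (A \cup B)$, the subgraph $G_1[C_w(G_1)]$ coincides with $G_2[C_w(G_1)]$; combined with $P(G_1) \setminus B = P(G_2) \setminus B$ and $R(G_1) \setminus B = R(G_2) \setminus B$, this shows $G_1[C_w(G_1)]$ is a connected subgraph of $G_2[P(G_2) \cup R(G_2)]$ containing $w$, so $C_w(G_1) \subseteq C_w(G_2)$, and the reverse inclusion follows symmetrically. The remaining equalities $G_1[C_w(G_1)] = G_2[C_w(G_2)]$ and $R(G_1) \cap C_w(G_1) = R(G_2) \cap C_w(G_2)$ are then immediate from $C_w(G_1) = C_w(G_2) \subseteq V \setminus (A \cup B)$ and $R(G_1) \setminus B = R(G_2) \setminus B$. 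The only subtle step is the observation that vertices of $B$ are automatically isolated inside the red–purple subgraph of either $G_i$, which is what forces the components to stay outside $B$; everything else is bookkeeping.
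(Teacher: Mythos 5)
Your argument follows the same structure as the paper's proof: apply \cref{lem:sapphire_cut_onesided} in both directions to get $S(G_1)\setminus B = S(G_2)\setminus B$, derive the $P$ and $R$ equalities from the fact that vertices outside $A \cup B$ have identical neighbourhoods in $G_1$ and $G_2$ and that $N_{G_1}(v)\cap B=\varnothing$ for such vertices, and then show the components $C_w$ never enter $A \cup B$. (The paper phrases the $P$-step as a chain of set identities rather than element-chasing, but the content is the same.)

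One slip in the ``moreover'' part deserves a correction. You assert that every $u \in B$ satisfies $N_{G_1}(u) \subseteq A$ and is therefore isolated in $G_1[P(G_1)\cup R(G_1)]$. But $N_{G_1}(B)$ is the \emph{external} neighbourhood of $B$, so the hypothesis $N_{G_1}(B)\subseteq A$ only gives $N_{G_1}(u)\subseteq A\cup B$ for $u\in B$; nothing rules out edges inside $B$, and indeed in the applications later in the paper (e.g.\ $B = W^\pm$ in \cref{lem:not_in_W_then_in_I}) the set $B$ contains many internal edges, so a $B$-vertex need not be isolated in the red--purple subgraph. The conclusion you want---that $C_w(G_1)\cap B = \varnothing$ for $w\notin B$---is nevertheless correct, but for the reason the paper gives: there is no edge in $G_1$ between $B$ and $V\setminus(A\cup B)$, since any such edge would put its endpoint in $V\setminus(A\cup B)$ into $N_{G_1}(B)\subseteq A$, which is absurd. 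Combined with $C_w(G_1)\subseteq V\setminus S(G_1)\subseteq V\setminus A$ and $w\in V\setminus(A\cup B)$, connectedness forces $C_w(G_1)\subseteq V\setminus(A\cup B)$. With that replacement, the rest of your argument goes through unchanged.
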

\begin{proof}
    By \Cref{lem:sapphire_cut_onesided}, it follows that $S(G_1) \setminus B = S(G_2) \setminus B$. Hence, using that $A \subseteq S(G_1) \cap S(G_2)$, $N_{G_1}(B) \cup N_{G_2}(B) \subseteq A$, and $E(G_1) \triangle E(G_2) \subseteq \binom{A \cup B}{2}$, we have
    \begin{align*}
        P(G_1) \setminus B &= N_{G_1}(S(G_1)) \setminus B =  N_{G_1}(S(G_1)) \setminus (A \cup B) = N_{G_2}(S(G_1)) \setminus (A\cup B)\\
        &= N_{G_2}(S(G_2)) \setminus (A\cup B) = N_{G_2}(S(G_2)) \setminus  B = P(G_2) \setminus B,
    \end{align*}
    and hence, also $R(G_1) \setminus B = R(G_2) \setminus B$.
    
    Finally, note that since $A \subseteq S(G_1) \cap S(G_2)$ and there is no edge from $B$ to $V \setminus (A \cup B)$, for any $w \in V \setminus B$, we have $C_w(G_1), C_w(G_2) \subseteq V \setminus 
(A \cup B)$ and thus $G_1[C_w(G_1)] = G_2[C_w(G_2)]$.
\end{proof}

We will also need a version of \cref{lem:sapphire_cut} which we can apply to the local colourings.
\begin{lemma} \label{lem:sapphire_cut_local}
    Let $G_1, G_2$ be two graphs on the same vertex set $V$. Let $k \geq 1$, $w\in V$ and $A \subseteq V$ be such that $A \subseteq S(G_1) \cap S(G_2)$ and $B \subseteq V \setminus \{w\}$ be such that $N_{G_1}(B)\cup N_{G_2}(B)  \subseteq A$. Suppose $E(G_1) \triangle E(G_2) \subseteq \binom{A \cup B}{2}$. 
    Then $C\coloneqq C_w^k(G_1) = C_w^k(G_2) \subseteq V \setminus (A \cup B)$, $G_1[C] = G_2[C]$, $P_k(G_1, w) \cap C = P_k(G_2,w) \cap C$, and $R_k(G_1, w) \cap C = R_k(G_2,w) \cap C$.
\end{lemma}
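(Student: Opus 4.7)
The plan is to mirror the proof of the global analogue \cref{lem:sapphire_cut}, replacing each appeal to the global strong $4$-core by its local counterpart. The argument has two main components: (i) showing that $C_w^k(G_i) \cap (A \cup B) = \varnothing$ for both $i \in \{1,2\}$, and (ii) establishing a local analogue of \cref{lem:sapphire_cut_onesided}, from which the equality of the two local colourings on $V \setminus B$ (where both are defined) will follow. Combined with the hypothesis $E(G_1) \triangle E(G_2) \subseteq \binom{A \cup B}{2}$, these two facts yield the claimed equalities $C_w^k(G_1) = C_w^k(G_2)$, $G_1[C] = G_2[C]$, and the matching of purple and red vertices inside $C$.

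For (i), \cref{obs:In_S_globally_implies_locally} applied to $A \subseteq S(G_i)$ gives $A \cap B_{G_i}(w,k) \subseteq S_k(G_i,w)$, so $C_w^k(G_i) \subseteq P_k(G_i,w) \cup R_k(G_i,w)$ is disjoint from $A$. To rule out $B$, suppose $b \in C_w^k(G_i) \cap B$; since $C_w^k(G_i)$ is connected and $w \notin B$, there is a path in $G_i[P_k(G_i,w) \cup R_k(G_i,w)]$ from $w$ to $b$, and the last vertex on this path before entering $B$ lies in $N_{G_i}(B) \subseteq A$, contradicting the disjointness from $A$ just shown.

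For (ii), I would mimic the argument of \cref{lem:sapphire_cut_onesided}: verify that $((S_k(G_1,w) \cap B_{G_2}(w,k)) \setminus B) \cup S_k(G_2,w)$, restricted to $B_{G_2}(w,k-1)$, satisfies the defining property \eqref{e:localstrong4core} for $G_2$ at $w$, and then invoke the maximality of $S_k(G_2,w)$. The key combinatorial input is that for any $v \in V \setminus (A \cup B)$ one has $N_{G_1}(v) = N_{G_2}(v)$ with all these neighbours lying in $V \setminus B$, so the sapphire-neighbour counts required by \eqref{e:localstrong4core} transfer between the two graphs. Running the argument with the roles of $G_1$ and $G_2$ swapped gives the reverse inclusion, after which the conclusions for $P_k$, $R_k$, and $C_w^k$ follow exactly as in the proof of \cref{lem:sapphire_cut}.

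The main obstacle is that the two balls $B_{G_1}(w,k)$ and $B_{G_2}(w,k)$ can genuinely differ, because edges in $\binom{A \cup B}{2}$ may create shortcuts from $w$ that exist in one graph but not the other. The verification of \eqref{e:localstrong4core} must therefore handle vertices near the boundaries $\partial_{G_i}(w,k)$ with care; however, the hypotheses $A \subseteq S(G_1) \cap S(G_2)$ and $N_{G_i}(B) \subseteq A$, together with \cref{obs:In_S_globally_implies_locally}, are precisely what prevent these boundary effects from leaking into the component $C$, which by (i) is anchored inside $V \setminus (A \cup B)$, where $G_1$ and $G_2$ agree.
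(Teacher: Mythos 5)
Your step (i) — showing $C_w^k(G_i) \cap (A\cup B)=\varnothing$ via \cref{obs:In_S_globally_implies_locally} and a last-vertex-before-$B$ argument — is correct and is essentially what the paper also extracts from the hypotheses. The gap is in step (ii), and you have in fact identified it yourself without resolving it. You propose to show that
\[
A^*\;\coloneqq\;\Big(\big((S_k(G_1,w)\cap B_{G_2}(w,k))\setminus B\big)\cup S_k(G_2,w)\Big)\cap B_{G_2}(w,k-1)
\]
satisfies \eqref{e:localstrong4core} for $G_2$. But membership in $S_k(G_1,w)$ carries no degree information for vertices on the boundary $\partial_{G_1}(w,k)$ — they belong to $S_k(G_1,w)$ by fiat. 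If some $u\notin A\cup B$ lies in $\partial_{G_1}(w,k)$ but, due to a shortcut through $\binom{A\cup B}{2}$, has $\dist_{G_2}(w,u)<k$, then $u$ is placed into $A^*$ while possibly having very small degree, so the condition $|N_{G_2}(u)\cap(A^*\cup\partial_{G_2}(w,k))|\geq 4$ can fail outright. The hand-wave that the hypotheses ``prevent these boundary effects from leaking into the component $C$'' does not address this, because the verification of \eqref{e:localstrong4core} is a statement about \emph{all} of $A^*\cup N_{G_2}(A^*)$, not just about $C$; and you cannot restrict to $C$ at this stage without circularity, since you are trying to prove $C$ is well-defined. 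The same boundary mismatch also undermines the final sentence ``the conclusions for $P_k,R_k$ follow exactly as in the proof of \cref{lem:sapphire_cut}'': the local $P_k,R_k$ are defined by subtracting $\partial_{G_i}(w,k)$, and these boundaries differ between $G_1$ and $G_2$, so the global computation does not transfer verbatim.

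The paper avoids comparing $S_k(G_1,w)$ and $S_k(G_2,w)$ directly. It instead builds a common vertex set $U'$ containing $w$, separated from the rest of both graphs by a set $A'$ that is contained in $S_k(G_i,w)$ for both $i$, with $G_1[U']=G_2[U']$ (possible because the graphs agree outside $\binom{A\cup B}{2}$ and $A$ is a cut in both). It then observes that the local colouring process (\cref{alg:strong_4_core_alg_local}), run on $G_1$ and on $G_2$, can recolour the same sequence of red vertices inside $U'$, so the two local colourings coincide on $U'\supseteq C_w^k(G_i)$. This sidesteps the boundary problem entirely: one never needs a one-sided inclusion of local $4$-cores across the two graphs, only agreement of the process on a common region. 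If you want to repair your approach, you would need to cut $U'$ off by $\partial_F(w,k)$ for a suitable auxiliary common subgraph $F$ (as the paper does) \emph{before} attempting any maximality argument, rather than intersecting after the fact with $B_{G_2}(w,k)$.
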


\begin{proof}
Since $A$ is a vertex-cut that separates $B$ and $V(G)\setminus (A\cup B)$ in both $G_i$, $w\notin A\cup B$, and $E(G_1) \triangle E(G_2) \subseteq \binom{A \cup B}{2}$
there exists a set $U\subseteq V\setminus A$ containing $w$ such that $G_1[U]=G_2[U]$ and the graph $F \coloneqq G_1[U]$ is connected and consists of a connected component of $G_i-A$ for each $i \in \{1,2\}$. By extension, with $A' \coloneqq (A \cap B_F(v,k)) \cup \partial_F(v,k)$, there exists  $U'\subseteq U$ containing $w$ such that $F'\coloneqq G_1[U']=G_2[U']$ is connected and consists of a connected component of $G_i-A'$ for each $i \in \{1,2\}$. By \Cref{obs:In_S_globally_implies_locally}, we have $A'\subseteq B_{G_i}(w,k) \cap S_k(G_i,w)$ for each $i \in \{1,2\}$. Thus $C_w^k(G_1), C_w^k(G_2) \subseteq U'$.

Now note that if the local colouring process in $G_1$ recolours the red vertices in $U'$ in some order $v_1,v_2,\dots$, then we could equally start the local colouring process in $G_2$ by recolouring the same vertices in the same order, and vice versa. 
It follows that $C\coloneqq C_w^k(G_1) = C_w^k(G_2) \subseteq U'$, $P_k(G_1, w) \cap C = P_k(G_2,w) \cap C$, and $R_k(G_1, w) \cap C = R_k(G_2,w) \cap C$. 
\end{proof}

\subsection{The effect of resampling an edge}\label{sec:effect-resampling}
Throughout this subsection, let us fix a graph $G$ and a potential edge $e = \{u,v\} \in \binom{V(G)}{2}$. We let $G^+ = G+e$ and $G^- = G- e$.  Moreover, for $x \in V(G)$ and $\circ \in \{+, -\}$, we let $C_x^\circ \coloneqq C_x(G^\circ)$, $W^\circ \coloneqq C_u^\circ \cup C_v^\circ$, and we abbreviate $C_x^{k,\circ} \coloneqq C_x^k(G^\circ)$, $\phi^\circ \coloneqq \phi_{G^\circ}$, $\phi_k^\circ \coloneqq \phi_{G^\circ,k}$, $S^\circ \coloneqq S(G^\circ)$, $P^\circ \coloneqq P(G^\circ)$, and $R^\circ \coloneqq R(G^\circ)$.
Further, we define 
\begin{align*} 
    I^* \coloneqq \{w \in V(G) \colon G^+[C_w^+] = G^-[C_w^-] \text{ and } R^+ \cap C_w^+ = R^- \cap C_w^-\}, 
\end{align*}
and
\begin{align*} 
    I^*_k \coloneqq \{&w \in V(G) \colon G^+[C_w^{k,+}] = G^-[C_w^{k,-}] \text{ and }\nonumber \\ &R_k(G^+,w) \cap C_w^{k,+} = R_k(G^-,w) \cap C_w^{k,-}\}. 
\end{align*}
Suppose that $w \in I^*$. By definition, we have $G^+[C_w^+] = G^-[C_w^-]$ which implies that $C_w^+ = C_w^-$. Since $R^+ \cap C_w^+ = R^- \cap C_w^-$, $C_w^+ \subseteq P^+ \cup R^+$, and $C_w^- \subseteq P^- \cup R^-$, we also have $P^- \cap C_w^- = P^+ \cap C_w^+$.
Thus $I^*$ is the set of vertices whose red-purple components in the global colouring are `indifferent' to the change in the edge $e = \{u,v\}$ and $I^*_k$ is the corresponding set of vertices for the local colouring. In particular, this implies that the value of $\phi$ does not change for vertices in $I^*$ when flipping the edge $e$.
\begin{observation} \label{obs:no_change_for_indif}
    For $w \in I^*$, we have $\phi^+(w) = \phi^-(w)$ and for $w \in I^*_k$, we have $\phi^+_k(w) = \phi^-_k(w)$.
\end{observation}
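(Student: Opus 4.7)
The plan is to unfold the definitions of $I^*$ and $I^*_k$ and observe that the function $\uc(H;W)/|V(H)|$ depends only on the induced pair $(H,W)$, so agreement of the pair on each side of the resampled edge is enough.

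First I would handle the global statement. Fix $w \in I^*$. The defining equality $G^+[C_w^+] = G^-[C_w^-]$ forces the vertex sets $C_w^+$ and $C_w^-$ to coincide. If this common set is empty, then by the convention $C_x = \varnothing \iff x \in S$ we get $w \in S^+ \cap S^-$, hence $\phi^+(w) = \phi^-(w) = 0$. Otherwise $C \coloneqq C_w^+ = C_w^-$ contains $w$, so $w \in P^+ \cup R^+$ and $w \in P^- \cup R^-$. As already noted in the sentence immediately preceding the observation, combining the equality $R^+ \cap C = R^- \cap C$ with the inclusions $C \subseteq P^\circ \cup R^\circ$ yields $P^+ \cap C = P^- \cap C$. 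Consequently the marked pair $(G^+[C], P^+ \cap C)$ equals $(G^-[C], P^- \cap C)$, and plugging into the definition of $\phi_G$ gives
\[
\phi^+(w) \;=\; \frac{\uc(G^+[C]; P^+ \cap C)}{|C|} \;=\; \frac{\uc(G^-[C]; P^- \cap C)}{|C|} \;=\; \phi^-(w).
\]

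The local statement is proved by an essentially identical argument, with $S^\circ, P^\circ, R^\circ, C_w^\circ$ replaced by their local counterparts $S_k(G^\circ,w), P_k(G^\circ,w), R_k(G^\circ,w), C_w^{k,\circ}$. For $w \in I^*_k$ the equality $G^+[C_w^{k,+}] = G^-[C_w^{k,-}]$ again collapses the two local components into a single set $C$; if $C = \varnothing$ then $w$ is locally sapphire in both graphs and both $\phi_k$-values vanish, while if $C \neq \varnothing$ then subtracting the common red intersection from $C \subseteq P_k(G^\circ,w) \cup R_k(G^\circ,w)$ gives $P_k(G^+,w) \cap C = P_k(G^-,w) \cap C$, and the same computation yields $\phi_k^+(w) = \phi_k^-(w)$.

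There is no genuine obstacle here: the statement is a direct definitional unpacking, and the only small point to double-check is that the convention $C_w^\circ = \varnothing \iff w \in S^\circ$ prevents the mixed situation in which $w$ is sapphire on one side but not on the other while still lying in $I^*$ (respectively $I^*_k$).
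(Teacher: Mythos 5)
Your proposal is correct and follows essentially the same route as the paper: $\phi_H(w)$ is a function of the pair $(H[C_w(H)], P(H)\cap C_w(H))$ (equivalently, of the induced subgraph and the colouring on it), and membership in $I^*$ (resp.\ $I^*_k$) is precisely the assertion that this pair is the same for $G^+$ and $G^-$. The only difference is that you spell out the empty-component case and the derivation of $P^+\cap C = P^-\cap C$ from $R^+\cap C = R^-\cap C$, both of which the paper leaves implicit (the latter is actually noted in the text just before the observation is stated).
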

\begin{proof}
    For a graph $H$ and $w \in V(H)$, the value of $\phi_H(w)$ only depends on $H[C_w(H)]$ and the global colouring on $C_w(H)$. If $w \in I^*$, then these are identical for $G^+$ and $G^-$, thus $\phi^+(w) = \phi^-(w)$. The second statement follows analogously.
\end{proof}

Let us now consider how the global colourings in $G^+$ and $G^-$ can differ, which will depend on the colours that $u$ and $v$ receive in $G^+$ and $G^-$.
Firstly, we note that if $u$ and $v$ are both sapphire in $G^-$, then the global colourings in $G^+$ and $G^-$ agree.

\begin{observation} \label{obs:uv_in_S^-}
    Suppose that $\{u,v\} \subseteq S^-$. Then $S^- = S^+$, $P^- = P^+$, and $R^- = R^+$. In particular, $I^* = V(G)$.
\end{observation}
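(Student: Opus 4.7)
The plan is to invoke \cref{lem:sapphire_cut} with $A = \{u,v\}$ and $B = \varnothing$, after which the conclusion, including $I^* = V(G)$, will drop out immediately. The only non-trivial hypothesis to check is that $A \subseteq S^+ \cap S^-$; by assumption $A \subseteq S^-$, so the work lies in verifying the inclusion $A \subseteq S^+$, which I will get from the stronger statement $S^- \subseteq S^+$.

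To establish $S^- \subseteq S^+$, I will show directly that $S^-$ satisfies the strong $4$-core property \cref{Strong_4-core_property} for $G^+$ and then appeal to the maximality of $S^+$. The key observation is that because both endpoints of the added edge $e = \{u,v\}$ already lie in $S^-$, adding $e$ creates no new external neighbour of $S^-$, so $N_{G^+}(S^-) = N_{G^-}(S^-)$, while every vertex's neighbourhood in $G^+$ only grows relative to $G^-$. Consequently, for every $w \in S^- \cup N_{G^+}(S^-) = S^- \cup N_{G^-}(S^-)$, we have
\[
|N_{G^+}(w) \cap S^-| \;\geq\; |N_{G^-}(w) \cap S^-| \;\geq\; 4,
\]
where the last inequality is the strong $4$-core property of $S^-$ in $G^-$.

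With $S^- \subseteq S^+$ in hand, all hypotheses of \cref{lem:sapphire_cut} are satisfied: $A = \{u,v\} \subseteq S^- \cap S^+$, $N_{G^+}(\varnothing) \cup N_{G^-}(\varnothing) = \varnothing \subseteq A$, and $E(G^+) \triangle E(G^-) = \{e\} \subseteq \binom{A}{2}$. The lemma then yields $S^+ = S^-$, $P^+ = P^-$, and $R^+ = R^-$, and moreover, for every $w \in V(G)$, the identities $C_w^+ = C_w^-$, $G^+[C_w^+] = G^-[C_w^-]$, and $R^+ \cap C_w^+ = R^- \cap C_w^-$, which is precisely the statement that $w \in I^*$. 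Hence $I^* = V(G)$.

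There is no real obstacle here: the only edge on which $G^+$ and $G^-$ differ is internal to the strong $4$-core of $G^-$, and it is a general principle of the colouring process that such edges cannot trigger any recolouring. The entire argument boils down to the single observation that adding an edge inside $S^-$ neither expands the external neighbourhood of $S^-$ nor reduces any of the sapphire degree counts.
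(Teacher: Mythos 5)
Your proof is correct, and there is no circularity since \cref{lem:sapphire_cut} and \cref{lem:sapphire_cut_onesided} are proved before \cref{obs:uv_in_S^-} in the paper and do not depend on it. The first half of your argument --- showing $S^- \subseteq S^+$ by verifying that $S^-$ satisfies the strong $4$-core property for $G^+$, using $N_{G^+}(S^-) = N_{G^-}(S^-)$ and $G^- \subseteq G^+$ --- is verbatim the paper's opening step. Where you diverge is in what comes next: the paper then proves the reverse inclusion $S^+ \subseteq S^-$ from scratch (a short case analysis on whether $x \in \{u,v\}$), and after establishing $S^+ = S^-$ it argues separately that $G^-[P^- \cup R^-]$ and $G^+[P^+ \cup R^+]$ have identical components to conclude $I^* = V(G)$. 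You instead feed $A = \{u,v\}$ and $B = \varnothing$ into \cref{lem:sapphire_cut}, and the lemma's ``moreover'' clause delivers $S^+ = S^-$, $P^+ = P^-$, $R^+ = R^-$, and the per-component agreement $G^-[C_w^-] = G^+[C_w^+]$, $R^- \cap C_w^- = R^+ \cap C_w^+$ for every $w$, all at once. The hypothesis check is as you say: $\{u,v\} \subseteq S^- \cap S^+$, $N_{G^\pm}(\varnothing) = \varnothing$, and $E(G^+) \triangle E(G^-) = \{\{u,v\}\} \subseteq \binom{\{u,v\}}{2}$. Your route is a genuine shortcut, and it is in fact the same style of argument the paper uses for the local analogue \cref{lem:uv_in_S^-_implies_equality_locally} (there via \cref{lem:sapphire_cut_local} with $A = S^-$); the paper's from-scratch proof of the observation is slightly longer but fully self-contained.
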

\begin{proof}
    We first show that $S^- \subseteq S^+$.
    Since $e = \{u,v\} \subseteq S^-$, we have $N_{G^+}(S^-) = N_{G^-}(S^-)$.
    By the definition of $S^-$, we have $|N_{G^-}(w) \cap S^-| \geq 4$ for every $w \in S^- \cup N_{G^-}(S^-)$. Therefore, $|N_{G^+}(w) \cap S^-| \geq |N_{G^-}(w) \cap S^-| \geq 4$ for every $w \in S^- \cup N_{G^-}(S^-) =  S^- \cup N_{G^+}(S^-)$. Hence, $S^-$ has the strong $4$-core property \cref{Strong_4-core_property} for $G^+$ and it follows that $S^- \subseteq S^+$. 

    We now show that $S^+ \subseteq S^-$.
    To that end, let $x \in S^+ \cup N_{G^-}(S^+)$. Since $N_{G^-}(S^+) \subseteq N_{G^+}(S^+)$, we have $|N_{G^+}(x) \cap S^+| \geq 4$, by the definition of $S^+$. If $x \notin \{u,v\}$, then $N_{G^+}(x) = N_{G^-}(x)$ and so $|N_{G^-}(x) \cap S^+| = |N_{G^+}(x) \cap S^+| \geq 4$. If $x \in \{u,v\}$, then, since $\{u, v\} \subseteq S^- \subseteq S^+$, we have $|N_{G^-}(x) \cap S^+| \geq |N_{G^-}(x) \cap S^-| \geq 4$, by the definition of $S^-$.
    It follows that $S^+$ has the strong $4$-core property \cref{Strong_4-core_property} for $G^-$. Hence, $S^+ \subseteq S^-$.

    Therefore, $S^- = S^+$. Finally note that since $\{u,v\} \subseteq S^-$, we have $P^- = N_{G^-}(S^-) = N_{G^+}(S^+) = P^+$ and thus $R^- = V(G) \setminus (S^- \cup P^-) = V(G) \setminus (S^+ \cup P^+) = R^+$.

    Now note that $G^-[R^- \cup P^-]$ and $G^+[R^+ \cup P^+]$ have the same components and for any such component $C$, we have $G^-[C] = G^+[C]$. Hence $I^* = V(G)$.
\end{proof}

Whilst the global colourings do not agree in every case, we will find that one of $S^+$ or $S^-$ will always be a subset of the other, and the same is true for $W^+$ and $W^-$.

\begin{lemma} \label{lem:S_and_W_containment}
    The following hold. 
    \begin{enumerate}[label = \upshape{(\roman*)}]
        \item If $\{u,v\} \cap S^+ = \varnothing$, then $N_{G^-}(W^+) \subseteq N_{G^+}(W^+) \subseteq S^+ \subseteq S^-$, $R^- \subseteq R^+$, and $W^- \subseteq W^+$.
        \item If $\{u,v\} \cap S^+ \neq \varnothing$, then $N_{G^-}(W^-) \subseteq N_{G^+}(W^-) \subseteq S^- \subseteq S^+$, $R^+ \subseteq R^-$, and $W^+ \subseteq W^-$.
    \end{enumerate}
\end{lemma}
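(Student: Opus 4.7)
The plan is to handle the two cases in parallel: first establish the sapphire-set containment via the maximality characterization of the strong $4$-core, then derive the $N(W)$, $R$, and $W$ containments as consequences. In both cases the key observation is that the only edge in which $G^+$ and $G^-$ differ is $e=\{u,v\}$, so for any vertex $w$ the counts $|N_{G^+}(w)\cap A|$ and $|N_{G^-}(w)\cap A|$ agree unless $w\in\{u,v\}$ and the other endpoint of $e$ lies in $A$.

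For case (i), the hypothesis $\{u,v\}\cap S^+=\varnothing$ says $e$ contributes no vertex to $S^+$, so $N_{G^+}(x)\cap S^+=N_{G^-}(x)\cap S^+$ for every $x\in V(G)$. Hence the strong $4$-core property of $S^+$ in $G^+$ transfers verbatim to $G^-$, and by maximality of $S^-$ we get $S^+\subseteq S^-$. The inclusion $N_{G^+}(W^+)\subseteq S^+$ is immediate from $W^+$ being a union of components of $G^+[P^+\cup R^+]$, and $N_{G^-}(W^+)\subseteq N_{G^+}(W^+)$ because $G^-\subseteq G^+$. For $R^-\subseteq R^+$, I would show $S^+\cup P^+\subseteq S^-\cup P^-$: any edge witnessing $x\in P^+\setminus S^+$ has an endpoint in $S^+$, hence cannot be $e$, so it survives in $G^-$ and witnesses $x\in P^-$. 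Finally, $S^+\subseteq S^-$ gives $P^-\cup R^-\subseteq P^+\cup R^+$, so $G^-[P^-\cup R^-]$ is a subgraph of $G^+[P^+\cup R^+]$; components containing $u$ or $v$ in the former then embed into those of the latter, yielding $W^-\subseteq W^+$.

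For case (ii), the most delicate step is $S^-\subseteq S^+$. I would prove that $S^-\cup S^+$ has the strong $4$-core property in $G^+$ and invoke the maximality of $S^+$. Vertices in $S^+\cup N_{G^+}(S^+)$ are handled by the definition of $S^+$, and vertices in $S^-\cup N_{G^-}(S^-)$ are handled by transferring the property of $S^-$ from $G^-$ to $G^+$ (since $G^-\subseteq G^+$). The only residual configuration is $w\in N_{G^+}(S^-)\setminus\bigl(S^+\cup N_{G^+}(S^+)\cup N_{G^-}(S^-)\bigr)$, which forces $w$ to acquire its sole $S^-$-neighbor via $e$, so $w\in\{u,v\}$ and the other endpoint of $e$ lies in $S^-$. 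But under case (ii) one endpoint of $e$ is in $S^+$, which places $w$ in $S^+\cup N_{G^+}(S^+)$, contradicting the choice of $w$. The other containments follow analogously to case (i): $N_{G^+}(W^-)\subseteq S^-$ after observing that if $e$ contributes a new external neighbor of $W^-$ in $G^+$, it must be an endpoint of $e$ that lies in $S^-$; $R^+\subseteq R^-$ from $S^-\cup P^-\subseteq S^+\cup P^+$; and $W^+\subseteq W^-$ once one notices that in case (ii), $e\notin G^+[P^+\cup R^+]$ because one of its endpoints lies in $S^+$, so $G^+[P^+\cup R^+]$ is actually a subgraph of $G^-[P^-\cup R^-]$ despite $G^+\supseteq G^-$.

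The main obstacle I anticipate is the single residual sub-case in the proof of $S^-\subseteq S^+$ in case (ii): since the strong $4$-core is not a monotone function of the graph, one cannot reduce to simple monotonicity and must dispose of this one configuration by explicitly using the case hypothesis $\{u,v\}\cap S^+\neq\varnothing$. A secondary subtlety is that the direction of the subgraph containment between $G^+[P^+\cup R^+]$ and $G^-[P^-\cup R^-]$ reverses between the two cases, and one must respect this reversal when passing component containments back and forth to derive the $W$-inclusions.
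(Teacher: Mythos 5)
Your proof is correct and follows the same core approach as the paper: in each case, the sapphire-set containment is established by exhibiting a set with the strong $4$-core property and invoking maximality (in case (i), $S^+$ itself is shown to have the property for $G^-$; in case (ii), $S^+\cup S^-$ is shown to have it for $G^+$), after which the $N(W)$-, $R$-, and $W$-containments are derived from it. The minor differences in the final deductions -- you dispose of the residual subcase $x\in\{u,v\}$ in (ii) by showing it is vacuous under the case hypothesis, whereas the paper first records $|N_{G^+}(u)\cap S^+|,|N_{G^+}(v)\cap S^+|\geq 4$ and uses it to verify the bound directly; and you obtain the $W$-containments by observing that $G^-[P^-\cup R^-]\subseteq G^+[P^+\cup R^+]$ in case (i) (and the reverse inclusion in case (ii), since $e$ has an endpoint in $S^+$), whereas the paper uses $N(W^\pm)\subseteq S^\pm$ to argue that components of $G^\mp - S^\mp$ either lie in $W^\pm$ or are disjoint from it -- are sound and stylistic rather than substantive.
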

\begin{proof} We deal with the two cases separately.
    \begin{enumerate}[label=\textbf{Case \arabic*:}, ref=\arabic*, wide, labelwidth=0pt, labelindent=0pt, parsep=5pt]
        \item $\{u,v\} \cap S^+ = \varnothing$.
        
            \noindent We first show that $S^+ \subseteq S^-$. 
            By the definition of $S^+$, we have $|N_{G^+}(x) \cap S^+| \geq 4$ for every $x \in S^+ \cup N_{G^+}(x)$. 
            Since $\{u,v\} \cap S^+ = \varnothing$, we have $N_{G^+}(S^+) = N_{G^-}(S^+)$ and for any vertex $x\in V(G)$, we have $N_{G^+}(x) \cap S^+ = N_{G^-}(x) \cap S^+$. 
            Thus, it follows that $|N_{G^-}(x) \cap S^+| \geq 4$ for every $x \in S^+ \cup N_{G^-}(S^+)$. That is, $S^+$ satisfies the strong $4$-core property \cref{Strong_4-core_property} for $G^-$. Hence, $S^+ \subseteq S^-$, and so
            \begin{align}
                N_{G^-}(W^+) \subseteq N_{G^+}(W^+) \subseteq S^+ \subseteq S^-, \label{eq:N_containment}
            \end{align}
            where we used that $G^- \subseteq G^+$ and the definition of $W^+$.

            Furthermore, since $N_{G^+}(S^+) = N_{G^-}(S^+)$ and $S^+ \subseteq S^-$, we have $S^+ \cup N_{G^+}(S^+) = S^+ \cup N_{G^-}(S^+) \subseteq S^- \cup N_{G^-}(S^-)$. Hence, 
            \[
            R^- = V(G) \setminus (S^- \cup N_{G^-}(S^-)) \subseteq V(G) \setminus (S^+ \cup N_{G^+}(S^+)) = R^+.
            \]

            Finally, by \cref{eq:N_containment}, any component of $G^-[P^- \cup R^-] = G^- - S^-$ is contained in $W^+$ or $V(G^-) \setminus (W^+ \cup N_{G^-}(W^+))$. Since $\{u,v\} \cap S^+ = \varnothing$, it follows from the definition of $W^+$ that $\{u,v\} \subseteq W^+$. Hence, the components $C_u^-$ and $C_v^-$ of $G^- - S^-$ are contained in $W^+$.
            
        \item $\{u,v\} \cap S^+ \neq \varnothing$. 
        
            \noindent Note that, in this case, since $\{u,v\} \in E(G^+)$, we have $\{u,v\} \subseteq S^+ \cup N_{G^+}(S^+)$. Thus, by the definition of $S^+$, we have 
            \begin{align} \label{uv_satisfy_property}
                |N_{G^+}(u) \cap S^+|, |N_{G^+}(v) \cap S^+| \geq 4.
            \end{align}
            We now show that $S^+ \cup S^-$ satisfies the strong $4$-core property \cref{Strong_4-core_property} for $G^+$. To that end, let $x \in S^+ \cup S^- \cup N_{G^+}(S^+ \cup S^-)$. If $x \in S^+ \cup N_{G^+}(S^+)$, then $|N_{G^+}(x) \cap (S^+ \cup S^-)| \geq |N_{G^+}(x) \cap S^+| \geq 4$ by the definition of $S^+$. Similarly, if $x \in S^- \cup N_{G^-}(S^-)$, then $|N_{G^+}(x) \cap (S^+ \cup S^-)| \geq |N_{G^-}(x) \cap S^-| \geq 4$ by the definition of $S^-$. Thus, we may assume that 
            \[
            x \in N_{G^+}(S^+ \cup S^-) \setminus (N_{G^+}(S^+) \cup N_{G^-}(S^-)) = N_{G^+}(S^-) \setminus N_{G^-}(S^-) \subseteq \{u,v\}
            \]
            and so $|N_{G^+}(x) \cap (S^+ \cup S^-)| \geq 4$ by \cref{uv_satisfy_property}. 
            Hence, $S^+ \cup S^-$ satisfies the strong $4$-core property~\cref{Strong_4-core_property} for $G^+$. It follows that $S^+ \cup S^- \subseteq S^+$, and so $S^- \subseteq S^+$. 

            Next we show that $N_{G^-}(W^-) \subseteq N_{G^+}(W^-) \subseteq S^-$, where the first inclusion is apparent. Furthremore, since $W^-$ is a union of components of $G^- - S^-$, we have $N_{G^-}(W^-) \subseteq S^-$. We split into three cases.
            If $\{u,v\} \cap S^- = \varnothing$, then $\{u,v\} \subseteq W^-$ by the definition of $W^-$ and thus $N_{G^+}(W^-) = N_{G^-}(W^-) \subseteq S^-$. If $\{u,v\} \subseteq S^-$, then by \cref{obs:uv_in_S^-}, $S^- = S^+$ and $W^- = W^+ = \varnothing$ and thus the conclusion trivially holds.
            So, we may assume without loss of generality that $u \in V(G^-) \setminus S^-$
            and $v \in S^-$. Then $u \in W^-$ and thus $N_{G^+}(W^-) \subseteq N_{G^-}(W^-) \cup \{v\} \subseteq S^-$. 

            Next we show that $R^+ \subseteq R^-$. Indeed, since $S^- \subseteq S^+$ and $G^- \subseteq G^+$, we have 
            \[
            R^+ = V(G) \setminus (S^+ \cup N_{G^+}(S^+)) \subseteq V(G) \setminus (S^- \cup N_{G^+}(S^-)) \subseteq V(G) \setminus (S^- \cup N_{G^-}(S^-)) = R^-. 
            \]
            
            Finally, we show that $C_x^+ \subseteq W^-$ for each $x \in \{u,v\}$, and hence $C_u^+ \cup C_v^+ = W^+ \subseteq W^-$. Note that, if $x \in S^- \subseteq S^+$, then $C_x^+ = \varnothing \subseteq W^-$ vacuously. Hence we may assume that $x \not\in S^-$, and so $x \in W^-$. In particular, $C_x^+$ meets $W^-$. On the other hand, recalling that $N_{G^+}(W^-) \subseteq S^- \subseteq S^+$, we see that every component of $G^+ - S^+$ is contained in $W^-$ or $V(G^+) \setminus (W^- \cup N_{G^+}(W^-))$. Hence, since $C_x^+$ meets $W^-$, it follows that $C_x^+ \subseteq W^-$ as claimed.
            \qedhere
\end{enumerate} 
\end{proof}

Using this information, we can get some control over the set of vertices $w$ whose global or local red-purple components are indifferent to flipping the edge $e$. 

We start by showing that, if $u$ and $v$ are both sapphire in $G^-$, then all local colourings for $G^-$ and $G^+$ agree.

\begin{lemma} \label{lem:uv_in_S^-_implies_equality_locally}
    Let $k \in \mathbb{N}$.
    If $\{u,v\} \subseteq S^-$, then $I^*_k = V(G).$
\end{lemma}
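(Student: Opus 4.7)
The plan is to apply \cref{lem:sapphire_cut_local} with the natural choice $A = \{u,v\}$, $B = \varnothing$, $G_1 = G^+$, and $G_2 = G^-$, after first reducing to this setting using \cref{obs:uv_in_S^-}. By \cref{obs:uv_in_S^-}, the hypothesis $\{u,v\} \subseteq S^-$ forces $S^- = S^+$, so $A = \{u,v\} \subseteq S(G^+) \cap S(G^-)$, giving the required sapphire condition. The symmetric difference $E(G^+) \triangle E(G^-) = \{e\}$ is contained in $\binom{A}{2} = \binom{A \cup B}{2}$, and the conditions $N_{G^+}(B) \cup N_{G^-}(B) \subseteq A$ and $B \subseteq V \setminus \{w\}$ hold vacuously since $B = \varnothing$.

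The only subtlety is that the statement of \cref{lem:sapphire_cut_local} implicitly requires $w \notin A \cup B$ (otherwise the conclusion $C \subseteq V \setminus (A \cup B)$ is not consistent with $w \in C_w^k$). I would therefore split into two cases according to whether $w \in \{u,v\}$ or not. When $w \notin \{u,v\}$, \cref{lem:sapphire_cut_local} applies directly and yields
\[
C_w^{k,+} = C_w^{k,-}, \qquad G^+[C_w^{k,+}] = G^-[C_w^{k,-}], \qquad R_k(G^+,w) \cap C_w^{k,+} = R_k(G^-,w) \cap C_w^{k,-},
\]
so $w \in I^*_k$ by the definition of $I^*_k$.

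When $w \in \{u,v\}$, I would argue that the local component containing $w$ is empty for both signs: since $w \in S^- = S^+$ and $w \in B_{G^\circ}(w,k)$, \cref{obs:In_S_globally_implies_locally} gives $w \in S_k(G^\circ, w)$ for each $\circ \in \{+,-\}$, so by the definition of $C_w^{k,\circ}$ we have $C_w^{k,+} = \varnothing = C_w^{k,-}$. The two equalities required for $w \in I^*_k$ are then trivially satisfied.

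I do not foresee a genuine obstacle: the lemma is simply the combination of \cref{obs:uv_in_S^-}, which guarantees that $u$ and $v$ are globally (and hence locally, via \cref{obs:In_S_globally_implies_locally}) sapphire in both $G^+$ and $G^-$, and \cref{lem:sapphire_cut_local}, which propagates a shared sapphire separator to the equality of local red-purple components. The only care required is the case analysis above to handle the possibility $w \in \{u,v\}$, where the target of the local colouring happens to coincide with an endpoint of the resampled edge.
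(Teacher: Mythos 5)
Your proof is correct and follows essentially the same route as the paper: both reduce via \cref{obs:uv_in_S^-} to the fact that $S^+ = S^-$ and then apply \cref{lem:sapphire_cut_local} with $B = \varnothing$, the only cosmetic difference being that the paper takes $A = S^+ = S^-$ while you take the smaller set $A = \{u,v\}$ (either choice satisfies the hypotheses). Your separate treatment of $w \in \{u,v\}$ addresses a real, if minor, imprecision that the paper leaves implicit: the proof of \cref{lem:sapphire_cut_local} opens with ``Since \dots $w\notin A\cup B$ \dots'' even though its statement only assumes $w\notin B$, and the paper's own application with $A=S^\pm$ silently invokes the lemma for $w\in A$ whenever $w\in S^\pm$; as you observe, for such $w$ one has $w\in S(G^+)\cap S(G^-)$, hence $w\in S_k(G^\pm,w)$ by \cref{obs:In_S_globally_implies_locally} and $C_w^{k,+}=\varnothing=C_w^{k,-}$, so the conclusions hold trivially, which is exactly what is needed to close that edge case.
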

\begin{proof}
    By \cref{obs:uv_in_S^-}, $S^+ = S^- \eqqcolon A$. 
    Since $\{u,v\} \subseteq A$, we have $E(G^-) \triangle E(G^+) \subseteq \binom{A}{2}$. 
    By \cref{lem:sapphire_cut_local}, with $G^-$, $G^+$, $A$, and $\varnothing$ playing the roles of $G_1$, $G_2$, $A$, and $B$, respectively, we have $I^*_k = V(G)$.
\end{proof}

Next we show that vertices not in $W^+$ or $W^-$ are `indifferent' to flipping the edge $e$.

\begin{lemma} \label{lem:not_in_W_then_in_I}
    The following hold for any $k \in \mathbb{N}$.
    \begin{enumerate}[label= \upshape{(\roman*)}]
        \item If $\{u,v\} \cap S^+ = \varnothing$, then $V(G) \setminus W^+ \subseteq I^* \cap I^*_k$.
        \item If $\{u,v\} \cap S^+
        \neq \varnothing$, then $V(G) \setminus W^- \subseteq I^* \cap I^*_k$.
    \end{enumerate}
\end{lemma}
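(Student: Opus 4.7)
The strategy is to reduce both statements to applications of the ``cut'' \cref{lem:sapphire_cut,lem:sapphire_cut_local}, using the containment information provided by \cref{lem:S_and_W_containment} to identify a suitable separating sapphire set. The rough picture in each case is that $W^+$ (respectively $W^-$) absorbs all the ``action'' of flipping $e$, and its external neighbourhood in $G^+$ is sapphire in both $G^+$ and $G^-$, so nothing outside $W^+$ (respectively $W^-$) can feel the change.

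For case (i), when $\{u,v\}\cap S^+=\varnothing$, I would set $B\coloneqq W^+$ and $A\coloneqq N_{G^+}(W^+)$. I then need to verify the hypotheses of \cref{lem:sapphire_cut,lem:sapphire_cut_local} with $G_1=G^+$, $G_2=G^-$. First, \cref{lem:S_and_W_containment}(i) gives $A\subseteq S^+\subseteq S^-$, so $A\subseteq S^+\cap S^-$. Secondly, $N_{G^+}(B)=A$ by definition and $N_{G^-}(B)\subseteq N_{G^+}(B)=A$ because $G^-\subseteq G^+$. Thirdly, $\{u,v\}\cap S^+=\varnothing$ together with $\{u,v\}\in E(G^+)$ forces $u,v$ to lie in a common component of $G^+-S^+$, so $\{u,v\}\subseteq C_u^+=C_v^+\subseteq W^+=B$ and hence $E(G^+)\triangle E(G^-)=\{\{u,v\}\}\subseteq\binom{B}{2}$. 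Applying \cref{lem:sapphire_cut} then yields, for every $w\in V(G)\setminus W^+$, that $C_w^+=C_w^-$, $G^+[C_w^+]=G^-[C_w^-]$, and $R^+\cap C_w^+=R^-\cap C_w^-$, which by definition means $w\in I^*$. Applying \cref{lem:sapphire_cut_local} for each such $w$ (note $w\notin B$) gives the analogous conclusion $w\in I^*_k$.

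For case (ii), when $\{u,v\}\cap S^+\neq\varnothing$, I would split on whether $\{u,v\}\subseteq S^-$. If so, \cref{obs:uv_in_S^-} and \cref{lem:uv_in_S^-_implies_equality_locally} already give $I^*=I^*_k=V(G)$, so the conclusion is immediate. Otherwise, I take $B\coloneqq W^-$ and $A\coloneqq N_{G^+}(W^-)$. Now \cref{lem:S_and_W_containment}(ii) gives $A\subseteq S^-\subseteq S^+$, so $A\subseteq S^+\cap S^-$, and $N_{G^+}(B)=A$, $N_{G^-}(B)\subseteq A$ as before. To check $\{u,v\}\subseteq A\cup B$, assume without loss of generality that $u\notin S^-$; then $u\in W^-=B$, while $v$ is either in $W^-=B$ or in $S^-$, and in the latter case $v\notin W^-$ together with $vu\in E(G^+)$ and $u\in W^-$ yields $v\in N_{G^+}(W^-)=A$. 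Thus the hypotheses of both cut lemmas hold, and applying them exactly as in case (i) gives $V(G)\setminus W^-\subseteq I^*\cap I^*_k$.

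There is no real obstacle here beyond bookkeeping: the main content is already encoded in \cref{lem:S_and_W_containment}, which tells us which of $S^\pm$ is larger and correspondingly which of $W^\pm$ is larger, and the cut lemmas then do the work of propagating agreement of colourings. The only mildly delicate point is verifying $\{u,v\}\subseteq A\cup B$ in case (ii), which requires the short case analysis on $\{u,v\}\cap S^-$ outlined above.
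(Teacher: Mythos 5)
Your proposal is correct and follows essentially the same route as the paper: the same case split (including the sub-case $\{u,v\}\subseteq S^-$ handled via \cref{obs:uv_in_S^-} and \cref{lem:uv_in_S^-_implies_equality_locally}), the same choice of $A=N_{G^+}(W^\circ)$ and $B=W^\circ$ furnished by \cref{lem:S_and_W_containment}, and the same pair of applications of \cref{lem:sapphire_cut} and \cref{lem:sapphire_cut_local}.
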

\begin{proof}
    We deal with the two cases separately.
    \begin{enumerate}[label=\textbf{Case \arabic*:}, ref=\arabic*, wide, labelwidth=0pt, labelindent=0pt, parsep=5pt]
        \item $\{u,v\} \cap S^+ = \varnothing$. 
            
            \noindent Let $w \in V(G) \setminus W^+$. Note that by \cref{lem:S_and_W_containment}, we have $N_{G^-}(W^+) \subseteq N_{G^+}(W^+) \subseteq S^+ = S^+ \cap S^-$. Since $\{u,v\} \cap S^+ = \varnothing$, we have $\{u,v\} \subseteq W^+$. Hence, $E(G^+) \triangle E(G^-) \subseteq \binom{W^+}{2}$. By \cref{lem:sapphire_cut} and \cref{lem:sapphire_cut_local}  with $G^+$, $G^-$, $N_{G^+}(W^+)$, and $W^+$ playing the roles of $G_1$, $G_2$, $A$, and $B$, respectively, we have $w \in I^* \cap I^*_k$.
        \item $\{u,v\} \cap S^+ \neq \varnothing$.
        
            \noindent Let $w \in V(G) \setminus W^-$. By \cref{lem:S_and_W_containment}, we have $N_{G^-}(W^-) \subseteq N_{G^+}(W^-) \subseteq S^- = S^- \cap S^+$.
            
            First suppose that $\{u,v\} \subseteq S^-$. Then by \cref{lem:uv_in_S^-_implies_equality_locally}, we have $I^*_k = V(G)$ and by \cref{obs:uv_in_S^-}, we have $I^* = V(G)$. Hence, $w \in I^* \cap I^*_k$.
            
            Thus, we may assume without loss of generality that $u \in P^- \cup R^-$. Hence, $u \in W^-$ and thus $\{u,v\} \subseteq W^- \cup N_{G^+}(W^-)$. Let $A \coloneqq N_{G^+}(W^-)$. It follows that $E(G^+) \triangle E(G^-) \subseteq \binom{A \cup W^-}{2}$.
            Hence, by \cref{lem:sapphire_cut} with $G^+$, $G^-$, $A$, and $W^-$ playing the roles of $G_1$, $G_2$, $A$, and $B$, respectively, we have $w \in I^*$.

            Since $A \subseteq S^- \cap S^+$, we have, by \cref{obs:In_S_globally_implies_locally}, that $A \cap B_{G^-}(w,k) \subseteq S_k(G^-, w)$ and $A \cap B_{G^+}(w,k) \subseteq S_k(G^+, w)$. Hence, by \cref{lem:sapphire_cut_local} with $G^+$, $G^-$, $A$, and $W^+$ playing the roles of $G_1$, $G_2$, $A$, and $B$, respectively, we have $w \in I^*_k$.

            It follows that $w \in I^* \cap I^*_k$. \qedhere
    \end{enumerate}
\end{proof}

To bound the variances of $\tX_k$ and $\tX -\tX_k$ using the Efron--Stein inequality (in the form of \cref{lem:ESapp}), we will be interested in the set
\begin{align} \label{eq:def_tD}
    \tD \coloneqq \{ w \in V(G) \colon \phi_k^+(w) \neq \phi_k^-(w)\}
\end{align}
of vertices whose local contribution changes and the set
\begin{align} \label{eq:def_D}
    D \coloneqq \{w \in V(G) \colon \phi^+(w)-\phi_k^+(w) \neq \phi^-(w) - \phi_k^-(w)\}
\end{align}
of vertices for which the difference between the local and the global contribution changes. We can now show that the vertices in $D \cup \tD$ are in some sense local to the edge $e$, in that they are contained in $W^+$ or $W^-$.

\begin{lemma} \label{lem:D_cont_in_W}
    The following hold.
    \begin{enumerate}[label= \upshape{(\roman*)}]
        \item If $\{u,v\} \cap S^+ = \varnothing$, then $D \cup \tD \subseteq W^+$.
        \item If $\{u,v\} \cap S^+
        \neq \varnothing$, then $D \cup \tD \subseteq W^-$.
    \end{enumerate}
\end{lemma}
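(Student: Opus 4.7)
The plan is to combine the two previous results already established in the excerpt, namely \cref{obs:no_change_for_indif} and \cref{lem:not_in_W_then_in_I}. The key observation is that membership in $I^* \cap I_k^*$ is, by \cref{obs:no_change_for_indif}, enough to guarantee that a vertex contributes nothing to $D$ or to $\tD$. Indeed, if $w \in I^* \cap I_k^*$, then $\phi^+(w) = \phi^-(w)$ and $\phi_k^+(w) = \phi_k^-(w)$, so in particular $w \notin \tD$ and $\phi^+(w) - \phi_k^+(w) = \phi^-(w) - \phi_k^-(w)$, which means $w \notin D$. Hence
\[
D \cup \tD \;\subseteq\; V(G) \setminus (I^* \cap I_k^*).
\]

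Now we simply apply \cref{lem:not_in_W_then_in_I}. In the first case, when $\{u,v\} \cap S^+ = \varnothing$, that lemma gives $V(G) \setminus W^+ \subseteq I^* \cap I_k^*$, or equivalently $V(G) \setminus (I^* \cap I_k^*) \subseteq W^+$, so $D \cup \tD \subseteq W^+$. In the second case, when $\{u,v\} \cap S^+ \neq \varnothing$, the lemma instead gives $V(G) \setminus W^- \subseteq I^* \cap I_k^*$, yielding $D \cup \tD \subseteq W^-$.

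There is no real obstacle here: all the substantive work (the monotonicity statements of \cref{lem:S_and_W_containment}, the cut lemmas \cref{lem:sapphire_cut,lem:sapphire_cut_local}, and the resulting containments $V(G) \setminus W^{\pm} \subseteq I^* \cap I_k^*$) has already been done. The present lemma is essentially a bookkeeping corollary: it just rephrases \cref{lem:not_in_W_then_in_I} in terms of the quantities $\phi$ and $\phi_k$ that actually appear in the Efron--Stein estimates of \cref{s:ESvariance}, by using \cref{obs:no_change_for_indif} to translate ``indifference of the red-purple component'' into ``equality of the contribution $\phi$ (respectively, $\phi_k$)''.
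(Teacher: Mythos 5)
Your proof is correct and follows essentially the same route as the paper's: both combine \cref{lem:not_in_W_then_in_I} (giving $V(G)\setminus W^{\pm} \subseteq I^* \cap I_k^*$) with \cref{obs:no_change_for_indif} (translating indifference into $w\notin D\cup\tD$), differing only in the cosmetic choice of whether to argue pointwise or via set complements.
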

\begin{proof}
    Suppose that $\{u,v\} \cap S^+ = \varnothing$ (the proof in the other case is analgous). Let $w \in V(G) \setminus W^+$. By \cref{lem:not_in_W_then_in_I}, we have $w \in I^* \cap I^*_k$. By \cref{obs:no_change_for_indif}, we have that $\phi^+(w) = \phi^-(w)$ and $\phi^+_k(w) = \phi^-_k(w)$. Hence $w \notin D \cup \tD$. Thus we have shown that $D \cup \tD \subseteq W^+$.
\end{proof}

The following lemma shows that $D =  \varnothing$ if both $W^+$ and $W^-$ have size smaller than $k$.
\begin{lemma} \label{lem:if_W_small_D_nothing}
    If $\max \{|W^+|, |W^-|\} < k$, then $D = \varnothing$.
\end{lemma}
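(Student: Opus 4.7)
The plan is to show $D = \varnothing$ by establishing, for every $w \in V(G)$, both $\phi^+(w) = \phi_k^+(w)$ and $\phi^-(w) = \phi_k^-(w)$, which together force $w \notin D$. By the symmetry between the two cases of \cref{lem:S_and_W_containment,lem:D_cont_in_W}, I would focus on the case $\{u,v\} \cap S^+ = \varnothing$, where \cref{lem:S_and_W_containment} gives $W^- \subseteq W^+$ and \cref{lem:D_cont_in_W} gives $D \subseteq W^+$; the other case is obtained by swapping $+$ and $-$ (and $W^+$ and $W^-$) throughout. Thus it would suffice to fix an arbitrary $w \in W^+$ and verify the two equalities.

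The key auxiliary claim I would establish first is: \emph{if $C_w^\circ \subseteq W^+$ for some $\circ \in \{+,-\}$, then $\phi^\circ(w) = \phi_k^\circ(w)$.} The proof is short: since $C_w^\circ$ is connected and contains $w$, and $|C_w^\circ| \leq |W^+| < k$, every vertex of $C_w^\circ$ lies within $G^\circ$-distance at most $|C_w^\circ| - 1 \leq k - 2$ of $w$, so $C_w^\circ \subseteq B_{G^\circ}(w, k-1)$, and \cref{obs:C_small} yields $\phi^\circ(w) = \phi_k^\circ(w)$. For $\circ = +$, the identity $W^+ = C_u^+ \cup C_v^+$ together with $w \in W^+$ forces $C_w^+ \subseteq W^+$, so the claim applies directly. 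For $\circ = -$, if $w \in S^-$ then $\phi^-(w) = 0$ by definition and, by \cref{obs:In_S_globally_implies_locally}, also $\phi_k^-(w) = 0$; otherwise $w \in P^- \cup R^-$, and I would invoke $N_{G^-}(W^+) \subseteq S^-$ from \cref{lem:S_and_W_containment} to argue that the component $C_w^-$ of $G^- - S^-$ containing $w$ cannot leave $W^+$ without crossing the sapphire boundary $N_{G^-}(W^+) \subseteq S^-$, which is forbidden. Hence $C_w^- \subseteq W^+$ and the claim applies again.

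Combining the two conclusions yields $\phi^+(w) - \phi_k^+(w) = 0 = \phi^-(w) - \phi_k^-(w)$, so $w \notin D$, and $D = \varnothing$ follows. The only slightly subtle step is the case $w \in W^+ \setminus W^-$: here neither $w$'s membership in a $W^-$-component nor the hypothesis $|W^-| < k$ directly bounds the size of $C_w^-$ in $G^-$, and the argument rests on the sapphire-boundary structure from \cref{lem:S_and_W_containment} to trap $C_w^-$ inside $W^+$, so that the diameter bound $|C_w^-| \leq |W^+| < k$ can be upgraded into the containment $C_w^- \subseteq B_{G^-}(w, k-1)$ required by \cref{obs:C_small}.
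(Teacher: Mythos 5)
Your proposal is correct and follows essentially the same route as the paper: split by the case $\{u,v\}\cap S^+ = \varnothing$ (or not), restrict attention to $w\in W^+$ via \cref{lem:D_cont_in_W}, use $C_w^+\subseteq W^+$ directly and $N_{G^-}(W^+)\subseteq S^-$ from \cref{lem:S_and_W_containment} to get $C_w^-\subseteq W^+$, then invoke the size bound $|W^+|<k$ and \cref{obs:C_small} to conclude $\phi_k^\circ(w)=\phi^\circ(w)$. The only difference is that you spell out the sub-case $w\in S^-$ explicitly (where the paper leaves it implicit), which is a harmless clarification rather than a different argument.
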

\begin{proof}
    We make the usual case distinction according to \Cref{lem:S_and_W_containment}.
    \begin{enumerate}[label=\textbf{Case \arabic*:}, ref=\arabic*, wide, labelwidth=0pt, labelindent=0pt, parsep=5pt]
        \item $\{u,v\} \cap S^+ = \varnothing$. \label{Case_1_W_small}
        
            \noindent By \cref{lem:D_cont_in_W}, we have $D \subseteq W^+$. Let $w \in W^+$ and note that, by definition, $C_w(G^+) \subseteq W^+$. Since $|W^+| < k$, this implies that $C_w(G^+) \subseteq B_{G^+}(w,k-1)$. Hence, by \cref{obs:C_small}, we have $\phi_k^+(w) = \phi^+(w)$. 
            
            On the other hand, by \cref{lem:S_and_W_containment}, we have $N_{G^-}(W^+) \subseteq S^-$. 
            Since $w \in W^+$ and $C_w(G^-)$ is the component of $G^- - S^-$ that contains $w$, this implies that $C_w(G^-) \subseteq W^+$. Since $|W^+| < k$, this implies that $C_w(G^-) \subseteq B_{G^-}(w,k-1)$. Hence, by \cref{obs:C_small}, we have $\phi_k^-(w) = \phi^-(w)$.

            Therefore, $w \notin D$ and so $D = \varnothing$.
        \item $\{u,v\} \cap S^+ \neq \varnothing$.
        
            \noindent This case is proved in an analogous way to Case~\ref{Case_1_W_small}, noting that \cref{lem:D_cont_in_W} implies that $D \subseteq W^-$. \qedhere
    \end{enumerate}
\end{proof}

Finally, we will prove some structural properties about $W^- \cup W^+ \cup \{u,v\}$ which will later use in the proof of \cref{lem:expec_W^*_bounds} to show that this set is unlikely to be large in $G\left(n,\frac{c}{n}\right)$. 

\begin{lemma} \label{lem:W^*_properties}
    Let $W^* \coloneqq W^- \cup W^+ \cup \{u,v\}$ and $W^*_R \coloneqq (R^- \cap W^*) \cup (R^+ \cap W^*)$. Then the following hold.
    \begin{enumerate}[label = \upshape{(\roman*)}]
        \item $G^+[W^*]$ is connected. \label{item:G^+[W^*]_is_connected}
        \item $N_G(x) \subseteq W^*$ for all $x \in W^*_R$. \label{item:W^*R_property}
        \item $|W^*_R| \geq \frac{|W^*|-2}{4}$. \label{item:W^*_R_size}
    \end{enumerate}
\end{lemma}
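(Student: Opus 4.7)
The plan is to leverage the dichotomy given by \cref{lem:S_and_W_containment}: either (a) $\{u,v\} \cap S^+ = \varnothing$, in which case $W^- \subseteq W^+$, $R^- \subseteq R^+$, and $N_{G^-}(W^+) \subseteq S^+ \subseteq S^-$; or (b) $\{u,v\} \cap S^+ \neq \varnothing$, in which case $W^+ \subseteq W^-$, $R^+ \subseteq R^-$, and $N_{G^+}(W^-) \subseteq S^-$. This dichotomy will split every verification cleanly and, importantly, tells us which of $G^+$ or $G^-$ controls the colouring of the ``larger'' of the two red--purple structures.

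For \cref{item:G^+[W^*]_is_connected}, in Case (a) both $u,v \in P^+ \cup R^+$ and $uv \in E(G^+)$, so the endpoints lie in a common component of $G^+[P^+ \cup R^+]$, forcing $C_u^+ = C_v^+$ and hence $W^* = W^+ = C_u^+$, which is connected in $G^+$ by definition. In Case (b), $W^* = W^- \cup \{u,v\}$; here I would observe that each of $C_u^- \cup \{u\}$ and $C_v^- \cup \{v\}$ is connected in $G^-$ (either as a red--purple component containing the relevant endpoint, or as a singleton when the endpoint lies in $S^-$), and that the edge $uv \in E(G^+)$ glues them into a single connected subgraph of $G^+$.

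For \cref{item:W^*R_property}, I will use that a globally red vertex has no sapphire neighbour: if $x \in R^\circ$ then $N_{G^\circ}(x) \subseteq P^\circ \cup R^\circ$, and since these neighbours are connected to $x$ they must lie in the component $C_x^\circ$. In Case (a), $W^*_R = R^+ \cap W^+$ (using $R^- \subseteq R^+$ and $W^* = W^+$), so for such $x$ we have $N_G(x) \subseteq N_{G^+}(x) \subseteq C_x^+ \subseteq W^+ = W^*$. In Case (b), $W^*_R = R^- \cap W^*$ (using $R^+ \subseteq R^-$); for $x \in R^- \cap (W^- \setminus \{u,v\})$, the edge $e$ does not touch $x$ so $N_G(x) = N_{G^-}(x) \subseteq C_x^- \subseteq W^-$, while for $x \in R^- \cap \{u,v\}$ (say $x=u$), the only possible extra neighbour in $G$ beyond $N_{G^-}(u) \subseteq C_u^- \subseteq W^-$ is $v$, and $v \in W^*$.

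Finally, for \cref{item:W^*_R_size}, I will apply \cref{obs:red_vxs_in_comp} to each component of $G^\circ[P^\circ \cup R^\circ]$ making up $W^\circ$, giving $|R^\circ \cap W^\circ| \geq |W^\circ|/4$ for $\circ \in \{+,-\}$. In Case (a), $|W^*_R| \geq |R^+ \cap W^+| \geq |W^+|/4 = |W^*|/4 \geq (|W^*|-2)/4$; in Case (b), $|W^*| \leq |W^-| + 2$ yields $|W^*_R| \geq |R^- \cap W^-| \geq |W^-|/4 \geq (|W^*|-2)/4$. I do not anticipate a substantive obstacle: once the dichotomy of \cref{lem:S_and_W_containment} is invoked, the proof is essentially bookkeeping, and the only mild subtlety is being careful that the ``sapphire boundary'' provided by that lemma is used in the correct direction ($G^-$ or $G^+$) when certifying that a red component is contained in $W^*$.
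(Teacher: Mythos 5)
Your proposal is correct and takes essentially the same approach as the paper: it rests on the dichotomy of \cref{lem:S_and_W_containment} together with \cref{obs:red_vxs_in_comp}, just as the paper's proof does. The only cosmetic difference is that you split part \cref{item:W^*R_property} into the two cases of the dichotomy (identifying $W^*$ with the ``larger'' of $W^+$ or $W^-$ up to $\{u,v\}$ and then using that red vertices see only their own red--purple component), whereas the paper argues more uniformly via $N_{G^+}(W^-\cup W^+)\subseteq S^-$; the underlying idea is the same.
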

\begin{proof} 
    Recall that $W^- \coloneqq C_u^- \cup C_v^-$ where for $x \in \{u,v\}$, $C_x^-$ is the component of $G^- - S^-$ that contains $x$ (or $C_x^- = \varnothing$ if $x \in S^-$). Since $G^+ \coloneqq G + uv$, this implies that $G^+[W^-]$ is connected. Similarly, $G^+[W^+]$ is connected. Hence, since the union of two non-disjoint connected sets is connected, \cref{item:G^+[W^*]_is_connected} follows. 

    Let $x \in R^- \cap W^*$. By the definition of $R^-$, we have $N_{G^-}(x) \subseteq R^- \cup P^-$. 
    By the definition of $W^-$, we have $x \in W^- \cup W^+$. Note that by \cref{lem:S_and_W_containment}, we have $N_{G^+}(W^- \cup W^+) \subseteq S^-$. Thus $N_{G}(x) \subseteq N_{G^+}(x) \subseteq W^- \cup W^+ \subseteq W^*$.
    Arguing along similar lines, we can conclude that for each $x \in W^*_R$, we have $N_G(x) \subseteq W^*$. Hence we have proved \cref{item:W^*R_property}.

    To prove \cref{item:W^*_R_size}, suppose first that $\{u,v\} \cap S^+ = \varnothing$. Then, by \cref{lem:S_and_W_containment}, we have $R^- \subseteq R^+$ and $W^- \subseteq W^+$. Hence, $R^- \cap W^- \subseteq R^+ \cap W^+$ and so $W^* = W^+ \cup \{u,v\}$ and $W^*_R \supseteq R^+ \cap W^+ = (R^+ \cap C_u^+) \cup (R^+ \cap C_v^+)$. Since $C_u^+$ and $C_v^+$ are components of $G^+ - S^+$, we have $C_u^+ \cap C_v^+ = \varnothing$ or $C_u^+ = C_v^+$. Thus, by \cref{obs:red_vxs_in_comp}, we have $|W^*_R| \geq |R^+ \cap W^+| \geq \frac{1}{4}|W^+| \geq \frac{1}{4}|W^*\setminus \{u,v\}| = \frac{|W^*| -2}{4}$. In the case where $\{u,v\} \cap S^+ \neq \varnothing$, $|W^*_R| \geq \frac{|W^*| -2}{4}$ follows analogously.
\end{proof}

\subsection{Typical changes in the colourings after resampling} \label{sec:resamp_typical_changes}
So far we have only considered the deterministic effects of adding or removing an edge to a fixed $G$ on the global and local colourings. Let us now show what these deterministic results imply for the `typical' changes when resampling an edge in a {\em random} graph.

In what follows we let $G \sim G\left(n,\frac{c}{n}\right)$, $e = \{u,v\} \in \binom{V(G)}{2}$ be arbitrary and let $G^{\circ},S^{\circ},P^{\circ},R^{\circ}$ and $W^{\circ}$ for $\circ \in \{+, -\}$ be as in the previous section.

The proof of the following lemma is an adaption of the proof of \cite[Lemma 7 (b)]{A23}. It states that whp there are no large red-purple components in $G^+$ or $G^-$. For completeness, the proof is given in \cref{a:comp_lem}.

\begin{lemma} \label{lem:no_large_red_purple_comps}
    Let $\cE_\mathrm{comp}$ be the event that every component in $G^+ - S^+$ or in $G^- - S^-$ has size at most $\log^4 n$. Then $\prob{\cE_\mathrm{comp}^c} = o(n^{-2})$. 
\end{lemma}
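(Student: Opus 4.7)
The plan is to reduce the event $\cE_\mathrm{comp}^c$ to a purely structural condition on $G$ and then establish that condition via a union bound. Since $\cE_\mathrm{comp}^c$ is the event that $G^+ - S^+$ or $G^- - S^-$ contains a component of size exceeding $\log^4 n$, by symmetry it suffices to consider the event that such a component $C$ exists in $G^+ - S^+$. Setting $A \coloneqq R^+ \cap C$, \cref{obs:red_vxs_in_comp} yields $|A| \geq |C|/4$, and $A$ is incident to at most $3|A|$ edges in $G^+$, hence at most $3|A| + 1$ edges in $G$. Moreover, inspecting \cref{alg:strong_4_core_alg} shows that every vertex of $P^+ \cap C$ is adjacent in $G^+$ to the red vertex that witnessed its being coloured purple, so $C \subseteq A \cup N_{G^+}(A)$; and since $|N_{G^+}(A) \setminus A|$ is at most the number of edges leaving $A$, this also gives $|C| \leq 4|A|$. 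Thus, on $\cE_\mathrm{comp}^c$, there exists $A \subseteq V(G)$ of size $a \geq \log^4 n / 4$ that is incident to at most $3a + 1$ edges in $G$ and is contained in a vertex set of size at most $4a$ whose induced subgraph in $G^+$ (or $G^-$) is connected.

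The next step is a union bound, following the template of \cite[Lemma 7 (b)]{A23}. For a fixed $a$-set $A$, the number of edges of $G$ incident to $A$ is distributed as $\mathrm{Bin}\bigl(\binom{a}{2} + a(n-a),\,c/n\bigr)$ with mean $\mu \approx ca$, and a standard lower-tail Chernoff bound gives
\[
\prob{A \text{ is incident to at most } 3a+1 \text{ edges in } G} \,\leq\, \exp\bigl(-a\,\gamma(c)\bigr),
\]
where $\gamma(c) \coloneqq c - 3 - 3\log(c/3) \to \infty$ as $c \to \infty$. The connectivity constraint is enforced by summing over the at most $\binom{n-a}{3a}$ choices for the remaining $3a$ vertices of $C$, the at most $(4a)^{4a-2}$ spanning trees on $C$ (Cayley's formula), and the factor $(c/n)^{4a-1}$ for such a tree being present in $G^\pm$. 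Crucially, one separates the edges of the tree incident to $A$ from the ``excess'' incident edges of $A$ outside the tree, so that the Chernoff bound can be applied to the excess edges independently of the tree-presence event. For $c \geq 20$, the constant $\gamma(c)$ is large enough (relative to the combinatorial factor $\exp(O(a \log c))$ produced by the above counts) to bound the expected number of bad configurations of size $a$ by $\exp(-\Omega(a))$.

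Summing over $a \geq \log^4 n / 4$, the probability of a large component in $G^+ - S^+$ is then bounded by $\exp(-\Omega(\log^4 n)) = o(n^{-k})$ for any constant $k$, and the same bound holds for $G^-$; combining them yields $\prob{\cE_\mathrm{comp}^c} = o(n^{-2})$. The main technical obstacle is balancing the Chernoff exponent $\gamma(c)$ against the combinatorial cost of choosing $A$ together with a spanning tree of its closed neighbourhood, and it is precisely here that the hypothesis $c \geq 20$ (which already underlies the strong $4$-core framework of \cite{A23}) is essential. The additional $+1$ accounting for the single edge $e$ by which $G$ and $G^\pm$ may differ is absorbed into constants throughout the argument.
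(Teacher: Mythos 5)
Your high-level plan---reduce to a structural property and union bound over bad configurations---is similar in spirit to the paper's, but the paper's proof (Appendix~C) uses two ideas that your sketch omits, and without them the calculation does not close at $c = 20$.

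First, the paper does not apply a lower-tail Chernoff bound to the total number of edges incident to $A = R^+ \cap C$. Instead, it exploits the \emph{hard} constraint that $R^+ \cap C$ has \emph{no} neighbours in $V(G)\setminus C$ (which follows because $P^+ = N(S^+)$, so red vertices never touch sapphire ones). This gives a clean factor $(1-p)^{\lceil i/4\rceil(n-i)} \approx e^{-ci/4}$ that is supported on edge-slots disjoint from those of the spanning tree, so no ``separation'' bookkeeping is needed. By contrast, the constraint ``$A$ incident to at most $3a+1$ edges of $G$'' mixes tree edges and non-tree edges; after conditioning on a spanning tree (at least $a$ of whose edges touch $A$), the residual Chernoff exponent is roughly $\gamma'(c) = c - 2 - 2\log(c/2)$. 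Tracking the combinatorial factors $\binom{n}{\ell}\binom{\ell}{a}\,\ell^{\ell-2}p^{\ell-2}$ for $\ell$ near $4a$ gives a base per-$a$ factor on the order of $c^{6}e^{O(1)}e^{-\gamma'(c)}$, which exceeds $1$ at $c=20$. The paper's per-vertex base is $2ce^{1-c/4} \approx 0.73$ at $c=20$, which is why the hypothesis $c\geq 20$ suffices there.

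Second, and more fundamentally, you only union-bound over components of size $\geq \log^4 n$ in the \emph{final} colouring, with no upper bound on their size. The sum over very large $\ell$ does not converge: for $\ell = \Theta(n)$ the spanning-tree expectation $\ell^{\ell-2}p^{\ell-2}$ is not $o(1)$, the binomial choice factors are exponentially large, and the Chernoff exponent (which shrinks as the number of pairs incident to $A$ outside $C$ decreases) does not compensate. The paper avoids this entirely: it tracks the growth of red-purple components through the iterations of \cref{alg:strong_4_core_alg}, observes that one iteration can only multiply the largest such component's size by $4\Delta(G^+)$, and hence that a final component of size $>\log^4 n$ forces either $\Delta(G^+)\geq\frac14\log^2 n$ or an \emph{intermediate-time} component of size in $[\log^2 n, \log^4 n]$. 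Only this bounded window needs a first-moment bound, and the constraint ``$R_t^+\cap C'$ has no neighbours outside $C'$'' is valid at every intermediate time $t$, not just at termination. Without this reduction to a bounded size range, your union bound has no chance of giving $o(n^{-2})$.
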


The following lemma states that $$W^*\coloneqq W^- \cup W^+ \cup \{u,v\}$$ is unlikely to be large in a strong way. Together with \cref{lem:D_cont_in_W}, we will use this to prove \cref{lem:var_X_X_k_diff_bound} via \cref{lem:ESapp}.
\begin{lemma} \label{lem:expec_W^*_bounds}
If $k \geq 100$ is fixed, then
    \begin{align*}
        \expec{|W^*|^2 \ind{|W^*| \geq k}} \leq 32 k^2 \left(\frac{3}{4}\right)^{k-2} + o(1) \quad\text{and}\quad \expec{|W^*|^2} \leq \tB + o(1),
    \end{align*}
    as $n \rightarrow \infty$, where $\tB > 0$ is an absolute constant.
\end{lemma}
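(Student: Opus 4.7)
The plan is to carry out a union-bound calculation informed by the structural consequences of \cref{lem:W^*_properties}. On the event $\{|W^*| = s\}$, the set $W^*$ is some vertex set $S' \ni \{u,v\}$ of cardinality $s$ with the two properties that (a)~$G^+[S']$ is connected, and (b)~there exists a subset $R' \subseteq S'$ with $|R'| \geq (s-2)/4$ such that every vertex of $R'$ has all of its $G$-neighbours inside $S'$. Crucially, (a) depends only on the edges of $G$ lying inside $\binom{S'}{2}$, while (b) depends only on the (non-)edges of $G$ between $R'$ and $V(G) \setminus S'$, so these two events are independent. Before the calculation I would condition on the event $\cE_{\mathrm{comp}}$ of \cref{lem:no_large_red_purple_comps}, on which $|W^*| \leq 2 \log^4 n + 2 = o(n)$; the complement has probability $o(n^{-2})$ and so contributes at most $n^2 \cdot o(n^{-2}) = o(1)$ to either expectation.

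The per-candidate estimate is then
\[
\prob{W^* = S'} ~\leq~ s^{s-2}\, p^{s-2} \cdot \binom{s}{r}(1-p)^{r(n-s)},
\]
with $r = \lceil (s-2)/4 \rceil$. Here Cayley's formula bounds the expected number of spanning trees of $G^+[S']$ by $s^{s-2}$, and each such tree lies in $G^+$ with probability at most $p^{s-2}$ (the $-2$ absorbing the possibility that the edge $uv$ is used). The second factor comes from a union bound over the choices of $R'$, together with the probability $(1-p)^{r(n-s)}$ that none of the $r(n-s)$ potential $G$-edges from $R'$ to $V \setminus S'$ is present. Summing over the $\binom{n-2}{s-2}$ choices of $S'$ bounds $\prob{|W^*| = s}$.

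Plugging in $p = c/n$, using $\binom{n-2}{s-2} \leq n^{s-2}/(s-2)!$, Stirling on $s^{s-2}/(s-2)!$, $\binom{s}{r} \leq 2^s$, and $(1-c/n)^{r(n-s)} \leq (1+o(1))\, e^{-cr}$ uniformly for $s = O(\log^4 n)$, the factors of $n$ cancel and we arrive at
\[
\prob{|W^*| = s} ~\leq~ (1+o(1))\, \frac{e^{c/2}}{c^2 \sqrt{2\pi(s-2)}}\, \bigl(2ec \cdot e^{-c/4}\bigr)^s.
\]
The key numerical point is that $c \mapsto 2ec \cdot e^{-c/4}$ is decreasing on $[4,\infty)$ and satisfies $2ec \cdot e^{-c/4} \leq 40\, e^{-4} < 3/4$ for every $c \geq 20$. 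Hence the tail decays geometrically at ratio at most $3/4$, and standard geometric-series manipulations (using $s^2/\sqrt{s-2} \leq \sqrt{2}\, s^{3/2}$ for $s \geq 4$ and $\sum_{s \geq k} s^2 q^s = q^k(4k^2 + O(k))$ for $q = 3/4$) give
\[
\expec{|W^*|^2 \ind{|W^*| \geq k}} ~\leq~ 32\, k^2\, (3/4)^{k-2} + o(1),
\]
the remaining lower-order polynomial and constant factors being absorbed into the constant $32$ for $k \geq 100$. The second bound then follows by splitting at $s = 100$: the tail is controlled by the first estimate, and the bounded range contributes at most $100^2$, so $\tB$ can be taken as any sufficiently large absolute constant.

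The main technical obstacle is the numerical balance underlying the entire argument: the combinatorial losses ($s^{s-2}$ from Cayley, $2^s$ from choosing $R'$, and the $e^s$ that emerges from Stirling) must be beaten by the exponential gain $e^{-cr}$ coming from forbidding external $G$-edges of $R'$. This balance is precisely what the hypothesis $c \geq 20$ guarantees; the threshold $20$ here is essentially sharp for the clean ratio $3/4$ in the statement, and any looser handling of the intermediate constants would force a worse base for the geometric tail.
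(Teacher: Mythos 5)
Your proposal has the same structure as the paper's proof: condition on $\cE_\mathrm{comp}$ so the complement contributes $o(1)$, invoke the properties of $W^*$ from \cref{lem:W^*_properties} (connectivity of $G^+[W^*]$ and a red subset of proportion at least $\tfrac14$ with no external $G$-edges), take a first-moment bound over candidate sets, spanning trees, and red subsets, and sum the resulting geometric tail. Your per-candidate estimate and the independence observation are both correct, and your use of Stirling is slightly finer than the paper's cruder $\binom{n}{j}\leq (en/j)^j$, so you avoid the $\frac{j}{j-2}$ loss that the paper has to deal with.

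There is, however, a real gap in the final bookkeeping. You arrive at
\[
\prob{|W^*|=s}\le (1+o(1))\,\frac{e^{c/2}}{c^2\sqrt{2\pi(s-2)}}\bigl(2ec\,e^{-c/4}\bigr)^s,
\]
and after substituting $2ec\,e^{-c/4}\le \tfrac34$ you treat the remaining prefactor $\frac{e^{c/2}}{c^2\sqrt{2\pi(s-2)}}$ as a quantity that can be ``absorbed into the constant $32$.'' But $e^{c/2}/c^2$ is unbounded in $c$ (already at $c=20$ it is about $55$, and it grows without bound), whereas the constant $32$ in the lemma's conclusion must be independent of $c$. As written, your argument does not yield the stated bound uniformly over $c\ge 20$. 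The point you are missing is that the $c$-dependence cancels exactly once you write the geometric factor with exponent $s-2$: since $\bigl(2ec\,e^{-c/4}\bigr)^2 = 4e^2c^2e^{-c/2}$, one has
\[
\frac{e^{c/2}}{c^2}\bigl(2ec\,e^{-c/4}\bigr)^s = 4e^2\,\bigl(2ec\,e^{-c/4}\bigr)^{s-2},
\]
so the probability is in fact bounded by $(1+o(1))\frac{4e^2}{\sqrt{2\pi(s-2)}}\bigl(\tfrac34\bigr)^{s-2}$, with a prefactor $4e^2/\sqrt{2\pi(s-2)}$ that is uniformly bounded (and decreasing in $s$). Only after this cancellation do the ``remaining constant factors'' genuinely fit under $32$ when $k\ge 100$. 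You should add this step explicitly, or — as the paper does — keep the exponent as $s-2$ throughout the calculation so that the cancellation never has to be observed after the fact.
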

\begin{proof}
    Recall that by \Cref{lem:no_large_red_purple_comps}, $\prob{\cE_\mathrm{comp}^c} = o(n^{-2})$. Hence, by linearity of expectation
    \begin{align}
    \expec{|W^*|^2 \ind{|W^*| \geq k}} &= \expec{|W^*|^2 \ind{|W^*| \geq k}  \indev{\cE_\mathrm{comp}}} + \expec{|W^*|^2 \ind{|W^*| \geq k}  \indev{\cE_\mathrm{comp}^c}} \nonumber\\
    &\leq \expec{|W^*|^2 \ind{|W^*| \geq k}  \indev{\cE_\mathrm{comp}}} + \prob{\cE_\mathrm{comp}^c} n^2 \nonumber\\
    &\leq \expec{|W^*|^2  \ind{|W^*| \geq k} \indev{\cE_\mathrm{comp}}} + o(1). \label{e:W*conditionalexpec}
    \end{align}
    
    Since, by \cref{lem:W^*_properties}, $W^*$ is connected in $G^+$ and contains at least $\frac{|W^*|-2}{4}$ vertices with no neighbour outside of $W^*$, it follows that for any $k \geq 100$,
    \begin{align*}
    \expec{|W^*|^2  \ind{|W^*| \geq k} \indev{\cE_\mathrm{comp}}} &\leq  \sum_{j=k}^{4 \log^4 n} j^2 \binom{n-2}{j-2} j^{j-2} p^{j-2} \binom{j}{\lceil\frac{j-2}{4}\rceil} (1-p)^{\frac{(j-2)(n-j)}{4}} \\
    &\leq  \sum_{j=k}^{\infty} 8j^2 \left(\frac{2ejnp}{j-2} e^{-\frac{c}{4}} \right)^{j-2}\\
    &\leq  \sum_{j=k}^{\infty} 8j^2 \left(\frac{k}{k-2}\cdot 2c e^{1-\frac{c}{4}} \right)^{j-2}\\
    &\leq 8k^2\left(\frac{k}{k-2}\cdot 2c e^{1-\frac{c}{4}} \right)^{k-2}\sum_{j=k}^{\infty} \left(\frac{j}{k}\right)^2\left(\frac{k}{k-2}\cdot 2c e^{1-\frac{c}{4}} \right)^{j-k}.
    \end{align*}
    Now note that $\left(\frac{j}{k}\right)^2 = \left(1+\frac{j-k}{k}\right)^2 \leq e^{\frac{2(j-k)}{k}}$ and that, since $k \geq 100$ and $c\geq 20$, we have
    $e^{\frac{2}{k}} \cdot \frac{k}{k-2} \cdot 2c e^{1-\frac{c}{4}} \leq e^{\frac{2}{100}} \cdot \frac{100}{98} \cdot 40 e^{-4} \leq \frac{3}{4}$. 
    Hence, we obtain
    \begin{align}
    \expec{|W^*|^2  \ind{|W^*| \geq k} \indev{\cE_\mathrm{comp}}} &\leq 8k^2 \left(\frac{3}{4}\right)^{k-2} \sum_{j=k}^{\infty} \left(e^{\frac{2}{k}}\frac{k}{k-2}\cdot 2c e^{1-\frac{c}{4}}\right)^{j-k}  \nonumber\\
    &\leq 8k^2 \left(\frac{3}{4}\right)^{k-2} \sum_{j=0}^{\infty} \left(\frac{3}{4}\right)^j \nonumber\\
    &= 32 k^2 \left(\frac{3}{4}\right)^{k-2}. \label{e:W*conditionalexpecbound}
    \end{align}
    The first part of the statement now follows from \eqref{e:W*conditionalexpec} and \eqref{e:W*conditionalexpecbound}.

    Furthermore, it follows from \eqref{e:W*conditionalexpecbound} that
    \begin{align*}
        \expec{|W^*|^2} &\leq \expec{|W^*|^2 \ind{|W^*| < 100} \indev{\cE_\mathrm{comp}}} + \expec{|W^*|^2 \ind{|W^*| \geq 100} \indev{\cE_\mathrm{comp}}}
+\expec{|W^*|^2\indev{\cE_\mathrm{comp}^c}} \\
        &\leq \sum_{j=2}^{99} j^2  + 32\cdot 100^2 \cdot \left(\frac{3}{4}\right)^{98} + o(1).
    \end{align*}
    Hence, the second part follows by setting $\tB \coloneqq \sum_{j=2}^{99} j^2  + 32\cdot 100^2 \cdot \left(\frac{3}{4}\right)^{98}$.
\end{proof}

\section{Variance bounds via Efron--Stein inequality}\label{s:ESvariance}
In this section we will use the results in \cref{s:strong4coreproperties} together with the Efron--Stein inequality (in the form of \cref{lem:ESapp}) to bound the variance of various random variables.
We prove upper bounds on the variance of $\tX - \tX_k$ (\cref{lem:var_X_X_k_diff_bound}), the variance of $\tX_k - \hX_k$ (\cref{lem:var_L_k_hL_k_diff_bound}), and the variance of $\tX_k$ (\cref{lem:Var_X_k_upper_bound}). 
The upper bounds on $\variance{\tX - \tX_k}$ and $\variance{\tX_k - \hX_k}$ will together imply that $\hX_k$ is a good approximation for $\tX$. The upper bound on $\variance{\tX_k}$ will be used to show that the bounds on $\variance{\tX - \tX_k}$ and $\variance{\tX_k - \hX_k}$ imply bounds on $\left|\variance{\tX} - \variance{\tX_k}\right|$ and $\left|\variance{\tX_k} - \variance{\hX_k}\right|$ via \cref{thm:prob_Cauchy_Schwarz}.

Throughout this section let $G \sim G\left(n,\frac{c}{n}\right)$, let $f = \left\{u, v\right\} \in \binom{V(G)}{2}$ be a fixed edge, and let $G^+ \coloneqq G+f$ and $G^- \coloneqq G-f$. Let $L,\tL,\tL_k,\hL_k$ and $X,\tX,\tX_k,\widehat{X}_k$ be defined as in \cref{s:Proof}. 

We start by giving a proof of \Cref{lem:var_X_X_k_diff_bound}, which bounds the variance of $\tX - \tX_k$. 
\begin{proof}[Proof of \Cref{lem:var_X_X_k_diff_bound}]
Recall that, by definition
\[
 \variance{\tX-\tX_k}=\frac{1}{n}\variance{\tL-\tL_k},
\]
and $\tL-\tL_k = \sum_{w \in V(G)} \left( \phi_{G}(w)-\phi_{G,k}(w) \right)$. Note further that, since $\phi_{G}(w),\phi_{G,k}(w) \in [0,1]$, it follows that $|\phi_{G}(w)-\phi_{G,k}(w)| \leq 1$ for all $w \in V(G)$. Hence, if we define
\[
D \coloneqq \{w \in V(G) \colon \phi^+(w)-\phi_k^+(w) \neq \phi^-(w) - \phi_k^-(w)\}
\]
as in \eqref{eq:def_D}, then by \Cref{lem:ESapp}
\begin{align}
\variance{\tX-\tX_k}&=\frac{1}{n}\variance{\tL-\tL_k} \nonumber\\
&\leq 2 p(1-p) n \expec{|D|^2}\nonumber \\
&\leq 2c  \expec{|D|^2}. \label{eq:VarboundexpecD}
\end{align}

By \cref{lem:D_cont_in_W,lem:if_W_small_D_nothing}, we have $|D| \leq |W^*| \ind{|W^*| \geq k}$, where $W^* \coloneqq W^- \cup W^+ \cup \{u,v\}$.  Together with \cref{lem:expec_W^*_bounds}, we have
\begin{align*}
    \expec{|D|^2} &\leq \expec{|W^*|^2 \ind{|W^*| \geq k}} \leq 32 k^2 \left(\frac{3}{4}\right)^{k-2} + o(1).
\end{align*}

Hence, by \cref{eq:VarboundexpecD}, we obtain
\begin{align*}
    \variance{\tX-\tX_k} &\leq 64ck^2 \left(\frac{3}{4}\right)^{k-2} + o(1),
\end{align*}
as desired.
\end{proof}

We also need the following bound on the variance of $\tX_k$ which is derived in a similar way.
\begin{lemma} \label{lem:Var_X_k_upper_bound}
    There exists an absolute constant $B^* > 0$ such that for all fixed $k \geq 100$ and $c \geq 20$, we have $$\variance{\tX_k} \leq cB^* + o(1),$$ as $n \rightarrow \infty$.
\end{lemma}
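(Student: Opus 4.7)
The plan is to follow the same template as the proof of \cref{lem:var_X_X_k_diff_bound}, namely to apply the Efron--Stein inequality via \cref{lem:ESapp} to the random variable $\tL_k = \sum_{v \in V(G)} (1-\phi_{G,k}(v))$. Since $1 - \phi_{G,k}(v) \in [0,1]$, the constant $K$ in the hypothesis of \cref{lem:ESapp} can be taken to equal $1$. Setting $\psi_G(v) = 1 - \phi_{G,k}(v)$, the relevant change-set after resampling the fixed edge $f = \{u,v\}$ is exactly
\[
\tD = \{w \in V(G) \colon \phi_k^+(w) \neq \phi_k^-(w)\},
\]
as defined in \cref{eq:def_tD}. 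Thus \cref{lem:ESapp} gives
\[
\variance{\tL_k} \;\leq\; 2 p(1-p) n^2 \expec{|\tD|^2},
\]
and dividing by $n$ yields $\variance{\tX_k} \leq 2 p(1-p) n \expec{|\tD|^2} \leq 2c \expec{|\tD|^2}$.

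The next step is to bound $\expec{|\tD|^2}$ by the second moment of $|W^*|$, where $W^* = W^- \cup W^+ \cup \{u,v\}$ is the union of the red-purple components of $u$ and $v$ in $G^-$ and $G^+$. By \cref{lem:D_cont_in_W}, in either case ($\{u,v\} \cap S^+ = \varnothing$ or not), we have $\tD \subseteq W^-$ or $\tD \subseteq W^+$, and in both cases $\tD \subseteq W^*$. Hence $|\tD|^2 \leq |W^*|^2$ pointwise. Note that, unlike in the proof of \cref{lem:var_X_X_k_diff_bound}, we do \emph{not} have access to an analogue of \cref{lem:if_W_small_D_nothing} for $\tD$ (the local colouring can change even when $W^*$ is small), so we cannot insert the indicator $\ind{|W^*|\geq k}$; however, for the present lemma we do not need the factor $(3/4)^{k-2}$, and the cruder bound suffices.

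Finally, applying the second part of \cref{lem:expec_W^*_bounds}, which says $\expec{|W^*|^2} \leq \tB + o(1)$ for an absolute constant $\tB$, we obtain
\[
\variance{\tX_k} \;\leq\; 2c\, \expec{|W^*|^2} \;\leq\; 2c\tB + o(1).
\]
Setting $B^* \coloneqq 2\tB$ yields the claimed bound $\variance{\tX_k} \leq cB^* + o(1)$. The entire argument is essentially routine given the machinery already developed in \cref{s:strong4coreproperties}; no genuine obstacle arises because all the hard work (the containment $\tD \subseteq W^*$ and the tail estimate for $|W^*|$) has already been carried out in \cref{lem:D_cont_in_W,lem:expec_W^*_bounds}.
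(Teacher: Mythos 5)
Your proof is correct and takes essentially the same approach as the paper: apply \cref{lem:ESapp} to $\tL_k$ with $K=1$ to get $\variance{\tX_k} \leq 2c\expec{|\tD|^2}$, use \cref{lem:D_cont_in_W} for the pointwise bound $|\tD| \leq |W^*|$, and then apply the second estimate of \cref{lem:expec_W^*_bounds} to conclude with $B^* = 2\tB$.
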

\begin{proof}
    Fix $k \geq 100$.
    Recall that $\tD \coloneqq \{ w \in V(G) \colon \phi_k^+(w) \neq \phi_k^-(w)\}$ as in \cref{eq:def_tD}, where we note that $\phi_{G,k}(w) \in [0,1]$ for all $w \in V(G)$. By applying \Cref{lem:ESapp} to $\tL$, as in the proof of  \cref{lem:var_X_X_k_diff_bound}, we get that 
    \begin{align} \label{eq:Var_X_k_bound}
        \variance{\tX_k} \leq 2c \expec{|\tD|^2}.
    \end{align}
    By \cref{lem:D_cont_in_W}, we have $|\tD| \leq |W^*|$, where $W^* \coloneqq W^- \cup W^+ \cup \{u,v\}$. Together with \cref{lem:expec_W^*_bounds}, we have
    \begin{align*}
        \expec{|\tD|^2} \leq \expec{|W^*|^2} \leq \tB +o(1),
    \end{align*}
    where $\tB > 0$ is an absolute constant. 
    Together with \cref{eq:Var_X_k_bound}, it follows that $\variance{\tX_k} \leq 2c \tB + o(1)$. Hence, we are done by setting $B^* \coloneqq 2\tB$.
\end{proof}

Finally, as mentioned in the introduction, our bounds on the variance of $\tX_k - \hX_k$ will hold for a much broader class of random variables, coming from the difference between (rescalings of) a weighted neighbourhood sum and an appropriate truncation thereof. Let us define a \emph{weighted ($k$th)-neighbourhood sum} to be a random variable of the form
\[
\tM_ k= \sum_{v \in V(G)} \sum_{(H,r) \in \mathcal{H}} \beta_{(H,r)} \mathds{1}_{\{(G[B_G(v,k)],v) \cong (H,r)\}},
\]
where $\mathcal{H}$ is the set of rooted graphs $(H,r)$ of radius at most $k$ and $\beta_{(H,r)} \in [0,1]$ is a constant depending only on $(H,r)$. The \emph{truncation} of $\tM_k$ is the random variable
\begin{equation}\label{e:truncation}
    \widehat{M}_k ~\coloneqq~ \sum_{v \in V(G)}\sum_{(T,r) \in \mathcal{T}} \beta_{(T,r)} \mathds{1}_{\{(G[B_G(v,k)],v) \cong (T,r)\}},
\end{equation}
where $\mathcal{T}$ is the set of rooted trees $(T,r) \in \mathcal{H}$ with $1 \leq v(T) \leq t_k = \tbound$. We will show the following lemma, which clearly implies \Cref{lem:var_L_k_hL_k_diff_bound}.

\begin{lemma} \label{lem:var_diff_with_trunc}
Let $\tM_ k$ be a weighted neighbourhood sum and let $\widehat{M}_k$ be its truncation. Then for fixed $k \geq 100$ and $c > 1$
    \begin{align*}
        \variance{\tM_k - \widehat{M}_k} \leq n\exp\left(-c^2k^2\right) + O\left(1\right),
    \end{align*}
    as $n \rightarrow \infty$.
\end{lemma}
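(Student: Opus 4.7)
The plan is to apply the Efron--Stein inequality in the form of \cref{lem:ESapp}. Since for each vertex $v$ the rooted $k$-ball $(G[B_G(v,k)],v)$ is isomorphic to a unique rooted graph (so at most one indicator is nonzero), we may write
\[
    \tM_k - \widehat{M}_k = \sum_{v \in V(G)} \psi_G(v), \quad \psi_G(v) \coloneqq \sum_{(H,r) \in \mathcal{H} \setminus \mathcal{T}} \beta_{(H,r)} \mathds{1}_{\{(G[B_G(v,k)],v) \cong (H,r)\}},
\]
and note $|\psi_G(v)| \leq 1$, with $\psi_G(v) \neq 0$ only when the rooted $k$-ball at $v$ is \emph{not} in $\mathcal{T}$, i.e.\ it either contains a cycle or has more than $t_k = (10ck)^{2k}$ vertices. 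Fixing a potential edge $f = \{u,w\}$ and writing $G^\pm = G \pm f$, \cref{lem:ESapp} with $K = 1$ gives
\[
    \variance{\tM_k - \widehat{M}_k} \leq 2p(1-p)n^2 \expec{|D|^2} \leq 2cn\,\expec{|D|^2}, \quad D = \{x : \psi_{G^+}(x) \neq \psi_{G^-}(x)\}.
\]

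Next I would localise $D$. Any $x$ at distance greater than $k$ from $\{u,w\}$ in both $G^+$ and $G^-$ has an unaltered rooted $k$-ball and therefore lies outside $D$, so $D \subseteq N \coloneqq B_{G^+}(\{u,w\}, k) \cup B_{G^-}(\{u,w\}, k)$. Moreover $x \in D$ forces the rooted $k$-ball at $x$ to lie outside $\mathcal{T}$ in at least one of $G^\pm$, and since that $k$-ball is contained in $B_{G^\circ}(\{u,w\}, 2k)$, the bad event $\mathcal{B}$ --- that $G^+[B_{G^+}(\{u,w\},2k)]$ or $G^-[B_{G^-}(\{u,w\},2k)]$ contains a cycle or has more than $t_k$ vertices --- must hold. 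Thus $|D|^2 \leq |N|^2 \mathds{1}_{\mathcal{B}}$, and it suffices to show $\expec{|N|^2 \mathds{1}_{\mathcal{B}}} \leq \tfrac{1}{2c}\exp(-c^2 k^2) + O(1/n)$. I would split $\mathcal{B} = \mathcal{B}_{\mathrm{cyc}} \cup \mathcal{B}_{\mathrm{big}}$.

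For $\mathcal{B}_{\mathrm{cyc}}$, a standard first-moment count in $G(n, c/n)$ shows that the expected number of short cycles near $\{u,w\}$ is $O(c^{6k}/n)$, while $|N|$ is stochastically dominated by the union of two $k$-generation Galton--Watson trees of mean offspring $c$, giving uniformly bounded moments. A routine conditional argument then yields $\expec{|N|^2 \mathds{1}_{\mathcal{B}_\mathrm{cyc}}} = O(1/n)$, which after multiplication by $2cn$ contributes the $O(1)$ error term. For $\mathcal{B}_{\mathrm{big}}$, the key estimate is $\prob{|B_G(x,k)| > t_k} \leq k\exp(-\Omega(c^2k^2))$ for any $x$, obtained by iterating the extreme-tail Chernoff bound (\cref{thm:ex_tail_Chernoff}) along the threshold sequence $b_i \coloneqq (10ck)^{2i}$. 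The growth ratio $b_i/b_{i-1} = (10ck)^2 \geq 14c$ (for $k \geq 2$) ensures the increments $a_i \coloneqq b_i - b_{i-1} \geq 7c\,b_{i-1}$ satisfy the hypothesis $t \geq 7np$ of \cref{thm:ex_tail_Chernoff}, and the smallest exponent $a_1 \geq \tfrac{1}{2}(10ck)^2 = 50c^2k^2$ produces the claimed decay. A dyadic decomposition on $|N|$ then absorbs the $|N|^2$ factor against the exponential decay, giving $\expec{|N|^2 \mathds{1}_{\mathcal{B}_\mathrm{big}}} \leq \tfrac{1}{2c}\exp(-c^2 k^2)$.

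The main technical obstacle is the large-ball estimate. The naive bound $\prob{\mathcal{B}_\mathrm{big}} \leq \prob{|B_G(u, 2k+1)| > t_k}$ only yields an $\exp(-O(ck))$ decay, because after telescoping up to generation $2k+1$ the smallest threshold shrinks to $t_k^{1/(2k+1)} = \Theta(ck)$. To recover the stronger $\exp(-\Omega(c^2 k^2))$ bound one must keep the Chernoff iteration at depth $k$ (so that $b_1 = (10ck)^2$) while still covering all offending centres in $N$. Coupling this with the dyadic decomposition on $|N|$ --- in the same spirit as the proof of \cref{lem:expec_W^*_bounds} --- is the crucial step.
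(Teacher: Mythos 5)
Your outer structure is the same as the paper's: apply \cref{lem:ESapp} to $\tM_k - \widehat{M}_k$, localise the change set $D$, and bound $\expec{|D|^2}$. The localisation $D \subseteq N$ and the implication $D \neq \varnothing \Rightarrow \mathcal{B}$ are both correct. The gap is in the large-ball estimate. You define $\mathcal{B}_{\mathrm{big}}$ through the $2k$-ball around $\{u,w\}$, but as you yourself note, $\prob{|B_G(u,2k+1)| > t_k}$ decays only like $\exp(-\Theta(ck))$, since $t_k^{1/(2k+1)} = \Theta(ck)$. This is strictly weaker than $\exp(-c^2k^2)$ (indeed much larger for $c\geq 20$, $k\geq 100$), so the lemma as stated would not follow. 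Your proposed fix --- ``keep the Chernoff iteration at depth $k$ while still covering all offending centres in $N$'' --- restates the difficulty rather than resolving it: if $x$ satisfies $\dist(x,\{u,w\})\leq k$, then $B_G(x,k)$ genuinely reaches distance $2k$ from $\{u,w\}$, and there is no way to control a radius-$2k$ set with only $k$ telescoping Chernoff steps.

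The paper's resolution (proof of \cref{lem:var_diff_with_trunc} together with \cref{lem:NUNUCexpectation}) never looks at the $2k$-ball. Setting $N^* \coloneqq B_G(u,k)\cup B_G(v,k)$, $U_N \coloneqq \{w: |B_{G^+}(w,k)| > t_k\}$ and $U_C$ the cyclic analogue, it shows $D^* \subseteq N^*\cap(U_N\cup U_C)$ and decomposes
\[
\expec{|D^*|^2} \;\leq\; \expec{|N^*|^2\,\ind{|N^*|>t_k}} \;+\; t_k\,\expec{|N^*\cap(U_N\cup U_C)|}.
\]
The first term only involves radius-$k$ balls at $u$ and $v$, giving $\exp(-\Omega(c^2k^2))$ by \cref{prop:kth_neigh_bound}. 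For the second, the decoupling observation that your sketch is missing is that, conditional on $|B_G(w,k)| = s$, the set $B_G(w,k)\setminus\{w\}$ is uniformly distributed over $(s-1)$-subsets of $V\setminus\{w\}$, so $\prob{w\in N^*\mid |B_G(w,k)|=s} = O(s/n)$; summing over $w$ and $s > t_k/2$ then trades the factor $n$ for a factor $1/n$ and leaves $\exp(-\Omega(c^2k^2))$. Relatedly, your treatment of $\mathcal{B}_{\mathrm{cyc}}$ asserts ``a routine conditional argument'' for $\expec{|N|^2\ind{\mathcal{B}_{\mathrm{cyc}}}} = O(1/n)$ without addressing the positive correlation between $|N|$ and $\ind{\mathcal{B}_{\mathrm{cyc}}}$; the paper handles this by the same $|N^*| > t_k$ vs.\ $|N^*|\leq t_k$ split rather than an unconditional product estimate.
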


In order to prove \Cref{lem:var_diff_with_trunc} we will require the following technical lemma about the typical neighbourhood structure in sparse random graphs, which will imply that the effect of moving to a truncation is neglible, whose proof is given in \cref{a:neighbourhood_approx}.
\begin{lemma}\label{lem:NUNUCexpectation}
For fixed $k \geq 100$, $c > 1$ and $t_k \coloneqq \tbound$, let 
\begin{align*}
U_N &~\coloneqq~ \{w \in V(G) \colon |B_{G^+}(w,k)| > t_k\}, \\ 
U_C &~\coloneqq~ \{w \in V(G) \colon G^+[B_{G^+}(w,k)] \text{ contains a cycle}\}, \text{ and} \\
N^* &~\coloneqq~ B_{G}(u,k) \cup B_{G}(v,k).
\end{align*}
Then 
\begin{enumerate}[label = \upshape{(\roman*)}]
\item\label{i:N*large} $\expec{|N^*|^2 \ind{|N^*| > t_k}} \leq 2k\exp\left(-2c^2k^2\right)$;
\item\label{i:N*capUNUC} $\expec{|N^* \cap (U_N \cup U_C)|} \leq 10k \exp\left(-2c^2k^2\right) +O\left(\frac{1}{n}\right).$
\end{enumerate}
\end{lemma}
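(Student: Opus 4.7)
The plan is to obtain a tail bound for $|B_G(w,k)|$ in $G \sim G(n,c/n)$ via BFS exploration combined with the extreme tail Chernoff bound (\cref{thm:ex_tail_Chernoff}), and then deduce both parts from it. Exploring by layers $V_j \coloneqq \partial_G(w,j)$, conditional on $(V_0,\ldots,V_{j-1})$ the size $|V_j|$ is stochastically dominated by a $\mathrm{Bin}(n, c|V_{j-1}|/n)$ random variable. A recursive conditioning argument, which at each step applies \cref{thm:ex_tail_Chernoff} as soon as the imposed threshold exceeds $7c$ times the previous layer size, yields
\[
\prob{|B_G(w,k)| > y} \leq k^2 \exp(-\beta y) \quad \text{for every } y \geq t_k,
\]
where $\beta \coloneqq ((k+1)(7c)^{k-1})^{-1}$. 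The main obstacle lies here: the factor-of-$7c$ growth forced by \cref{thm:ex_tail_Chernoff} must trace back all the way to $|V_0|=1$, which is why $\beta$ carries the factor $(7c)^{k-1}$. Fortunately, $\beta t_k \geq (10ck)^{2k}/((k+1)(7c)^{k-1})$ dwarfs $2c^2k^2$ for $k \geq 100$ and $c \geq 1$, so this tail is more than strong enough.

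For part \ref{i:N*large}, since $|N^*| \leq |B_G(u,k)| + |B_G(v,k)|$, the event $\{|N^*|>t_k\}$ forces at least one of $|B_G(u,k)|$ or $|B_G(v,k)|$ to exceed $t_k/2$. By symmetry,
\[
\expec{|N^*|^2 \ind{|N^*|>t_k}} \leq 8\expec{|B_G(u,k)|^2 \ind{|B_G(u,k)|>t_k/2}},
\]
and the right-hand side expands as the tail integral $(t_k/2)^2 \prob{|B_G(u,k)|>t_k/2} + \int_{t_k/2}^{n} 2y \prob{|B_G(u,k)|>y}\,dy$, both terms being $O(\exp(-\beta t_k/3))$ by the tail bound and comfortably fitting under $2k\exp(-2c^2k^2)$.

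For part \ref{i:N*capUNUC} I split $|N^* \cap (U_N \cup U_C)| \leq |N^* \cap U_N| + |N^* \cap U_C|$. For the $U_N$ piece, if $w \in N^*$ then $\dist_{G^+}(w,\{u,v\}) \leq k$, so $B_{G^+}(w,k) \subseteq B_{G^+}(\{u,v\},2k)$; hence $w \in U_N$ forces $|B_{G^+}(\{u,v\},2k)| > t_k$, and $|N^* \cap U_N| \leq |B_{G^+}(\{u,v\},2k)| \ind{|B_{G^+}(\{u,v\},2k)|>t_k}$, whose expectation is controlled by the same BFS argument over $2k$ generations from $\{u,v\}$, yielding $O(k\exp(-2c^2k^2))$. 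For the $U_C$ piece, any $w \in N^* \cap U_C$ witnesses a cycle of length at most $2k+1$ in $B_{G^+}(w,k)$ together with a path of length at most $k$ from some $y\in\{u,v\}$ to $w$ in $G$. Splitting on whether the cycle uses the resampled edge $f=\{u,v\}$, each case produces a connected subgraph of $G$ on $V = O(k)$ vertices with $V$ edges (one independent cycle, or a $u$-$v$ path with an attached branch). A routine first moment estimate --- each such subgraph type with $V$ non-fixed vertices occurs in $G$ with expected count $O(c^V/n)$, and only polynomially many types in $k$ arise --- gives $\expec{|N^* \cap U_C|} = O(1/n)$, which combines with the $U_N$ bound to complete the proof.
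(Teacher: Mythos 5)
Your proposal is correct in substance, but it follows a genuinely different route from the paper's proof, so a comparison is worthwhile. The paper's Proposition~\ref{prop:kth_neigh_bound} derives a Weibull-type tail $\prob{|B_G(w,k)| \geq s} \leq k\exp(-s^{1/k})$ by choosing, in the BFS/Galton--Watson argument, layer-thresholds growing geometrically with ratio $s^{1/k}$; summing these tails over $s > t_k$ then requires the separate technical estimate Proposition~\ref{prop:near_geometric_sum_2}. You instead choose thresholds growing geometrically with the fixed ratio $7c$ (the minimum that \cref{thm:ex_tail_Chernoff} permits), which gives a genuinely exponential tail $\exp(-\Theta(y/(k(7c)^{k})))$ on the domain $y \gtrsim k(7c)^k$; this is substantially stronger than the paper's bound when $y \geq t_k$, and it makes the tail-integral estimates in both parts essentially immediate, with no analogue of Proposition~\ref{prop:near_geometric_sum_2} needed. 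For the $U_N$ contribution to part~\ref{i:N*capUNUC}, your containment $B_{G^+}(w,k) \subseteq B_{G^+}(\{u,v\},2k)$ for $w \in N^*$ collapses the whole estimate to the tail of a single $2k$-ball, avoiding the paper's exchangeability argument (conditioning on $|B_G(w,k)|=s$ and using that the ball is then a uniform $(s{-}1)$-subset). For the $U_C$ contribution, you extract a cycle of length $\leq 2k{+}1$ from the BFS tree of the ball and bound $\expec{|N^*\cap U_C|}$ directly by a first moment over small unicyclic/tree structures rooted at $\{u,v\}$; the paper instead works with $U_C\setminus U_N$ so it can restrict to balls of size $\leq t_k$ and bound $\prob{e(G[B])\geq|B|-1}$. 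Both yield $O(1/n)$.

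A few small slips, none of which damage the argument: you state the exponential tail for $y \geq t_k$ but then use it at $y = t_k/2$ (which is still far inside the true validity range $y \gtrsim k(7c)^k$, so harmless); the configuration ``a $u$--$v$ path with an attached branch'' is a tree with $V{-}1$ edges rather than $V$, but since $u$ and $v$ are both then forced vertices the first-moment count is still $O(c^{O(k)}/n)$; and the number of unicyclic/tree types on $O(k)$ vertices is super-polynomial in $k$ (roughly $k^{O(k)}$), not polynomial, though this is irrelevant since $k$ is fixed and the dependence is absorbed into the $O(1/n)$.
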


We are now ready to prove \Cref{lem:var_diff_with_trunc}.

\begin{proof}[Proof of \Cref{lem:var_diff_with_trunc}]
Let us suppose that 
\[
\tM_ k= \tM_ k(G) = \sum_{v \in V(G)} \sum_{(H,r) \in \mathcal{H}} \beta_{(H,r)} \mathds{1}_{\{(G[B_G(v,k)],v) \cong (H,r)\}},
\]
where $\beta_{(H,r)} \in [0,1]$ and let us define for $w \in V(G)$
\[
\psi_{G,k}(w) = \sum_{(H,r) \in \mathcal{H}} \beta_{(H,r)} \mathds{1}_{\{(G[B_G(w,k)],v) \cong (H,r)\}},
\]
so that $\tM_ k=\sum_{w \in V(G)} \psi_{G,k}(w)$. Note that, since $\beta_{(H,r)} \in [0,1]$, it follows that $\psi_{G,k}(w) \in [0,1]$. Then
\[
\widehat{M}_k =\widehat{M}_k(G) =  \sum_{w \in V(G)}\psi_{G,k}(w)  \ind{|B_G(w,k)| \leq t_k} \ind{G[B_G(w,k)] \text{ is a tree}}:= \sum_{w \in V(G)} \widehat{\psi}_{G,k}(w),
\]
where similarly $\widehat{\psi}_{G,k} \in [0,1]$.

Hence, by applying \Cref{lem:ESapp} to $\tM_k - \widehat{M}_k$, it follows that 
    \begin{align} \label{eq:var_L_k_hL_k_diff_bound}
        \variance{\tM_k - \widehat{M}_k} \leq 2cn \expec{|D^*|^2}, 
    \end{align}
    where 
    \begin{align*}
        D^* \coloneqq \{w \in V(G) \colon \psi_{G^+,k}(w) - \widehat{\psi}_{G^+,k}(w) \neq \psi_{G^-,k}(w) - \widehat{\psi}_{G^-,k}(w)\}.
    \end{align*}
    
        It remains to bound $\expec{|D^*|^2}$. To this end let us define, as in \cref{lem:NUNUCexpectation}, 
    \begin{align*}
        U_N &~\coloneqq~ \{w \in V(G) \colon |B_{G^+}(w,k)| > t_k\},\\ 
        U_C &~\coloneqq~ \{w \in V(G) \colon G^+[B_{G^+}(w,k)] \text{ contains a cycle}\}, \text{ and }\\
        N^* &~\coloneqq~ B_G(u,k) \cup B_G(v,k).
    \end{align*}
We claim that 
\begin{align}\label{eq:D*subset}
        D^* \subseteq N^* \cap (U_N \cup U_C).
    \end{align}

    Indeed, suppose that $w \in V(G) \setminus (U_N \cup U_C)$. Then $|B_{G^-}(w,k)| \leq |B_{G^+}(w,k)| \leq t_k$ and $G^-[B_{G^-}(w,k)]$ and $G^+[B_{G^+}(w,k)]$ are trees. Hence, $\psi_{G^+,k}(w) = \widehat{\psi}_{G^+,k}(w)$ and $\psi_{G^-,k}(w) =  \widehat{\psi}_{G^-,k}(w)$. Thus, $w \notin D^*$. It follows that $D^* \subseteq U_N \cup U_C$. 

    On the other hand, suppose that $w \in V(G) \setminus N^*$. 
    Then $G^+[B_{G^+}(w,k)] = G^-[B_{G^-}(w,k)]$ and thus $\psi_{G^+,k}(w) = \psi_{G^-,k}(w)$ and $\widehat{\psi}_{G^+,k}(w) = \widehat{\psi}_{G^-,k}(w)$. Hence, $w \notin D^*$. It follows that $D^* \subseteq N^*$. Hence, \eqref{eq:D*subset} follows.

    From \eqref{eq:D*subset} we can conclude that
    \begin{align} \label{eq:D^*_expec_bound}
            \expec{|D^*|^2} &\leq \expec{|N^* \cap (U_N \cup U_C)|^2 \ind{|N^*| > t_k}} + \expec{|N^* \cap (U_N \cup U_C)|^2 \ind{|N^*| \leq t_k}} \nonumber\\
            &\leq \expec{|N^*|^2 \ind{|N^*| > t_k}} + t_k\expec{|N^* \cap (U_N \cup U_C)|}.
    \end{align}

    Hence, by \eqref{eq:var_L_k_hL_k_diff_bound}, \eqref{eq:D^*_expec_bound}, and \Cref{lem:NUNUCexpectation}, 
    \begin{align*}
         \variance{\tM_k - \widehat{M}_k} &\leq 2cn \expec{|D^*|^2}\\
        &\leq n\left(4ck\exp\left(-2c^2k^2\right) + 20ck \tbound \exp\left(-2c^2k^2\right)\right) +O\left(1\right) \\
        &\leq n\exp\left(-c^2k^2\right) +O\left(1\right),
    \end{align*}
    since $t_k \coloneqq \tbound$, $k\geq 100$ and $c>1$, as desired.
\end{proof}

\section{\texorpdfstring{A lower bound on $\variance{\tL_k}$}{A lower bound on the variance of Lk}}\label{s:varlowerbound}

In this subsection we will prove that the variance of $\tL_k$ is linear in $n$, where we recall that $\tL_k = \tL_k(G) \coloneqq n-\sum_{v\in V(G)} \phi_{G,k}(v)$. 

The heuristic idea is relatively simple, but surprisingly hard to implement rigorously due to the non-monotonicity of the strong $4$-core. Let us briefly sketch our approach.
Suppose we expose $G \sim G(n,p)$ via two rounds with probabilities $p_1$ and $p_2$ where $p_1,p_2 = \Theta\left(\frac{1}{n} \right)$ but $p_2$ is much smaller than $p_1$. Let $G_1 \sim G(n,p_1)$ and $G_2 \sim G(n,p_2)$, so that $G = G_1 \cup G_2$.

Since $p_1 \approx \frac{c}{n}$, for large $c$ we expect the strong $4$-core $S_1 = S(G_1)$ to cover a large proportion of the vertices, but equally we expect there to be a linearly sized set $A'$ of isolated vertices in $G_1$. Furthermore, since $p_2 = \Theta\left(\frac{1}{n} \right)$ it is relatively easy to verify that we expect there to be some linear-sized subset $A \subseteq A'$  with $N_{G_2}(A) : =B$ which satisfies the following properties:
\begin{itemize}
\item $A$ is independent in $G_1$;
\item $B \subseteq S(G_1)$;
\item $m \coloneqq e_{G_2}(A,B) = \Omega(n)$;
\item $|A| = \Omega(n)$;
\item $\frac{m}{|A|} < \frac{1}{100}$.
\item Each $b \in B$ is incident to at most one edge in $E_{G_2}(A,B)$.
\end{itemize}

If we condition on this likely event, and the set of edges of $G_1 \cup G_2$ outside of $E_G(A,B)$,
then by symmetry the number of edges incident to each vertex $a \in A$ can be modelled by placing $m$ balls into $|A|$ bins. 

Let us assume for simplicity's sake that $B$ is so \emph{robustly} sapphire in $G_1$ that exposing the edges $E_{G_2}(A,B)$ will not change the colouring outside of $A \cup B$. In this case, the contribution to $\Phi(G)$ from vertices outside of $A \cup B$ will not be changed by the edges $E_{G_2}(A,B)$. Furthermore, it can be seen (see \cref{prop:H^*_vs_H}) that the contribution to $\Phi(G)$ from the red-purple component containing a vertex $a \in A_2$ depends only on the number of edges of $E_{G_2}(A,B)$ which are incident to $a$. If this number is at most $1$ the contribution is $1$, and if this number is at least $2$ then the contribution is $0$.

However, standard calculations (see \cref{lem:balls_in_bins}) will show that the variance of the random variable which counts the number of bins containing at least two balls when $m$ balls are placed in $|A|$ bins at random is at least $\frac{m^2}{3|A|} = \Omega(n)$. Due to the likely existence of sets $A,B$ as above, this will be enough to bound the variance of $\tL$ from below by a linear function of $n$.

Of course, there are many simplifications in the strategy above. Firstly, although this is not a big difference, in the above sketch we are working with $\tL$ rather than $\tL_k$. Secondly, and more crucially, it is not clear that we can guarantee the existence of sets $A$ and $B$ with the desired properties to make this sketch work, or even what properties we should require in terms of the robustness of the colouring of $B$.

Let us make the above sketch precise. Throughout this section we will assume that $c \geq 20$, $k \geq 100$, and that $n$ is large enough. We will prove the following lower bound on the variance of $\tL_k$.

\begin{lemma} \label{lem:L_k_var_lower_bound}
    For every $c \geq 20$, $k \geq 100$, and $n$ large enough, $$\variance{\tL_k} \geq 2A_c n,$$ where $A_c \coloneqq \frac{1}{12} e^{-20c} > 0$.
\end{lemma}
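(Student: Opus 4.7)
The plan is to implement the two-round exposure sketched before the lemma. First I would split $p = c/n$ as $p = p_1 + p_2 - p_1 p_2$ with $p_1$ close to $c/n$ and $p_2 = \Theta(1/n)$ chosen small enough that the typical degree in $G_2 \sim G(n,p_2)$ is much smaller than $4$, but still linear in expectation overall. Writing $G = G_1 \cup G_2$ with $G_1$ and $G_2$ independent, I would use the law of total variance: $\variance{\tL_k(G)} \geq \Exp\!\left[\variance{\tL_k(G) \mid G_1}\right]$. So it suffices to show that, on a likely event $\cE$ depending only on $G_1$, we have $\variance{\tL_k(G) \mid G_1} \geq 3A_c n$.

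The event $\cE$ is that there exist disjoint sets $A, B \subseteq V$ with the following properties: $A$ is independent in $G_1$, every $v \in A$ is isolated in $G_1$ (so trivially red in the global colouring of $G_1$ alone), $B \subseteq S(G_1)$ in a \emph{robust} sense, meaning that each $b \in B$ has at least, say, $10$ neighbours in $S(G_1)$ inside $B_{G_1}(b, k)$, and the purple/red structure in the $k$-neighbourhoods of $B$ is stable under the addition of a single extra edge at each $b$; also $|A|, |B| = \Omega(n)$ and $\frac{|B|}{|A|} < 1/100$. Standard first-moment and concentration calculations in $G(n,p_1)$ show that such sets exist whp when $c \geq 20$, the main point being that for large $c$ the strong $4$-core covers all but an $e^{-(1+o(1))c}$ fraction of vertices while still leaving a linear (roughly $e^{-c} n$) set of isolated vertices available for $A$; the factor $e^{-20c}$ in $A_c$ originates from quantifying this.

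Conditioning now on $G_1$ in $\cE$ and on all edges of $G_2$ outside $E_{G_2}(A,B)$, I would expose $E_{G_2}(A,B)$ last. Since $A$ is independent in $G_1$, each $a \in A$ has only the edges in $E_{G_2}(A,B)$ incident to it; by the robustness of $B$, \cref{lem:sapphire_cut_local} (applied separately at each $a$ with $B \cup \partial$-data playing the role of the cut) guarantees that the local colourings $(S_k, P_k, R_k)$ computed from $v \in V \setminus (A \cup B)$ are unchanged by these edges, and similarly the component $C_a^k$ of each $a \in A$ is contained in $A \cup B$. Using the structural lemma implicit in the paper (the ``\texttt{H\^{}*} vs.\ \texttt{H}'' computation referenced in the sketch), $1 - \phi_{G,k}(a)$ is a function only of the number of $E_{G_2}(A,B)$-edges incident to $a$: it equals some fixed value $\alpha_0$ when this degree is $0$ or $1$ and a different fixed value $\alpha_{\geq 2}$ when it is $\geq 2$, with $\alpha_0 \neq \alpha_{\geq 2}$ (indeed, an isolated or leaf vertex is always left uncovered, whereas a degree-$\geq 2$ vertex can be used as an interior vertex of a $B$-path). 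Consequently, up to an additive constant that is measurable with respect to the conditioning, $\tL_k(G)$ equals $(\alpha_0 - \alpha_{\geq 2})$ times the number $N$ of $a \in A$ whose $E_{G_2}(A,B)$-degree is at most $1$.

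Finally, conditional on the number $m$ of edges in $E_{G_2}(A,B)$, by symmetry the distribution of these edges is uniform over all ways of placing them as a simple bipartite edge-set with distinct endpoints in $B$. Restricting to the likely event that $m = \Theta(n)$ and $m/|A| < 1/100$, the count $N$ of bins receiving at most one ball has, by a standard second-moment balls-in-bins calculation, variance $\Omega(m^2/|A|) = \Omega(n)$; this is the content of the auxiliary lemma about balls-in-bins alluded to. Combining, $\variance{\tL_k(G) \mid G_1} = \Omega(n)$ on $\cE$, with an explicit constant of the form $e^{-O(c)}$, yielding $\variance{\tL_k} \geq 2 A_c n$ with $A_c = \tfrac{1}{12} e^{-20 c}$. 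The main obstacle I expect is the robustness verification: one must pin down which event on $G_1$ makes $B$ sufficiently resilient that adding the sparse edges in $E_{G_2}(A,B)$ cannot cascade through $B$ and alter local colourings at vertices outside $A \cup B$ (or at other vertices of $A$); this is where the two-round exposure and the $4$-vs-$10$ robustness slack are essential, and where the careful application of the cut lemmas from \cref{sec:local_changes} is needed.
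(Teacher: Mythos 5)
Your overall strategy is the same as the paper's: a two-round exposure with a sparse second round $G_2$, identification of a linear-sized set $A$ of $G_1$-isolated vertices ``cut off'' by a linear-sized robustly sapphire set $B$, localisation of the effect of the edges $E_{G_2}(A,B)$ via the cut lemmas, and reduction to a balls-in-bins variance estimate. However, your proposal has a genuine gap in the handling of robustness. You define the event $\cE$ entirely in terms of $G_1$, but the localisation argument needs $B$ to be robustly sapphire in $G^* = G_1 \cup \bigl(G_2 - E_{G_2}(A,B)\bigr)$ — the graph one conditions on before exposing $E_{G_2}(A,B)$. This graph contains $\Theta(n)$ additional $G_2$-edges not present in $G_1$, and a $\Theta(e^{-5c})$-fraction of the vertices of $B$ are expected to acquire one of them; the resulting cascades can recolour vertices near $B$, so a condition phrased only on $G_1$ cannot rule them out. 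The paper resolves this with the iterative process in \cref{alg:edge_revealing_process}, which alternately reveals $G_2$-edges outside the current $(A,B)$ and shrinks $A,B$ until they stabilise, so that the final $B_4$ is robustly sapphire relative to the full graph $G^*$; this stabilisation (and the accompanying claim that it still leaves $|B_3| = \Omega(n)$) is the most intricate part of the argument and is essentially absent from your sketch. You flag the robustness verification as the main obstacle, but locate the difficulty only in resilience against $E_{G_2}(A,B)$, not against the rest of $G_2$.

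Two further, smaller, remarks. First, the robustness condition you propose (``at least $10$ neighbours in $S(G_1)$ inside $B_{G_1}(b,k)$'') is not the right shape. The paper's robust sapphire property requires that \emph{every} $x \in B \cup N(B)$ have $N[x] \subseteq S$ and degree $\geq 5$ — so the resilience of the neighbours, not merely of $b$ itself — and, crucially, that distinct $B$-vertices be pairwise at graph-distance $\geq 5$. The pairwise-distance condition (RS2) is what prevents cascades caused by edges at distinct $b,b' \in B$ from interfering; without it the cut-lemma bookkeeping in \cref{prop:RS_retained,prop:H^*_vs_H} does not close. Second, your statement that $1 - \phi_{G,k}(a)$ depends only on whether the $E_{G_2}(A,B)$-degree of $a$ is at most $1$ or at least $2$ is not correct pointwise: $\phi_{G,k}(a) = 1$ at degree $0$ but $\phi_{G,k}(a) = \tfrac{1}{2}$ at degree $1$ (with a matching $\tfrac{1}{2}$ at the unique neighbour $b \in B$). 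Only the \emph{aggregate} contribution of the star containing $a$ collapses to a two-valued function of whether its degree is $\leq 1$ or $\geq 2$, which is the form used in \cref{lem:adjacment}.
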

The proof of \cref{lem:L_k_var_lower_bound}, which we give in \cref{sec:proof_of_var_lower_bound}, will rely on the following notion of a \emph{robustly} sapphire set. Given a graph $H$ and a subset $B \subseteq V(H)$, we say that $B$ has the \emph{robust sapphire property for $H$} if the following hold. 
\begin{enumerate}[label = \upshape{(RS\arabic*)}, leftmargin= \widthof{RS100000}]
    \item For every $x \in B \cup N_H(B)$, we have $\{x\} \cup N_H(x) \subseteq S(H)$ and $d_H(x) \geq 5$. \label{item:RS1}
    \item For every $b_1, b_2 \in B$ with $b_1 \neq b_2$, we have $\dist_H(b_1, b_2) \geq 5$. \label{item:RS2}
\end{enumerate}

Given a graph $H$ and disjoint subsets $A,B\subseteq V(H)$, we say that $(H,A,B)$ has the property~$\mathcal{P}$ (written $(H,A,B) \in \mathcal{P}$) if the following hold, where $H^* \coloneqq H - E_H(A,B)$.
\begin{enumerate}[label = \upshape{(P\arabic*)}, leftmargin= \widthof{P100000}]
    \item $E_H(A) = E_H(B) = \varnothing$. \label{item:P1}
    \item $H[A \cup B]$ is a union of stars and $|N_H(b) \cap A| \leq 1$ for each $b \in B$. \label{item:P2}
    \item $A$ consists of isolated vertices in $H^*$. \label{item:P3}
    \item $B$ has the robust sapphire property for $H^*$. \label{item:P4}
\end{enumerate}

The next lemma states that if $(H,A,B) \in \cP$, then $L_k(H) - L_k(H^*)$ is precisely the number of stars in $H[A,B]$ with at least $2$ edges.
This will allow us to show that if we condition on $E(H^*)$, then the remaining variance will be the same as the variance of the number of bins with at least $2$ balls in a balls in bins model. 

\begin{lemma}\label{lem:adjacment}
    Let $k\geq 2$. Let $H$ be a graph and let $A, B \subseteq V(H)$ be disjoint such that $(H,A,B) \in \cP$. Let $Y$ be the number of stars in $H[A,B]$ with at least $2$ edges. Let $H^*\coloneqq H - E_H(A,B)$. Then,
  \begin{equation}\label{eq:phichwerries}
      \tL_k(H)= \tL_k(H^*) +Y.
  \end{equation}
\end{lemma}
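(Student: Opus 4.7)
My plan is to decompose the difference $\tL_k(H)-\tL_k(H^*) = \sum_{v\in V(H)}\bigl(\phi_{H^*,k}(v)-\phi_{H,k}(v)\bigr)$ into contributions coming from the individual stars of $H[A,B]$. Properties \upshape{(P1)}--\upshape{(P4)} guarantee that distinct stars are structurally isolated: any two vertices of $B$ have pairwise $H^*$-distance at least $5$ by \upshape{(RS2)}, and each $b\in B$ together with its $H^*$-neighbourhood lies in $S(H^*)$ with $H^*$-degree at least $5$ by \upshape{(RS1)}. First I would use this robust sapphire ``shield'' together with \cref{lem:sapphire_cut_local}, applied to the cut set $N_{H^*}(B)$, to argue that any vertex $v$ whose local view $B_H(v,k)$ does not meet a star contributes $0$ to the above sum, since its local colourings in $H^*$ and $H$ coincide on its red-purple component.

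It then remains to compute, for each star $S_a\coloneqq\{a\}\cup N_H(a)$ centred at $a\in A$ with $r\coloneqq |N_H(a)\cap B|$, the contribution $\sum_{v\in S_a}\bigl(\phi_{H^*,k}(v)-\phi_{H,k}(v)\bigr)$. In $H^*$: property \upshape{(P3)} gives that $a$ is isolated, so $C_a^k(H^*)=\{a\}$ with $P_k(H^*,a)=\varnothing$, whence $\phi_{H^*,k}(a)=1$; and property \upshape{(P4)} gives $b_i\in S(H^*)$, so by \cref{obs:In_S_globally_implies_locally} each $b_i$ lies in $S_k(H^*,b_i)$ and thus $\phi_{H^*,k}(b_i)=0$. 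The total $H^*$-contribution over $S_a$ is therefore $1$. In $H$: I would use the robust sapphire property to show that the local component $C_v^k(H)$ coincides with $S_a$ for every $v\in S_a$, with $a$ playing the role of $R$ and the $b_i$'s the role of $P$. A direct computation then gives $\uc(H[S_a];\{b_1,\ldots,b_r\})=0$ when $r\geq 2$ (the path $b_1 a b_2$ covers $a$) and $=1$ otherwise. Since $\phi_{H,k}$ distributes this uncovered count equally over the $r+1$ vertices of $S_a$, we have $\sum_{v\in S_a}\phi_{H,k}(v)=\uc(H[S_a];\{b_1,\ldots,b_r\})$.

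Combining the two computations, each star contributes $1-\uc$, which equals $1$ if $r\geq 2$ and $0$ otherwise, so the total over all stars is precisely $Y$, as required. The main obstacle I anticipate is the clean identification of the local triple $(S_k(H,v),P_k(H,v),R_k(H,v))$ on the star component with the global $(S(H)\cap C_v,P(H)\cap C_v,R(H)\cap C_v)$ for each $v\in S_a$. For this I would apply \cref{obs:C_small} to each $v\in S_a$, checking its hypothesis $C_v(H)\subseteq B_H(v,k-1)$: the component $C_v(H)=S_a$ lies inside the ball of radius $2$ around any $v\in S_a$ in $H$, while the $H^*$-shield $N_{H^*}(b_i)$ around each leaf sits at $H^*$-distance $1$ from $b_i$ and hence at $H$-distance at most $3$ from any $v\in S_a$, which is enough to see the shield inside $B_H(v,k)$ and to deduce the required match of local and global $(S,P,R)$-markings on the star.
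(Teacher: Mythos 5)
Your overall strategy — first showing that the robustly sapphire cut $N_{H^*}(B)$ shields every vertex outside $A\cup B$ from the edge changes, and then computing the per-star contributions — is the same route the paper takes; the $H^*$-side computation (contribution $1$ per star, since centres are isolated red and leaves are sapphire) is also fine. However, there is a genuine gap on the $H$-side. Your claim that ``the local component $C_v^k(H)$ coincides with $S_a$ for every $v\in S_a$, with $a$ playing the role of $R$ and the $b_i$'s the role of $P$'' is false whenever the star has $r\geq 4$ edges. In that case $a$ gains four or more sapphire neighbours and is \emph{absorbed into} $S(H)$, as are all of its leaves; the red-purple component $C_v^k(H)$ is empty for every $v\in S_a$, not equal to $S_a$. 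This non-monotone behaviour of the strong $4$-core is exactly what \cref{prop:H^*_vs_H} pins down: the set $A'=\{a\in A : d_H(a)\geq 4\}$ moves from $R(H^*)$ to $S(H)$, while $A\setminus A'$ stays red and $B'=N_H(A\setminus A')$ becomes purple, with the remaining vertices of $B$ staying sapphire.

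The identity you write down, $\sum_{v\in S_a}\phi_{H,k}(v)=\uc(H[S_a];\{b_1,\dots,b_r\})$, does happen to hold numerically when $r\geq 4$ (both sides are $0$), so your final answer is correct; but the justification you give — that $\phi_{H,k}$ distributes the uncovered count of a red-purple component equal to $S_a$ evenly over its $r+1$ vertices — does not apply there, since $S_a$ is not a component of $H[P(H)\cup R(H)]$ at all and $|C_v^k|=0$. To make the argument rigorous you need to treat the $d_H(a)\geq 4$ stars as a separate case and argue, as the paper does via \cref{prop:H^*_vs_H} (or \cref{prop:RS_retained} for the one-sided inclusion), that each $\phi_{H,k}$ vanishes because the relevant vertices are sapphire. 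The degree-$\{0,1,2,3\}$ case analysis you sketch is otherwise consistent with the paper's.
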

We prove \cref{lem:adjacment} in \cref{sec:robust_sapphire}.
In order to use this to prove \cref{lem:L_k_var_lower_bound}, we will need the following bound on the variance of the number of bins with at least $2$ balls in a balls in bins model. The proof of this is given in \cref{a:balls_in_bins}.

\begin{lemma} \label{lem:balls_in_bins}
    Let $N$ and $m$ be positive integers such that $m = \Theta(N)$ as $N \rightarrow \infty$ and $\frac{m}{N} < \frac{1}{100}$. Consider a balls and bins configuration with $N$ bins and $m$ balls, that is, each of the $m$ balls is assigned independently and uniformly at random to one of the $N$ bins.
    Let $Z$ be the number of bins with at least $2$ balls. Then, for $N$ large enough,
    \[
        \variance{Z} = h\left(\frac{m}{N}\right)N +O(1) \geq \frac{1}{3}\left(\frac{m}{N}\right)^2 N,
    \]
    as $N \rightarrow \infty$, where $h(x) \coloneqq (1+x)e^{-x} -((1+x)^2+x^3) e^{-2x}$.
\end{lemma}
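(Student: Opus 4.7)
The plan is to expand $\variance{Z}$ asymptotically by a direct computation. Writing $Z = \sum_{i=1}^N Z_i$ with $Z_i \coloneqq \mathds{1}\{X_i \geq 2\}$ and $X_i$ the number of balls in bin $i$, symmetry yields
\[
\variance{Z} \;=\; N\variance{Z_1} \,+\, N(N-1)\,\mathrm{Cov}(Z_1, Z_2),
\]
so it suffices to compute $\prob{X_1 \geq 2}$ and $\prob{X_1 \geq 2, X_2 \geq 2}$ from the multinomial distribution and expand them in $q \coloneqq 1/N$ while keeping $p \coloneqq m/N$ fixed. Using $m\ln(1-q) = -p - pq/2 + O(pq^2)$ and $m\ln(1-2q) = -2p - 2pq + O(pq^2)$, the quantities $(1-q)^m$, $(1-q)^{m-1}$, $(1-2q)^m$, $(1-2q)^{m-1}$, and $(1-2q)^{m-2}$ can all be expanded up to relative error $O(q^2)$.

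After this routine arithmetic, I would obtain
\[
\prob{X_1 \geq 2} \;=\; 1 - (1+p)e^{-p} - \tfrac{1}{2}p(1-p)q\,e^{-p} + O(q^2),
\]
and, by summing the four cases for $(X_1, X_2) \in \{(0,0),(0,1),(1,0),(1,1)\}$,
\[
\prob{X_1 \leq 1, X_2 \leq 1} \;=\; (1+p)^2 e^{-2p} + p(1-2p^2)q\,e^{-2p} + O(q^2).
\]
The key step is the cancellation at leading order in $\mathrm{Cov}(Z_1, Z_2) = \prob{X_1 \geq 2, X_2 \geq 2} - \prob{X_1 \geq 2}^2$: expanding via $\prob{X_1 \geq 2, X_2 \geq 2} = 1 - 2\prob{X_1 \leq 1} + \prob{X_1 \leq 1, X_2 \leq 1}$, both the $O(1)$ parts and the $O(q)e^{-p}$ parts agree, and what survives at order $q$ is exactly $-p^3 e^{-2p}$, giving $\mathrm{Cov}(Z_1, Z_2) = -p^3 q\,e^{-2p} + O(q^2)$. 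Multiplying by $N(N-1)$ contributes $-Np^3 e^{-2p} + O(1)$, and combining with $N\variance{Z_1} = N(1+p)e^{-p}(1-(1+p)e^{-p}) + O(1)$ yields $\variance{Z} = h(p)N + O(1)$, as claimed.

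For the lower bound $h(p) \geq p^2/3$ when $p < 1/100$, I would factor $h(p) = e^{-2p}\bigl[(1+p)e^p - (1+p)^2 - p^3\bigr]$ and apply the Taylor bound $e^p \geq 1 + p + p^2/2$ to obtain $(1+p)e^p - (1+p)^2 \geq (1+p)\cdot p^2/2 \geq p^2/2$. Hence the bracket is at least $p^2/2 - p^3 \geq p^2(1/2 - 1/100) = 49p^2/100$, and combined with $e^{-2p} \geq 1 - 2p \geq 98/100$, this gives $h(p) \geq (49 \cdot 98)/10000 \cdot p^2 > p^2/3$. The main obstacle is the bookkeeping in the expansions: naively both $\expec{Z_1 Z_2}$ and $\expec{Z_1}^2$ are $\Theta(1)$ quantities that cancel at leading order, so one must pin down the subleading $O(1/N)$ corrections to all five powers $(1 \pm \cdot/N)^{m - \cdot}$ above and verify that everything collapses to the clean coefficient $-p^3 e^{-2p}$; this is mechanical but unforgiving of arithmetic slips.
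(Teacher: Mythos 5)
Your proposal is correct and follows the same route as the paper: write the variance as $N\variance{Z_1} + N(N-1)\mathrm{Cov}(Z_1,Z_2)$, expand the relevant multinomial probabilities to order $1/N$ with $p = m/N$ bounded, and observe that the leading terms cancel leaving the coefficient $-p^3 e^{-2p}$ for the covariance (the paper does the equivalent computation using $Y = N - Z$, which has the same variance). As a bonus, your Taylor-expansion argument for $h(p) \geq p^2/3$ when $p < 1/100$ supplies the step the paper explicitly leaves out ("proof omitted"), and it checks out.
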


As mentioned in the sketch proof above, we will expose the edges of $G$ in two rounds.
Let $p_2 \coloneqq \frac{e^{-5c}}{n}$ and let $p_1 \in (0,1)$ be such that $1-p=(1-p_1)(1-p_2)$. Let $G_1\sim G(n,p_1)$ and $G_2\sim G(n,p_2)$ be independent random graphs. Then, we can couple $G$ and $(G_1,G_2)$ such that $G=G_1\cup G_2$.
To bound the variance of $\tL_k$ we will implement an \emph{edge revealing process}, that is, a process that reveals partial information about the random graphs $G_1$ and $G_2$ to find two vertex disjoint sets, which will play the roles of $A$ and $B$ from the sketch proof above. The next lemma makes precise the properties we want these sets to have. We will prove this in \cref{subsec:edge_reveal}.

\begin{lemma} \label{lem:edge_reveal_alg_exists}
    There exists an edge revealing process that outputs two vertex disjoint sets $A_4, B_4 \subseteq V(G)$ such that the following hold, where $G^* \coloneqq G - E_{G}(A_4,B_4))$.
    \begin{enumerate}[label = \upshape{(R\arabic*)}, leftmargin= \widthof{R100000}]
    \item $(G,A_4,B_4)\in \mathcal{P}$. \label{item:Reveal1}
    \item $E(G_1[A_4,B_4])=\varnothing$. \label{item:Reveal2}
    \item The process reveals precisely the edges of $G^*$, that $E(G_1[A_4,B_4])=\varnothing$, that $|N_{G_2}(b) \cap A_4| \leq 1$ for each $b \in B_4$, and the number of edges in $G_2[A_4,B_4]$, which we denote by $m$. \label{item:Reveal3}
    \item $\prob{|A_4| \geq \frac{e^{-c}n}{2} \text{ and } e^{-10c}n \leq m \leq e^{-5c}n} \geq \frac{1}{2}$. \label{item:Reveal4}
\end{enumerate}
\end{lemma}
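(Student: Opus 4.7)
The plan is to implement the revealing process in three rounds, producing a descending chain of candidates $(A_1 \supseteq A_3 \supseteq A_4,\ B_1 \supseteq B_3 \supseteq B_4)$ that is finalized at the end. In Round~1, expose $G_1 \sim G(n, p_1)$ in full and define $A_1 \coloneqq \{v : d_{G_1}(v) = 0\}$; then greedily choose $B_1 \subseteq V \setminus A_1$ as a maximal set such that every vertex of every $5$-ball $B_{G_1}(b, 5)$ (for $b \in B_1$) has $G_1$-degree at least $20$ and distinct elements of $B_1$ are at $G_1$-distance $\geq 10$. A Galton--Watson comparison together with \cref{Biased_McDiarmid_ineq} give $|A_1| \geq \tfrac{3}{4} e^{-c} n$ and $|B_1| \geq \beta n$ whp for some constant $\beta = \beta(c) > 0$, and the construction ensures that $B_1$ is robustly sapphire in $G_1$ with $B_{G_1}(B_1, 5) \subseteq S(G_1)$.

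In Round~2, expose $F_0 \coloneqq G_2 \setminus E_{G_2}(A_1, B_1)$ and define $A_3 \coloneqq \{a \in A_1 : a$ has no $F_0$-edge$\}$ and $B_3 \coloneqq \{b \in B_1 : F_0$ has no edge incident to $B_{G_1}(b, 5)\}$. Chernoff (\cref{thm:ex_tail_Chernoff}) applied to $d_{G_2}(a)$ gives $|A_3| \geq \tfrac{1}{2} e^{-c} n$ whp, and a first-moment argument using $|F_0| = O(e^{-5c} n)$ and $|B_{G_1}(b, 5)| = O(c^5)$ gives $|B_3| \geq \beta n / 2$ whp. Still in Round~2, successively reveal the remaining $G_2$-edges lying outside $E_{G_2}(A_3, B_3)$ (namely $E_{G_2}(A_1 \setminus A_3, B_1)$ and $E_{G_2}(A_3, B_1 \setminus B_3)$), then iteratively set $B_4 \coloneqq \{b \in B_3 : N_{G_2}(b) \subseteq A_3\}$ and $A_4 \coloneqq \{a \in A_3 : N_{G_2}(a) \subseteq B_4\}$, revealing $E_{G_2}(A_3, B_3 \setminus B_4)$ and $E_{G_2}(A_3 \setminus A_4, B_4)$ as required. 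Since $np_2 = e^{-5c}$, each refinement loses only $o(n)$ vertices whp, so $|A_4| \geq \tfrac{1}{2} e^{-c} n$ persists. Round~3 finally exposes $\mathcal{E} \coloneqq \{|N_{G_2}(b) \cap A_4| \leq 1$ for every $b \in B_4\}$ and the count $m \coloneqq e_{G_2}(A_4, B_4)$; on $\mathcal{E}$ we output $(A_4, B_4)$ and otherwise a trivial pair.

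Properties \ref{item:Reveal2} and \ref{item:Reveal3} follow directly from the schedule, and P1--P3 of \ref{item:Reveal1} follow from the $G_1$-isolation of $A_1$, the $G_1$-distance condition on $B_1$ (together with the refinement removing any $G_2$-edges that would violate independence), from $\mathcal{E}$, and from the construction of $A_4$ which forces every $G_2$-edge of $A_4$ into $E_{G_2}(A_4, B_4)$, so $A_4$ is isolated in $G^* \coloneqq G - E_G(A_4, B_4)$. For \ref{item:Reveal4}, $\expec{m} = |A_4|\,|B_4|\,p_2 = \Theta(e^{-6c} n) \in [2 e^{-10c} n,\ \tfrac{1}{2} e^{-5c} n]$, so Chebyshev gives $m \in [e^{-10c}n, e^{-5c}n]$ with probability $\geq \tfrac{3}{4}$, while a first-moment bound on $|\{b \in B_4 : |N_{G_2}(b) \cap A_4| \geq 2\}|$ (with expectation $O(e^{-12c} n)$) gives $\prob{\mathcal{E}^c} \leq \tfrac{1}{8}$; combining these yields \ref{item:Reveal4} with probability $\geq \tfrac{1}{2}$.

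The main obstacle is verifying P4, that $B_4$ has the robust sapphire property in $G^*$. Since the strong $4$-core is not monotone under edge additions, the revealed $G_2$-edges could in principle trigger a peeling cascade that reaches $B_4$. Our construction ensures that no revealed $G_2$-edge is incident to $B_{G_1}(B_4, 5)$, so $N_{G^*}(v) = N_{G_1}(v)$ for every $v \in B_{G_1}(B_4, 4)$; this immediately transfers the degree condition of RS1 and the distance condition RS2 from $G_1$ to $G^*$. For the sapphire containment in RS1, we apply \cref{lem:sapphire_cut_onesided} with $A \coloneqq \partial_{G_1}(B_4, 5)$ and $B \coloneqq V \setminus B_{G_1}(B_4, 4)$; the hypothesis $A \subseteq S(G^*)$ is established by a separate inductive peeling argument that exploits the uniform high-degree condition within the $5$-balls to show that the only vertices peeled from $S(G_1)$ when passing to $S(G^*)$ lie at $G_1$-distance larger than $5$ from $B_4$.
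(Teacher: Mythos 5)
Your approach differs genuinely from the paper's (which uses an iterative ``stabilization'' loop to re-establish the robust sapphire property for the accumulated graph of revealed edges after each reveal), but as written it has two serious gaps.

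First, your definition of $B_1$ requires \emph{every} vertex of every $5$-ball $B_{G_1}(b,5)$ to have $G_1$-degree at least $20$. For $c$ near the hypothesis threshold $c = 20$, a vertex has degree at least $20$ with probability roughly $1/2$, while $|B_{G_1}(b,5)|$ is typically of order $c^5 \approx 3 \times 10^6$. The probability that a single $5$-ball satisfies your condition is therefore something like $2^{-c^5}$, astronomically small, so $|B_1|$ is not $\Omega(n)$ (indeed $|B_1| = 0$ whp). The paper's condition only constrains the $2$-ball around $b$ to lie in $S(G_0)$ and the $1$-ball to have degree $\geq 5$, which is several orders of magnitude weaker and is what makes the density argument go through for $c \geq 20$. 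Your condition could perhaps be repaired for very large $c$, but the lemma is stated for all $c \geq 20$.

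Second, the crux of P4 is not resolved. You correctly identify that the non-monotonicity of the strong $4$-core is the obstacle, and you sketch an application of \cref{lem:sapphire_cut_onesided} whose key hypothesis is $\partial_{G_1}(B_4,5)\subseteq S(G^*)$ (incidentally, since $N_{G_1}(B)\subseteq\partial_{G_1}(B_4,4)$ when $B=V\setminus B_{G_1}(B_4,4)$, your separating set should be $\partial_{G_1}(B_4,4)$ rather than $\partial_{G_1}(B_4,5)$ for the lemma's hypotheses to align). But you then assert this containment follows from a ``separate inductive peeling argument.'' This is exactly the hard part and you give no proof. The degree-$20$ condition alone does not stop a peeling cascade from entering the ball: boundary vertices of the $5$-ball may have most of their neighbours outside the ball, and if those external neighbours are peeled, the cascade propagates inward. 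The paper sidesteps this entirely by running the peeling process on the actual revealed graph and iterating until $B_2$ stabilizes as a robustly sapphire set for that graph, combined with \cref{prop:RS_retained} to carry the property through the final reveals. Without either that iteration or a substantive substitute for the ``inductive peeling argument,'' P4 remains unproved.
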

Note that \cref{item:Reveal2} implies that $G^* = G_1 \cup (G_2 - E_{G_2}(A_4,B_4))$, that is, $G^*$ consists of all edges of $G$ except the ones from $A_4$ to $B_4$ that belong to $G_2$.

\subsection{Proof of Lemma \ref{lem:L_k_var_lower_bound}} \label{sec:proof_of_var_lower_bound}
By \cref{lem:edge_reveal_alg_exists}, there exists an edge revealing process such that \cref{item:Reveal1,item:Reveal2,item:Reveal3,item:Reveal4} hold. Let $A_4$ and $B_4$ be the two vertex disjoint subsets of $V(G)$ that the process outputs.
For disjoint $U, W \subseteq V(G)$ let $\mathcal{F}(U,W)$ be the set of subsets of edges of $\binom{V(G)}{2} \setminus \{\{x,y\} \colon x \in U, y \in W\}$. Furthermore for disjoint $U,W\subseteq V(G)$, $F\in \mathcal{F}(U,W)$, and $0\leq m\leq |U||W|$ we let 
$\mathcal{E}(U,W,F,m)$ be the event that 
\begin{align*}
    A_4 = U,\quad B_4 = W, \quad E(G)\setminus E(G[U,W]) = F, \quad\text{and}\quad e(G_2[U,W]) = m.
\end{align*}
In addition, we let $Y(U,W,m)$ be the number of stars in $G[U, W]$ with at least $2$ edges, conditioned on the event $\mathcal{E}(U,W,F,m)$. Note that if $\mathcal{E}(U,W,F,m)$ holds, then $F = E(G^*)$ and thus $\tL_k(G^*)$ is determined by $F$.
Hence, \cref{lem:adjacment} implies that there exists a deterministic $x(F)$ such that in the event $\mathcal{E}(U,W,F,m)$ one has 
\begin{align} \label{eq:x_F}
    \tL_k(G)=x(F)+Y(U,W,m).
\end{align}
Let $\mathcal{Q}$ be the set of $4$-tuples $(U, W, F, m)$, where $U, W \subseteq V(G)$ are disjoint, $F \in \cF(U,W)$, and $0 \leq m \leq |U||W|$ are such that $\prob{\cE(U,W,F,m)} >0$, $|U| \geq \frac{e^{-c}n}{2}$, and $e^{-10c}n \leq m \leq e^{-5c}n$. By the law of total expectation, it follows that
\begin{align*}
&\variance{\tL_k}=\expec{\left(\tL_k-\expec{\tL_k}\right)^2} \\ 
&\quad \overset{\mathclap{\cref{eq:x_F}}}{=} \sum_{\substack{(U,W,F,m) \\ \prob{\cE(U,W,F,m)}>0}} \prob{\mathcal{E}(U,W,F,m)} 
\expec{\left(x(F)+Y(U,W,m)-\expec{\tL_k}\right)^2\bigg|\mathcal{E}(U,W,F,m)} \\
&\quad\geq \sum_{(U,W,F,m) \in \cQ} \prob{\mathcal{E}(U,W,F,m)} 
\expec{\left(x(F)+Y(U,W,m)-\expec{\tL_k}\right)^2\bigg|\mathcal{E}(U,W,F,m)}
\\& \quad\geq \sum_{(U,W,F,m) \in \cQ} \prob{\mathcal{E}(U,W,F,m)} \variance{Y(U,W,m)\bigg|\mathcal{E}(U,W,F,m)},
\end{align*}
where the second inequality follows from the fact that, conditioned on $\mathcal{E}(U,W,F,m)$, $\expec{\tL_k} - x(F)$ is a constant and that the constant $a$ that minimises 
\[
    \expec{(Y(U,W,m) - a)^2 \bigg|\mathcal{E}(U,W,F,m)}
\]
is $a = \expec{Y(U,W,m)\bigg|\mathcal{E}(U,W,F,m)}$.
By \cref{item:Reveal4}, we have
\begin{equation}
    \sum_{(U,W,F,m) \in \cQ} \prob{\mathcal{E}(U,W,F,m)}   \geq \frac{1}{2}.
\end{equation}  
Thus, it now suffices to prove that for all $(U,W,F,m) \in \cQ$,
\begin{equation}\label{eq:goal_exp}
\variance{Y(U,W,m)\bigg|\mathcal{E}(U,W,F,m)} \geq 4 A_c n,
\end{equation} 
where $A_c \coloneqq \frac{1}{12} e^{-20c} > 0$.
Observe that $\variance{Y(U,W,m)\bigg|\mathcal{E}(U,W,F,m)}$ equals the variance of the number of bins with at least $2$ balls in a balls in bins configuration where each of the $m$ balls is placed in one of the $|U|$ many bins uniformly at random.
Using the bounds on $m$ given by $(U,W,F,m) \in \cQ$, we have by \cref{lem:balls_in_bins} that the variance of this quantity can be lower bounded by $\frac{m^2}{3|U|} \geq 4A_c n$, where $A_c \coloneqq \frac{1}{12} e^{-20c} > 0$. 
This completes the proof of \cref{lem:L_k_var_lower_bound}.

\subsection{The robust sapphire property} \label{sec:robust_sapphire}
In this subsection we will prove \cref{lem:adjacment}. In order to do so we need to understand how adding the edges in $E(H) \setminus E(H^*)$ to $H^*$ changes its strong $4$-core. We first prove the following proposition which tells us how a set of vertices with the robust sapphire property can limit changes to the strong $4$-core when adding more edges.

\begin{proposition} \label{prop:RS_retained}
    Let $H^*$ be a graph and $A, B \subseteq V(H^*)$ be disjoint such that the vertices in $A$ are isolated in $H^*$ and $B$ has the robust sapphire property for $H^*$. Let $F \subseteq \{ab \colon a \in A, b \in B\}$ and $H \coloneqq (V(H^*), E(H^*) \cup F)$. 
    Let $B_{=1} \coloneqq \{b \in B \colon |N_H(b) \cap A| =1\}$, $A' \coloneqq \{a \in A \colon d_H(a) \geq 4, N_H(a) \subseteq B_{=1}\}$, and $B' \coloneqq N_H(A \setminus A')$.
    Then $(S(H^*) \setminus B') \cup A' \subseteq S(H)$. In addition, $B \setminus N_H(A)$ has the robust sapphire property for $H$.
\end{proposition}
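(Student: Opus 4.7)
The plan is to prove the two conclusions separately. For the containment, set $U \coloneqq (S(H^*) \setminus B') \cup A'$; by maximality of the strong $4$-core it suffices to verify that $U$ has the strong $4$-core property for $H$, that is, $|N_H(v) \cap U| \geq 4$ for every $v \in U \cup N_H(U)$, which I would handle by case analysis on the membership of $v$.

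Three structural observations drive the casework. First, every $b \in B_{=1}$ avoids $B'$: if $v \in A'$ and $vb \in E(H)$, then $|N_H(b) \cap A|=1$ forces $v$ to be the unique $A$-neighbour of $b$, so $b \notin N_H(A \setminus A') = B'$. Second, since $F$-edges have one endpoint in $A$ and the other in $B$, and $A \cap S(H^*) = \varnothing$ (as $A$-vertices are isolated in $H^*$), every $v \in S(H^*) \setminus B$ satisfies $N_H(v) = N_{H^*}(v)$. Third, by (RS1) and (RS2) applied to $B$ in $H^*$, any $v$ with a $B$-neighbour in $H^*$ has $d_{H^*}(v) \geq 5$, $N_{H^*}(v) \subseteq S(H^*)$, and at most one $B$-neighbour (two would put distinct $B$-vertices at $H^*$-distance at most $2$). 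Combining these: if $v \in A'$ then $N_H(v) \subseteq B_{=1} \subseteq U$; if $v \in B \setminus B'$ then (RS1) yields $\geq 5$ neighbours in $S(H^*)$, and (RS2) rules out any of them being in $B' \subseteq B$; if $v \in S(H^*) \setminus (B \cup B')$ then $v$ has at most one $B$-neighbour and hence at most one in $B'$, leaving $\geq 4$ neighbours in $U$. Vertices in $N_H(U) \setminus U$ reduce to these same subcases once $v \in A \setminus A'$ is ruled out (since such a $v$ would force its $U$-neighbour to lie in $B'$).

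For the second claim, set $B^\dagger \coloneqq B \setminus N_H(A)$ and note $B^\dagger \cap B' = \varnothing$ because $B' \subseteq N_H(A)$. For (RS1), any $x \in B^\dagger \cup N_H(B^\dagger)$ satisfies $N_H(x) = N_{H^*}(x)$, since an $F$-edge at $x$ would place $x$ itself or a $B^\dagger$-neighbour of $x$ into $N_H(A)$, contradicting the definition of $B^\dagger$. The bounds $d_H(x) \geq 5$ and $\{x\} \cup N_H(x) \subseteq S(H^*)$ then transfer directly from (RS1) for $B$ in $H^*$; moreover, by (RS2), $x$ has at most one $B$-neighbour in $H^*$, and that neighbour must lie in $B^\dagger$ rather than $B'$. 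Hence $\{x\} \cup N_H(x) \subseteq S(H^*) \setminus B' \subseteq U \subseteq S(H)$, where the last inclusion uses the first part.

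Finally, for (RS2), suppose a shortest $H$-path $P = v_0 v_1 \cdots v_\ell$ of length $\ell < 5$ connects distinct $b_1, b_2 \in B^\dagger$. If $P$ uses no $F$-edge, then $P \subseteq H^*$ contradicts (RS2) for $B$ in $H^*$. Otherwise every $F$-edge of $P$ has an $A$-endpoint that cannot be $b_1$ or $b_2$, so each $A$-vertex of $P$ is internal and sandwiched between two $B$-vertices in $P$. A direct case-check over $\ell \in \{2, 3, 4\}$ then forces either the first or last edge of $P$ to be an $F$-edge (contradicting $b_1, b_2 \notin N_H(A)$) or recovers two $B$-vertices at $H^*$-distance less than $5$ (again contradicting (RS2)). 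The main technical obstacle throughout is keeping precise track of which sapphire vertices of $H^*$ survive the addition of $F$-edges; the spreading property (RS2) is what makes this manageable, by ensuring that each non-$B$ vertex has at most one $B$-neighbour in $H^*$ and so at most one candidate for `demotion' into $B'$.
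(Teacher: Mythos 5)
Your high-level strategy is the same as the paper's: show the candidate set $U \coloneqq (S(H^*)\setminus B')\cup A'$ has the strong $4$-core property by a case analysis on $v\in U\cup N_H(U)$, using (RS1) to get at least $5$ sapphire $H^*$-neighbours near $B$ and (RS2) to bound the number of $B$-neighbours by one. The second part (RS1 and RS2 for $B\setminus N_H(A)$) is essentially correct.

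However, the first part has a genuine gap in the case decomposition. Your three cases ($v\in A'$, $v\in B\setminus B'$, $v\in S(H^*)\setminus(B\cup B')$) cover exactly $U$ and nothing else, and the closing claim that ``vertices in $N_H(U)\setminus U$ reduce to these same subcases once $v\in A\setminus A'$ is ruled out'' is false. Two types of vertices lie in $N_H(U)\setminus U$ and fall into none of your cases: (i) vertices of $B'$ — these are in $S(H^*)$ but not in $S(H^*)\setminus(B\cup B')$, and they have at least $5$ $H^*$-neighbours in $S(H^*)\setminus B\subseteq U$ by (RS1)/(RS2), so they really do lie in $N_H(U)\setminus U$; (ii) purple vertices of $H^*$, i.e.\ $v\in N_{H^*}(S(H^*))\setminus(S(H^*)\cup A\cup B)$. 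Both can be handled with the same observations you already set up (for $B'$ use (RS1)/(RS2) as in your case 2; for purple vertices use the strong $4$-core inequality $|N_{H^*}(v)\cap S(H^*)|\geq 4$ together with the at-most-one $B$-neighbour bound), but as written the argument simply doesn't reach them. The paper sidesteps this by first proving $U\cup N_H(U)\subseteq S(H^*)\cup N_{H^*}(S(H^*))\cup A'$ and then splitting the non-$A'$ case by whether $x\in B\cup N_{H^*}(B)$ or not, which is exhaustive on the larger set.

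A minor point: the stated ``observation 1'' ($B_{=1}\cap B'=\varnothing$) is not actually true. If $a\in A$ has $H$-degree $1$ with a single $B$-neighbour $b$ and $a$ is $b$'s only $A$-neighbour, then $a\in A\setminus A'$, $b\in B_{=1}$, yet $b\in B'$. (The paper makes the same over-claim $B'\subseteq B\setminus B_{=1}$.) What you actually prove — and all that is used — is the weaker $N_H(A')\cap B'=\varnothing$, which is correct; just state that instead.
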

\begin{proof}
    Let $\tS \coloneqq (S(H^*) \setminus B') \cup A'$. We first show that $\tS \subseteq S(H)$. It suffices to show that $\tS$ has the strong $4$-core property \cref{Strong_4-core_property} for $H$. Observe that $B \subseteq S(H^*)$, since $B$ has the robust sapphire property for $H^*$. Note that, since $B' \cup N_H(A') \subseteq B \subseteq S(H^*)$,
    \begin{align*}
        N_H(\tS) \subseteq N_H(S(H^*) \setminus B') \cup N_H(A') \subseteq N_{H^*}(S(H^*)) \cup A' \cup B 
        \subseteq  N_{H^*}(S(H^*)) \cup  S(H^*) \cup A'.
    \end{align*} 
    Hence,
    \begin{align} \label{eq:tS_cup_NtS_inclusion}
        \tS \cup N_H(\tS) \subseteq S(H^*) \cup N_{H^*}(S(H^*)) \cup A'.
    \end{align}
    Let $x \in \tS \cup N_H(\tS)$. We show that $|N_H(x) \cap \tS| \geq 4$. First suppose that $x \in A'$. By the definition of $A'$, we have $N_H(A')\subseteq B_{=1}$. Thus, by the definition of $B_{=1}$, we have $N_H(A')\subseteq B$ is disjoint from $N_H(A\setminus A') \eqqcolon B'\subseteq B\setminus B_{=1}$. By the definition of $A'$, it now follows that $x$ has at least $4$ neighbours in $B_{=1}\setminus B' \subseteq B\setminus B'\subseteq S(H^*) \setminus B' \subseteq \tS$, and so $|N_H(x) \cap \tS| \geq 4$.
    
    Now assume that $x \notin A'$ and hence, $x \in S(H^*) \cup N_{H^*}(S(H^*))$ by \cref{eq:tS_cup_NtS_inclusion}.
    By the definition of $S(H^*)$, we have
    \begin{align}
        |N_{H^*}(x) \cap S(H^*)| \geq 4.
    \end{align}
    If $x \notin B \cup N_{H^*}(B)$, then $N_{H^*}(x) = N_{H^*}(x) \setminus B$ and so 
    \begin{align*}
        |N_H(x) \cap \tS| \geq |N_H(x) \cap (S(H^*) \setminus B)| \geq |(N_{H^*}(x) \setminus B) \cap S(H^*)| = |N_{H^*}(x) \cap S(H^*)| \geq 4.
    \end{align*}
    So, we may assume that $x \in B \cup N_{H^*}(B)$. Note that, since $B$ has the robust sapphire property for $H^*$, we have $|N_{H^*}(x) \cap S(H^*)| \geq 5$ and $|N_{H^*}(x) \cap B| \leq 1$.
    Hence,
    \begin{align*}
        |N_H(x) \cap \tS| &\geq |N_H(x) \cap (S(H^*) \setminus B)| \geq |N_{H^*}(x) \cap (S(H^*) \setminus B)| \\ 
        &\geq |N_{H^*}(x) \cap S(H^*)| - |N_{H^*}(x) \cap B| \geq 5-1 =4.
    \end{align*}
    We have shown that $\tS$ has the strong $4$-core property \cref{Strong_4-core_property} for $H$ and hence, $\tS \subseteq S(H)$. 

    It remains to show that $\tB \coloneqq B \setminus N_H(A)$ has the robust sapphire property for $H$. We first note that $\dist_{H}(b,b') \geq 5$ for all $b \in B$ and $b' \in \tB$. Indeed, since $\tB\subseteq B$, by \cref{item:RS2} $\dist_{H^*}(b,b') \geq 5$. So, let $P$ be a path from $b$ to $b'$ in $H$ that uses an edge of $F$. Note that, $P$ contains a path in $H^*$ from $b$ to a vertex in $B \setminus \tB$. Since $B$ has the robust sapphire property for $H^*$, this implies that $P$ has length at least $5$. Hence, $\dist_{H}(b,b') \geq 5$. In particular, since $\tB\subseteq B$, \cref{item:RS2} is satisfied.

   Furthermore, it follows that $(\tB \cup N_H(\tB) \cup N_H(N_H(\tB))) \cap B' = \varnothing$ and, since $B$ has the robust sapphire property for $H^*$, $\tB \cup N_H(\tB) \cup N_H(N_H(\tB)) \subseteq B \cup N_{H^*}(B) \cup N_{H^*}(N_{H^*}(B))$. Hence, $\tB \cup N_H(\tB) \cup N_H(N_H(\tB)) \subseteq  S(H^*) \setminus B' \subseteq S(H)$. 
    
    Finally, since $B$ has the robust sapphire property for $H^*$, we have $d_H(x) = d_{H^*}(x) \geq 5$ for all $x \in \tB \cup N_H(\tB) \subseteq (B \cup N_{H^*}(B))\setminus B'$. Hence, \cref{item:RS1} is satisfied.
\end{proof}

Next, we show precisely how the strong $4$-core changes when going from $H^*$ to $H$ (in the setting of \cref{lem:adjacment}).

\begin{proposition} \label{prop:H^*_vs_H}
    Let $H$ be a graph and let $A, B \subseteq V(H)$ be disjoint such that $(H,A,B) \in \cP$. Let $H^*\coloneqq H - E_H(A,B)$, $A' \coloneqq \{x \in A \colon d_{H}(x) \geq 4\}$ and $B' \coloneqq N_H(A \setminus A') \subseteq B$. Then the following hold.
    \begin{enumerate}[label = \upshape{(\roman*)}]
        \item $S(H) = (S(H^*) \setminus B') \cup A'$. \label{item:S(H)}
        \item $P(H) = P(H^*) \cup B'$. \label{item:P(H)}
        \item $R(H) = R(H^*) \setminus A'$. \label{item:R(H)}
    \end{enumerate}
\end{proposition}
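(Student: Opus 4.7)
The idea is to derive one inclusion in (i) from \cref{prop:RS_retained} and then prove the reverse inclusion via the maximality of the strong $4$-core of $H^*$; parts (ii) and (iii) will then follow from (i) by bookkeeping.

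First, I would reconcile the definitions of $A'$ and $B'$ with those in \cref{prop:RS_retained}, where $A'$ additionally requires $N_H(a)\subseteq B_{=1}$. Property \cref{item:P2} guarantees $|N_H(b)\cap A|\leq 1$ for every $b\in B$, so whenever $a\in A$ has a neighbour $b\in B$ we automatically have $b\in B_{=1}$; hence the two definitions of $A'$, and consequently of $B'=N_H(A\setminus A')$, coincide. Applying \cref{prop:RS_retained} with our $H^*$, $A$, $B$ and $F=E_H(A,B)\subseteq \{ab:a\in A,b\in B\}$ then yields one half of (i), namely $(S(H^*)\setminus B')\cup A'\subseteq S(H)$. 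As a by-product, $B\setminus N_H(A)$ retains the robust sapphire property in $H$, which will be convenient below.

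For the reverse inclusion in (i), I would set $\hat S \coloneqq (S(H)\setminus A')\cup B'$ and show that $\hat S$ satisfies the strong $4$-core property \cref{Strong_4-core_property} for $H^*$; by maximality of $S(H^*)$ this forces $\hat S\subseteq S(H^*)$, i.e.\ $S(H)\setminus A'\subseteq S(H^*)\setminus B'$, which is exactly what is needed. To check the property I would take $v\in\hat S\cup N_{H^*}(\hat S)$ and split into two cases. If $v\in B\cup N_{H^*}(B)$, then \cref{item:RS1,item:RS2} (applied in $H^*$) yield $d_{H^*}(v)\geq 5$, $N_{H^*}(v)\subseteq S(H^*)$ and $|N_{H^*}(v)\cap B|\leq 1$; combined with \cref{item:P3} (so $N_{H^*}(v)\cap A=\varnothing$) and the half-inclusion already proved (so $S(H^*)\setminus B'\subseteq \hat S$ and $B\setminus B'\subseteq \hat S$), this gives $N_{H^*}(v)\subseteq\hat S$. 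If $v\notin B\cup N_{H^*}(B)$, a short elimination --- using that $A$ is isolated in $H^*$, and that any $A$-vertex lying in $S(H)$ must lie in $A'$ --- shows $v\notin A\cup B$; hence $v$ is incident to no edge of $E_H(A,B)$, so $N_H(v)=N_{H^*}(v)$ and $N_H(v)\cap A=\varnothing$. One then checks that either $v\in S(H)$ or $v$ has a neighbour $w\in S(H)\setminus A'$ in $H^*$, in either case giving $|N_H(v)\cap S(H)|\geq 4$; since $N_H(v)\cap A'\subseteq N_H(v)\cap A=\varnothing$, one may replace $S(H)$ by $\hat S$ without loss, yielding $|N_{H^*}(v)\cap \hat S|\geq 4$.

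Once (i) is in hand, (ii) reduces to verifying the two inclusions between $P(H)=N_H(S(H))$ and $P(H^*)\cup B'$. For the forward inclusion, each $b\in B'$ retains its (at least five) $H^*$-neighbours in $S(H^*)\setminus B'\subseteq S(H)$ by \cref{item:RS1,item:RS2}, while any $v\in P(H^*)$ is adjacent in $H^*$ to some $w\in S(H^*)\setminus B'$, because $N_{H^*}(B')\subseteq S(H^*)$ cannot contain $v$. For the reverse inclusion, \cref{item:P2} rules out a new purple vertex appearing in $A\setminus A'$: its unique $B$-neighbour would lie in $B'$ and so be excluded from $S(H)$. Part (iii) is then a one-line set computation: $R(H)=V(H)\setminus(S(H)\cup P(H))=V(H)\setminus(S(H^*)\cup P(H^*)\cup A')=R(H^*)\setminus A'$.

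The main obstacle is Step 2. Since $\hat S$ is obtained from $S(H^*)$ by trading $B'$ for $A'$, with neither set containing the other, one must track carefully which $H^*$-neighbours of $v$ lie in the swapped regions. The key leverage is that $A$ (and hence $A'$) is isolated in $H^*$ and therefore contributes nothing to any $H^*$-neighbourhood, while the buffer $d_{H^*}(v)\geq 5$ furnished by \cref{item:RS1} absorbs the at most one $B$-neighbour that could be shifted when $v$ lies near $B$.
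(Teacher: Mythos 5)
You take a genuinely different route to the reverse inclusion in (i): rather than invoking \cref{lem:sapphire_cut} with the separating set $U = S(H^*) \setminus B'$ and block $W = A \cup B'$ as the paper does, you verify the strong $4$-core property of $\hat{S} \coloneqq (S(H) \setminus A') \cup B'$ for $H^*$ directly and invoke maximality. The underlying case analysis is sound, and the idea is conceptually transparent. However, there is a real gap at the step ``$\hat{S}\subseteq S(H^*)$, i.e.\ $S(H)\setminus A'\subseteq S(H^*)\setminus B'$''. Maximality only yields $S(H)\setminus A'\subseteq S(H^*)$; to further exclude $B'$ one must additionally show $S(H)\cap B'=\varnothing$. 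The paper does this explicitly: every $a\in A\setminus A'$ has $d_H(a)\le 3<4$, so $a\in R(H)$, whence $B'=N_H(A\setminus A')\subseteq P(H)$ and thus $B'\cap S(H)=\varnothing$. You never supply this short degree argument, even though you tacitly rely on the same fact in your treatment of (ii) when asserting that $B'$ is ``excluded from $S(H)$''. Relatedly, your reverse inclusion in (ii) only handles the candidate location $A\setminus A'$ for a new purple vertex; the remaining cases (which the paper dispatches in the one-line equality $N_H((S(H^*)\setminus B')\cup A')=N_{H^*}(S(H^*))\cup B'$) still need to be addressed. These gaps are small and local, but as written the proof does not close.
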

\begin{proof}
    Let $\tS \coloneqq (S(H^*) \setminus B') \cup A'$.
    Note that, since $(H,A,B) \in \cP$, by \cref{item:P2}, we have $B_{=1} \coloneqq \{b \in B \colon |N_H(b) \cap A| = 1\} =B$. Thus,
    \[
    A' \coloneqq \{a \in A \colon d_H(a) \geq 4\} = \{a \in A \colon d_H(a) \geq 4, N_H(a) \subseteq B_{=1}\}.
  \]
  Hence, by \cref{prop:RS_retained}, we have $\tS \subseteq S(H)$.

    Next, we show that $S(H) \subseteq \tS$. Let $W \coloneqq A \cup B'$ and $U \coloneqq S(H^*) \setminus B'$. Note that $B \subseteq S(H^*)$ and so $B \subseteq (S(H^*)\setminus B') \cup B' \subseteq W \cup U$. Since also $A \subseteq W \cup U$, we have $E(H) \triangle E(H^*) \subseteq \binom{A \cup B}{2} \subseteq \binom{W \cup U}{2}$. Note that $U \subseteq S(H) \cap S(H^*)$, since $U \coloneqq S(H^*) \setminus B' \subseteq \tS \subseteq S(H)$. Observe that 
    \begin{align*}
        N_{H^*}(W) \subseteq N_{H^*}(B) \subseteq S(H^*) \setminus B \subseteq U
    \end{align*}
    and
    \begin{align*}
        N_H(W) \subseteq (B \setminus B') \cup (N_H(B) \setminus A) \subseteq (S(H^*)\setminus B') \cup N_{H^*}(B) \subseteq S(H^*) \setminus B' = U.
    \end{align*}
    Hence, by \cref{lem:sapphire_cut} with $U$ and $W$ playing the roles of $A$ and $B$, respectively, we have $S(H) \setminus W = S(H^*) \setminus W = S(H^*) \setminus B'$. Thus, $S(H) \subseteq (S(H^*)\setminus B') \cup W$. It follows by the definition of $A'$, that $A \setminus A' \subseteq R(H)$ and thus $B' \subseteq P(H)$. 
    Hence, $W \setminus (R(H) \cup P(H)) \subseteq A'$. 
    It follows that
    \begin{align*}
        S(H) \subseteq (S(H^*) \setminus B') \cup A' \eqqcolon \tS.
    \end{align*}
    Together with the reverse inclusion established above, we have proved \cref{item:S(H)}.

    To prove \cref{item:P(H)}, note that 
    \begin{align*}
        P(H) \coloneqq N_H(S(H)) = N_H((S(H^*) \setminus B') \cup A') = N_{H^*}(S(H^*)) \cup B' = P(H^*) \cup B'.
    \end{align*}

    To prove \cref{item:R(H)}, note that 
    \begin{align*}
        R(H) &\coloneqq V(H) \setminus (S(H) \cup P(H)) = (V(H^*) \setminus (S(H^*) \cup P(H^*))) \setminus (A \setminus A') \\
        &= R(H^*) \setminus A'. 
    \end{align*}
    This completes the proof.
\end{proof}

Finally, we are ready to prove \cref{lem:adjacment}.

\begin{proof}[Proof of \cref{lem:adjacment}]
    By \cref{prop:H^*_vs_H}, we have
    \begin{align}\label{eq:global_colouring}
        S(H) = (S(H^*) \setminus B') \cup A', \quad P(H) = P(H^*) \cup B', \quad \text{and} \quad R(H) = R(H^*) \setminus A', 
    \end{align}
    where $A' \coloneqq \{x \in A \colon d_H(x) \geq 4\}$ and $B' \coloneqq N_H(A\setminus A')$. 
    Note that, using \cref{item:P4}, \[
    U \coloneqq N_H(A \cup B) = N_{H^*}(A \cup B) = N_{H^*}(B) \subseteq S(H^*) \setminus B \subseteq S(H).
    \]
    and hence,
    \begin{align} \label{U_in_both_S}
        U \subseteq S(H) \cap S(H^*).
    \end{align}
    Let $w \in V(H) \setminus (A \cup B)$.     Since $E(H) \triangle E(H^*) \subseteq \binom{A \cup B \cup U}{2}$, by \cref{lem:sapphire_cut_local} with $U$ and $A \cup B$, playing the roles of $A$ and $B$, respectively, we have 
    \begin{align} \label{eq:phi_outside_A_B}
        \phi_{H,k}(w) = \phi_{H^*,k}(w), \text{ for } w \in V(H) \setminus (A \cup B).
    \end{align}

    Note that, since by \cref{item:P3} $A$ is a set of isolated vertices in $H^*$, we have $A \subseteq R(H^*)$. Moreover, by \cref{item:P4}, it follows that $B \subseteq S(H^*)$.
    For $x \in A \cup B$, we denote by $C_x$ and $C_x^*$ the components in $H[P(H) \cup R(H)]$ and $H^*[P(H^*) \cup R(H^*)]$ that contain $x$, respectively. 
    Note that for $x \in A$, $C_x^* = \{x\}$ and $x \in R(H^*)$ and thus, by \cref{obs:C_small}, $\phi_{H^*,k}(x) = 1$. Moreover, for $x \in B$, $x \in S(H^*)$ and by \cref{obs:In_S_globally_implies_locally}, $x \in S_k(H^*,x)$ implying $\phi_{H^*,k}(x) = 0$. It follows that 
    \begin{align} \label{eq:phi_A_B_H^*}
        \sum_{x \in A \cup B} \phi_{H^*,k}(x) = |A|.
    \end{align}
    
    By \cref{item:P1,item:P2} and by \cref{eq:global_colouring,U_in_both_S}, for each $x \in A \cup B$ we either have $x \in S(H)$ or $H[C_x]$ is a star with centre at a vertex in $A$ and all leaves in $B$. Moreover, since $k \geq 2$, we have by \cref{obs:C_small}, that $\phi_{H,k}(x) = \phi_H(x)$ for each $x \in A \cup B$. Let $A'_0 \coloneqq \{x \in A' \colon d_H(x) = 0\}$, $A'_1 \coloneqq \{x \in A' \colon d_H(x) = 1\}$, and $B'_1 \coloneqq N_H(A'_1)$. 
    Now, note that for $x \in A_0'$, we have $C_x = \{x\}$ and $x \in R(H)$ and thus $\phi_{H,k}(x) = \phi_H(x) = 1$. 
    For $x \in A'_1 \cup B'_1$, we see that $H[C_x]$ is a single edge with one endpoint in each of $R(H)$ and $P(H)$ and thus $\phi_{H,k}(x) = \phi_H(x) = \frac{1}{2}$. Let $x \in (A' \cup B') \setminus (A'_0 \cup A'_1 \cup B'_1)$. Then $H[C_x]$ is a $K_{1,2}$ or a $K_{1,3}$ whose central vertex is in $R(H)$ and whose leaves are in $P(H)$. Hence, $\phi_{H,k}(x) = \phi_H(x) = 0$ as the vertex in $R(H)$ can be covered by a path of length $2$ with endpoints in $P(H)$. Finally, for $x \in (A \cup B) \setminus (A' \cup B')$, we have $x \in S(H)$ and thus $\phi_{H,k}(x) = \phi_H(x) = 0$. 
    It follows that 
    \begin{align*}
        \sum_{x \in A \cup B} \phi_{H,k}(x) = |A'_0| + \frac{|A'_1|}{2} + \frac{|B'_1|}{2} = |A'_0| + |A'_1|. 
    \end{align*}
    Note that $|A| - |A'_0| -|A'_1| = Y$, and thus 
    \begin{align} \label{eq:phi_A_B_H}
        \sum_{x \in A \cup B} \phi_{H,k}(x) = |A| - Y. 
    \end{align}
    Hence, we have
    \begin{align*}
        \tL_k(H) &\coloneqq n - \sum_{x \in V(H)} \phi_{H,k}(x) = n - \sum_{x \in V(H) \setminus (A \cup B)} \phi_{H,k}(x) - \sum_{x \in A \cup B} \phi_{H,k}(x) \\
        \overset{\cref{eq:phi_outside_A_B}, \cref{eq:phi_A_B_H}}&{=} n -  \sum_{x \in V(H) \setminus (A \cup B)} \phi_{H^*,k}(x) -|A| + Y \\
        \overset{\cref{eq:phi_A_B_H^*}}&{=} n - \sum_{x \in V(H^*)} \phi_{H^*,k}(x) + Y = \tL_k(H^*) + Y,
    \end{align*}
        where the penultimate equality also uses that $V(H^*) = V(H)$.
    This completes the proof.
\end{proof}

\subsection{Proof of Lemma \ref{lem:edge_reveal_alg_exists}} \label{subsec:edge_reveal}
We first describe the edge revealing process and then show that it satisfies \cref{item:Reveal1,item:Reveal2,item:Reveal3,item:Reveal4}. For ease of notation, given a graph $G$ and $v\in V(G)$, we let $N_G[v]:=\{v\} \cup N_{G}(v)$.

\begin{breakablealgorithm}
    \caption{Edge revealing process} \label{alg:edge_revealing_process}
\begin{algorithmic}[1] 
    \MyInput{$c \geq 20$, $k \geq 100$, and $n\in \mathbb{N}$.}
    \MyOutput{Random sets $A_4$ and $B_4$ such that $(G,A_4,B_4) \in \cP$.}
    \State Reveal $E(G_1)$ and set $A_0 \coloneqq \{v \in V(G) \colon d_{G_1}(v) = 0\}$.

    \State Reveal all edges of $G_2[A_0]$ and set $A_1 \coloneqq \{v \in V(G) \colon N_G(v) \cap A = \varnothing\}$.

    \State Reveal all edges of $G_2[V(G) \setminus A_1]$.

    \State Set $G_0 \coloneqq (V(G), E(G_1) \cup E(G_2[V(G)\setminus A_1])\cup E(G_2[A_1]))$. \Comment{$G_0$ is the graph of all edges revealed so far.}

    \State Use \cref{alg:strong_4_core_alg} to determine $S(G_0)$.

    \State Set $B_0 \coloneqq \{b \in V(G) \colon \text{Each } x \in N_{G_0}[b] \text{ satisfies } N_{G_0}[x] \subseteq S(G_0) \text{ and } d_{G_0}(x) \geq 5 \}$. 
    \LComment{Note that $B_0$ is the maximal set of vertices in $G_0$ such that \cref{item:RS1} holds with $G_0$ and $B_0$ playing the roles of $H$ and $B$, respectively.}

    \State Set $B_1 \coloneqq \varnothing$.

    \While{There exists $x \in B_0 \setminus B_1$ such that $\dist_{G_0}(x,B_1) \geq 5$.}
        \State Add $x$ to $B_1$.
    \EndWhile

    \LComment{Note that $B_1$ is a maximal subset of $B_0$, with the property that no two vertices in $B_1$ are within distance at most $4$ in $G_0$. Observe that $B_1$ has the robust sapphire property for $G_0$.}

    \State Set $A_2 \coloneqq A_1$, $B_2 \coloneqq B_1$, and $G^* \coloneqq G_0$.

    \While{True.}
        \State Reveal $E(G_2[A_2, V(G) \setminus B_2])$. 

        \State $G^* \coloneqq G^* \cup G_2[A_2, V(G) \setminus B_2]$. \Comment{$G^*$ is the graph of all edges revealed so far.}

        \State $A^* \coloneqq \{a \in A_2 \colon d_{G^*}(a) = 0\}$.

        \State Use \cref{alg:strong_4_core_alg} to determine $S(G^*)$.

        \State $B_0^* \coloneqq \{b \in B_2 \colon \text{Each } x \in N_{G^*}[b]  \text{ satisfies } N_{G^*}[x] \subseteq S(G^*) \text{ and } d_{G^*}(x) \geq 5\}$.

        \State $B^* \coloneqq \varnothing$.

        \While{There exists $x \in B_0^* \setminus B^*$ such that $\dist_{G^*}(x,B^*) \geq 5$.}
            \State Add $x$ to $B^*$.
        \EndWhile

        \LComment{Note that $B^*$ is a maximal subset of $B_2$ that has the robust sapphire property for the graph of all edges revealed so far.}
        
        \If{$A^* = A_2$ and $B^* = B_2$}
            \State Break. \Comment{Ends the while-loop once $A_2$ and $B_2$ have `stabilised'.}
        \EndIf

        \State Set $A_2 \coloneqq A^*$ and $B_2 \coloneqq B^*$.
    \EndWhile

    \State Set $A_3 \coloneqq A_2$, $B_3 \coloneqq B_2$, and $B_3^- \coloneqq \varnothing$.

    \For{$b \in B_3$}
        \State Query if $|N_G(b) \cap A_3| \geq 2$. 
        
        \LComment{We only reveal if $|N_G(b) \cap A_3| \geq 2$, not the corresponding edges.}
        
        \If{$|N_G(b) \cap A_3| \geq 2$}
            \State Add $b$ to $B_3^-$.
        \EndIf
    \EndFor

    \State Reveal $E(G[A_3, B_3^-])$. 

    \State Set $G^* \coloneqq G^* \cup G[A_3, B_3^-]$. \Comment{$G^*$ is the graph of all edges revealed so far.}

    \State Set $A_4 \coloneqq A_3 \setminus N_{G^*}(B_3^-)$.

    \LComment{Note that $A_4 = A_3 \setminus N_G(B_3^-)$ since all edges incident to $B_3^-$ have been revealed.}

    \State Reveal $E(G[A_3\setminus A_4, B_3])$.
    
    \State Set $G^* \coloneqq G^* \cup G[A_3\setminus A_4, B_3]$. \Comment{$G^*$ is the graph of all edges revealed so far.} 
    \State Set $B_4 \coloneqq B_3 \setminus N_{G^*}(A_3 \setminus A_4)$.

    \LComment{Note that $B_4 = B_3 \setminus N_G(A_3 \setminus A_4)$, since all edges incident to $A_3 \setminus A_4$ have been revealed.}
    \State Reveal the number of edges $m$ in $G_2[A_4,B_4]$. \Comment{We do not reveal the corresponding edges.}
    
    \State \Return $A_4$, $B_4$, and $m$.
\end{algorithmic}
\end{breakablealgorithm}  
We first show that \cref{item:Reveal1} holds, that is, $(G, A_4, B_4) \in \cP$. Note that \cref{item:P1,item:P2,item:P3} clearly hold by construction. We now show that \cref{item:P4} holds, that is, that $B_4$ has the robust sapphire property for $G^* \coloneqq G- E_G(A_4,B_4)$. 
  
By construction, $B_3$ has the robust sapphire property for $G - E_G(A_3,B_3)$ and $A_3$ is isolated in $G - E_G(A_3,B_3)$. Hence, by \cref{prop:RS_retained} with $G - E_G(A_3, B_3)$, $G^*$, 
$A_3$, and $B_3$ playing the roles of $H^*$, $H$, $A$, and $B$, respectively, it follows that $B_3 \setminus N_{G^*}(A_3)$ has the robust sapphire property for $G^*$. We claim that $B_3 \setminus N_{G^*}(A_3) = B_4$, and hence, $B_4$ has the robust sapphire property for $G^*$ as claimed.
Indeed, since $B_4 = B_3 \setminus N_G(A_3 \setminus A_4)$, we have $E_G(A_3 \setminus A_4, B_4) = \varnothing$. We now show that $E_G(A_4, B_3 \setminus B_4) = \varnothing$. Suppose for a contradiction, that there is an edge $ab \in E(G)$ with $a \in A_4$ and $b \in B_3 \setminus B_4$. Since $b \in B_3 \setminus B_4$, we have $b \in N_G(A_3 \setminus A_4)$. Let $a' \in A_3 \setminus A_4$ such that $a'b \in E(G)$. Hence, $|N_G(b) \cap A_3| \geq 2$, so $b \in B_3^-$. This implies that $a \in N_G(B_3^-)$, a contradiction to our assumption that $a \in A_4 = A_3 \setminus N_G(B_3^-)$.
Since $E_G(A_3 \setminus A_4, B_4) = \varnothing$ and $E_G(A_4, B_3 \setminus B_4) = \varnothing$, we have $G^* \coloneqq G - E_G(A_4,B_4)$ contains all the edges of $G$ incident to $A_3 \setminus A_4$ but none of the edges of $G$ incident to $A_4$. It follows that $B_4 = B_3 \setminus N_G(A_3 \setminus A_4) = B_3 \setminus N_{G^*}(A_3)$.

Now, note that \cref{item:Reveal2} is satisfied, since $A_4$ is contained in $A_0$, the set of isolated vertices of $G_1$.
Moreover, note that by construction \cref{alg:edge_revealing_process} satisfies \cref{item:Reveal3}.

It remains to prove that \cref{item:Reveal4} holds.
Every isolated vertex of $G$ belongs to $A_4$. Thus, by a simple application of McDiarmid's inequality~(\cref{Biased_McDiarmid_ineq}), whp $|A_4| \geq \frac{e^{-c}n}{2}$.
Moreover, $m \leq e(G_2) \leq 2p_2\frac{n^2}{2} \leq e^{-5c}n$ holds with high probability by a Chernoff bound.
To show that \cref{item:Reveal4} holds, it now suffices to prove that $m\geq e^{-10c}n$ with probability at least $\frac{7}{10}$. To that end, we first prove that whp $|B_1|\geq \frac{n}{c^7}$, and then that $|B_3|\geq \frac{n}{2c^7}$ with probability at least $\frac{4}{5}$. We start with the following (standard) bound on the size of the neighbourhood of a set of vertices.
\begin{claim}\label{lem:neighbourhoodsize}
Whp for every set $W\subseteq V(G)$, we have $|N_G(W)|\leq \max\{e^{-2c}n,(2c \log c)|W|\}$.
\end{claim}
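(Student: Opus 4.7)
The plan is a standard union bound over subsets $W$, since this is an expansion-type property of $G(n,p)$. For a fixed $W \subseteq V(G)$ of size $w$, each vertex $v \in V(G) \setminus W$ independently has at least one neighbour in $W$ with probability $1 - (1-p)^w \leq wp$, so $|N_G(W)|$ is stochastically dominated by $\mathrm{Bin}(n, wp)$ with mean at most $wc$. Writing $t_w \coloneqq \max\{e^{-2c}n, (2c\log c)w\}$, I will apply the standard Chernoff bound $\prob{Y \geq t} \leq (e\expec{Y}/t)^t$. The key observation is that $t_w \geq (2c\log c)w$ in both cases of the maximum (trivially in the second, and since $w \leq e^{-2c}n/(2c\log c)$ in the first), so $e\expec{|N_G(W)|}/t_w \leq e/(2\log c) < 1/2$ for $c \geq 20$, yielding
\[
    \prob{|N_G(W)| \geq t_w} \leq \left(\frac{e}{2\log c}\right)^{t_w}.
\]

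I then split the union bound into two regimes at $w_0 \coloneqq e^{-2c}n/(2c\log c)$. For $w \leq w_0$ (where $t_w = e^{-2c}n$ for every $W$), using $\binom{n}{w} \leq (en/w)^w$, the logarithm of $\binom{n}{w}(e/(2\log c))^{e^{-2c}n}$ is increasing in $w$ on $[1, w_0]$, so its maximum is attained at $w = w_0$. Using $n/w_0 = 2c(\log c)e^{2c}$, this maximum evaluates to
\[
    \exp\!\left(e^{-2c}n\left(\frac{1+2c+\log(2c\log c)}{2c\log c} + 1 - \log(2\log c)\right)\right),
\]
and for $c \geq 20$ the bracketed quantity is a negative constant bounded away from zero (at $c=20$ it is approximately $-0.41$). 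Summing over the at most $n$ values of $w \leq w_0$ loses only a factor of $n$, which is absorbed since the exponent is linear in $n$; hence this regime contributes $o(1)$.

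For $w > w_0$ (where $t_w = (2c\log c)w$), the per-$w$ bound factorises as
\[
    \binom{n}{w}\left(\frac{e}{2\log c}\right)^{(2c\log c)w} \leq \left(\frac{en}{w}\left(\frac{e}{2\log c}\right)^{2c\log c}\right)^{w}.
\]
The assumption $w > w_0$ gives $n/w < 2c(\log c)e^{2c}$, and for $c \geq 20$ the factor $(e/(2\log c))^{2c\log c}$ is so small (at $c = 20$ its logarithm is approximately $-95$, compared to $\log(en/w) \leq 46$) that the bracketed base is bounded by a constant $\beta(c) < 1$ (with $\beta < e^{-49}$ at $c=20$). Summing the resulting geometric series yields $O(\beta(c)^{w_0})$, which is $o(1)$ since $w_0 = \Theta(n) \to \infty$. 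Combining the two regimes completes the proof. The only computational obstacle is verifying that the Chernoff exponent $2c(\log c)(\log(2\log c)-1)$ dominates the binomial entropy $\log(en/w)$ in both regimes, which uses $c \geq 20$ with comfortable slack.
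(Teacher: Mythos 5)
Your proof is correct and reaches the right conclusion, but it takes a genuinely different route from the paper on one key point: how to handle small sets $W$. The paper applies the union bound only for sets with $|W| \geq e^{-3c}n$ (showing $|N_G(W)| \leq (2c\log c)|W|$ for all such sets), and then handles every smaller $W$ \emph{deterministically}: it extends $W$ to a superset $W'$ of size $\lceil e^{-3c}n\rceil$, notes $N_G(W) \subseteq W' \cup N_G(W')$, and reads off $|N_G(W)| \leq (2c\log c + 1)|W'| \leq e^{-2c}n$ from the already-established bound for $W'$. This sidesteps any union bound over small sets. You instead run a direct union bound over every size $w$, splitting at $w_0 = e^{-2c}n/(2c\log c)$ exactly where the maximum in the statement switches, and in the small-$w$ regime you have to verify that the numerical exponent $g(c) := \frac{1+2c+\log(2c\log c)}{2c\log c} + 1 - \log(2\log c)$ is negative for all $c \geq 20$. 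You only evaluate $g$ at $c = 20$, implicitly relying on monotonicity of $g$; this is true (the second term is clearly decreasing and the first behaves like $1/\log c$), but it is an extra thing to check, and it is precisely the kind of case-by-case numerics that the paper's extension trick was designed to avoid. For the large-$w$ regime the two arguments are essentially equivalent: the paper's first-moment count $\binom{n}{(2c\log c)j}j^{(2c\log c)j}p^{(2c\log c)j} \leq (enjp/t)^t$ is the same quantity as your Chernoff bound $(e\mu/t_w)^{t_w}$ with $\mu = njp$. So the tradeoff is: your approach is more systematic and avoids the slightly awkward superset construction, but the paper's is cleaner in the small-set case because it eliminates one of the two delicate numerical verifications.
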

\begin{proofclaim}
    The probability that there exists $W\subseteq V(G)$ of size at least $e^{-3c}n$ with $|N_G(W)| > (2c\log c)|W|$ is at most
\begin{align*}
\sum_{j\geq e^{-3c}n}    \binom{n}{j} \binom{n}{(2c\log c)j}j^{(2c\log c)j} p^{(2c\log c)j}
&\leq \sum_{j\geq e^{-3c}n } \bigg( \frac{en}{j} \bfrac{enp}{2c\log c}^{2c\log c} \bigg)^j\\ &\leq \sum_{j\geq e^{-3c}n} \bigg( e^{3c+1} \bfrac{e}{2\log c}^{2c\log c} \bigg)^j =o(1).
\end{align*} 
Thus, whp $|N_G(W)| \leq (2c\log c)|W|$ for every $W \subseteq V(G)$ with $|W| \geq e^{-3c}n$. Now, suppose this event holds and let $W \subseteq V(G)$ with $|W| < e^{-3c}n$. Let $W' \subseteq V(G)$ be such that $W \subseteq W'$ and $|W'| = \lceil e^{-3c}n\rceil$. Then, since $N_G(W) \subseteq W \cup N_G(W) \subseteq W' \cup N_G(W')$,
\begin{align*}
    |N_G(W)| &\leq |W| + |N_G(W)| \leq |W'| + |N_G(W')| \leq (2c \log c +1) |W'| \\
    &= (2c \log c +1) \lceil e^{-3c}n\rceil \leq e^{-2c}n.
\end{align*}
This completes the proof of the claim. 
\end{proofclaim}

We also need a bound on the number of vertices of a given degree in $G_0$.
\begin{claim} \label{lem:deg_in_G_0}
    Let $0 \leq i \leq 6$. Then whp the number of vertices of degree $i$ in $G_0$ is at most \[\left(1+\frac{1}{10}\right)\frac{c^ie^{-c}}{i!}n.\]
\end{claim}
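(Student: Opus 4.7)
The plan is to compare the degree sequence of $G_0$ to that of $G_1 \sim G(n,p_1)$, exploiting that the extra edges in $G_0$ (beyond $G_1$) all lie in $G_2$, which is extremely sparse. Since $G_1 \subseteq G_0 \subseteq G_1 \cup G_2$, every vertex whose degree differs between $G_0$ and $G_1$ must be incident to an edge of $G_2$. Hence, writing $X_i$ and $Y_i$ for the number of degree-$i$ vertices in $G_0$ and in $G_1$ respectively,
\[
X_i \leq Y_i + 2 e(G_2).
\]
It therefore suffices to control $Y_i$ and $e(G_2)$ separately.

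For $Y_i$: we have $\expec{Y_i} = n\binom{n-1}{i}p_1^i(1-p_1)^{n-1-i}$. Since $1-p = (1-p_1)(1-p_2)$ gives $np_1 \to c - e^{-5c}$ as $n \to \infty$, and since $e^{-5c} \leq e^{-100}$ for $c \geq 20$, the standard binomial-to-Poisson estimate together with $(c-e^{-5c})^i e^{-(c-e^{-5c})} \leq c^i e^{-c}(1+2e^{-100})$ yields $\expec{Y_i} \leq \left(1+\tfrac{1}{40}\right)\tfrac{c^i e^{-c}}{i!}n$ for $n$ large enough. Flipping a single edge of $G_1$ changes $Y_i$ by at most $2$, so McDiarmid's inequality (\cref{Biased_McDiarmid_ineq}) with deviation $\eps = n^{3/4}$ gives $|Y_i - \expec{Y_i}| = o(n)$ whp, and hence whp $Y_i \leq \left(1+\tfrac{1}{20}\right)\tfrac{c^i e^{-c}}{i!}n$.

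For $e(G_2)$: since $e(G_2) \sim \mathrm{Bin}\!\left(\binom{n}{2},\tfrac{e^{-5c}}{n}\right)$ has mean of order $e^{-5c}n$, the extreme tail Chernoff bound (\cref{thm:ex_tail_Chernoff}) applied at $t = 7\binom{n}{2}p_2$ yields whp $e(G_2) \leq 4e^{-5c}n$. For $c \geq 20$ and $0 \leq i \leq 6$ one has $\tfrac{c^i}{i!} \geq 1$, so $\tfrac{c^i e^{-c}/i!}{e^{-5c}} = \tfrac{c^i e^{4c}}{i!} \geq e^{80}$, and therefore $8e^{-5c}n \leq \tfrac{1}{100}\tfrac{c^ie^{-c}}{i!}n$ trivially. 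Combining these bounds gives whp
\[
X_i \leq Y_i + 2e(G_2) \leq \left(1+\tfrac{1}{20}+\tfrac{1}{100}\right)\tfrac{c^i e^{-c}}{i!}n \leq \left(1+\tfrac{1}{10}\right)\tfrac{c^i e^{-c}}{i!}n,
\]
as required. I do not anticipate any significant obstacle here; this is essentially a routine concentration argument, and the key mechanism is simply that $p_2 = e^{-5c}/n$ is so tiny compared to $p_1 \approx c/n$ that the contribution of $G_2$ to the degree sequence of $G_0$ is negligible.
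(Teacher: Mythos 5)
Your proof is correct and follows essentially the same approach as the paper: sandwich the degree sequence of $G_0$ between that of a nearby binomial random graph and a small correction term accounting for $G_2$, then apply a concentration inequality. The only cosmetic differences are that the paper compares $G_0$ with $G = G_1 \cup G_2$ (so $\expec{\tY_i}$ is computed at $p = c/n$ directly, avoiding your small extra step bounding $(c-e^{-5c})^i e^{-(c-e^{-5c})}$ against $c^ie^{-c}$), and the paper bounds the correction via $Z$, the number of $G_2$-incident vertices, using McDiarmid, while you use $2e(G_2)$ and the extreme-tail Chernoff bound; both corrections are of order $e^{-5c}n$ and are dominated in the same way.
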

\begin{proofclaim}
    Fix $0 \leq i \leq 6$ and set $Y_i$ and $\tY_i$ to be the number of vertices of degree $i$ in $G_0$ and $G$, respectively. Let $Z$ be the number of vertices that are adjacent to at least one edge of $G_2$. Note that $Y_i \leq \tY_i + Z$. Observe that $\expec{\tY_i} = n\binom{n-1}{i}p^{i}(1-p)^{n-1-i} = (1+o(1)) \frac{c^i e^{-c}}{i!} n$ and $\expec{Z} \leq n (n-1)p_2 \leq e^{-5c}n$. Since adding or removing an edge changes the value of $\tY_i$ and $Z$ by at most $2$, by McDiarmid's inequality~(\cref{Biased_McDiarmid_ineq}), whp $\tY_i \leq \left(1+\frac{1}{20}\right)\frac{c^ie^{-c}}{i!}n$ for all $i \in [6]$ and $Z \leq \left(1+\frac{1}{20}\right)e^{-5c}n$. Hence, whp $Y_i \leq \left(1+\frac{1}{10}\right)\frac{c^ie^{-c}}{i!}n$ for all $i \in [6]$.
\end{proofclaim}

Next, we prove a lower bound on $|B_1|$. We aim to show that 
\begin{align}\label{eq:B_1_bound}
    \textit{whp} \qquad  |B_1|\geq \frac{n}{c^7}.
\end{align}
Let $V_4$ be the set of vertices of degree $4$ in $G_0$. The key observation is that  
\begin{equation}\label{eq:Bcontainment}
 V(G)\setminus \left(R(G_0)\cup P(G_0)\cup N_{G_0}(P(G_0))\cup N_{G_0}(N_{G_0}(P(G_0))) \cup V_4\cup N_{G_0}(V_4)\right) \subseteq B_0.     
\end{equation}
Indeed, suppose that 
\[
    x \in  V(G)\setminus \left(R(G_0)\cup P(G_0)\cup N_{G_0}(P(G_0))\cup N_{G_0}(N_{G_0}(P(G_0))) \cup V_4\cup N_{G_0}(V_4)\right).
\]
Since $x \notin R(G_0) \cup P(G_0)\cup N_{G_0}(P(G_0))\cup N_{G_0}(N_{G_0}(P(G_0)))$, we have $N_{G_0}[y] \subseteq S(G_0)$ for every $y \in N_{G_0}[x]$.
Since $x \notin V_4 \cup N_{G_0}(V_4)$ and since all vertices of degree $3$ or fewer in $G_0$ are in $R(G_0)$, we have $d_{G_0}(y) \geq 5$ for all $y \in N_{G_0}[x]$.
It follows that $x \in B_0$. Thus, we have proved \cref{eq:Bcontainment}.

\begin{claim} \label{claim:RP_etc_upper} Whp
    \[
        |R(G_0)| + |P(G_0)| + |N_{G_0}(P(G_0))| + |N_{G_0}(N_{G_0}(P(G_0)))| + |V_4| + |N_{G_0}(V_4)| \leq \frac{3}{5}n.
    \]
\end{claim}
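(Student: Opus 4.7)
The plan is to bound each of the six summands separately; five reduce to direct applications of the preceding claims, while the sixth requires a quantitative bound on $|R(G_0)|$.

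First, by \cref{lem:deg_in_G_0} with $i = 4$, whp $|V_4| \leq \tfrac{11}{10}\cdot\tfrac{c^4 e^{-c}}{24}\,n$, and since $G_0 \subseteq G$ and hence $N_{G_0}(V_4) \subseteq N_G(V_4)$, \cref{lem:neighbourhoodsize} yields $|N_{G_0}(V_4)| \leq \max\{e^{-2c}n,\,(2c\log c)|V_4|\}$; the combined contribution of these two terms is at most $10^{-2}n$ for $c \geq 20$.

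For $|R(G_0)|+|P(G_0)|$, \cref{obs:red_vxs_in_comp} gives $|P(G_0)| \leq 3|R(G_0)|$ (since in every red–purple component $C'$ one has $|R \cap C'| \geq \tfrac{1}{4}|C'|$), so it suffices to bound $|R(G_0)|$. The key observation is that there are no edges from $R$ to $S$ (a sapphire neighbour of a red vertex would lie in $N(S)\setminus S = P$), so every red vertex has all of its $G_0$-neighbours inside $R \cup P$, and by (the analogue for $G_0$ of) \cref{lem:no_large_red_purple_comps} it lies in a red–purple component of size at most $\log^4 n$ whp. A first-moment calculation over rooted connected subgraphs that can arise as such a component --- in the same spirit as the proof of \cref{lem:no_large_red_purple_comps} itself --- gives $\Exp|R(G_0)| = O\bigl((1+c+c^2/2+c^3/6)e^{-c}\bigr)\,n$, and \cref{Biased_McDiarmid_ineq} then yields concentration, so whp $|R(G_0)| \leq \gamma n$ with $\gamma = O(c^3 e^{-c})$.

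Finally, since every purple vertex was recoloured from sapphire because some neighbour had just become red, $P \subseteq N_{G_0}(R)$; hence $N_{G_0}(P) \subseteq N_{G_0}(N_{G_0}(R))$ and $N_{G_0}(N_{G_0}(P)) \subseteq N_{G_0}(N_{G_0}(N_{G_0}(R)))$. Iterating \cref{lem:neighbourhoodsize} twice (the $e^{-2c}n$ branch of the maximum contributes only $O\bigl((2c\log c)^3 e^{-2c}\bigr)\,n = o(n)$ overall), whp $|N_{G_0}(P)| \leq 3(2c\log c)\gamma n$ and $|N_{G_0}(N_{G_0}(P))| \leq 3(2c\log c)^2\gamma n$. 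Summing all six bounds gives a total of at most
\[
\tfrac{11}{10}\cdot\tfrac{c^4 e^{-c}}{24}\bigl(1 + 2c\log c\bigr)\,n \;+\; \bigl(4 + 3(2c\log c) + 3(2c\log c)^2\bigr)\gamma\, n + o(n),
\]
which for $c \geq 20$ and $\gamma = O(c^3 e^{-c})$ is comfortably at most $\tfrac{3}{5}n$. The main obstacle is the quantitative bound on $|R(G_0)|$ --- in particular pinning down the implicit constant small enough to survive the factor $(2c\log c)^2$ from the double iterated neighbourhood; all other steps are routine applications of the stated claims.
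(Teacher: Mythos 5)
The overall skeleton of your plan (bound each summand, use \cref{obs:red_vxs_in_comp} for $|P|\leq 3|R|$, iterate \cref{lem:neighbourhoodsize} for the neighbourhood terms) matches the paper's. However, you take a genuinely different — and ultimately incomplete — route at the crucial step of bounding $|R(G_0)|$, and this is where the proposal breaks down.

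You propose to prove $\mathbb{E}[|R(G_0)|]=O\bigl((1+c+c^2/2+c^3/6)e^{-c}\bigr)n$ via a first-moment calculation over the red--purple components, then concentrate via McDiarmid. This is \emph{not} what the paper does, and you do not actually carry it out. The paper avoids the first moment over components entirely: it observes from \cref{obs:red_vxs_in_comp} that $R(G_0)$ is incident to at most $3|R(G_0)|$ edges, and writing $r=|P(G_0)|/|R(G_0)|\in(0,3]$ it derives the \emph{deterministic} bounds
\[
|P(G_0)|\le\sum_{i=0}^{5}(6-i)d_i,\qquad |R(G_0)|\le\sum_{i=0}^{6}(7-i)d_i,
\]
where $d_i$ is the number of red vertices of $G_0$-degree $i$. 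Since trivially $d_i\leq |\{v\colon d_{G_0}(v)=i\}|$, one only needs the routine degree-count concentration (\cref{lem:deg_in_G_0}) to conclude $|P(G_0)|\leq \frac{2c^5e^{-c}}{5!}n$ and $|R(G_0)|\leq\frac{2c^6e^{-c}}{6!}n$ whp. No first-moment computation over components is required.

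Your claimed bound $\gamma=O(c^3e^{-c})$ is the order of vertices of degree $\leq 3$ only, i.e.\ it ignores the cascading recolouring entirely. A genuine first-moment calculation over all possible red--purple components contains contributions (for instance from components of size $4$, where $\lceil j/4\rceil=1$ still, giving probability $\approx e^{-c}$ but a polynomial prefactor $\Theta(c^3)$ with a constant noticeably larger than $1/6$) that are not controlled by the formula you wrote down. Whether the sum over component sizes really yields $O(c^3e^{-c})n$ with a \emph{small enough} constant to absorb the $3(2c\log c)^2$ factor (your own arithmetic shows this factor is of order $10^4$ at $c=20$, leaving essentially no slack) is precisely the point you flag as the ``main obstacle'' and leave open. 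Since the statement you are asked to prove becomes false if the constant is off by an order of magnitude, this is a real gap, not a routine detail. The edge-counting/degree-profile argument via \cref{obs:red_vxs_in_comp} is the missing idea; it replaces an unresolved first-moment estimate with a short deterministic inequality that is both correct and sharp enough.

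The remaining steps in your proposal — bounding $|V_4|+|N_{G_0}(V_4)|$, using $|P|\leq 3|R|$, and iterating \cref{lem:neighbourhoodsize} with the observation that the $e^{-2c}n$ branch of the maximum is negligible — are all correct and match the paper.
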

\begin{proofclaim}
    Note that if $R(G_0) = \varnothing$, then also $P(G_0) = \varnothing$. So we may assume without loss of generality that $R(G_0) \neq \varnothing$. Let $r \coloneqq |P(G_0)|/|R(G_0)|$. Note that by \cref{obs:red_vxs_in_comp}, $|P(G_0)| \leq 3 |R(G_0)|$ and so $0 < r \leq 3$.
    Note further that $P(G_0) = N_{G_0}(R(G_0))$ and thus $r|R(G_0)| = |P(G_0)| = |N_{G_0}(R(G_0))| \leq e_{G_0}(R(G_0), V(G) \setminus R(G_0))$.
    Moreover, note that $R(G_0)$ is incident with at most $3|R(G_0)|$ edges in $G_0$ by \cref{obs:red_vxs_in_comp}. Hence,
    \begin{align*}
        e_{G_0}(R(G_0)) + r|R(G_0)| \leq e_{G_0}(R(G_0)) + e_{G_0}(R(G_0), V(G) \setminus R(G_0)) \leq 3|R(G_0)|.
    \end{align*}
    It follows that $e_{G_0}(R(G_0)) \leq (3-r)|R(G_0)|$. We deduce that 
    \begin{align} \label{eq:total_deg_upper}
        \sum_{x \in R(G_0)} d_{G_0}(x) &= 2 e_{G_0}(R(G_0)) + e_{G_0}(R(G_0), V(G) \setminus R(G_0)) \nonumber\\
        &\leq (3-r)|R(G_0)| + 3|R(G_0)| = (6-r)|R(G_0)|.
    \end{align} 
    For $0 \leq i \leq 7$, we let $d_i$ and $d_{\geq i}$ be the number of vertices in $R(G_0)$ with degree equal to $i$ and at least $i$ in $G_0$, respectively. 
    Note that 
    \begin{align} \label{eq:R(G_0)_upper}
        |R(G_0)| = \sum_{i = 0}^5 d_i + d_{\geq 6} \quad \text{and} \quad |R(G_0)| = \sum_{i = 0}^6 d_i + d_{\geq 7}
    \end{align}
    and
    \begin{align} \label{eq:total_deg_lower}
        \sum_{x \in R(G_0)} d_{G_0}(x) \geq \sum_{i=0}^5 i d_i + 6 d_{\geq 6} \quad \text{and} \quad \sum_{x \in R(G_0)} d_{G_0}(x) \geq \sum_{i=0}^6 i d_i + 7 d_{\geq 7}.
    \end{align}
    Combining \cref{eq:total_deg_lower,eq:total_deg_upper}, we obtain 
    \begin{align} \label{eq:total_deg_comb}
        6d_{\geq 6} \leq (6-r)|R(G_0)| - \sum_{i = 0}^5 i d_i \quad \text{and} \quad 7d_{\geq 7} \leq (6-r)|R(G_0)| - \sum_{i = 0}^6 i d_i.
    \end{align}
    Combining \cref{eq:total_deg_comb,eq:R(G_0)_upper} gives
    \begin{align*}
        6|R(G_0)| = \sum_{i=0}^5 6d_i + 6d_{\geq 6} \leq \sum_{i=0}^5 (6-i)d_i + (6-r)|R(G_0)|
    \end{align*}
    and
    \begin{align*}
        7|R(G_0)| = \sum_{i=0}^6 7d_i + 7 d_{\geq 7} \leq \sum_{i=0}^6 (7-i) d_i +(6-r)|R(G_0)| \leq \sum_{i=0}^6 (7-i) d_i + 6|R(G_0)|.
    \end{align*}
    From these inequalities, we deduce
    \begin{align*}
        |R(G_0)| \leq \frac{1}{r}\sum_{i=0}^5 (6-i)d_i \quad\text{and}\quad |R(G_0)| \leq \sum_{i=0}^6 (7-i) d_i.
    \end{align*}
    Since $|P(G_0)| = r |R(G_0)|$, we obtain
    \begin{align*}
        |P(G_0)| \leq \sum_{i=0}^5 (6-i)d_i \quad\text{and}\quad |R(G_0)| \leq \sum_{i=0}^6 (7-i) d_i.
    \end{align*}
    Together with \cref{lem:deg_in_G_0}, we have whp
    \begin{align} \label{eq:P(G_0)_upper_whp}
        |P(G_0)| \leq \sum_{i=0}^5 (6-i)\left(1+\frac{1}{10}\right)\frac{c^ie^{-c}}{i!}n \leq \frac{2c^5e^{-c}}{5!}n
    \end{align}
    and
    \begin{align} \label{eq:R(G_0)_upper_whp}
        |R(G_0)| \leq \sum_{i=0}^6 (7-i)\left(1+\frac{1}{10}\right)\frac{c^ie^{-c}}{i!}n \leq \frac{2c^6e^{-c}}{6!}n.
    \end{align}
    Combining \cref{lem:neighbourhoodsize} with \cref{eq:P(G_0)_upper_whp}, we obtain that whp
    \begin{align} \label{eq:NP_upper_whp}
        |N_{G_0}(P(G_0))|\leq |N_{G}(P(G_0))| \leq \frac{2 c^6(\log c)e^{-c}}{5!}n
    \end{align}
    and
    \begin{align} \label{eq:NNP_upper_whp}
        |N_{G_0}(N_{G_0}(P(G_0)))|\leq |N_{G}(N_{G}(P(G_0)))| \leq \frac{2 c^7(\log c)^2 e^{-c}}{5!}n.
    \end{align}
    Moreover, by \cref{lem:deg_in_G_0}, whp
    \begin{align} \label{eq:VNV_upper_whp}
        |V_4| + |N_{G_0}(V_4)| \leq 5 |V_4| \leq  5\left(1+\frac{1}{10}\right)\frac{c^4e^{-c}}{4!}n.
    \end{align}
    It follows from \cref{eq:R(G_0)_upper_whp,eq:P(G_0)_upper_whp,eq:NP_upper_whp,eq:NNP_upper_whp,eq:VNV_upper_whp} that whp
    \begin{align*}
        &|R(G_0)| + |P(G_0)| + |N_{G_0}(P(G_0))| + |N_{G_0}(N_{G_0}(P(G_0)))| + |V_4| + |N_{G_0}(V_4)| \\
        &\quad \leq \frac{2c^6e^{-c}}{6!}n + \frac{2c^5e^{-c}}{5!}n + \frac{2 c^6(\log c)e^{-c}}{5!}n + \frac{2 c^7(\log c)^2 e^{-c}}{5!}n + 5\left(1+\frac{1}{10}\right)\frac{c^4e^{-c}}{4!}n \\
        &\quad \leq \frac{3 c^7(\log c)^2e^{-c}}{5!}n \leq \frac{3}{5}n,
    \end{align*}
    as desired.
\end{proofclaim} 
By \cref{claim:RP_etc_upper} and \cref{eq:Bcontainment}, we have whp
\begin{align*}
    |B_0|&\geq n- \left(|R(G_0)| + |P(G_0)| + |N_{G_0}(P(G_0))| + |N_{G_0}(N_{G_0}(P(G_0)))| + |V_4| + |N_{G_0}(V_4)|\right)\\ &\geq \frac{2}{5}n.
\end{align*}
Since $\dist_G(b,B_1) \leq 4$ for all $b \in B_0$, whp the set $B_1$ has the property that 
\[
    |B_1|+|N_{G}(B_1)|+ |N_G(N_{G}(B_1))|+|N_G(N_G(N_{G}(B_1)))|+|N_G(N_G(N_G(N_{G}(B_1))))|\geq \frac{2}{5}n.
\]
As $\frac{n}{c^7}\geq e^{-3c}n$ and 
$\frac{1}{c^7}\sum_{i=0}^4(2c\log c)^i<\frac{2}{5}$ for $c\geq 20$, \cref{lem:neighbourhoodsize} implies that $|B_1|\geq \frac{n}{c^7}$, proving \cref{eq:B_1_bound}.  

We are now ready to prove the bound on $|B_3|$. We will show that
\begin{align}\label{eq:B_3_bound}
     |B_3|\geq \frac{n}{2c^7} \qquad \text{ with probability at least } \frac{4}{5}.
\end{align}
Let $U$ be the set of vertices in red/purple components of size at least 2 of $\tG \coloneqq G-E_G(A_3,B_3)$ that contain a vertex in $A_0$ (and hence, also an edge of $G_2$). 
\begin{claim} 
    Let $\mathcal{B}_{\mathrm{comp}}$ be the event that every component in $\tG[R(\tG) \cup P(\tG)]$ has size at most $\log^4 n$.
    Then $\prob{\mathcal{B}_{\mathrm{comp}}^c} = o(n^{-2})$.
\end{claim}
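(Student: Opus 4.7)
The plan is to follow the same first-moment strategy used to prove \cref{lem:no_large_red_purple_comps} (adapted from \cite[Lemma 7(b)]{A23}), suitably modified for the graph $\tG = G - E_G(A_3, B_3)$. The key structural ingredient I will use is that, by \cref{obs:red_vxs_in_comp} applied to $\tG$, any component $C$ of $\tG - S(\tG)$ of size $j$ contains at least $\lceil j/4 \rceil$ vertices that are red with respect to $\tG$, and these vertices have no $\tG$-edges leaving $C$. Since $G_1 \subseteq \tG$ and $G_1$ is independent of $G_2$, each such red vertex in particular has no $G_1$-edge to $V(G) \setminus C$, and this is the isolating constraint I will feed into the first-moment bound.

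More precisely, I will bound the expected number of pairs $(C, R_C)$, where $C \subseteq V(G)$ has size $j > \log^4 n$ and $R_C \subseteq C$ has $|R_C| = \lceil j/4 \rceil$, such that $\tG[C]$ is connected and no vertex of $R_C$ has a $G_1$-edge to $V(G)\setminus C$. Using $\tG \subseteq G$ to bound the connectedness probability via the existence of a spanning tree, and using the independence of $G_1$ and $G_2$ together with $p_1 \geq (c - e^{-5c})/n \geq 19/n$ to bound the no-outside-edges probability, the expected number of such pairs for a fixed $j$ is at most
\[
\binom{n}{j}\, j^{j-2}\, p^{j-1}\, \binom{j}{\lceil j/4 \rceil}\, (1-p_1)^{\lceil j/4\rceil (n-j)}.
\]
Summing over $j > \log^4 n$ and invoking the routine estimate carried out in \cref{a:comp_lem} for \cref{lem:no_large_red_purple_comps} shows that the total is $o(n^{-2})$, so that by Markov's inequality $\prob{\mathcal{B}_{\mathrm{comp}}^c} = o(n^{-2})$.

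The main obstacle is that the removed edge set $E_G(A_3,B_3)$ is not a simple deterministic function of $G$ but depends in a complicated recursive way on the edge-revealing process, and moreover the strong $4$-core is not monotone under edge deletion, so one cannot simply compare $S(\tG)$ to $S(G)$ or $S(G_1)$. I sidestep this by never conditioning on $A_3, B_3$ directly, and instead exploit the deterministic sandwich $G_1 \subseteq \tG \subseteq G$ together with the independence of $G_1$ and $G_2$: the ambient $G$ provides the upper bound on the spanning-tree probability inside a putative component, while the subgraph $G_1$, whose density $p_1 n \geq 19$ remains supercritical, is used to isolate the red vertices from the outside, which is just enough to drive the first-moment sum below $n^{-2}$.
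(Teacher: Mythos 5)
Your first-moment bound for a single component size $j$ is correct, and the structural inputs are sound: red vertices of $\tG$ have no $\tG$-edges (hence no $G_1$-edges) leaving their component, $\tG \subseteq G$ supplies the spanning tree, and disjointness of the edge sets $\binom{C}{2}$ and $E(C,V\setminus C)$ gives the independence between the spanning-tree event in $G$ and the $G_1$-isolation event. The problem is the range of summation. Summing your bound over all $j > \log^4 n$ up to $n$ does \emph{not} give $o(n^{-2})$: using $\binom{n}{j}j^{j-2} \leq (en)^j/j^2$, $\binom{j}{\lceil j/4\rceil}\leq 2^j$, and $(1-p_1)^{\lceil j/4\rceil(n-j)}\leq e^{-p_1 j(n-j)/4}$, the summand is at most $\frac{n}{cj^2}\bigl(2ec\,e^{-p_1 n(1-j/n)/4}\bigr)^j$, and once $j/n$ is bounded away from $0$ the base exceeds $1$ (for $c=20$ and $j=n/2$ it is roughly $9$); when $j$ is close to $n$ the isolation factor degenerates to a constant while the combinatorial factors remain exponential. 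So the sum over $j>\log^4 n$ is exponentially large in $n$, not $o(n^{-2})$. The ``routine estimate carried out in \cref{a:comp_lem}'' that you want to invoke is only taken over the intermediate range $i\in[\log^2 n,\log^4 n]$, and getting to that range is the content of the staged argument there --- it cannot be skipped.

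The missing ingredient is precisely that staged reduction, applied verbatim to $\tG$. First, with probability $1-o(n^{-2})$, $\Delta(\tG)\leq \Delta(G) < \tfrac{1}{4}\log^2 n$ by Chernoff. Second, running \cref{alg:strong_4_core_alg} on $\tG$, the maximum red-purple component size $m_t$ at step $t$ satisfies $m_{t+1}\leq 4\Delta(\tG)\max(m_t,1)$, so on the degree event a final component larger than $\log^4 n$ forces some $t$ with $\log^2 n\leq m_t\leq\log^4 n$. Only then is your first-moment bound applied, restricted to $i\in[\log^2 n,\log^4 n]$ (using that components at intermediate stages also satisfy \cref{obs:red_vxs_in_comp}); in this range $i=o(n)$, so $(1-p_1)^{\lceil i/4\rceil(n-i)}$ decays like $e^{-(1-o(1))p_1 n i/4}$ and the geometric base $2ec\,e^{-(1-o(1))p_1 n/4}$ is below $1$ for $c\geq 20$, making the sum super-polynomially small. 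As stated, your proposal omits this reduction and therefore has a genuine gap.
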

The proof of this claim is very similar to the proof of \cref{lem:no_large_red_purple_comps} (except that we upper bound the probability that an edge is not present by $1-p_1$ instead of $1-p$), so we omit it. 

Note that each component $C$ of $\tG[R(\tG) \cup P(\tG)]$ with $C \cap A_0 \neq \varnothing$ of size $k \geq 2$ contains at least $\frac{k}{4}$ vertices of $R(\tG)$ by \cref{obs:red_vxs_in_comp}. Those vertices have no neighbours in $\tG$ outside of $V(C)$. Moreover, since $C \cap A_0 \neq \varnothing$, we have that $C$ contains an edge of $G_2$.  
It follows that, 
\begin{align*}
    \expec{|U|} &= \expec{|U|\indev{\mathcal{B}_{\mathrm{comp}}}} + \expec{|U|\indev{\mathcal{B}_{\mathrm{comp}}^c}} \\
    &\leq \sum_{k = 2}^{\log^4 n} k \binom{n}{k}k^{k-2}p^{k-2} (kp_2) \binom{k}{\lceil \frac{k}{4} \rceil  }(1-p_1)^{\lceil \frac{k}{4} \rceil (n-k)}+ n \prob{\mathcal{B}_{\mathrm{comp}}^c}\\ 
    &\leq \sum_{k = 2}^{\log^4 n} (1+o(1)) \bfrac{en}{k}^k k^{k}p^{k}\bfrac{p_2}{p^2} 2^ke^{-p_1\lceil \frac{k}{4} \rceil n}+o(1)\\
    &\leq \sum_{k = 2}^{\log^4 n} (1+o(1)) \bfrac{p_2}{p^2} \left(2enp e^{-\frac{n}{4}(p-p_2)} \right)^k+o(1) \\ &\leq \sum_{k\geq 2}(1+o(1)) c^{-2}e^{-5c} \left(\frac{4}{5}\right)^kn+o(1)\leq \frac{1}{10}e^{-5c}n. 
\end{align*}
Markov's inequality implies that $|U|\leq e^{-5c}n$ with probability at least $\frac{9}{10}$.
Let $U_{\geq 1}^*$ be the set of vertices adjacent to at least one edge of $G_2$.
Note that $\expec{|U_{\geq 1}^*|} \leq n^2 p_2 = e^{-5c}n$. A simple application of McDiarmid's inequality~(\cref{Biased_McDiarmid_ineq}) gives that whp $|U_{\geq 1}^*| \leq 2e^{-5c}n$.
\begin{claim}
    Let $x \in B_1$ with $\dist_{G}(x, U \cup U_{\geq 1}^*) \geq 4$. Then $x \in B_3$.
\end{claim}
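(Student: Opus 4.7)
The plan is to verify that $x$ satisfies the conditions required to be placed in $B_3$ by \cref{alg:edge_revealing_process}. Setting $B_2^{(0)} \coloneqq B_1$ and writing $B_2^{(t)}$ for the value of $B_2$ at the end of iteration $t$, so that $B_3 = B_2^{(T)}$ where $T$ is the final iteration, I argue by induction on $t$ that $x \in B_2^{(t)}$; the base case $x \in B_1$ is given. The key locality observation is that, because $\dist_G(x, U_{\geq 1}^*) \geq 4$, no vertex of $B_G(x, 3)$ is incident to any $G_2$-edge. Hence all $G$-edges in $B_G(x, 3)$ are $G_1$-edges, and the induced subgraph on $B_G(x,3)$ coincides in $G_1$, $G_0$, every intermediate $G^*$ arising in the algorithm, $\tG$, and $G$. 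In particular $N_{G^*}[x] = N_{G_0}[x]$ and $N_{G^*}[y] = N_{G_0}[y]$ for every $y \in N_{G_0}[x]$; combined with $x \in B_1 \subseteq B_0$, this immediately yields the degree condition $d_{G^*}(y) \geq 5$ for the robust sapphire property of $x$ in $G^*$.

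For the greedy selection step, any other vertex $x' \in B_2^{(t-1)} \subseteq B_1$ realising $\dist_{G^*}(x, x') \leq 4$ would do so along a path contained in $B_{G^*}(x, 4) = B_{G_0}(x, 4)$, and by the locality observation every such edge is a $G_0$-edge. This would give $\dist_{G_0}(x, x') \leq 4$, contradicting the pairwise distance $\geq 5$ property of $B_1$ in $G_0$. Hence $x$ is selected into $B^{(t)}$ by the greedy procedure whenever the strong $4$-core condition holds.

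The crux is thus to show $N_{G_0}^2[x] \subseteq S(G^*)$ at every iteration. Since $x \in B_1 \subseteq B_0$ gives $N_{G_0}^2[x] \subseteq S(G_0)$, I need to show that $G_0$-sapphire vertices in this set remain sapphire in $G^*$. Suppose for contradiction that some $z \in N_{G_0}^2[x]$ lies in $S(G_0) \setminus S(G^*)$, and let $C$ be the component of $G^*[R(G^*) \cup P(G^*)]$ containing $z$. A cascade argument shows $|C| \geq 2$: the strong $4$-core property in $G_0$ gives $z$ at least four $G_0$-neighbours in $S(G_0)$, all of which are also $G^*$-neighbours; since $z \notin S(G^*)$, at least one such neighbour must itself be non-sapphire in $G^*$ and therefore lies in $C$.

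Finally, using $G^* \subseteq \tG$, I aim to show that $C$ is contained in a component $C'$ of $\tG[R(\tG) \cup P(\tG)]$ with $|C'| \geq 2$ and $C' \cap A_0 \neq \varnothing$, which forces $z \in U$ and contradicts $\dist_G(x, U) \geq 4$. The main obstacle is exactly this last step: the strong $4$-core is non-monotone under edge addition, so $S(G^*)$ and $S(\tG)$ cannot be directly compared, and one has to trace carefully how sapphire failures propagate across components through the stages of the edge revealing process. The key insight is that every edge in $G^* \setminus G_0$ is a $G_2$-edge with at least one endpoint in $A_1 \subseteq A_0$, so any cascade responsible for $z \in S(G_0) \setminus S(G^*)$ must originate at such an $A_0$-vertex, and this vertex must lie in the same $\tG$-component as $z$, completing the contradiction.
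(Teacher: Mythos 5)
Your locality observation (all $G$-edges incident to $B_G(x,3)$ are $G_1$-edges), the degree argument, and the distance argument are all correct and closely parallel the paper's. The genuine gap is in the sapphire-preservation step, where you diverge from the paper's route and do not complete yours.

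Two concrete problems with your final step. First, you aim to reach a contradiction by concluding $z\in U$. But $U$ is a subset of $R(\tG)\cup P(\tG)$, so concluding $z\in U$ would require $z\notin S(\tG)$ — whereas at the final iteration (where $G^*=\tG$) you in fact \emph{need} $z\in S(\tG)$ for $x$ to survive. So deriving $z\in U$ cannot be the contradiction for all iterations, and your logical structure is internally inconsistent. Second, even setting that aside, your "key insight" that the cascade from $G_0$ to $G^*$ originates at an $A_0$-vertex lying in the same $\tG$-component as $z$ is exactly where the non-monotonicity you flag actually bites: $z\notin S(G^*)$ does not imply $z\notin S(\tG)$, so the $G^*$-component $C$ need not sit inside any red/purple component of $\tG$ at all, and tracking which component of $\tG$ absorbs the cascade origin is precisely the thing you have not shown. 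Naming the obstacle is not the same as overcoming it.

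The paper's argument avoids this entirely by working directly with $\tG$ and applying \cref{lem:sapphire_cut_onesided}. Concretely: if $z\notin S(\tG)$, let $C$ be the component of $\tG[R(\tG)\cup P(\tG)]$ containing $z$. Since $\dist_G(x,z)\leq 2<4\leq \dist_G(x,U)$ and $z\notin A_0$, the definition of $U$ forces $C\cap A_0=\varnothing$. Taking $W:=V(G)\setminus(C\cup N_{\tG}(C))$, the set $N_{\tG}(C)\subseteq S(\tG)$ acts as a cut, every edge of $E(G_0)\triangle E(\tG)$ (being incident to $A_0$, which is disjoint from $C$) lies in $\binom{W\cup N_{\tG}(C)}{2}$, and \cref{lem:sapphire_cut_onesided} yields $S(G_0)\cap C\subseteq S(\tG)\cap C$, contradicting $z\in S(G_0)$. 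Your proposal never invokes the sapphire cut machinery, which is exactly what lets one compare $S(G_0)$ and $S(\tG)$ across a sapphire separator despite non-monotonicity; a cascade-tracking argument without such a tool is not going to close. Replacing your final paragraph with an application of \cref{lem:sapphire_cut_onesided} along the paper's lines is the fix.
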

\begin{proofclaim}
    Let $y \in B_1$ with $y \neq x$. By the definition of $B_1$, we have $\dist_{G_0}(x,y) \geq 5$. Since $\dist_{G}(x, U_{\geq 1}^*) \geq 4$, any path from $x$ to $y$ that is in $G$ but not in $G_0$ (and thus contains an edge of $G_2$) is of length at least $5$. Hence, $\dist_{\tG}(x,y) \geq 5$.
    Note that since $\dist_{G}(x, U_{\geq 1}^*) \geq 4$, we have 
    \[
        \tG\left[  N_{\tG}[x] \cup N_{\tG}(N_{\tG}(x))\right] = G_0\left[N_{G_0}[x]  \cup N_{G_0}(N_{G_0}(x))\right].
    \]
    Thus, in particular, since $B_1$ has the robust sapphire property for $G_0$, we have 
    \begin{align} \label{eq:y_in_S}
      N_{\tG}[y] \subseteq S(G_0) \quad\text{and}\quad d_{\tG}(y) \geq 5, \quad\text{ for every } y \in   N_{\tG}[x].
    \end{align}
    Hence, it suffices to show that $N_{\tG}[x] \cup N_{\tG}(N_{\tG}(x)) \subseteq S(\tG)$. To that end, let $z \in N_{\tG}[x] \cup N_{\tG}(N_{\tG}(x))$. Note that by \cref{eq:y_in_S}, we have $z \in S(G_0)$. Moreover, since $\dist_G(x,z) \leq 2$ and $\dist_G(x,U) \geq 4$, we have $z \notin U$. By the definition of $U$ and by $z \notin A_0$, $z$ is contained in a component $C$ of $\tG[R(\tG) \cup P(\tG)]$ that does not contain a vertex of $A_0$. Let $W \coloneqq V(G) \setminus (C \cup N_{\tG}(C))$. Since all edges in $E(G_0) \triangle E(\tG)$ are incident to $A_0$, we have $N_{\tG}(W) = N_{G_0}(W) = N_{G_0}(C) = N_{\tG}(C)$ and $E(G_0) \triangle E(\tG) \subseteq \binom{W \cup N_{\tG}(C)}{2}$. Furthermore, since $C$ is a component of $\tG[R(\tG) \cup P(\tG)]$, we have $N_{\tG}(C) \subseteq S(\tG)$. Hence, by \cref{lem:sapphire_cut_onesided} with $N_{\tG}(C)$ and $W$ playing the roles of $A$ and $B$, respectively, we have $S(G_0) \cap C \subseteq S(\tG) \cap C$. Finally, since $z \in S(G_0) \cap C$, $z \in S(\tG)$.
\end{proofclaim}
 Thus, \cref{lem:neighbourhoodsize,eq:B_1_bound} imply that with probability at least $\frac{4}{5}$, we have
\begin{align*}
    |B_3| &\geq |B_1| - (|U\cup U_{\geq 1}^*| + |N_G(U\cup U_{\geq 1}^*)| + |N_G(N_G(U\cup U_{\geq 1}^*))| \\
    &\phantom{{}\geq |B_1| - (|U\cup U_{\geq 1}^*|}+ |N_G(N_G(N_G(U\cup U_{\geq 1}^*)))|)\\
    &\geq \frac{n}{c^7} - e^{-2c}n\sum_{i=0}^3 (2c\log c)^i  \geq  \frac{n}{2c^7}.
\end{align*}
This proves \cref{eq:B_3_bound}.

Finally, we will show that
\begin{align} \label{eq:m_bound}
    m\geq e^{-10c}n \qquad \text{ with probability at least } \frac{7}{10}.
\end{align}

Note that $m \coloneqq e_{G_2}(A_4,B_4)$ is bounded below by the number $m'$ of edge-components\footnote{By `edge-components' we mean components isomorphic to $K_2$.} in $G_2[A_3,B_3]$. Indeed, any edge that forms such a component in $G_2[A_3,B_3]$ is also in $G[A_4,B_4]$ by the definition of $A_4$ and $B_4$.
Observe that, by the time we define $A_3$ and $B_3$, we have not revealed anything about the edges in $G[A_3,B_3]$. Hence, for each $x \in A_3$ and $y \in B_3$, the edge $xy$ is in $G_2[A_3,B_3]$ independently with probability $p_2$.
Note further, that $|A_3|$ is bounded from below by the number of isolated vertices of $G$, thus whp $|A_3|\geq \frac{e^{-c}n}{2}$. Thereafter,
\begin{align*}
\condexpec{m'}{|A_3|\geq \frac{e^{-c}n}{2}, |B_3|\geq \frac{n}{2c^7}}&\geq \frac{e^{-c}n}{2} \frac{n}{2c^7} p_2 (1-p)^{2n}  \geq  \frac{e^{-c}n}{2} \frac{n}{2c^7} \frac{e^{-5c}}{n} \frac{e^{-2c}}{2}\geq \frac{e^{-9c}n}{100}.
\end{align*}
Observe that, adding or removing an edge from $G_2[A_3,B_3]$ changes $m'$ by at most $2$. Thus, a simple application of McDiarmid's inequality~(\cref{Biased_McDiarmid_ineq}) gives that whp $m \geq m' \geq e^{-10c}n$ conditioned on the events $|A_3|\geq \frac{e^{-c}n}{2}$ and $|B_3|\geq \frac{n}{2c^7}$.
Thus, by \cref{eq:B_3_bound}, $m\geq e^{-10c}n$ with probability at least $\frac{7}{10}$.   

This proves \cref{eq:m_bound} and thus \cref{lem:edge_reveal_alg_exists}.

\section{\texorpdfstring{Properties of $\hL_k$}{Properties of Lk}}\label{s:Lkprop}

In this section, we will prove that $\left(\frac{\variance{\hL_k}}{n}\right)^{\frac{1}{2}}$ converges to a constant $\sigma_k >0$ as $n \rightarrow \infty$ for each $k \in \mathbb{N}$ and the the sequence $\sigma_k$ converges as $k \rightarrow \infty$. This will allow us to prove \cref{lem:variancelimit}. Moreover, we will show using Stein's method that $\hL_k$ satisfies a central limit theorem (\Cref{lem:conv_by_stein}). As we showed in the proof of \cref{thm:Law_of_longest_cycle}, this then carries over into a central limit theorem for $L\left(G\left(n, \frac{c}{n}\right)\right)$.

\subsection{\texorpdfstring{The form of $\variance{\hL_k}$ and the convergence of $\frac{\variance{\hL_k}}{n}$ in $n$}{The form of the variance of Lk and the convergence of Lk/n}}

It will be convenient in what follows to view $\hL_k$ not as a weighted sum of neighbourhood counts, but rather by considering the contribution from each set $U \subseteq V(G)$ coming from vertices whose neighbourhood is $U$.

More concretely, we can rewrite $\hL_k$ as follows, where we recall that $\cT$ is the set of rooted trees with $1 \leq v(T) \leq t_k \coloneqq \tbound$ and radius at most $k$.
\begin{align*}
     \hL_k &= \sum_{v \in V(G)}\sum_{(T,r) \in \mathcal{T}} \alpha_{(T,r)} \mathds{1}_{\{(G[B_G(v,k)],v) \cong (T,r)\}}\\
     &=\sum_{\substack{U \subseteq V(G) \\ 1 \leq |U| \leq t_k}} \sum_{v \in U} \sum_{\substack{(T,r) \in \mathcal{T} \\ v(T) = |U|}} \alpha_{(T,r)} \mathds{1}_{\{B_G(v,k) = U\}} \mathds{1}_{\{(G[U],v) \cong (T,r)\}}\\
     &\coloneqq \sum_{\substack{U \subseteq V(G) \\ 1 \leq |U| \leq t_k}} \sum_{v \in U} \sum_{\substack{(T,r) \in \mathcal{T} \\ v(T) = |U|}} \alpha_{(T,r)} I(U,v,T,r),
\end{align*}
where we have defined $I(U,v,T,r)\coloneqq \mathds{1}_{\{B_G(v,k) = U\}} \mathds{1}_{\{(G[U],v) \cong (T,r)\}}$ as this term will occur frequently.
 Letting 
 \begin{align} \label{eq:Y_U_Def}
     Y_U \coloneqq \sum_{v \in U} \sum_{\substack{(T,r) \in \mathcal{T} \\ v(T) = |U|}} \alpha_{(T,r)} I(U,v,T,r),
 \end{align}
 and $\cU_k \coloneqq \{U \subseteq V(G) \colon 1 \leq |U| \leq t_k\}$,
 we have $\hL_k = \sum_{U \in \cU_k} Y_U$.

 We start by showing that $\variance{\hL_k} = O(n)$ and that the limit $\lim_{n \rightarrow \infty}\frac{\variance{\hL_k}}{n}$ exists. 
\begin{lemma} \label{lem:Var_is_Kn}
    There exists a constant $K(k,c) \in \mathbb{R}$ such that $\variance{\hL_k} = K(k,c)n + O(1)$.
\end{lemma}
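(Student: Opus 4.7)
The plan is to compute $\variance{\hL_k}$ directly using the additive representation $\hL_k = \sum_{U \in \cU_k} Y_U$, which gives
\[
\variance{\hL_k} = \sum_{U_1, U_2 \in \cU_k} \mathrm{cov}(Y_{U_1}, Y_{U_2}).
\]
The key observation is that, by the vertex-transitivity of the distribution of $G(n,p)$, the value $\mathrm{cov}(Y_{U_1}, Y_{U_2})$ depends only on the triple of sizes $(s_1, s_2, i) = (|U_1|, |U_2|, |U_1 \cap U_2|)$. Since $s_1, s_2 \leq t_k$, the variance reduces to a finite sum $\sum_{(s_1, s_2, i)} N(s_1, s_2, i)\, \gamma(s_1, s_2, i)$, where $N(s_1, s_2, i) = \binom{n}{s_1}\binom{s_1}{i}\binom{n-s_1}{s_2-i} = \Theta(n^{s_1+s_2-i})$ counts ordered pairs of the given type, and $\gamma(s_1, s_2, i)$ is the common covariance value. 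The goal is to show that $N(s_1,s_2,i)\,\gamma(s_1,s_2,i) = K_{s_1, s_2, i}\, n + O(1)$ for each triple, for some constants $K_{s_1, s_2, i}$ depending only on $k$ and $c$.

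Estimating $\gamma$ splits into two cases. If $i \geq 1$, then $Y_{U_j} \neq 0$ forces $G[U_j]$ to be a tree, so the joint event forces $G[U_1 \cup U_2]$ to contain at least $s_1 + s_2 - i - 1$ edges on $s_1 + s_2 - i$ vertices. Both $\expec{Y_{U_1} Y_{U_2}}$ and $\expec{Y_{U_1}}\expec{Y_{U_2}}$ are then $O(n^{-(s_1+s_2-i-1)})$, and a careful expansion in powers of $1/n$ yields $\gamma(s_1, s_2, i) = K/n^{s_1+s_2-i-1} + O(n^{-(s_1+s_2-i)})$. If $i = 0$, a more delicate cancellation is needed. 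Conditioning on the tree shape of each $G[U_j]$, the event $\{B_G(v_j, k) = U_j\}$ imposes the constraint that no edge joins an \emph{interior} vertex of $U_j$ (one at distance less than $k$ from $v_j$) to $V(G) \setminus U_j$. When $U_1, U_2$ are disjoint, the only overlap between these two forbidden-edge sets lies between the interiors of $U_1$ and $U_2$, so the joint probability differs from the product of marginals by a factor $(1-p)^{-s_1^{\mathrm{int}} s_2^{\mathrm{int}}} = 1 + \tfrac{c\, s_1^{\mathrm{int}}\, s_2^{\mathrm{int}}}{n} + O(n^{-2})$. This yields $\gamma(s_1, s_2, 0) = K/n^{s_1+s_2-1} + O(n^{-(s_1+s_2)})$, and multiplication by $N = \Theta(n^{s_1+s_2})$ again gives a $K\cdot n + O(1)$ contribution.

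Summing over the finitely many triples yields the desired $\variance{\hL_k} = K(k,c)\, n + O(1)$. The main obstacle will be the $i = 0$ case, where the leading-order terms of $\expec{Y_{U_1} Y_{U_2}}$ and $\expec{Y_{U_1}}\expec{Y_{U_2}}$ both have order $n^{-(s_1+s_2-2)}$ and must cancel exactly, leaving only the $1/n$ correction coming from the interactions between the interior-vertex sets of $U_1$ and $U_2$. Establishing this cancellation requires a precise combinatorial description of which edges are forbidden by each event and confirming that their overlap is exactly as described above; once this is in hand, the remaining error-tracking in each expansion is routine, though involved.
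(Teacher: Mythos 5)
Your proposal is correct and is essentially the same argument as the paper's (Lemma \ref{lem:Var_is_Kn_general} in Appendix F): decompose $\variance{\hL_k}$ over pairs $(U_1,U_2)$, exploit vertex-transitivity to reduce to finitely many isomorphism types indexed by $(|U_1|,|U_2|,|U_1\cap U_2|)$, and track the exact powers of $n$, using the fact that for disjoint $U_1,U_2$ the two marginal events share forbidden edges exactly between the two interiors $B_{T_i}(r_i,k-1)$, which produces the $(1-p)^{-|B_{T_1}(r_1,k-1)||B_{T_2}(r_2,k-1)|}-1 = \Theta(1/n)$ factor and the consequent cancellation. The paper further splits the overlapping cases into $|U_1\cap U_2|=1$ and $|U_1\cap U_2|\geq 2$, since in the former the product of marginals is of the same order as the joint term and only partially cancels, while in the latter it is lower order and negligible; your ``careful expansion in powers of $1/n$'' covers both but glosses over that the $i\geq 2$ analysis also needs the observation that the dominant contribution comes from configurations where $G[U_1\cap U_2]$ is itself a tree (so that the joint event uses exactly $s_1+s_2-i-1$ edges).
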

In fact, we show that this hold more generally for any truncated weighted neighbourhood sum.
\begin{lemma} \label{lem:Var_is_Kn_general}
    Let $\widehat{M}_k \coloneqq \sum_{v \in V(G)}\sum_{(T,r) \in \mathcal{T}} \beta_{(T,r)} \mathds{1}_{\{(G[B_G(v,k)],v) \cong (T,r)\}}$ be a truncated weighted neighbourhood sum as in \cref{e:truncation} with weights $\bm{\beta} \coloneqq \left(\beta_{(T,r)}\right)_{(T,r) \in \cT}$\,.
    Then there exists a constant $K(k,c,\bm{\beta}) \in \mathbb{R}$ such that $\variance{\widehat{M}_k} = K(k,c,\bm{\beta})n + O(1)$.
\end{lemma}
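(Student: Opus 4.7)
My plan is to decompose $\widehat{M}_k$ as a sum over subsets, expand the variance as a sum of covariances, and extract the leading $O(n)$ contribution by grouping pairs $(U_1,U_2)$ according to their isomorphism type. Writing $\widehat{M}_k = \sum_{U \in \cU_k} Y_U$ exactly as in the discussion preceding \Cref{lem:Var_is_Kn} but with the $\alpha$-weights replaced by $\beta$-weights, the variance decomposes as $\variance{\widehat{M}_k} = \sum_{U_1, U_2 \in \cU_k} \mathrm{Cov}(Y_{U_1}, Y_{U_2})$. Because $G \sim G(n,p)$ is vertex-transitive, $\mathrm{Cov}(Y_{U_1}, Y_{U_2})$ depends only on the unlabelled structure of the pair. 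Setting $s_1 = |U_1|$, $s_2 = |U_2|$ and $s_{12} = |U_1 \cap U_2|$, the number of ordered pairs of a given type is a polynomial in $n$ of degree $s_1 + s_2 - s_{12}$, and since $|U_i| \leq t_k$ there are only finitely many types.

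The heart of the argument will be to compute the asymptotics of $\mathrm{Cov}(Y_{U_1}, Y_{U_2})$ for each type. I will expand $\expec{Y_{U_1}Y_{U_2}}$ as a sum over choices of roots $v_i \in U_i$ and rooted trees $(T_i, r_i)$ with $v(T_i) = s_i$, so that each summand is an explicit product of powers of $p$ and $(1-p)$ determined by the number of required edges and non-edges, both internal to $U_1 \cup U_2$ and across its boundary. Setting $p = c/n$ and expanding $(1-p)^{u(n-r)} = e^{-cu}(1 + O(1/n))$, the leading behaviour can be read off. The key cancellation that produces the $O(n)$ leading term is already visible in the disjoint case $s_{12}=0$: for each fixed choice of roots and trees, letting $u_i = |U_i \setminus \partial_{T_i}(r_i,k)|$ denote the number of non-boundary vertices, one has the exact factorisation
\[
\prob{I(U_1,v_1,T_1,r_1)=1,\,I(U_2,v_2,T_2,r_2)=1} = \prob{I(U_1,v_1,T_1,r_1)=1}\,\prob{I(U_2,v_2,T_2,r_2)=1}\,(1-p)^{-u_1 u_2},
\]
where the correction factor reflects the $u_1 u_2$ potential edges between non-boundary vertices of $U_1$ and $U_2$ that are double-counted in the product of marginals. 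Expanding $(1-p)^{-u_1u_2} = 1 + c u_1 u_2/n + O(n^{-2})$ then yields $\mathrm{Cov}(Y_{U_1}, Y_{U_2}) = A/n^{s_1+s_2-1} + O(n^{-(s_1+s_2)})$, with an explicit constant $A$ depending only on the type, $c$, and $\bm{\beta}$.

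For overlapping types ($s_{12} \geq 1$), a similar but more careful counting yields $\mathrm{Cov}(Y_{U_1}, Y_{U_2}) = O(n^{-(s_1+s_2-s_{12}-1)})$, with a linear-in-$n$ contribution arising only from compatibility patterns in which the tree edges of $T_1$ and $T_2$ induced on $U_1 \cap U_2$ form a spanning subtree on the intersection; all other patterns contribute $O(1)$ to the variance. Multiplying the count $\frac{n^{s_1+s_2-s_{12}}}{s_{12}!(s_1-s_{12})!(s_2-s_{12})!} + O(n^{s_1+s_2-s_{12}-1})$ by the covariance asymptotic will then give a contribution of the form $K_{\text{type}}(k,c,\bm{\beta})n + O(1)$ per type, and summing over the finitely many types yields $\variance{\widehat{M}_k} = K(k,c,\bm{\beta})n + O(1)$.

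I expect the main obstacle to be the bookkeeping for overlapping configurations: one must carefully enumerate, as a function of the compatibility pattern between $(T_1,r_1)$ and $(T_2,r_2)$ on $U_1 \cap U_2$, the edges inside $U_1 \cup U_2$ forced to be present, the edges inside $U_1 \cup U_2$ forced to be absent, and the boundary non-edges forced absent, then combine these with the polynomial count of pairs and verify that the $O(1)$ error terms genuinely remain $O(1)$ after multiplication by the $\Theta(n^{s_1+s_2-s_{12}})$ count. The disjoint case already displays the structure cleanly, but organising the overlap bookkeeping uniformly across all iso types is the delicate step.
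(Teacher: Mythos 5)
Your proposal is correct and takes essentially the same approach as the paper's proof: decompose $\widehat{M}_k = \sum_{U \in \cU_k} Y_U$, expand the variance into a sum of covariances, group pairs by the sizes $|U_1|, |U_2|, |U_1 \cap U_2|$, compute the leading asymptotics of each covariance, and multiply by the $\Theta(n^{|U_1| + |U_2| - |U_1 \cap U_2|})$ count. Your exact factorisation identity in the disjoint case, with the correction factor $(1-p)^{-u_1u_2}$ where $u_i = |B_{T_i}(r_i,k-1)|$, is precisely the computation the paper performs, and your intuition that the linear-in-$n$ contribution from overlapping pairs arises from configurations whose common edges span the intersection is also what the paper verifies via its set $\tcF_i$ of compatible configurations with $|E_1 \cap E_2| = i-1$ (for overlaps of size $i \geq 2$; for $i=1$ the constraint is instead that the shared vertex lie on $\partial(v_1,k) \cap \partial(v_2,k)$, which is what you call "vacuously" spanning but which the paper tracks explicitly to extract the correct constant).
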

Since the proof consists of standard, albeit reasonably involved, combinatorial estimates, we defer the details to \cref{a:VarhLk}.

However, despite the fact that we can give an explicit formula for the constant $K(k,c)$ in \cref{lem:Var_is_Kn}, it is not obvious that $K(k,c)\neq 0$. Since it will be important for the application of Stein's method that $\variance{\hL_k}$ is not too small, we will also want to establish a lower bound on $\variance{\hL_k}$, which we will do by relating it to $\variance{\tL_k}$, which we have already bounded from below in \cref{lem:L_k_var_lower_bound}.

\begin{lemma} \label{lem:var_asymptotics}
    For fixed $c \geq 20$, there exists $k_0(c) \geq 100$ such that for all fixed $k \geq k_0(c)$ the following hold, where $A_c$ and $B^*$ are as in \cref{lem:L_k_var_lower_bound} and \cref{lem:Var_X_k_upper_bound}, respectively.
    \begin{enumerate}[label = \upshape{(\roman*)}]
        \item $\variance{\hL_k} = \Theta(n)$. \label{item:hL_k_var_order_n}
        \item The limit $\sigma_k \coloneqq \lim_{n\rightarrow \infty}\left(\frac{\variance{\hL_k}}{n}\right)^{\frac{1}{2}}$ exists.
        \item  $0 < \left(A_c\right)^{\frac{1}{2}} \leq \sigma_k \leq \left(2cB^*\right)^{\frac{1}{2}}$.
    \end{enumerate}
\end{lemma}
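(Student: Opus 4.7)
The plan is to deduce all three parts by combining the known bounds on $\variance{\tL_k}$ with a Cauchy--Schwarz-type comparison between $\variance{\tL_k}$ and $\variance{\hL_k}$, using the fact that $\variance{\tL_k - \hL_k}$ is small relative to $n$ once $k$ is large.

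For part (ii), I would first invoke \cref{lem:Var_is_Kn} to write $\variance{\hL_k} = K(k,c)n + O(1)$ for some constant $K(k,c)$. Since variance is non-negative, $K(k,c) \geq 0$, and dividing by $n$ gives $\variance{\hL_k}/n \to K(k,c)$. Hence the limit $\sigma_k \coloneqq \sqrt{K(k,c)}$ exists; the task in (iii) is then to show $K(k,c) \in [A_c, 2cB^*]$.

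For part (iii), I would apply the probabilistic Cauchy--Schwarz inequality (\cref{thm:prob_Cauchy_Schwarz}) with $Y = \tL_k$ and $Z = \hL_k$ to obtain
\[
\left|\variance{\tL_k} - \variance{\hL_k}\right| \;\leq\; \variance{\tL_k - \hL_k} + 2\left(\variance{\tL_k - \hL_k}\cdot\variance{\tL_k}\right)^{\frac{1}{2}}.
\]
Substituting the bound $\variance{\tL_k - \hL_k} \leq n\exp(-c^2 k^2) + O(1)$ from \cref{lem:var_L_k_hL_k_diff_bound} and the bound $\variance{\tL_k} \leq cB^* n + o(n)$ from \cref{lem:Var_X_k_upper_bound}, the right-hand side is at most $\eps_k n + o(n)$, where
\[
\eps_k \;\coloneqq\; \exp(-c^2 k^2) + 2\sqrt{cB^* \exp(-c^2 k^2)}.
\]
Since $\eps_k \to 0$ as $k \to \infty$, I can choose $k_0(c) \geq 100$ large enough so that $\eps_k \leq A_c$ for all $k \geq k_0(c)$. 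Combining with the lower bound $\variance{\tL_k} \geq 2A_c n$ from \cref{lem:L_k_var_lower_bound} then yields
\[
\variance{\hL_k} \;\geq\; \variance{\tL_k} - \eps_k n - o(n) \;\geq\; 2A_c n - A_c n - o(n) \;=\; A_c n - o(n),
\]
so $K(k,c) \geq A_c$ and $\sigma_k \geq \sqrt{A_c}$. For the upper bound, the same estimate gives $\variance{\hL_k} \leq (cB^* + A_c)n + o(n)$, which is at most $2cB^* n$ for $n$ large, since for $c \geq 20$ one has $A_c = \frac{1}{12}e^{-20c} \leq cB^*$ trivially. Thus $\sigma_k \leq \sqrt{2cB^*}$, completing (iii). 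Part (i) then follows immediately: $\variance{\hL_k} = K(k,c)n + O(1)$ with $A_c \leq K(k,c) \leq 2cB^*$, so $\variance{\hL_k} = \Theta(n)$.

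There is no substantial obstacle here; the argument is essentially an assembly of the already-proven ingredients. The one mildly delicate point is the choice of $k_0(c)$: one needs $\eps_k$ to drop below the lower-bound constant $A_c = \frac{1}{12}e^{-20c}$, which is exponentially small in $c$, but since $\eps_k$ decays like $\exp(-c^2 k^2 / 2)$ in $k$, it suffices to take $k_0(c)$ of order $\sqrt{c}/c = c^{-1/2}$ times a modest constant -- certainly $k_0(c)$ can be chosen as a function of $c$ alone. The factor of $2$ slack in the lower bound $\variance{\tL_k} \geq 2A_c n$ from \cref{lem:L_k_var_lower_bound} is precisely what makes this work.
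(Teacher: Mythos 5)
Your proof is correct and follows essentially the same route as the paper's: invoke \cref{lem:Var_is_Kn} to get existence of the limit, then compare $\variance{\hL_k}$ with $\variance{\tL_k}$ via \cref{thm:prob_Cauchy_Schwarz}, \cref{lem:var_L_k_hL_k_diff_bound}, \cref{lem:L_k_var_lower_bound}, and \cref{lem:Var_X_k_upper_bound}, choosing $k_0(c)$ large enough that the correction term is at most $A_c n + o(n)$. One small slip: to obtain the displayed inequality with $\variance{\tL_k}$ (rather than $\variance{\hL_k}$) under the square root you should take $Y=\hL_k$ and $Z=\tL_k$ in \cref{thm:prob_Cauchy_Schwarz}, not the other way around as you wrote; the formula you actually use is the correct one, so the argument goes through.
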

\begin{proof}
    By \cref{lem:Var_is_Kn}, we have $\variance{\hL_k} = K(k,c)n+O(1) = O(n)$ and 
    $$\sigma_k \coloneqq \lim_{n\rightarrow \infty}\left(\frac{\variance{\hL_k}}{n}\right)^{\frac{1}{2}} = \sqrt{K(k,c)}.$$
    Note however, that \cref{lem:Var_is_Kn} does not rule out the possibility that $K(k,c) = 0$. So, let us give quantitative bounds on $\variance{\hL_k}$.

    By \cref{lem:L_k_var_lower_bound,lem:Var_X_k_upper_bound}, 
    \begin{align} \label{eq:var_L_k_bounds}
         2 A_cn \leq \variance{\tL_k} = n \variance{X_k} \leq cB^*n + o(n),
    \end{align}
    and by \cref{lem:var_L_k_hL_k_diff_bound}, we have
    \begin{align} \label{eq:var_L_k_hL_k_diff_bound_2}
        \variance{\tL_k - \hL_k} \leq \exp\left(-c^2k^2\right)n + O(1).
    \end{align}
    Hence, by \eqref{eq:var_L_k_bounds}, \eqref{eq:var_L_k_hL_k_diff_bound_2} and \cref{thm:prob_Cauchy_Schwarz}, we obtain
    \begin{align}\label{eq:Var_hL_L_diff}
    \left| \variance{\hL_k} - \variance{\tL_k} \right| &\leq \variance{\hL_k - \tL_k} + 2\left(\variance{\hL_k - \tL_k}\variance{\tL_k}\right)^{\frac{1}{2}} \nonumber \\
    &\leq \exp\left(-c^2k^2\right)n + O(1) + 2\left(cB^*\exp\left(-c^2k^2\right)+o(1)\right)^{\frac{1}{2}}n \nonumber\\
    &\leq \left( \min\left\{ A_c , cB^* \right\} +o(1) \right)n,
 \end{align}
 for all $k \geq k_0(c)$ if $k_0(c)$ is sufficiently large. In particular, by \eqref{eq:var_L_k_bounds} and \eqref{eq:Var_hL_L_diff}, it follows that
\[
\variance{\hL_k} = \Theta(n) \text{ and } \left(A_c\right)^{\frac{1}{2}} \leq \sigma_k \leq \left(2cB^*\right)^{\frac{1}{2}},
\]
 for all $k \geq k_0(c)$ if $k_0(c)$ is sufficiently large.
\end{proof}

\subsection{The convergence of \texorpdfstring{$\sigma_k$}{σk} in \texorpdfstring{$k$}{k}}

We first prove the following lemma which roughly speaking states that $\variance{\hL_k}$ and $\variance{\tL}$ are close to each other. This will allow us to show that the sequence of real numbers $(\sigma_k)_{k \geq 1}$ given by \cref{lem:var_asymptotics} has a limit.
\begin{lemma} \label{lem:var_diff_limit}
    For every fixed $c \geq 20$, there exists $\tk_0(c) \geq 100$ such that for all $k \geq \tk_0(c)$,
    \begin{align*}
        \lim_{n \rightarrow \infty} \frac{1}{n} \left|\variance{\hL_k} - \variance{\tL} \right| \leq 2\left(\frac{4}{5}\right)^{\frac{k}{2}}.
    \end{align*}
\end{lemma}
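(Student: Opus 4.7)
The plan is to split the quantity $|\variance{\hL_k}-\variance{\tL}|$ via the triangle inequality through the intermediate variance $\variance{\tL_k}$, and then to estimate each piece using the probabilistic Cauchy--Schwarz inequality (\cref{thm:prob_Cauchy_Schwarz}) together with the bounds on $\variance{\tL-\tL_k}$, $\variance{\tL_k-\hL_k}$ and $\variance{\tL_k}$ already established in \cref{lem:var_X_X_k_diff_bound,lem:var_L_k_hL_k_diff_bound,lem:Var_X_k_upper_bound}. The geometric rate $(4/5)^{k/2}$ in the statement is slightly weaker than $(3/4)^{k/2}$, which gives us enough slack to absorb the polynomial factor $k^2$ and the constants depending on $c$.

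First, I would establish an a priori bound $\variance{\tL}\leq C(c)\,n$ for some constant $C(c)$ depending only on $c$. This follows because
\[
\variance{\tL}^{\frac{1}{2}} \leq \variance{\tL_k}^{\frac{1}{2}} + \variance{\tL-\tL_k}^{\frac{1}{2}},
\]
and, for $k$ fixed large enough, both summands on the right are $O(n^{1/2})$ by \cref{lem:Var_X_k_upper_bound} and \cref{lem:var_X_X_k_diff_bound}; concretely, one can take $C(c)\leq 3cB^*$ once $k$ is chosen so that $64ck^2(3/4)^{k-2}\leq cB^*$.

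Next, by \cref{thm:prob_Cauchy_Schwarz} applied with $(Y,Z)=(\hL_k,\tL_k)$ and again with $(Y,Z)=(\tL_k,\tL)$, together with \cref{lem:var_L_k_hL_k_diff_bound} and \cref{lem:var_X_X_k_diff_bound}, we obtain
\begin{align*}
\tfrac{1}{n}\bigl|\variance{\hL_k}-\variance{\tL_k}\bigr| &\leq \tfrac{1}{n}\variance{\hL_k-\tL_k} + 2\Bigl(\tfrac{1}{n}\variance{\hL_k-\tL_k}\cdot\tfrac{1}{n}\variance{\tL_k}\Bigr)^{\frac{1}{2}} \\
&\leq \exp(-c^2k^2) + 2\bigl(cB^*\exp(-c^2k^2)\bigr)^{\frac{1}{2}} + o(1),
\end{align*}
and
\begin{align*}
\tfrac{1}{n}\bigl|\variance{\tL_k}-\variance{\tL}\bigr| &\leq \tfrac{1}{n}\variance{\tL_k-\tL} + 2\Bigl(\tfrac{1}{n}\variance{\tL_k-\tL}\cdot\tfrac{1}{n}\variance{\tL}\Bigr)^{\frac{1}{2}} \\
&\leq 64ck^2\bigl(\tfrac{3}{4}\bigr)^{k-2} + 2\Bigl(C(c)\cdot 64ck^2\bigl(\tfrac{3}{4}\bigr)^{k-2}\Bigr)^{\frac{1}{2}} + o(1).
\end{align*}

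Finally, I would combine these two estimates via the triangle inequality and let $n\to\infty$. The dominant term on the right-hand side is of order $k\,(3/4)^{k/2}$ (up to a constant depending on $c$), and since $(3/4)^{k/2}=(15/16)^{k/2}(4/5)^{k/2}$, the factor $(15/16)^{k/2}$ eventually beats any polynomial in $k$ and any constant depending on $c$. Thus, by choosing $\tk_0(c)$ large enough that
\[
64ck^2\bigl(\tfrac{3}{4}\bigr)^{k-2} + 2\bigl(64cC(c)k^2\bigr)^{\frac{1}{2}}\bigl(\tfrac{3}{4}\bigr)^{\frac{k-2}{2}} + \exp(-c^2k^2) + 2\bigl(cB^*\exp(-c^2k^2)\bigr)^{\frac{1}{2}} \leq 2\bigl(\tfrac{4}{5}\bigr)^{k/2}
\]
for all $k\geq \tk_0(c)$, the lemma follows. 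The main technical issue (though a minor one) is the a priori bound on $\variance{\tL}$ needed to apply Cauchy--Schwarz in the second step, since \cref{lem:Var_X_k_upper_bound} is stated only for the $k$-local approximation; this is resolved by the bootstrap argument described in the first paragraph.
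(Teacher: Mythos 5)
Your argument is correct and follows the same overall plan as the paper: split $|\variance{\hL_k}-\variance{\tL}|$ via the triangle inequality through $\variance{\tL_k}$, then apply \cref{thm:prob_Cauchy_Schwarz} to each piece using \cref{lem:var_X_X_k_diff_bound,lem:var_L_k_hL_k_diff_bound,lem:Var_X_k_upper_bound}. The one place you diverge is in the second piece: you apply Cauchy--Schwarz with $(Y,Z)=(\tL_k,\tL)$, so the bound involves $\variance{\tL}$, forcing you to first establish the a priori bound $\variance{\tL}=O(n)$ (your bootstrap via sub-additivity of standard deviation is a valid way to do this). The paper sidesteps this extra step by taking $\tL_k$ as the reference in both applications of Cauchy--Schwarz, i.e.\ using
\[
\left| \variance{\tL} -\variance{\tL_k} \right| \leq \variance{\tL -\tL_k} + 2\left(\variance{\tL -\tL_k}\,\variance{\tL_k}\right)^{\frac{1}{2}},
\]
so that only $\variance{\tL_k}$ appears, which is bounded directly by \cref{lem:Var_X_k_upper_bound}. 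Both routes close the gap to the stated $(4/5)^{k/2}$; the paper's choice of reference variable is just slightly more economical.
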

\begin{proof}
    Fix $c \geq 20$, let $\tk_0(c)$ be large enough, and let $k \geq \tk_0(c)$.
    Note that 
    \begin{align} \label{eq:lim_split}
        \lim_{n \rightarrow \infty} \frac{1}{n}\left|\variance{\hL_k} - \variance{\tL} \right| \leq \lim_{n \rightarrow \infty} \frac{1}{n}\left|\variance{\hL_k} - \variance{\tL_k} \right| + \lim_{n \rightarrow \infty} \frac{1}{n}\left|\variance{\tL_k} - \variance{\tL} \right|.
    \end{align}
    We bound these two terms as follows. For the second term, we observe that, by
    \cref{thm:prob_Cauchy_Schwarz}, \cref{lem:var_X_X_k_diff_bound}, and \cref{lem:Var_X_k_upper_bound},
    \begin{align} \label{eq:var_lim_2}
        \lim_{n \rightarrow \infty} \frac{1}{n}\left| \variance{\tL} -\variance{\tL_k} \right| 
        &\leq \lim_{n \rightarrow \infty} \left(\variance{\tX -\tX_k} + 2\left(\variance{\tX -\tX_k} \variance{\tX_k}\right)^{\frac{1}{2}} \right) \nonumber\\
        &\leq 64ck^2 \left(\frac{3}{4}\right)^{k-2} + 2\left(64ck^2 \left(\frac{3}{4}\right)^{k-2} cB^*\right)^{\frac{1}{2}} \nonumber\\
        &\leq \left(\frac{4}{5}\right)^{\frac{k}{2}},
    \end{align}
    if $\tk_0(c)$ is large enough compared to $B^*$.
    
    We bound the first term in \cref{eq:lim_split} similarly. By \cref{thm:prob_Cauchy_Schwarz}, \cref{lem:var_L_k_hL_k_diff_bound}, and \cref{lem:Var_X_k_upper_bound},
    \begin{align} \label{eq:var_lim_1}
        \lim_{n \rightarrow \infty} \frac{1}{n}\left|\variance{\hL_k} - \variance{\tL_k} \right|
        &\leq \lim_{n \rightarrow \infty}\left(\variance{\hX_k - \tX_k} +2\left(\variance{\hX_k - \tX_k} \variance{\tX_k}\right)^{\frac{1}{2}}\right) \nonumber\\
        &\leq \left(\frac{4}{5}\right)^{\frac{k}{2}}.
    \end{align}
    By \cref{eq:lim_split,eq:var_lim_1,eq:var_lim_2} we obtain the desired conclusion.
\end{proof}

We now show that $(\sigma_k)_{k \geq 1}$ has a limit.
\begin{lemma}\label{lem:limitsigmak}
    For every fixed $c \geq 20$, the sequence of real numbers $(\sigma_k)_{k \geq 1}$ given by \cref{lem:var_asymptotics} has a limit $\sigma$ as $k \rightarrow \infty$ with $0 < \left(A_c\right)^{\frac{1}{2}} \leq \sigma \leq \left(2cB^*\right)^{\frac{1}{2}}$, where $A_c$ and $B^*$ are as in \cref{lem:L_k_var_lower_bound} and \cref{lem:Var_X_k_upper_bound}, respectively.
\end{lemma}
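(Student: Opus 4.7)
The plan is to use \cref{lem:var_diff_limit} to show that the sequence $(\sigma_k^2)_{k \geq \tk_0(c)}$ is Cauchy, and then deduce convergence of $\sigma_k$ together with the claimed bounds from those already established in \cref{lem:var_asymptotics}. The triangle inequality, applied through $\variance{\tL}$, will give control on $|\sigma_k^2 - \sigma_{k'}^2|$ without requiring that $\variance{\tL}/n$ itself converge.

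First I would note that for any $k, k' \geq \tk_0(c)$, combining \cref{lem:var_diff_limit} with the triangle inequality yields
\[
\limsup_{n \to \infty} \frac{1}{n}\bigl|\variance{\hL_k} - \variance{\hL_{k'}}\bigr| \leq 2\left(\tfrac{4}{5}\right)^{k/2} + 2\left(\tfrac{4}{5}\right)^{k'/2}.
\]
Since, by \cref{lem:var_asymptotics}, both $\sigma_k^2 = \lim_{n \to \infty}\variance{\hL_k}/n$ and $\sigma_{k'}^2 = \lim_{n \to \infty}\variance{\hL_{k'}}/n$ exist (as limits, not merely limits superior), the difference of these limits equals the limit of the difference, and we conclude
\[
|\sigma_k^2 - \sigma_{k'}^2| \leq 2\left(\tfrac{4}{5}\right)^{k/2} + 2\left(\tfrac{4}{5}\right)^{k'/2}.
\]
The right-hand side tends to $0$ as $k, k' \to \infty$, so $(\sigma_k^2)_{k \geq \tk_0(c)}$ is a Cauchy sequence in $\mathbb{R}$ and thus has a limit $\gamma \in \mathbb{R}$.

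By \cref{lem:var_asymptotics}, each $\sigma_k^2$ lies in the interval $[A_c, 2cB^*]$ for $k \geq \max\{k_0(c),\tk_0(c)\}$, so $\gamma \in [A_c, 2cB^*]$ as well. In particular $\gamma \geq A_c > 0$, so setting $\sigma := \sqrt{\gamma}$ is well defined and $\sigma \in [(A_c)^{1/2}, (2cB^*)^{1/2}]$. Continuity of the square root on $[A_c, 2cB^*]$ gives $\sigma_k = \sqrt{\sigma_k^2} \to \sqrt{\gamma} = \sigma$ as $k \to \infty$, completing the proof.

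The only potential obstacle would have been that $\variance{\tL}/n$ might not converge, but the argument above avoids this by using $\variance{\tL}$ only as a bridge inside the triangle inequality. Indeed, a byproduct of the Cauchy argument combined with \cref{lem:var_diff_limit} is that $\lim_{n \to \infty} \variance{\tL}/n$ exists and equals $\sigma^2$, which is the content needed for \cref{item:limit_in_k} of \cref{lem:variancelimit}.
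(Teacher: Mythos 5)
Your proof is correct and follows essentially the same approach as the paper: you establish that $(\sigma_k^2)$ is Cauchy via the triangle inequality through $\variance{\tL}$ together with \cref{lem:var_diff_limit}, and then derive the bounds on $\sigma$ from those on $\sigma_k$ in \cref{lem:var_asymptotics}. Your use of $\limsup$ in the intermediate step and the explicit observation that the limit of a difference of convergent sequences equals the difference of limits is a small but welcome increase in care over the paper's version, which writes $\lim$ throughout.
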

\begin{proof}
    We show that $(\sigma_k)_{k \geq 1}$ converges by showing that $(\sigma_k^2)_{k \geq 1}$ is a Cauchy sequence. Let $k^* \geq \tk_0(c)$, where $\tk_0(c)$ is as in \cref{lem:var_diff_limit}.
    Let $k_1, k_2 \geq k^*$. Note that by \cref{lem:var_diff_limit},
    \begin{align*}
        |\sigma_{k_1}^2 - \sigma_{k_2}^2| &= \lim_{n \rightarrow \infty} \frac{1}{n}\left| \variance{\hL_{k_1}} - \variance{\hL_{k_2}} \right| = \lim_{n \rightarrow \infty} \frac{1}{n}\left| \variance{\hL_{k_1}} -\variance{\tL} +\variance{\tL}- \variance{\hL_{k_2}} \right| \\
        &\leq \lim_{n \rightarrow \infty} \frac{1}{n}\left| \variance{\hL_{k_1}} - \variance{\tL} \right| +\lim_{n \rightarrow \infty} \frac{1}{n}\left| \variance{\hL_{k_2}} - \variance{\tL} \right| \\
        &\leq 2\left(\frac{4}{5}\right)^{\frac{k_1}{2}} + 2\left(\frac{4}{5}\right)^{\frac{k_2}{2}} 
        \leq 4\left(\frac{4}{5}\right)^{\frac{k^*}{2}}.
    \end{align*}
    It follows that $(\sigma_k^2)_{k \geq 1}$ is a Cauchy sequence.
    The bounds on $\sigma$ follow from the bounds on $\sigma_k$ in \cref{lem:var_asymptotics}.
\end{proof}

\subsection{A central limit theorem for \texorpdfstring{$\hL_k$}{Lk}}\label{sec:normalapprox}

We first note that \cref{lem:normality} easily implies the following central limit theorem for truncations of weighted neighbourhood sums whose variance is not too small, which in particular, by \cref{lem:var_asymptotics} \cref{item:hL_k_var_order_n}, implies \cref{lem:conv_by_stein}.

\begin{lemma}\label{lem:normality_truncatedneighbourhoods}
Let $c > 1$, let $p = \frac{c}{n}$ and let $G \sim G(n,p)$. Let $\widehat{M}_k$ be the truncation of a weighted neighbourhood sum. If $\variance{\widehat{M}_k} = \omega\left( n^{\frac{2}{3} }\right)$, then 
\[
 \frac{\widehat{M}_k - \expec{\widehat{M}_k}}{\left(\variance{\widehat{M}_k}\right)^{\frac{1}{2}}} \quad\indist\quad \mathcal{N}(0,1) \quad \text{ as } n \to \infty.
 \]
\end{lemma}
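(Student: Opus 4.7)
The plan is to apply \cref{lem:normality} directly to $\widehat{M}_k$. The first step is to rewrite the truncated weighted neighbourhood sum in the indexed form required by Stein's method, exactly as was done for $\hL_k$ at the start of \cref{s:Lkprop}: letting $\cU_k \coloneqq \{U \subseteq V(G) \colon 1 \leq |U| \leq t_k\}$ and, for each $U \in \cU_k$,
\[
    Y_U ~\coloneqq~ \sum_{v \in U} \sum_{\substack{(T,r) \in \mathcal{T} \\ v(T) = |U|}} \beta_{(T,r)} \mathds{1}_{\{B_G(v,k) = U\}} \mathds{1}_{\{(G[U],v) \cong (T,r)\}},
\]
we get $\widehat{M}_k = \sum_{U \in \cU_k} Y_U$, with the size bound $|U| \leq t_k$ yielding the parameter $t \coloneqq t_k$ of \cref{lem:normality}.

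Next, I will verify the remaining hypotheses with $T_U \coloneqq t_k \mathds{1}_{\{G[U] \text{ connected}\}}$. The global bound $Y_U \leq t_k$ and the fact that $Y_U > 0$ forces $G[U]$ to be connected (since any $v \in U$ with $B_G(v,k) = U$ must reach every vertex of $U$ via $G[U]$) give $0 \leq Y_U \leq T_U$. For the semi-induced property, the key observation I will use is the following: if $B_G(v,k) = U$ for some $v \in U$, then no shortest path in $G$ from $v$ to a vertex of $U$ can leave $U$, for otherwise the intermediate vertex in $V \setminus U$ would lie within distance less than $k$ of $v$, contradicting $B_G(v,k) = U$. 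Symmetrically, any short path to a vertex outside $U$ must cross an edge in $\cF(U)$ (its first edge leaving $U$). Together with the fact that $(G[U], v) \cong (T, r)$ depends only on $G \cap \cF(U)$, this shows that the event $\{B_G(v,k) = U\}$, and hence $Y_U$ itself, is determined by edges incident to $U$.

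The main obstacle is the monotonicity condition \cref{eq:Y_U_ineq}, namely $Y_U(G) \leq Y_U(G \setminus \{e\}) \leq T_U(G)$ whenever $|e \cap U| = 1$. Writing $e = \{u, w\}$ with $u \in U$, $w \notin U$, the plan is: for each $v \in U$ with $B_G(v,k) = U$, the observation of the previous paragraph provides shortest paths in $G[U]$ from $v$ to every other vertex of $U$, all of which avoid $w$ and therefore survive in $G - e$; hence $B_{G-e}(v,k) = U$, and since $e \notin E(G[U])$ the isomorphism class of $(G[U],v)$ is unchanged. Thus each summand contributing to $Y_U(G)$ contributes the same value to $Y_U(G - e)$, giving $Y_U(G) \leq Y_U(G - e)$. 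The upper bound $Y_U(G - e) \leq T_U(G)$ then follows from the global $Y_U(G - e) \leq t_k \mathds{1}_{\{(G-e)[U] \text{ connected}\}}$ together with $(G - e)[U] = G[U]$.

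Finally, the hypothesis $p = \frac{c}{n} = \Theta(1/n)$ of \cref{lem:normality} is immediate, and the assumption $\variance{\widehat{M}_k} = \omega(n^{2/3})$ of the present lemma is exactly $\sigma_Y = \omega(n^{1/3})$. Invoking \cref{lem:normality} then yields the desired convergence $\sigma_Y^{-1}(\widehat{M}_k - \expec{\widehat{M}_k}) \indist \mathcal{N}(0,1)$.
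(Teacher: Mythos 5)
Your proof is correct and follows the same route as the paper: rewrite $\widehat{M}_k$ in the $\sum_{U\in\cU_k} Y_U$ form from \cref{s:Lkprop}, verify the hypotheses of \cref{lem:normality} with $K=t=t_k$, and apply it. The paper dispatches the verification with ``it is easy to see''; you spell it out, and your checks of the semi-induced property, the $0\le Y_U\le T_U$ bound, and the monotonicity \cref{eq:Y_U_ineq} are all essentially right. One small point you leave implicit in the monotonicity argument: from ``all shortest paths inside $U$ survive in $G-e$'' you only get $U\subseteq B_{G-e}(v,k)$; you also need $B_{G-e}(v,k)\subseteq U$, which follows immediately because deleting an edge cannot decrease any distance, so $B_{G-e}(v,k)\subseteq B_G(v,k)=U$. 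Worth one clause, but no real gap.
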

\begin{proof}
Suppose $\widehat{M}_k$ is the truncation (as in \eqref{e:truncation}) of a neighbourhood sum
\[
\tM_ k= \sum_{v \in V(G)} \sum_{(H,r) \in \mathcal{H}} \beta_{(H,r)} \mathds{1}_{\{(G[B_G(v,k)],v) \cong (H,r)\}}.
\]
As in \cref{eq:Y_U_Def}, we can write $\widehat{M}_k = \sum_{U \in \cU_k} Y_U$, where  $\cU \coloneqq \{U \subseteq V(G) \colon 1 \leq |U| \leq t_k\}$,
\[
Y_U ~\coloneqq~ \sum_{v \in U} \sum_{\substack{(T,r) \in \mathcal{T} \\ v(T) = |U|}} \beta_{(T,r)} I(U,v,T,r),
\]
and $I(U,v,T,r)\coloneqq \mathds{1}_{\{B_G(v,k) = U\}} \mathds{1}_{\{(G[U],v) \cong (T,r)\}}$. Observe that for each $U \in \cU$, we have that $Y_U$ is semi-induced with respect to $U$.

Thus, it is easy to see that the conditions of \cref{lem:normality} are satisfied for $Y =\widehat{M}_k$ with $t_k$ playing the roles of both $t$ and $K$. Hence, by \cref{lem:normality}, we have 
    \[
        \frac{\widehat{M}_k - \expec{\widehat{M}_k}}{\left(\variance{\widehat{M}_k}\right)^{\frac{1}{2}}} \quad\indist\quad \mathcal{N}(0,1) \quad \text{ as } n \rightarrow \infty,
    \]
    as desired.
\end{proof}

\subsection{Proof of Lemma \ref{lem:variancelimit}}\label{sec:variancelimit}

We first note that \cref{item:limit_in_n} follows immediately from \cref{lem:var_asymptotics}. Furthermore, \cref{item:limit_bounds} and the first part of \cref{item:limit_in_k} follow from \cref{lem:limitsigmak}.

For the second part of \cref{item:limit_in_k} we argue as follows. By \cref{thm:L_tL_equal}, \cref{lem:var_X_X_k_diff_bound}, and \cref{lem:var_L_k_hL_k_diff_bound},
\begin{align*}
&\variance{L-\tL} \leq \expec{(L-\tL)^2} = O(1);\\
&\variance{\tL-\tL_k} \leq \left(64ck^2 \left(\frac{3}{4}\right)^{k-2} + o(1) \right)n; \text{ and }\\
&\variance{\tL_k - \hL_k} \leq \exp\left(-c^2k^2\right)n + O(1).
\end{align*}

Furthermore, since by \cref{thm:prob_Cauchy_Schwarz},
\begin{align*}
\variance{X} \leq \variance{X-Y} + \variance{Y} + 2\left(\variance{X-Y}\variance{Y}\right)^{\frac{1}{2}},
\end{align*}
we can deduce from the above, and the fact that 
$\variance{\hL_k} = O(n)$ (see~\cref{lem:Var_is_Kn}), that also $\variance{L},\variance{\tL}$, and $\variance{\tL_k}$ are $O(n)$.

It then follows by repeated applications of \cref{thm:prob_Cauchy_Schwarz} that
\[
    \lim_{n \rightarrow \infty} \frac{\variance{L(G)}}{n} = \lim_{n \rightarrow \infty} \frac{\variance{\tL}}{n} = \lim_{k \rightarrow \infty} \lim_{n \rightarrow \infty} \frac{\variance{\hL_k}}{n} = \lim_{k \rightarrow \infty} \sigma_k^2 = \sigma^2,
\]
establishing the second part of \cref{item:limit_in_k}.

\section{Discussion}\label{s:discussion}
In this paper, we prove a central limit theorem for the circumference of a sparse random graph. 
Our proof 
has essentially two main parts:  we first establish a `local' approximation to the circumference, for which a central limit theorem can be proven using standard techniques; and we then use the Efron--Stein inequality to control the accuracy of our approximation and in this way deduce a central limit theorem from the local version.

We complement our main result  by proving that the circumference of a random graph exhibits the same behaviour up to the Hamiltonicity threshold. In addition, we show that at the Hamiltonicity threshold the difference between the number of vertices and the circumference of a random graph follows the Poisson paradigm.

Unlike many other graph parameters for which a central limit theorem has been established, for example, subgraph counts \cite{GK16, K84, KR83, R88}, the order of the giant component \cite{BR12,PW05,PW08} or the order of the $k$-core \cite{JL08}, there is not an obvious `local' approximation to the circumference, however the main novelty in this regard comes from earlier work of the first author \cite{A23} and the first author and Frieze \cite{AF21}, see for example \cref{thm:L_tL_equal}, where they established an explicit structural parameter which has a `local' approximation and which is close in distribution to the circumference.

Our main contribution is then the application of the Efron--Stein inequality in this context to `lift' properties of these approximations by analysing local changes under the resampling of edges, together with the non-trivial combinatorial analysis of the local changes of this approximation under resampling.

It seems likely that these methods, both the local approximations coming from \cite{A23,AF21} and the use of the Efron--Stein inequality to transfer a central limit theorem from an approximation, can be applied to prove central limit theorems in other contexts.

As a simple example, since the $k$-core can be defined via a similar peeling process as in \cref{alg:strong_4_core_alg}, there is a natural `local' approximation to the order of the $k$-core in the vein of \cref{alg:strong_4_core_alg_local}, for which a central limit theorem can be established via \cref{lem:normality}. It is reasonable to believe that the changes in the difference between the $k$-core and this approximation under resampling can be shown to be small and `local' as in \cref{s:strong4coreproperties}, and so these methods should lead to an alternative proof of a central limit theorem for the order of the $k$-core for a suitable range of $p$, which we intend to address in future work \cite{AEKP25}.

As a more complex example, it seems likely that similar methods should give an alternative proof that the matching number of $G\left(n,\frac{c}{n}\right)$ satisfies a central limit theorem for $c$ sufficiently large, which was recently proved for $c>e$ by Kreacic \cite{K17} and for $c \leq e$ by Glasgow, Kwan, Sah and Sawhney \cite{GKSS24}.

Indeed, similar arguments as in \cite{A23} should show that for $c$ sufficiently large, if we take the colouring $V(G) = S \cup P \cup R$ given by \Cref{alg:strong_4_core_alg}, then $G[S \cup P]$ is robust enough that any matching on $G[P \cup R]$ can be extended to a matching which covers all but at most one vertex of $S \cup P$. In particular, if a vertex is contained in a component of $G[P\cup R]$ of size at most $k$, then the `contribution' of $v$ towards the matching number can already be `recognised' in $B_G(v,k)$. This would lead to a similar `local' approximation to the matching number, for which most of the analysis in this paper would follow verbatim.
Of course, there are many technical details left to check to verify that the approximations in these cases are appropriately `robust' under the resampling of edges, and that the variances involved are large enough to apply Stein's method (c.f.~\cref{lem:normality}).

Whilst \cref{thm:Law_of_longest_cycle} goes slightly beyond a central limit theorem, giving some quantitative control on the variance $\variance{L}$ of the circumference $L \coloneqq L\left(G\left(n, \frac{c}{n}\right)\right)$, our upper and lower bounds on $\sigma = \lim_{n\to \infty}\left(\frac{\variance{L}}{n}\right)^{\frac{1}{2}}$ are quite far apart as functions of $c$. It would be interesting to know what the actual asymptotic dependence of $\sigma$ on $c$ is, where we would expect the answer to be closer to the upper bound than the lower bound. Furthermore, given \cref{thm:Law_of_longest_cycle}, we find ourselves in the situation that, for $c \geq 20$, we know that $L$ is concentrated in a reasonably small interval about its mean $\mu:=\expec{L}$, in that $L = \mu + O_{\bm{p}}(\sqrt{n})$, but the best known estimate for $\mu$, which follows from \cref{t:AnastosFriezeScaling}, only gives $\mu = f(c) n + o_{\bm{p}}(n)$.  Hence, it would be interesting to obtain a more precise estimate for $\mu$. More ambitiously, it would be interesting to determine if $L$ satisfies a \emph{local limit theorem}, in the sense that, for any compact interval $I \subseteq \mathbb{R}$ and $\delta >0$, there exists $n_0 \in \mathbb{N}$ such that for all $n \geq n_0$, we have that if $k = k(n) \in \mathbb{N}$ is such that $\frac{k - \mu}{\sqrt{n}} \in I$, then
\[
(1-\delta) \frac{1}{\sqrt{2\pi n} \sigma} \exp\left(- \frac{(k-\mu)^2}{2\sigma^2 n}\right) \leq \prob{L = k} \leq (1+\delta)  \frac{1}{\sqrt{2\pi n} \sigma} \exp\left(- \frac{(k-\mu)^2}{2\sigma^2 n}\right).
\]

Finally, as mentioned in the introduction, whilst the distribution of the circumference of a sparse random graph $G\left(n,\frac{c}{n}\right)$ is well understood now when $c$ is large, there is still a large gap in our knowledge for smaller $c$, where even a scaling limit has not yet been established. It would be interesting to know if the lower bound $c \geq 20$ can be improved upon significantly, where we note that for small enough values of $c$, for example below the threshold for the existence of a $3$-core, there does not seem to be a way to apply the methods used in \cite{A23} and in this paper. In this paper, we specifically use the lower bound on $c$ in the proofs of \cref{lem:expec_W^*_bounds,lem:L_k_var_lower_bound}, and it is also needed to apply \cref{thm:L_tL_equal}. 
Roughly, a large $c$ is needed to ensure the existence of the strong $4$-core and to show that the \emph{mantle}, what remains when we delete the strong $4$-core, is roughly distributed like a sub-critical random graph.
Heuristically, there should be some threshold $c^*$ at which the strong $4$-core emerges, and above which the mantle should have such a sub-critical distribution. It would be interesting to determine the value of $c^*$, which computational experiments suggest is $\approx 9.2$, and more generally the threshold for the existence of strong a $k$-core for general $k \in \mathbb{N}$. We note that, in the case $k=4$, the conjectured threshold is much larger than the corresponding threshold for the $4$-core, which is known to be $\approx 5.15$, and instead lies between the threshold for the $6$-core ($\approx 8.37$) and the threshold for the $7$-core ($\approx 9.88$).

\section*{Acknowledgements}
We thank Michael Missethan for discussions at the beginning of the project.  This research was funded in part by the Austrian Science Fund (FWF) [10.55776/P36131, 10.55776/I6502], and by the European Union’s Horizon 2020 research and innovation programme under the Marie Sk\l{}odowska-Curie grant agreement No.\ 101034413 \includegraphics[width=4.5mm, height=3mm]{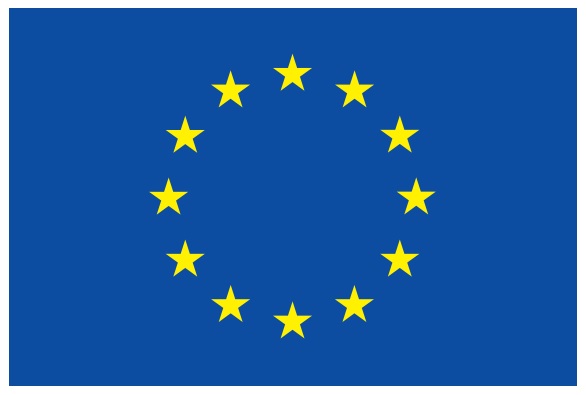}. For open access purposes, the authors have applied a CC BY public copyright license to any author accepted manuscript version arising from this submission.

\bibliographystyle{abbrv}
\bibliography{bib}
 
\appendix

\section{Proof of Proposition \ref{p:generalp}}\label{a:generalp}
This proof is most easily understood when read after reading the rest of the paper, as it uses some results and ideas from the proof of Theorem \ref{thm:Law_of_longest_cycle}, and we give less detail where arguments are similar to earlier parts of the paper.

We first note that for $\lambda(n) \to \infty$, a classic result of Bollob\'{a}s~\cite{B83} and Koml{\'o}s and Szemer{\'e}di \cite{KS83} implies that whp $G(n,p)$ is Hamiltonian, and so $n=L(G)$, dealing with \ref{i:large}. Let us suppose then, in what follows, that $\lambda(n)$ is bounded above by a constant and thus $c = \log n + \log \log n + \lambda(n) \leq 2 \log n$. 

Let $V(G) = S \cup P \cup R$ be the partition given by \Cref{alg:strong_4_core_alg} and let $\tL$ be defined as in \cref{thm:L_tL_equal}, so that $\prob{\tL(G) \neq L(G)} = O\left(n^{-2}\right)$, and so it suffices to show that $\tL(G)$ satisfies \cref{p:generalp} in place of $L(G)$. Moreover, note that by \cref{obs:red_vxs_in_comp}, a component containing $i$ red vertices has at most $3i$ purple vertices and a similar argument as in \cref{lem:no_large_red_purple_comps} shows that, for $c=O(\log n)$, whp all components of $G[P \cup R]$ have order at most $\log^4 n$.

Let us first consider the case where $\lambda(n) \rightarrow \lambda \in \mathbb{R}$. In this case, we note that a simple first moment calculation, as in \eqref{e:W*conditionalexpecbound}, implies that whp there are no components of $G[P \cup R]$ containing more than one red vertex.
The probability that there is some component of $G[P \cup R]$ containing more than one red vertex is at most
\begin{align*}
 \sum_{i\geq 2}^{4 \log^4 n} \sum_{j=0}^{3i} \binom{n}{j+i}\binom{j+i}{i} (j+i)^{j+i-2} p^{j+i-1} (1-p)^{i(n-j-i)} 
=o(1).
\end{align*}

However, if a component $K$ contains a single red vertex $v$, then $\uc(G[K],P \cap K)$ is equal to $0$ if $d(v) \geq 2$, and $1$ if $d(v) \leq 1$. On the other hand, every vertex of degree $\leq 1$ lies in $R$ and hence whp
\[
\tL(G) = n - n_0(G) - n_1(G),
\]
where $n_i(G)$ is the number of vertices of degree $i$ in $G$. It follows that $\tL(G) - (n - n_0(G) - n_1(G)) \inprob 0$, as $n \rightarrow \infty$. Hence, standard results on the distribution of $n_0(G) + n_1(G)$, see for example~\cite[Section 3]{FK16}, 
imply that \ref{i:medium} holds.

The case where $\lambda(n) \rightarrow -\infty$ and $c \rightarrow \infty$ is slightly more delicate, but follows a similar idea. We write
\begin{align*}
\tL(G) &= (\tL(G)  - n_0(G) - n_1(G)) + n_0(G) + n_1(G)\\
&\eqqcolon Y + n_0(G) + n_1(G).
\end{align*}
Since, by standard results, $n_1(G)$ satisfies a central limit theorem in this regime of $p$, it will be sufficient to show that 
\begin{align} \label{eq:var_dominates}
    \max\{ \variance{n_0(G)}, \variance{Y} \} = o \left( \variance{n_1(G)}\right).
\end{align}
Indeed, note that if we assume \cref{eq:var_dominates}, then by Chebyshev's inequality, we have as $n \rightarrow \infty$, 
\begin{align*}
    \frac{\tL(G)-\expec{\tL(G)}}{\sqrt{\variance{n_1(G)}}} - \frac{n_1(G)-\expec{n_1(G)}}{\sqrt{\variance{n_1(G)}}} = \frac{Y - \expec{Y}}{\sqrt{\variance{n_1(G)}}} + \frac{n_0(G) - \expec{n_0(G)}}{\sqrt{\variance{n_1(G)}}} \inprob 0.
\end{align*}
This implies that as $n \rightarrow \infty$,
\begin{align*}
    \frac{\tL(G)-\expec{\tL(G)}}{\sqrt{\variance{n_1(G)}}} \quad\indist\quad \mathcal{N}(0,1).
\end{align*}
Since \cref{eq:var_dominates} and \cref{thm:prob_Cauchy_Schwarz} imply that $\variance{\tL(G)} = (1+o(1))\variance{n_1(G)}$, we have by Slutsky's theorem (\cref{thm:Slutsky}), that as $n \rightarrow \infty$,
\begin{align*}
    \frac{\tL(G)-\expec{\tL(G)}}{\sqrt{\variance{\tL(G)}}} \quad\indist\quad \mathcal{N}(0,1).
\end{align*}
Thus it suffices to show \cref{eq:var_dominates}.

Again by standard results, the variances of $n_0(G)$ and $n_1(G)$ are known to satisfy
\[
\variance{n_1(G)}= (1+o(1))ce^{-c}n \quad \text{ and } \quad \variance{n_0(G)}= o(\variance{n_1(G)}).
\]

Hence, it remains to prove that $\variance{Y}= o(ce^{-c}n) = o(\variance{n_1(G)})$. Now, by considering $Y$ as a sum of weighted components we have
\[Y= \sum_{C} (\Phi(C)-n_0(C) - n_1(C)),
\]
where the sum is taken over components $C$ of $G[P\cup R]$.
However, we note that the contribution to this sum from components $C$ containing at most one red vertex is $0$. Indeed, by \Cref{obs:red_vxs_in_comp}, all components contain at least one red vertex, and for a component $C$ containing a unique red vertex $w$ it easy to verify that $\Phi(C)=1$ if $d(w) \leq 1$ and $\Phi(C) = 0$ otherwise. Hence, since each vertex of degree at most $1$ is always red, in each case it follows that $\Phi(C)-n_0(C) - n_1(C) = 0$.
Thus we can rewrite $Y$ as 
\begin{align*}
    Y&= \sum_{\substack{C\\ |C\cap R| \geq 2 }} \Phi(C)-n_0(C) - n_1(C)
\\&= \sum_{\substack{C\\ |C\cap R| \geq 2 }} \sum_{v\in C}\frac{ \Phi(C)-n_0(C) - n_1(C)}{|C|}.
\end{align*}
For every component $C$ of $G[P\cup R]$ and $w \in C$, let $\psi'_G(w)=\frac{ \Phi(C)-n_0(C) - n_1(C)}{|C|}$ and let $\psi'_G(w)=0$ for $w\in S(G)$. 
If we let $D' \coloneqq \{w \in V(G) \colon \psi'_{G^+}(w) \neq \psi'_{G^-}(w)\}$, then
by \cref{lem:ESapp}
\begin{align*}
    \variance{Y} \leq 2p(1-p)n^2 \expec{|D'|^2} \leq 2c n \cdot \expec{|D'|^2}.
\end{align*}
Note that \cref{obs:no_change_for_indif} also holds with $\phi$ replaced by $\psi'$. Hence similar to \cref{lem:D_cont_in_W}, we get that $D' \subseteq W^* \coloneqq W^- \cup W^+ \cup \{u,v\}$. Suppose that $D'\neq \varnothing$ and let $w \in D'$. 
Then $\psi'_{G^+}(w) \neq \psi'_{G^-}(w)$, hence $\psi'_{G^+}(w) \neq 0$ or $\psi'_{G^-}(w) \neq 0$. 
Assume that $\psi'_{G^+}(w) \neq 0$ (the proof proceeds analogously in the other case). 
It follows that $w \in (R^+ \cup P^+) \cap W^* \subseteq W^- \cup W^+$ and that $C_w^+$ is a red-purple component in $G^+$ with at least $2$ red vertices (as otherwise $\psi'(w) = 0$). 
By \cref{lem:S_and_W_containment}, we have $N_{G^+}(W^- \cup W^+) \subseteq S^+$ and thus $C_w^+ \subseteq W^- \cup W^+ \subseteq W^*$ (since $C_w^+$ is a component of $G^+ - S^+$).
Since $W^*_R \eqqcolon W^* \cap (R^- \cup R^+) \supseteq W^+ \cap R^+ \supseteq C_w^+ \cap R^+$, we have $|W^*_R| \geq |C_w^+ \cap R^+| \geq 2$.
Hence, by \cref{lem:W^*_properties} we have that if $D'\neq \varnothing$ then $G^+$ spans a tree $T$ containing the vertices $u$ and $v$, at least $\ell \geq 2$ vertices with no neighbour outside $T$, and at most $4\ell+2$ vertices in total.

Similarly to \eqref{e:W*conditionalexpecbound}, we have that
\begin{align*}
 \expec{|D'|^2} &\leq  \sum_{\ell =2}^{4\log^4 n}  
  \sum_{j=2}^{4 \ell +2} j^2 \binom{n-2}{j-2} j^{j-2} p^{j-2} \binom{j}{\ell } (1-p)^{\ell (n-j)} \nonumber\\
   &\leq  \sum_{\ell =2}^{4\log^4 n}  
  \sum_{j=2}^{4 \ell +2} j^2 \bfrac{e(n-2)jp}{\max\{j-2,1\}}^{j-2} 2^j e^{-p\ell (n-j)} \nonumber\\
   &\leq  \sum_{\ell =2}^{4\log^4 n}  
  \sum_{j=2}^{4 \ell +2} 4 j^2 (20c)^{j-2}  e^{-(1+o(1))c\ell }  = c^8e^{-(1+o(1))2c},
    \end{align*}
where at the last equality we used that $c\to \infty$ as $n \to \infty$. 
Thus,
\begin{align*}
    \variance{Y}&\leq 2p(1-p)n^2 \expec{|D|^2}\leq 2n c^9e^{-(1+o(1))2c}=o(ce^{-c}n).
\end{align*}

\section{Proof of Lemma \ref{lem:normality}}\label{a:normality}

By \cref{thm:Stein}, it suffices to show that $\delta \rightarrow 0$ as $n \rightarrow \infty$ (where $\delta$ is defined as in~\cref{eq:delta}).
Note that for any $U \in \cU$,
\begin{align} \label{eq:mean}
    \mu_U \coloneqq \expec{Y_U} \leq \expec{T_U} = O(p^{|U|-1}),
\end{align}
where we used the fact that $|U| \leq t = O(1)$.

We claim that for all $U_1, U_2, U_3 \in \cU$ with $U_1 \cap U_2 = \varnothing$ and $U_1 \cap U_3 = \varnothing$, the following inequalities hold.
\begin{align}
    0 &\leq Y_{U_1} \leq Y_{U_1}^{U_2} \leq Y_{U_1}^{U_2 \cup U_3} \leq T_{U_1}, \label{eq:Ineq1} \\
    0 &\leq Y_{U_1}^{U_2} - Y_{U_1} \leq Y_{U_1}^{U_2} \mathds{1}_{\{E_G(U_1,U_2) = \varnothing\}} \leq T_{U_1} \mathds{1}_{\{E_G(U_1,U_2) = \varnothing\}}, \text{ and } \label{eq:Ineq2} \\
    0 &\leq Y_{U_1}^{U_2 \cup U_3} - Y_{U_1}^{U_2} \leq Y_{U_1}^{U_2 \cup U_3} \mathds{1}_{\{E_G(U_1,U_3) = \varnothing\}} \leq T_{U_1} \mathds{1}_{\{E_G(U_1,U_2) = \varnothing\}}. \label{eq:Ineq3}
\end{align}
All these inequalities except the middle inequalities in \cref{eq:Ineq2} and \cref{eq:Ineq3} follow directly from \cref{eq:Y_U_ineq}. Note that if $E_G(U_1, U_2) = \varnothing$, then, by the definition of $Y_{U_1}^{U_2}$ and the fact that $Y_{U_1}$ is semi-induced with respect to $U_1$, we have $Y_{U_1} = Y_{U_1}^{U_2}$. Hence $Y_{U_1}^{U_2} - Y_{U_1} \leq Y_{U_1}^{U_2} \mathds{1}_{\{E_G(U_1,U_2)= \varnothing\}}$. The middle inequality in \cref{eq:Ineq3} follows analogously. 

For $U_1, U_2\in \cU$ with $U_1 \cap U_2 \neq \varnothing$, if both $G[U_1]$ and $G[U_2]$ are connected then so is $G[U_1\cup U_2]$. Thus,  
\begin{align} \label{eq:2T}
        \expec{T_{U_1}T_{U_2}} &\leq K^2 \expec{\ind{G[U_1\cup U_2] \text{ is connected}}}  = O(p^{|U_1 \cup U_2| - 1}).
    \end{align}
Similarly, for $U_1, U_2, U_3 \in \cU$ with $U_1 \cap U_2 \neq \varnothing$ and $(U_1 \cup U_2) \cap U_3 \neq \varnothing$, if each of $G[U_1]$, $G[U_2]$, and $G[U_3]$ is connected then so is $G[U_1\cup U_2\cup U_3]$. Thus
\begin{align} \label{eq:3T}
    \begin{split}
        \expec{T_{U_1}T_{U_2}T_{U_3}} &= O(p^{|U_1 \cup U_2 \cup U_3| - 1}).
    \end{split}  
\end{align}

We deal with the three terms in \cref{eq:delta} separately.
\begin{claim} \label{claim:bound_term_1}
   We have $\sum_{U \in \cU} \expec{|X_U| Z_U^2} = o(1)$.
\end{claim}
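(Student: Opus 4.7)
The plan is to bound $\expec{|X_{U_1}| Z_{U_1}^2}$ pointwise by an expectation of a product of indicator-type functions, expand $Z_{U_1}^2$ as a double sum over $U_2, U_3 \in \cU$, and then estimate the resulting sum over triples $(U_1,U_2,U_3)$ combinatorially. Since $|\cU|$ is exponential in $n$, the crucial point is that the summand is nonzero only when a certain subgraph is present in $G$, and this requirement will force enough factors of $p=\Theta(1/n)$ to make the sum of order $n$, which combined with the prefactor $\sigma_Y^{-3} = o(n^{-1})$ will yield the desired $o(1)$ bound.

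First, I would observe that, by the definitions of $X_U$ and $\mu_U$ and the hypothesis \cref{eq:Y_U_ineq}, one has $|X_U| \leq \sigma_Y^{-1}(T_U + \mu_U)$. For $Z_{U_1 U_2}$, the definition gives the pointwise bound $|Z_{U_1 U_2}| \leq \sigma_Y^{-1} T_{U_2}$ when $U_1 \cap U_2 \neq \varnothing$ and, using \cref{eq:Ineq2}, $|Z_{U_1 U_2}| \leq \sigma_Y^{-1} T_{U_2} \mathds{1}_{\{E_G(U_1,U_2)\neq \varnothing\}}$ when $U_1 \cap U_2 = \varnothing$. Writing $B_{U_1 U_2}$ for the right-hand side in each case, this gives
\[
\expec{|X_{U_1}| Z_{U_1}^2} \;\leq\; \sigma_Y^{-3} \sum_{U_2, U_3 \in \cU} \expec{(T_{U_1}+\mu_{U_1})\, B_{U_1 U_2}\, B_{U_1 U_3}}.
\]

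The main combinatorial task is then to bound $\sum_{U_1,U_2,U_3 \in \cU}\expec{(T_{U_1}+\mu_{U_1})B_{U_1U_2}B_{U_1U_3}}$ by $O(n)$. I would group the triples by their isomorphism type, i.e.\ by the sizes $j_i \coloneqq |U_i|$ (each at most $t$) and by the intersection pattern among $U_1,U_2,U_3$. For a fixed type with $J \coloneqq |U_1\cup U_2 \cup U_3|$, the number of triples is $O(n^J)$. The event making the summand nonzero requires $G[U_i]$ to be connected for each $i$ (from $T_{U_i}$) and requires an edge of $G$ between $U_1$ and each $U_j$ with $U_1\cap U_j=\varnothing$. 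Combining the spanning trees of each $G[U_i]$ with the mandated cross-edges produces a spanning subgraph of $G[U_1\cup U_2\cup U_3]$ whose underlying ``contact graph'' is connected (since each $U_j$ is linked to $U_1$ either by intersection or by a cross-edge), so the spanning subgraph itself is connected on $J$ vertices and hence has at least $J-1$ edges. A first-moment bound over the $O(1)$ choices of minimal such subgraph therefore gives
\[
\expec{(T_{U_1}+\mu_{U_1})B_{U_1U_2}B_{U_1U_3}} \;=\; O(p^{J-1}).
\]
Multiplying by $O(n^J)$ and using $p=\Theta(1/n)$ yields $O(n)$ per intersection type, and summing over the bounded number of types gives a total of $O(n)$. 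Hence
\[
\sum_{U \in \cU} \expec{|X_U| Z_U^2} \;=\; O\!\left(\sigma_Y^{-3}\, n\right) \;=\; o(1),
\]
since $\sigma_Y = \omega(n^{1/3})$.

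The main obstacle I anticipate is the bookkeeping: verifying rigorously that across all intersection patterns of $(U_1,U_2,U_3)$ the requirement of connectivity of each $G[U_i]$ together with the edge-indicator factors always forces the union $G[U_1\cup U_2\cup U_3]$ to contain a connected spanning subgraph, so that the probability estimate $O(p^{J-1})$ holds uniformly. Once that is established, the inclusion--exclusion counting of $n^J$ and the cancellation $n^J\cdot p^{J-1}=O(n)$ are straightforward, and the factor $\sigma_Y^{-3}=o(n^{-1})$ delivers the conclusion.
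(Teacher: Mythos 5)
Your proposal is correct and takes a genuinely different route from the paper's proof. The paper exploits the sign of the cross term: after expanding $\sigma_Y^2 Z_{U_1}^2$ as a double sum, it observes that the mixed terms $2Y_{U_2}(Y_{U_3}-Y_{U_3}^{U_1})$ with $U_2\cap U_1\neq\varnothing$, $U_3\cap U_1=\varnothing$ are nonpositive (since $Y_{U_3}\leq Y_{U_3}^{U_1}$), drops them entirely, and is left with only two intersection patterns to bound (both $U_j$ intersecting $U_1$, or both disjoint); it then handles the sub-cases $U_2\cap U_3=\varnothing$ vs.\ $U_2\cap U_3\neq\varnothing$ separately using \cref{eq:2T} and \cref{eq:3T}. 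You instead bound $|Z_{U_1U_2}|$ pointwise in absolute value, which forces you to keep the mixed terms, but gives a unified claim $\expec{(T_{U_1}+\mu_{U_1})B_{U_1U_2}B_{U_1U_3}}=O(p^{|U_1\cup U_2\cup U_3|-1})$ across all intersection patterns, proved by noting that the nonvanishing event yields a connected spanning subgraph of $G[U_1\cup U_2\cup U_3]$. Both routes give $O(n)$ before the prefactor $\sigma_Y^{-3}=o(n^{-1})$. The paper's sign trick buys fewer cases; your version has a cleaner unifying principle but commits you to more bookkeeping.

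One imprecision to fix in a write-up: the summand $(T_{U_1}+\mu_{U_1})B_{U_1U_2}B_{U_1U_3}$ is \emph{not} supported on the event that $G[U_1]$ is connected, because $\mu_{U_1}$ is a deterministic constant, so your "the event making the summand nonzero requires $G[U_i]$ connected for each $i$" is not literally true. For the $\mu_{U_1}$-contribution you should factor out $\mu_{U_1}=O(p^{|U_1|-1})$ separately and verify in each intersection pattern that $\mu_{U_1}\cdot\expec{B_{U_1U_2}B_{U_1U_3}}=O(p^{J-1})$; this does go through (using $J\leq |U_1|+|U_2\cup U_3|-1$ when $U_1$ meets $U_2\cup U_3$, etc., exactly as the paper does in its case analysis), but the "connected spanning subgraph certificate" phrasing only directly applies to the $T_{U_1}$-contribution. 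Also beware that when $U_2\cap U_3\neq\varnothing$ the two cross-edge indicators $\mathds{1}_{\{E_G(U_1,U_2)\neq\varnothing\}}$ and $\mathds{1}_{\{E_G(U_1,U_3)\neq\varnothing\}}$ are \emph{not} independent, so you cannot extract a factor $p^2$ there; you only need (and can only get) one factor of $p$, which is compensated by $|U_2\cup U_3|<|U_2|+|U_3|$.
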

\begin{proofclaim}
    Note that 
    \begin{align*}
        \begin{split}
            \sigma_Y^2 Z_{U_1}^2 &= \sum_{\substack{U_2, U_3 \in \cU \\ U_2 \cap U_1 \neq \varnothing,\, U_3 \cap U_1 \neq \varnothing}} Y_{U_2} Y_{U_3} + 
            \sum_{\substack{U_2, U_3 \in \cU \\ U_2 \cap U_1 \neq \varnothing,\, U_3 \cap U_1 = \varnothing}}  2 Y_{U_2} (Y_{U_3} - Y_{U_3}^{U_1}) \\
            &\quad\quad + \sum_{\substack{U_2, U_3 \in \cU \\ U_2 \cap U_1 = \varnothing,\, U_3 \cap U_1 = \varnothing}} (Y_{U_2}^{U_1} - Y_{U_2}) (Y_{U_3}^{U_1} - Y_{U_3})
       \end{split}
       \\
       &\leq \sum_{\substack{U_2, U_3 \in \cU \\ U_2 \cap U_1 \neq \varnothing,\, U_3 \cap U_1 \neq \varnothing}} Y_{U_2} Y_{U_3} + \sum_{\substack{U_2, U_3 \in \cU \\ U_2 \cap U_1 = \varnothing,\, U_3 \cap U_1 = \varnothing}}  (Y_{U_2}^{U_1} - Y_{U_2}) (Y_{U_3}^{U_1} - Y_{U_3}),
    \end{align*}
    since $Y_{U_3} \leq Y_{U_3}^{U_1}$.
    Together with $|X_U| \leq \sigma_Y^{-1}(Y_U + \mu_U)$, this implies that 
    \begin{align} \label{eq:expand_1}
        \begin{split}
            \sigma_Y^3 &\smashoperator[l]{\sum_{U \in \cU}} \expec{|X_U| Z_U^2} \leq \\
            &\smashoperator[l]{\sum_{\substack{U_1, U_2, U_3 \in \cU \\ U_2 \cap U_1 \neq \varnothing,\, U_3 \cap U_1 \neq \varnothing}}} \left(\expec{Y_{U_1} Y_{U_2} Y_{U_3}} + \mu_{U_1} \expec{ Y_{U_2} Y_{U_3}}\right) \\
            +\quad\quad\quad&\smashoperator[l]{\sum_{\substack{U_1, U_2, U_3 \in \cU \\ U_2 \cap U_1 = \varnothing,\, U_3 \cap U_1 = \varnothing}}}  \left(\expec{Y_{U_1}(Y_{U_2}^{U_1} - Y_{U_2}) (Y_{U_3}^{U_1} - Y_{U_3})}  + \mu_{U_1}\expec{(Y_{U_2}^{U_1} - Y_{U_2}) (Y_{U_3}^{U_1} - Y_{U_3})}\right).
        \end{split}
    \end{align}

    Let $U_1, U_2, U_3 \in \cU$ with $U_2 \cap U_1 \neq \varnothing$ and $U_3 \cap U_1 \neq \varnothing$.
    If $U_2 \cap U_3 = \varnothing$, then
    \begin{align*}
        \mu_{U_1} \expec{T_{U_2} T_{U_3}} &= O(p^{|U_1|-1 + |U_2|-1 + |U_3|-1}) \\ &= 
        O(p^{|U_1|-1 + |U_2|-|U_1 \cap U_2| + |U_3|-|(U_1 \cup U_2) \cap U_3|}) = O(p^{|U_1 \cup U_2 \cup U_3| -1}).  
    \end{align*}
    If $U_2 \cap U_3 \neq \varnothing$, then, by \cref{eq:2T}, we have
    \begin{align*}
        \mu_{U_1} \expec{T_{U_2} T_{U_3}} &= O(p^{|U_1|-1 + |U_2\cup U_3| -1}) \\ &= 
        O(p^{|U_1|-|U_1 \cap (U_2 \cup U_3)| + |U_2\cup U_3| -1}) = O(p^{|U_1 \cup U_2 \cup U_3| -1}).  
    \end{align*}
    Hence, using \cref{eq:3T}, we have
    \begin{align*}
        \expec{Y_{U_1} Y_{U_2} Y_{U_3}} + \mu_{U_1} \expec{ Y_{U_2} Y_{U_3}} \leq \expec{T_{U_1} T_{U_2} T_{U_3}} + \mu_{U_1} \expec{T_{U_2} T_{U_3}} = O(p^{|U_1 \cup U_2 \cup U_3|-1}).
    \end{align*}
    It follows that 
    \begin{align} \label{eq:bound_1}
        \begin{split}
            \sum_{\substack{U_1, U_2, U_3 \in \cU \\ U_2 \cap U_1 \neq \varnothing,\, U_3 \cap U_1 \neq \varnothing}} \left(\expec{Y_{U_1} Y_{U_2} Y_{U_3}} + \mu_{U_1} \expec{ Y_{U_2} Y_{U_3}}\right) 
            \\ = \sum_{\substack{U_1, U_2, U_3 \in \cU \\ U_2 \cap U_1 \neq \varnothing,\, U_3 \cap U_1 \neq \varnothing}} O(p^{|U_1 \cup U_2 \cup U_3|-1}) = O(n).
        \end{split}
    \end{align}
    
    Now let $U_1, U_2, U_3 \in \cU$ with $U_2 \cap U_1 = \varnothing$ and $U_3 \cap U_1 = \varnothing$. 
    If $U_2 \cap U_3 = \varnothing$, then, by  \cref{eq:Ineq1}, \cref{eq:Ineq2}, \cref{eq:mean}, and independence, we have 
    \begin{align*}
        &\expec{Y_{U_1}(Y_{U_2}^{U_1} - Y_{U_2}) (Y_{U_3}^{U_1} - Y_{U_3})}  + \mu_{U_1}\expec{(Y_{U_2}^{U_1} - Y_{U_2}) (Y_{U_3}^{U_1} - Y_{U_3})} \\ & \quad\leq
        \expec{T_{U_1}T_{U_2}T_{U_3} \ind{E_G(U_1,U_2) \neq \varnothing} \ind{E_G(U_1,U_3) \neq \varnothing}} \\
        &\quad\quad+ \expec{T_{U_1}} \expec{T_{U_2}T_{U_3} \ind{E_G(U_1,U_2) \neq \varnothing} \ind{E_G(U_1,U_3) \neq \varnothing}} \\
        &\quad = 2 \expec{T_{U_1}}\expec{T_{U_2}}\expec{T_{U_3}}\prob{E_G(U_1,U_2) \neq \varnothing}\prob{E_G(U_1,U_3) \neq \varnothing} \\
        &\quad = O(p^{|U_1|+|U_2|+|U_3|-1}) = O(p^{|U_1 \cup U_2 \cup U_3|-1}),
    \end{align*}
    since $\prob{E_G(U_1,U_2) \neq \varnothing},\prob{E_G(U_1,U_3) \neq \varnothing} = O(p)$.
    If $U_2 \cap U_3 \neq \varnothing$, then, by  \cref{eq:Ineq1}, \cref{eq:Ineq2}, \cref{eq:mean}, \cref{eq:2T}, and independence, we have  
    \begin{align*}
        &\expec{Y_{U_1}(Y_{U_2}^{U_1} - Y_{U_2}) (Y_{U_3}^{U_1} - Y_{U_3})}  + \mu_{U_1}\expec{(Y_{U_2}^{U_1} - Y_{U_2}) (Y_{U_3}^{U_1} - Y_{U_3})} \\ & \quad\leq
        \expec{T_{U_1}T_{U_2}T_{U_3}\ind{E_G(U_1,U_2) \neq \varnothing} } + \mu_{U_1}\expec{T_{U_2}T_{U_3}\ind{E_G(U_1,U_2) \neq \varnothing} } \\
        &\quad\leq 2 \expec{T_{U_1}} \prob{E_G(U_1,U_2) \neq \varnothing} \expec{T_{U_2}T_{U_3}} \\
        &\quad = O(p^{|U_1|+|U_2 \cup U_3|-1}) = O(p^{|U_1 \cup U_2 \cup U_3|-1}).
    \end{align*}
    It follows that 
    \begin{align} \label{eq:bound_2}
        \begin{split}
            &\sum_{\substack{U_1, U_2, U_3 \in \cU \\ U_2 \cap U_1 = \varnothing \\ U_3 \cap U_1 = \varnothing}}  \left(\expec{Y_{U_1}(Y_{U_2}^{U_1} - Y_{U_2}) (Y_{U_3}^{U_1} - Y_{U_3})}  + \mu_{U_1}\expec{(Y_{U_2}^{U_1} - Y_{U_2}) (Y_{U_3}^{U_1} - Y_{U_3})}\right) \\
            &=\sum_{\substack{U_1, U_2, U_3 \in \cU \\ U_2 \cap U_1 = \varnothing \\ U_3 \cap U_1 = \varnothing}} O(p^{|U_1 \cup U_2 \cup U_3|-1})
            = O(n).
        \end{split}
    \end{align}
    Hence, by \cref{eq:expand_1}, we have 
    \[
        \sigma_Y^3 \sum_{U \in \cU} \expec{|X_U| Z_U^2} = O(n).
    \]
    Since, $\sigma_Y = \omega(n^{\frac{1}{3}})$, we have $\sum_{U \in \cU} \expec{|X_U| Z_U^2} = o(1)$.
\end{proofclaim}
Next we bound the second term in \cref{eq:delta}.
    \begin{claim} \label{claim:bound_term_2}
        We have $\sum_{U_1, U_2 \in \cU} \expec{|X_{U_1} Z_{U_1U_2}V_{U_1U_2}|} = o(1)$.
    \end{claim}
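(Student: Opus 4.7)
The plan is to follow the template of the proof of \cref{claim:bound_term_1}: bound $|X_{U_1}Z_{U_1U_2}V_{U_1U_2}|$ pointwise, expand into a triple sum over $(U_1,U_2,U_3) \in \cU^3$, and show that each summand's expectation is $O(p^{|U_1 \cup U_2 \cup U_3|-1})$. The sum is then $O(n)$, which, after dividing by $\sigma_Y^3 = \omega(n)$, gives the desired $o(1)$.

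First, I would derive pointwise bounds on each factor. From $|X_{U_1}| \leq \sigma_Y^{-1}(Y_{U_1}+\mu_{U_1}) \leq \sigma_Y^{-1}(T_{U_1}+\mu_{U_1})$ and from the definitions of $Z$ and $V$ together with \cref{eq:Ineq1,eq:Ineq2,eq:Ineq3}, one obtains
\[
|Z_{U_1U_2}| \leq \sigma_Y^{-1}T_{U_2}\bigl(\mathds{1}_{\{U_1 \cap U_2 \neq \varnothing\}} + \mathds{1}_{\{U_1 \cap U_2 = \varnothing\}}\mathds{1}_{\{E_G(U_1,U_2) \neq \varnothing\}}\bigr)
\]
and
\[
|V_{U_1U_2}| \leq \sigma_Y^{-1}\sum_{\substack{U_3 \in \cU\\ U_3 \cap U_1 = \varnothing}}T_{U_3}\bigl(\mathds{1}_{\{U_3 \cap U_2 \neq \varnothing\}} + \mathds{1}_{\{U_3 \cap U_2 = \varnothing\}}\mathds{1}_{\{E_G(U_2,U_3) \neq \varnothing\}}\bigr).
\]

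Next, I would multiply these out and split the resulting triple sum according to whether $U_1 \cap U_2$ and $U_2 \cap U_3$ are empty (note that $U_1 \cap U_3 = \varnothing$ holds automatically in the contributing terms of $V$). In every case, the factors $T_{U_i}$ depend only on the internal edges $E_G(U_i)$ and the cross-edge indicators depend on $E_G(U_i,U_j)$, so the relevant collections of random edges are pairwise disjoint and the expectation factors. Applying \cref{eq:mean,eq:2T,eq:3T} together with $\prob{E_G(U,V) \neq \varnothing} = O(p)$ for disjoint $U,V$, the key observation is that each ``disjoint'' branch of $Z$ or $V$ introduces an indicator $\mathds{1}_{\{E_G(\cdot,\cdot) \neq \varnothing\}}$ contributing a factor of $p$ which precisely compensates for the power of $p$ no longer supplied by an overlap of the corresponding $U_i$'s; the expectation of each summand is therefore uniformly $O(p^{|U_1 \cup U_2 \cup U_3|-1})$. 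Since $|U_i| \leq t = O(1)$, the number of ordered triples with $|U_1 \cup U_2 \cup U_3|=s$ is $O(n^s)$, and summing $O(n^s p^{s-1}) = O(n)$ over the constant range of $s$ yields $\sigma_Y^3\sum_{U_1,U_2}\expec{|X_{U_1}Z_{U_1U_2}V_{U_1U_2}|} = O(n)$, from which the claim follows via $\sigma_Y^3 = \omega(n)$.

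The main obstacle will be the case bookkeeping: the product decomposes into eight sub-cases once one distinguishes between the $T_{U_1}$ and $\mu_{U_1}$ alternatives in $|X_{U_1}|$, and in each one must verify the $O(p^{|U_1 \cup U_2 \cup U_3|-1})$ bound. The most delicate sub-cases are those where $\mu_{U_1}$ replaces $T_{U_1}$ and $U_1$ meets $U_2 \cup U_3$, where the exponent comparison rests on the inclusion-exclusion identity $|U_1|+|U_2 \cup U_3|-2 \geq |U_1 \cup U_2 \cup U_3|-1$, which holds precisely because $U_1 \cap (U_2 \cup U_3) \neq \varnothing$. None of the individual estimates is hard, but one must check uniformly across all cases that the indicators picked up from the ``disjoint'' branches supply exactly the powers of $p$ needed to keep the triple sum of order $n$ rather than a larger polynomial.
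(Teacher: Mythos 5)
Your proposal follows essentially the same route as the paper: bound $|X_{U_1}|$ by $\sigma_Y^{-1}(T_{U_1}+\mu_{U_1})$, expand $Z_{U_1U_2}$ and $V_{U_1U_2}$ from their definitions, split into the four cases determined by whether $U_1\cap U_2$ and $U_2\cap U_3$ are empty, and verify that each summand's expectation is $O(p^{|U_1\cup U_2\cup U_3|-1})$ so that the triple sum is $O(n)$ and $\sigma_Y^3=\omega(n)$ finishes the job. One small imprecision worth flagging: the claim that "the relevant collections of random edges are pairwise disjoint and the expectation factors" is only literally true in the fully disjoint sub-case; when $U_2\cap U_3\neq\varnothing$ one cannot factor $\expec{T_{U_2}T_{U_3}}$ and must instead appeal to \cref{eq:2T} (or \cref{eq:3T}), which you do cite, so the argument is sound as a whole but the stated mechanism should be phrased as "factorization into independent blocks, with joint-tree bounds on the non-factoring block."
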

    \begin{proofclaim}
        Note that, by definition, if $U_1 = U_2$, then $V_{U_1U_2} = 0$. Hence, we may sum only over $U_1, U_2 \in \cU$ with $U_1 \neq U_2$. By substituting the definitions of $Z_{U_1U_2}$ and $V_{U_1U_2}$, using $|X_{U_1}| \leq \sigma_Y^{-1}(Y_{U_1}+\mu_{U_1})$, and applying the triangle inequality, we have
        \begin{align} \label{eq:expand_2}
            \begin{split}
                \sigma_Y^3\quad &\smashoperator[l]{\sum_{U_1, U_2 \in \cU}} \expec{|X_{U_1} Z_{U_1U_2}V_{U_1U_2}|} \leq \\
                &\smashoperator[l]{\sum_{\substack{U_1, U_2, U_3 \in \cU \\ U_1 \cap U_2 \neq \varnothing, U_1 \neq U_2 \\ U_3 \cap U_1 = \varnothing, U_3 \cap U_2 \neq \varnothing}}} \left(\expec{Y_{U_1}Y_{U_2}Y_{U_3}^{U_1}} + \mu_{U_1} \expec{Y_{U_2}Y_{U_3}^{U_1}}\right)\\
                +\quad\quad &\smashoperator[l]{\sum_{\substack{U_1, U_2, U_3 \in \cU \\ U_1 \cap U_2 \neq \varnothing, U_1 \neq U_2 \\ U_3 \cap U_1 = \varnothing, U_3 \cap U_2 = \varnothing}}} \left(\expec{Y_{U_1}Y_{U_2} |Y_{U_3}^{U_1}-Y_{U_3}^{U_1 \cup U_2}|} + \mu_{U_1} \expec{Y_{U_2} |Y_{U_3}^{U_1}-Y_{U_3}^{U_1 \cup U_2}|}\right) \\
                +\quad\quad &\smashoperator[l]{\sum_{\substack{U_1, U_2, U_3 \in \cU \\ U_1 \cap U_2 = \varnothing \\ U_3 \cap U_1 = \varnothing, U_3 \cap U_2 \neq \varnothing}}} \left(\expec{Y_{U_1}  |Y_{U_2}-Y_{U_2}^{U_1}|   Y_{U_3}^{U_1}} + \mu_{U_1} \expec{|Y_{U_2}-Y_{U_2}^{U_1}|  Y_{U_3}^{U_1}}\right)\\
                +\quad\quad &\smashoperator[l]{\sum_{\substack{U_1, U_2, U_3 \in \cU \\ U_1 \cap U_2 = \varnothing \\ U_3 \cap U_1 = \varnothing, U_3 \cap U_2 = \varnothing}}} \left(\expec{Y_{U_1}|Y_{U_2}-Y_{U_2}^{U_1}|  |Y_{U_3}^{U_1}-Y_{U_3}^{U_1 \cup U_2}|} + \mu_{U_1} \expec{|Y_{U_2}-Y_{U_2}^{U_1}|  |Y_{U_3}^{U_1}-Y_{U_3}^{U_1 \cup U_2}|}\right).
            \end{split}
        \end{align}
        Since, $\sigma_Y = \omega(n^{\frac{1}{3}})$, it suffices to show that each of the four sums in \cref{eq:expand_2} is $O(n)$. 
        
        For the first sum in \cref{eq:expand_2}, let $U_1, U_2, U_3 \in \cU$ such that $U_1 \cap U_2 \neq \varnothing$, $U_1 \neq U_2$, $U_3 \cap U_1 = \varnothing$, and $U_3 \cap U_2 \neq \varnothing$.
        By \cref{eq:Ineq1}, \cref{eq:mean}, \cref{eq:2T}, and \cref{eq:3T}, we have
        \begin{align*}
            \expec{Y_{U_1}Y_{U_2}Y_{U_3}^{U_1}} + \mu_{U_1} \expec{Y_{U_2}Y_{U_3}^{U_1}} &\leq \expec{T_{U_1}T_{U_2}T_{U_3}} + \expec{T_{U_1}} \expec{T_{U_2}T_{U_3}} \\
            &= O(p^{|U_1 \cup U_2 \cup U_3| -1}) + O(p^{|U_1|-1}p^{|U_2 \cup U_3| -1}) \\
            &= O(p^{|U_1 \cup U_2 \cup U_3| -1}) + O(p^{|U_1|-1 + |U_2 \cup U_3| -|U_1 \cap (U_2 \cup U_3)|}) \\
            &= O(p^{|U_1 \cup U_2 \cup U_3| -1}).
        \end{align*}
        Hence, the first sum in \cref{eq:expand_2} is $O(n)$. 
        
        For the second sum, let $U_1, U_2, U_3 \in \cU$ such that $U_1 \cap U_2 \neq \varnothing$, $U_1 \neq U_2$, $U_3 \cap U_1 = \varnothing$, and $U_3 \cap U_2 = \varnothing$. By \cref{eq:Ineq1}, \cref{eq:Ineq3}, \cref{eq:mean}, \cref{eq:2T}, and independence, we have
        \begin{align*}
            &\expec{Y_{U_1}Y_{U_2} |Y_{U_3}^{U_1}-Y_{U_3}^{U_1 \cup U_2}|} + \mu_{U_1} \expec{Y_{U_2} |Y_{U_3}^{U_1}-Y_{U_3}^{U_1 \cup U_2}|} \\
            &\quad \leq \expec{T_{U_1}T_{U_2}T_{U_3} \ind{E_G(U_2,U_3) \neq \varnothing}} + \expec{T_{U_1}} \expec{T_{U_2}T_{U_3} \ind{E_G(U_2,U_3) \neq \varnothing}} \\
            &\quad \leq \expec{T_{U_1}T_{U_2}}\expec{T_{U_3}}\prob{E_G(U_2, U_3) \neq \varnothing} + \expec{T_{U_1}}\expec{T_{U_2}}\expec{T_{U_3}}\prob{E_G(U_2, U_3) \neq \varnothing} \\
            &\quad = O(p^{|U_1 \cup U_2| -1 +|U_3|}) +O(p^{|U_1| + |U_2| + |U_3| -2}) \\
            &\quad = O(p^{|U_1 \cup U_2 \cup U_3|-1}).
        \end{align*}
        Hence, the second sum is $O(n)$.

        For the third sum, let $U_1, U_2, U_3 \in \cU$ such that $U_1 \cap U_2 = \varnothing$, $U_3 \cap U_1 = \varnothing$, and $U_3 \cap U_2 \neq \varnothing$.
        By \cref{eq:Ineq1}, \cref{eq:Ineq2}, \cref{eq:mean}, \cref{eq:2T}, and independence, we have
        \begin{align*}
            &\expec{Y_{U_1}  |Y_{U_2}-Y_{U_2}^{U_1}|   Y_{U_3}^{U_1}} + \mu_{U_1} \expec{|Y_{U_2}-Y_{U_2}^{U_1}|  Y_{U_3}^{U_1}} \\
            &\quad \leq \expec{T_{U_1}T_{U_2}\ind{E_G(U_1, U_2) \neq \varnothing} T_{U_3}} + \expec{T_{U_1}} \expec{T_{U_2}\ind{E_G(U_1, U_2) \neq \varnothing} T_{U_3}} \\
            &\quad = 2 \expec{T_{U_1}} \prob{E_G(U_1, U_2) \neq \varnothing} \expec{T_{U_2}T_{U_3}} \\
            &\quad = O(p^{|U_1|+|U_2 \cup U_3| -1}) = O(p^{|U_1 \cup U_2 \cup U_3|-1}).
        \end{align*}
        Hence, the third sum is $O(n)$.

        For the fourth sum, let $U_1, U_2, U_3 \in \cU$ such that $U_1 \cap U_2 = \varnothing$, $U_3 \cap U_1 = \varnothing$, and $U_3 \cap U_2 = \varnothing$. By \cref{eq:Ineq1}, \cref{eq:Ineq2}, \cref{eq:Ineq3}, \cref{eq:mean}, and independence, we have
        \begin{align*}
            &\expec{Y_{U_1}  |Y_{U_2}-Y_{U_2}^{U_1}|   |Y_{U_3}^{U_1}-Y_{U_3}^{U_1 \cup U_2}|} + \mu_{U_1} \expec{|Y_{U_2}-Y_{U_2}^{U_1}|  |Y_{U_3}^{U_1}-Y_{U_3}^{U_1 \cup U_2}|} \\
            &\quad \leq \expec{T_{U_1}T_{U_2}T_{U_3} \ind{E_G(U_1,U_2) \neq \varnothing} \ind{E_G(U_2,U_3) \neq \varnothing}} \\
            &\quad\quad+ \expec{T_{U_1}}\expec{T_{U_2}T_{U_3} \ind{E_G(U_1,U_2) \neq \varnothing} \ind{E_G(U_2,U_3) \neq \varnothing}} \\
            &\quad \leq 2 \expec{T_{U_1}}\expec{T_{U_2}}\expec{T_{U_3}} \prob{E_G(U_1, U_2) \neq \varnothing} \prob{E_G(U_2, U_3) \neq \varnothing} \\
            &\quad = O(p^{|U_1| + |U_2| + |U_3| -1}) = O(p^{|U_1 \cup U_2 \cup U_3|-1}).
        \end{align*}
        Hence, the fourth sum is $O(n)$. This completes the proof of the claim.
    \end{proofclaim}
    Finally we bound the third term in \cref{eq:delta}.
    \begin{claim} \label{claim:bound_term_3}
        We have $\sum_{U_1, U_2 \in \cU} \expec{|X_{U_1}Z_{U_1U_2}|} \expec{|Z_{U_1} + V_{U_1U_2}|} = o(1)$.
    \end{claim}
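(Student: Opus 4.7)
The plan is to bound the two expectations $\expec{|X_{U_1}Z_{U_1U_2}|}$ and $\expec{|Z_{U_1}+V_{U_1U_2}|}$ separately, combine them, and finally sum over $U_1, U_2 \in \cU$. The key observation is that it suffices to prove the uniform bound $\expec{|Z_{U_1}+V_{U_1U_2}|} = O(\sigma_Y^{-1})$, since then the claim will reduce to the sum $\sum_{U_1,U_2} \expec{|X_{U_1}Z_{U_1U_2}|}$, which can be controlled by the same case analysis already carried out in the proof of \Cref{claim:bound_term_2}. By the triangle inequality it is enough to show $\expec{|Z_{U_1}|} = O(\sigma_Y^{-1})$ and $\expec{|V_{U_1U_2}|} = O(\sigma_Y^{-1})$ uniformly.

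For $\expec{|Z_{U_1}|}$, expand $Z_{U_1} = \sum_{U_3 \in \cU} Z_{U_1U_3}$ and bound each term according to whether $U_3$ meets $U_1$. When $U_1 \cap U_3 \neq \varnothing$, $|Z_{U_1U_3}| = \sigma_Y^{-1} Y_{U_3}$ and so $\expec{|Z_{U_1U_3}|} \leq \sigma_Y^{-1}\mu_{U_3} = O(\sigma_Y^{-1} p^{|U_3|-1})$ by \cref{eq:mean}; for each fixed size $s = |U_3|$ and intersection size $s_{13} = |U_1 \cap U_3| \geq 1$, the number of such $U_3$ is $O(n^{s-s_{13}})$, yielding a contribution of $O(\sigma_Y^{-1} p^{s_{13}-1}) = O(\sigma_Y^{-1})$ per sub-case. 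When $U_1 \cap U_3 = \varnothing$, \cref{eq:Ineq2} gives $|Z_{U_1U_3}| \leq \sigma_Y^{-1}T_{U_3}\ind{E_G(U_1,U_3)\neq\varnothing}$; by independence of the edges inside $U_3$ and those between $U_1$ and $U_3$, the expectation is $O(\sigma_Y^{-1} p^{|U_3|})$, and summing over $U_3$ of size $s$ gives $O(\sigma_Y^{-1} n^s p^s) = O(\sigma_Y^{-1})$. Since $|U_1| \leq t$ is bounded, summing over the $O(1)$ choices of sizes and intersection sizes gives $\expec{|Z_{U_1}|} = O(\sigma_Y^{-1})$. The bound $\expec{|V_{U_1U_2}|} = O(\sigma_Y^{-1})$ is obtained in an analogous way: in the first sum defining $V_{U_1U_2}$ we use $Y_{U_3}^{U_1} \leq T_{U_3}$ together with the constraint $U_3 \cap U_2 \neq \varnothing$ (which saves one factor of $n$ and so balances the missing factor of $p$), and in the second sum we use \cref{eq:Ineq3} to get $Y_{U_3}^{U_1} - Y_{U_3}^{U_1\cup U_2} \leq T_{U_3}\ind{E_G(U_3,U_2)\neq \varnothing}$ and apply independence exactly as before.

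Next, the sum $\sum_{U_1,U_2 \in \cU} \expec{|X_{U_1}Z_{U_1U_2}|}$ can be bounded by repeating the case analysis used in the proof of \Cref{claim:bound_term_2}. Writing $|X_{U_1}| \leq \sigma_Y^{-1}(Y_{U_1} + \mu_{U_1})$, one obtains $\expec{|X_{U_1}Z_{U_1U_2}|} = O(\sigma_Y^{-2} p^{|U_1 \cup U_2|-1})$ when $U_1 \cap U_2 \neq \varnothing$ (via \cref{eq:2T} and \cref{eq:mean}) and $\expec{|X_{U_1}Z_{U_1U_2}|} = O(\sigma_Y^{-2} p^{|U_1|+|U_2|-1})$ when $U_1 \cap U_2 = \varnothing$ (via \cref{eq:Ineq2}, \cref{eq:mean} and independence). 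In either case the sum over pairs $(U_1,U_2)$ with fixed joint vertex support of size $s$ has $O(n^s)$ terms each contributing $O(\sigma_Y^{-2} p^{s-1}) = O(\sigma_Y^{-2} n^{-s+1})$, giving a total of $O(\sigma_Y^{-2} n)$.

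Combining the two estimates we obtain
\begin{align*}
\sum_{U_1, U_2 \in \cU} \expec{|X_{U_1}Z_{U_1U_2}|}\expec{|Z_{U_1}+V_{U_1U_2}|} \leq O(\sigma_Y^{-1}) \sum_{U_1,U_2 \in \cU} \expec{|X_{U_1}Z_{U_1U_2}|} = O(\sigma_Y^{-3}n) = o(1),
\end{align*}
where the last equality uses the hypothesis $\sigma_Y = \omega(n^{1/3})$. The main obstacle is keeping the combinatorial bookkeeping honest across the numerous sub-cases for how $U_3$ (and, in the $V_{U_1U_2}$ calculation, the pair $(U_2,U_3)$) can intersect $U_1$: every constraint of the form ``$U_3$ must share at least one vertex with a fixed bounded set'' removes one factor of $n$ from the count of eligible $U_3$, which must be tracked carefully to match the $p$-power produced by \cref{eq:mean} or by the probability of an edge between two disjoint sets. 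Once this matching is verified, the estimate is routine, and together with \Cref{claim:bound_term_1,claim:bound_term_2} it shows that the quantity $\delta$ in \cref{eq:delta} tends to $0$, so \cref{thm:Stein} yields the required central limit theorem.
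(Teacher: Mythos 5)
Your proposal is correct and follows essentially the same route as the paper's proof: establish the uniform bounds $\expec{|Z_{U_1}|}, \expec{|V_{U_1U_2}|} = O(\sigma_Y^{-1})$ by splitting $Z_{U_1}$ and $V_{U_1U_2}$ according to intersection patterns (using \cref{eq:mean}, \cref{eq:Ineq2}, \cref{eq:Ineq3} and independence of edges crossing disjoint sets), then bound $\sum_{U_1,U_2}\expec{|X_{U_1}Z_{U_1U_2}|}=O(\sigma_Y^{-2}n)$ by the same three-way case analysis ($U_1=U_2$, $U_1\cap U_2\neq\varnothing$ with $U_1\neq U_2$, $U_1\cap U_2=\varnothing$) as in \Cref{claim:bound_term_2}, and combine to get $O(\sigma_Y^{-3}n)=o(1)$. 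One minor note: you correctly use \cref{eq:Ineq3} with the indicator $\ind{E_G(U_3,U_2)\neq\varnothing}$; the indicator in the displayed form of \cref{eq:Ineq3} in the paper contains what appears to be a typo (it has $E_G(U_1,U_3)=\varnothing$ and then $E_G(U_1,U_2)=\varnothing$), but the way the paper actually applies it in \Cref{claim:bound_term_2} matches your version.
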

    \begin{proofclaim}
        We begin by showing that for any $U_1, U_2 \in \cU$, we have $\expec{|Z_{U_1}|} = O(\sigma_Y^{-1})$ and $\expec{|V_{U_1,U_2}|} = O(\sigma_Y^{-1})$.        
        By  the triangle inequality, \cref{eq:Ineq2}, \cref{eq:mean}, and independence   we have
        \begin{align*}
            \sigma_Y \expec{|Z_{U_1}|} &\leq \sum_{\substack{U_3 \in \cU \\ U_3 \cap U_1 \neq \varnothing}} \expec{Y_{U_3}} + \sum_{\substack{U_3 \in \cU \\ U_3 \cap U_1 = \varnothing}} \expec{|Y_{U_3} - Y_{U_3}^{U_1}|} \\ 
            &\leq \sum_{\substack{U_3 \in \cU \\ U_3 \cap U_1 \neq \varnothing}} O(p^{|U_3|-1}) + \sum_{\substack{U_3 \in \cU \\ U_3 \cap U_1 = \varnothing}} \expec{T_{U_3} \ind{E_G(U_1, U_3 ) \neq \varnothing}} \\
            &= O(1) + \sum_{\substack{U_3 \in \cU \\ U_3 \cap U_1 = \varnothing}} \expec{T_{U_3}} \expec{\ind{E_G(U_1, U_3) \neq \varnothing}} \\
            &= O(1) + \sum_{\substack{U_3 \in \cU \\ U_3 \cap U_1 = \varnothing}} O(p^{|U_3|}) = O(1).
        \end{align*}
        Hence, $\expec{|Z_{U_1}|} = O(\sigma_Y^{-1})$. Similarly, $\expec {|V_{U_1, U_2}|} = O(\sigma_Y^{-1})$.
        
        Since,  $\sigma_Y = \omega(n^{\frac{1}{3}})$, it now suffices to show that 
        \begin{align} \label{eq:sum_bound_3}
            \sigma_Y^2 \sum_{U_1, U_2 \in \cU} \expec{|X_{U_1}Z_{U_1U_2}|} = O(n).
        \end{align}
        Let $U_1, U_2 \in \cU$.
        If $U_1 = U_2$, then we have 
        \begin{align*}
            \sigma_Y^2 \expec{|X_{U_1}Z_{U_1U_2}|} &\leq \expec{Y_{U_1}^2} + \mu_{U_1}^2 \leq \expec{T_{U_1}^2} + \mu_{U_1}^2\\
            &\leq K^2\expec{\ind{G[U_1] \text{ is a tree}}} + \mu_{U_1}^2 = O(p^{|U_1| - 1}).
        \end{align*}      
        Hence, the contribution of terms of this type to the sum in \cref{eq:sum_bound_3} is $O(n)$.
        If $U_1 \cap U_2 \neq \varnothing$ and $U_1 \neq U_2$, then by \cref{eq:Ineq1} and \cref{eq:2T},  we have 
        \begin{align*}
            \sigma_Y^2 \expec{|X_{U_1}Z_{U_1U_2}|} &\leq \expec{Y_{U_1}Y_{U_2}} + \mu_{U_1}\mu_{U_2} \\
            &\leq \expec{T_{U_1}T_{U_2}} + \mu_{U_1}\mu_{U_2} \\
            &\leq O(p^{|U_1 \cup U_2|-1}) + O(p^{|U_1|+|U_2|-2}) \\
            &= O(p^{|U_1 \cup U_2| -1}).
        \end{align*}
        Hence, the contribution of terms of this type to the sum in \cref{eq:sum_bound_3} is $O(n)$.
        If $U_1 \cap U_2 = \varnothing$, then  by \cref{eq:Ineq1}, \cref{eq:Ineq2} \cref{eq:mean} and independence, we have 
        \begin{align*}
            \sigma_Y^2 \expec{|X_{U_1}Z_{U_1U_2}|} &\leq \expec{Y_{U_1}   |Y_{U_2}- Y_{U_2}^{U_1}|} + \mu_{U_1} \expec{|Y_{U_2}- Y_{U_2}^{U_1}|} \\
            &\leq \expec{T_{U_1} T_{U_2} \ind{E_G(U_1,U_2) \neq \varnothing}} + \expec{T_{U_1}} \expec{T_{U_2}\ind{E_G(U_1,U_2) \neq \varnothing}} \\
            &= 2 \expec{T_{U_1}} \expec{T_{U_2}} \prob{E_G(U_1,U_2) \neq \varnothing} \\
            &=  O(p^{|U_1| + |U_2| -1}) = O(p^{|U_1 \cup U_2|-1}).
        \end{align*}
        Hence, the contribution of terms of this type to the sum in \cref{eq:sum_bound_3} is $O(n)$. This completes the proof of the claim.
    \end{proofclaim}
    By \cref{claim:bound_term_1}, \cref{claim:bound_term_2}, and \cref{claim:bound_term_3}, we have $\delta \rightarrow 0$ as $n \rightarrow \infty$ (where $\delta$ is defined as in \cref{eq:delta}). Hence, by \cref{thm:Stein}, it follows that 
    \[
        \frac{Y - \expec{Y}}{\sigma_Y} \quad\indist\quad \mathcal{N}(0,1) \quad \text{ as } n \rightarrow \infty,
    \]  
    as desired.

    \section{Proof of Lemma \ref{lem:no_large_red_purple_comps}} \label{a:comp_lem}
    For $\circ \in \{-,+\}$, let $\cE^\circ$ be the event that $G^\circ - S^\circ$ contains a component of size larger than $\log^4n$.
    Note that $\prob{\cE_\mathrm{comp}^c} \leq \prob{\cE^+} + \prob{\cE^-}$. We show that $\prob{\cE^+} = o(n^{-2})$. The proof that $\prob{\cE^-} = o(n^{-2})$ is analogous. Note that we may assume that $n$ is large enough.

    Let $R_t^+$ and $P_t^+$ be the sets of red and purple vertices after $t$ iterations of the while-loop in \cref{alg:strong_4_core_alg} applied to $G^+$, respectively. Moreover, let $m_t^+$ be the size of the largest component in $G^+[R_t^+ \cup P_t^+]$.
    
    Every component of $G^+[R_{t+1}^+ \cup P_{t+1}^+]$ consists of a union of components of $G^+[R_t^+ \cup P_t^+]$ or vertices in $S_t^+$, that are connected by edges incident to the vertices that were recoloured in the $(t+1)$-th iteration. Since in  every iteration a single vertex is recoloured red and at most $3$ of its neighbours are recoloured purple, it follows that $m_{t+1}^+ \leq 4 \Delta(G^+) \max(m_t^+,1)$. 
    
    Suppose that $G^+ - S^+ = G^+[R^+ \cup P^+]$ contains a component of size larger than $\log^4 n$. Then at least one of the following holds.
    \begin{enumerate}[label = \upshape{(\roman*)}]
        \item $\Delta(G^+) \geq \frac{1}{4}\log^2n$,
        \item There exists $t \geq 1$ such that $\log^2n \leq m_t^+ \leq \log^4n$.
    \end{enumerate}
    It follows that 
    \begin{align}\label{e:E+twoterms}
        \prob{\cE^+} &\leq \prob{\Delta(G^+) \geq \frac{1}{4}\log^2n} + \prob{\exists t \geq 1 \colon \log^2n \leq m_t^+ \leq \log^4n}.
    \end{align}
    It now suffices to show that both of these terms are $o(n^{-2})$.
    We bound the first term in \eqref{e:E+twoterms} as follows.
    \begin{align*}
        \prob{\Delta(G^+) \geq \frac{1}{4}\log^2n} &\leq \prob{\Delta(G) \geq \frac{1}{4}\log^2n - 1} \leq n \prob{\text{Bin}\left(n-1, \frac{c}{n}\right) \geq \frac{1}{4}\log^2n -1} \nonumber\\
        &\leq \exp\left( -\frac{1}{4}\log^2n +1\right) = o(n^{-2}), 
    \end{align*}
    where the last inequality follows from \cref{thm:ex_tail_Chernoff}.
    
    To bound the second term in \eqref{e:E+twoterms} we use the following facts. For any component $C$ of $G^+[R_t^+ \cup P_t^+]$, the graph $G^+[C]$ is connected and by \cref{obs:red_vxs_in_comp}, $|R_t^+ \cap C| \geq \frac{|C|}{4}$.
    Moreover, since in $G^+$ there are no edges from $R_t^+$ to $V(G) \setminus (R_t^+ \cup P_t^+)$, there are no edges in $G^+$ from $C \cap R_t^+$ to $V(G) \setminus C$.

    If particular, given a subset $ C \subseteq V(G)$ of size $i\coloneqq |C| = o(n)$, if $C$ is a component of $G^+$, then~$G$ must contain all but one edge of a spanning tree of $C$ and no edges from $C \cap R_t^+$ to $V(G) \setminus C$ and we can calculate
    \begin{align*}
    \prob{C \text{ is a component of $G^+[R_t^+\cup P_t^+]$ for some $t \geq 1$}} &\leq  i^{i-2} p^{i-2} \binom{i}{\lceil \frac{i}{4} \rceil} (1-p)^{\frac{i(n-i)}{4}}\\
    &\leq  4 \left( 2ip \exp\left( - \frac{c}{4}\right)\right)^{i-2},
    \end{align*}
    using the fact that $ \binom{i}{\lceil \frac{i}{4} \rceil} \leq 2^i$, $1-p \leq e^{-p}$ and $i=o(n)$. 
    
    It follows that 
    \begin{align*}
        &\prob{\exists t \geq 1 \colon \log^2n \leq m_t^+ \leq \log^4n} \\
        &\quad\leq \sum_{\log^2n \leq i \leq \log^4n} \sum_{\substack{C \subseteq V(G) \\ |C| =i}} \prob{C \text{ is a component of $G^+[R_t^+\cup P_t^+]$ for some $t \geq 1$}} \\
        &\quad \leq \sum_{\log^2n \leq i \leq \log^4n} 4\binom{n}{i}\left( 2ip \exp\left( - \frac{c}{4}\right)\right)^{i-2}  \\
        &\quad \leq \sum_{\log^2n \leq i \leq \log^4n} \left(\frac{2en}{i}\right)^2 \left( 2ec \exp\left( - \frac{c}{4}\right)\right)^{i-2} \\
        &\quad = o(n^{-2}),
    \end{align*}
    bounding the second term in \cref{e:E+twoterms} and completing the proof.

\section{Proof of Lemma \ref{lem:NUNUCexpectation}} \label{a:neighbourhood_approx}
    In order to prove \cref{lem:NUNUCexpectation}, we need the following bound on the probability that the $k$-th neighbourhood-ball is large. 

    \begin{proposition} \label{prop:kth_neigh_bound}
    Let $c > 0$ and let $G \sim G\left(n,\frac{c}{n}\right)$. Then, for each vertex $w \in V(G)$ and $s > (7c)^k$,
        \begin{align*}
            \prob{|B_{G}(w,k)| \geq s} \leq k\exp\left(-s^{\frac{1}{k}}\right).
        \end{align*}
    \end{proposition}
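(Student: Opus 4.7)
The plan is to analyse the breadth-first exploration of $B_G(w,k)$ from $w$, layer by layer, combining stochastic domination of the BFS by a branching process with the extreme tail Chernoff bound (\cref{thm:ex_tail_Chernoff}) via a stopping time argument on the layer sizes.

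Writing $L_i \coloneqq |\partial_G(w,i)|$, so that $L_0 = 1$ and $|B_G(w,k)| = \sum_{i=0}^{k} L_i$, exposing the BFS one layer at a time gives the following key stochastic domination: conditional on everything seen up to $\partial_G(w,i-1)$, and in particular on $L_{i-1} = m$, the next layer $L_i$ is stochastically dominated by $\mathrm{Bin}(mn, c/n)$, since each of the $m$ vertices in $\partial_G(w,i-1)$ has at most $\mathrm{Bin}(n, c/n)$ independent new neighbours in $V(G) \setminus B_G(w,i-1)$, and these distributions are conditionally independent across $\partial_G(w,i-1)$.

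I would then introduce geometric thresholds $T_i \coloneqq \alpha \cdot s^{i/k}$ for $i = 1, \ldots, k$, with $\alpha \in (0,1)$ chosen (for instance $\alpha = 1 - s^{-1/k}$) so that $\sum_{i=1}^{k} T_i \leq s - 1$, and note that the consecutive ratio $T_i/T_{i-1} = s^{1/k}$ strictly exceeds $7c$ by the hypothesis $s > (7c)^k$. Introduce the stopping time $i^* \coloneqq \min\{i \geq 1 \colon L_i \geq T_i\}$ (set to $\infty$ if no such $i$ exists). The sum constraint on the $T_i$ gives $\{|B_G(w,k)| \geq s\} \subseteq \{i^* \leq k\}$, since on $\{i^* > k\}$ one has $|B_G(w,k)| \leq 1 + \sum_{i=1}^{k} T_i \leq s$. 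Conditional on $\{L_{i-1} < T_{i-1}\}$ the dominating binomial has mean at most $c T_{i-1} = cT_i/s^{1/k} < T_i/7$, so \cref{thm:ex_tail_Chernoff} gives $\prob{L_i \geq T_i \mid L_{i-1} < T_{i-1}} \leq \exp(-T_i)$. A union bound over $i$ then yields
\[
    \prob{|B_G(w,k)| \geq s} \;\leq\; \prob{i^* \leq k} \;\leq\; \sum_{i=1}^{k} \exp(-T_i) \;\leq\; k \exp(-T_1),
\]
which is of the desired form $k\exp(-s^{1/k})$ up to a bounded constant in the exponent that can be absorbed by a cosmetic rescaling of the threshold sequence.

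The main technical point (and the only potential obstacle) is the dual constraint on the thresholds $T_i$: they must have ratio at least $7c$ for the Chernoff bound of \cref{thm:ex_tail_Chernoff} to apply at every layer, while simultaneously summing to at most $s-1$ for the stopping-time implication $\{|B_G(w,k)| \geq s\} \subseteq \{i^* \leq k\}$ to close the loop. The hypothesis $s > (7c)^k$ is precisely what allows both constraints to be satisfied simultaneously by geometric thresholds of ratio $s^{1/k}$, and any choice such as $\alpha = 1 - s^{-1/k}$ balances them; the remaining bookkeeping to pass from $k\exp(-T_1)$ to the clean form $k\exp(-s^{1/k})$ is routine.
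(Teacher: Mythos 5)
Your approach is essentially the one in the paper: expose the ball layer by layer via breadth-first search, observe that if $|B_G(w,k)|$ is large then some layer must have grown by a factor of roughly $s^{1/k}$, bound this event by the extreme-tail Chernoff estimate (\cref{thm:ex_tail_Chernoff}), and union-bound over the $k$ layers. The one place where the two implementations genuinely diverge is the choice of threshold, and this is where your version falls slightly short of the statement. The paper conditions on the previous layer's size $Z_{i-1}$ and bounds the probability of the \emph{relative} event $\{Z_i \geq s^{1/k}Z_{i-1}\}$; the Chernoff exponent $s^{1/k}Z_{i-1}$ is therefore at least $s^{1/k}$ in the worst case $Z_{i-1}=1$, and larger otherwise, so no constant is lost. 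You instead use deterministic geometric thresholds $T_i = \alpha s^{i/k}$, and the constraint $\sum_{i=1}^k T_i \leq s-1$ (needed for the stopping-time implication) forces $\alpha = 1-s^{-1/k}$ and hence $T_1 = s^{1/k}-1$, giving the final bound $k\exp\left(-(s^{1/k}-1)\right) = ke\exp\left(-s^{1/k}\right)$, off by a factor $e$.

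Contrary to your closing aside, this factor cannot be removed by ``cosmetic rescaling of the threshold sequence'' while keeping the thresholds geometric with ratio $s^{1/k}$: the requirement $T_1 \geq s^{1/k}$ would force $T_k = T_1 s^{(k-1)/k} \geq s$, which already contradicts $\sum_{i=1}^k T_i < s$. The two constraints you identify as being in tension really are in tension, and the geometric ansatz cannot satisfy both at the desired strength. To recover the stated constant you should replace the fixed thresholds by the paper's random thresholds: condition on $L_{i-1}$, bound $\prob{L_i \geq s^{1/k}L_{i-1} \mid L_{i-1}=m}$ for each $m \geq 1$ using \cref{thm:ex_tail_Chernoff} (valid since $s^{1/k}m \geq 7cm$), and note the resulting bound $\exp(-s^{1/k}m) \leq \exp(-s^{1/k})$. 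Aside from this, your handling of the ball size $|B_G(w,k)| = \sum_{i=0}^k L_i$ via the stopping time is actually spelled out more carefully than the corresponding step in the paper, and in every application of this proposition the quantity $s$ is taken so far above $(7c)^k$ that a constant factor is immaterial, so the deficit is cosmetic in practice even though the specific fix you propose does not quite work.
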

    \begin{proof}
        We can think of exposing $B_{G}(w,k)$ by running a breadth-first-search algorithm in $G$ rooted at $w$ to depth $k$. A standard argument shows that we can couple the tree formed in this way from below by a Galton--Watson tree with child distribution $\mathrm{Bin}\left(n, \frac{c}{n}\right)$. Let $Z_0,Z_1,\dots, Z_k$ denote the sizes of the $k+1$ layers of the Galton--Watson tree (in particular $Z_0 =1$). We see that by \Cref{thm:ex_tail_Chernoff},
         \begin{align*}
               \prob{|B_{G}(w,k)| \geq s} &\leq \prob{Z_k \geq s} \leq \sum_{i \in [k]}\condprob{Z_i \geq s^{\frac{1}{k}}Z_{i-1}}{Z_{i-1} \geq 1} \leq k \exp\left(-s^{\frac{1}{k}}\right), 
            \end{align*}
            since conditioned on the value of $Z_{i-1}$, we have that $Z_i$ has distribution $\mathrm{Bin}\left(nZ_{i-1}, \frac{c}{n}\right)$.
    \end{proof}

    In the resulting calculations we will also need the following proposition.
    \begin{proposition} \label{prop:near_geometric_sum_2}
    Let $a \geq 1$ and $m_1, m_2, s_0\in \mathbb{N}$ with $s_0 \geq \max\{a(2(m_1+1)(m_2+1))^{2m_2}, a^{m_2+1}\}$. Then \begin{align*}
        \sum_{s \geq s_0 +1} s^{m_1} \exp\left(-\left(\frac{s}{a}\right)^{\frac{1}{m_2}}\right) \leq \exp\left(-\frac{1}{2}\left(\frac{s_0}{a}\right)^{\frac{1}{m_2}}\right).
    \end{align*} 
    \end{proposition}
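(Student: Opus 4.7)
The plan is to bound the sum by an integral, convert this integral (via a change of variables) into a tail integral of the incomplete gamma function, and then use a standard tail estimate together with the hypotheses on $s_0$ to absorb the resulting polynomial prefactor into the exponential decay.

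First, I would check that $g(s) := s^{m_1} \exp(-(s/a)^{1/m_2})$ is nonincreasing on $[s_0, \infty)$: the logarithmic derivative is negative precisely when $(s/a)^{1/m_2} \geq m_1 m_2$, which is comfortably ensured by the hypothesis $s_0 \geq a(2(m_1+1)(m_2+1))^{2m_2}$. Hence $\sum_{s \geq s_0 + 1} g(s) \leq \int_{s_0}^\infty g(s)\, ds$. Substituting $t = (s/a)^{1/m_2}$ (so $s = a t^{m_2}$ and $ds = a m_2 t^{m_2 - 1}\, dt$) rewrites the integral as $a^{m_1+1} m_2 \int_{T_0}^\infty t^M e^{-t}\, dt$ with $T_0 := (s_0/a)^{1/m_2}$ and $M := (m_1+1)m_2 - 1$.

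For the tail gamma integral I would use the standard estimate $\int_{T_0}^\infty t^M e^{-t}\, dt \leq 2 T_0^M e^{-T_0}$, valid for $T_0 \geq 2M$. This comes from the observation that $t \mapsto t^M e^{-t/2}$ is decreasing on $[2M,\infty)$, so one can pull a factor of $T_0^M e^{-T_0/2}$ out of the integrand and integrate the remaining $e^{-t/2}$. The hypothesis gives $T_0 \geq (2(m_1+1)(m_2+1))^2$, which easily implies $T_0 \geq 2M$. Combining, the sum is bounded above by $2 m_2 a^{m_1+1} T_0^M e^{-T_0}$.

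The final step is to absorb the polynomial prefactor $2 m_2 a^{m_1+1} T_0^M$ into a factor of $e^{T_0/2}$, leaving the desired $e^{-T_0/2}$. Here the second hypothesis $s_0 \geq a^{m_2+1}$, combined with the identity $s_0 = a T_0^{m_2}$, yields $a \leq T_0$, so that $a^{m_1+1} T_0^M \leq T_0^{(m_1+1)(m_2+1)-1}$. It then suffices to verify the elementary inequality $\log(2 m_2) + ((m_1+1)(m_2+1) - 1) \log T_0 \leq T_0/2$ for all $T_0 \geq (2(m_1+1)(m_2+1))^2$; this reduces to a one-variable inequality of the form $(2C - 1)\log(2C) \lesssim C^2$ for $C := (m_1+1)(m_2+1)$, which can be checked directly (for instance using $\log y \leq \sqrt{y}$ for $y \geq 1$, or by checking the boundary case $T_0 = 4C^2$ and invoking monotonicity). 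The main technical obstacle throughout is keeping track of the constants so that they match precisely the stated lower bound on $s_0$; the two conditions $s_0 \geq a(2(m_1+1)(m_2+1))^{2m_2}$ and $s_0 \geq a^{m_2+1}$ play complementary roles, the first ensuring enough exponential decay to dominate a polynomial of degree roughly $(m_1+1)(m_2+1)$ in $T_0$, and the second allowing us to replace $a$ by $T_0$ in that polynomial.
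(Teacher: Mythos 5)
Your proposal is correct and follows essentially the same route as the paper: monotonicity of the summand gives a tail integral, the substitution $u = (s/a)^{1/m_2}$ turns it into an incomplete gamma integral, and the two halves of the hypothesis on $s_0$ are used (via $a \leq T_0$ and $\log y \leq \sqrt{y}$) to absorb the polynomial prefactor into $e^{T_0/2}$. The only cosmetic difference is that the paper bounds $a^{m_1+1}m_2\,u^{(m_1+1)m_2-1}e^{-u}$ pointwise by $\tfrac12 e^{-u/2}$ before integrating, whereas you first integrate the tail to get $2T_0^M e^{-T_0}$ and then absorb the resulting prefactor.
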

    \begin{proof}
    Fix $a \geq 1$, $m_1, m_2 \in \mathbb{N}$ and $s_0 \in \mathbb{N}$ with $s_0 \geq \max\{a(2(m_1+1)(m_2+1))^{2m_2}, a^{m_2+1}\}$.
    Consider the function $g(x) \coloneqq x^{m_1}\exp\left(-\left(\frac{x}{a}\right)^{\frac{1}{m_2}}\right)$. Since 
    \[
        g'(x) = \frac{1}{m_2}x^{m_1-1}\exp\left(-\left(\frac{x}{a}\right)^{\frac{1}{m_2}}\right) (m_1m_2 - \left(\frac{x}{a}\right)^{\frac{1}{m_2}}),
    \]
    $g(x)$ is decreasing for $x \geq a(m_1m_2)^{m_2}$. Since $a(m_1m_2)^{m_2} \leq a(2(m_1+1)(m_2+1))^{2m_2} \leq s_0$, we have
    \begin{align*}
        \sum_{s \geq s_0 +1} s^{m_1} \exp\left(-\left(\frac{s}{a}\right)^{\frac{1}{m_2}}\right) \leq \int_{s_0}^\infty x^{m_1} \exp\left(-\left(\frac{x}{a}\right)^{\frac{1}{m_2}}\right) dx. 
    \end{align*}
    Substituting $u = \left(\frac{x}{a}\right)^{\frac{1}{m_2}}$ gives
    \begin{align*}
        \int_{s_0}^\infty x^{m_1} \exp\left(-\left(\frac{x}{a}\right)^{\frac{1}{m_2}}\right) dx = a^{m_1+1}m_2\int_{\left(\frac{s_0}{a}\right)^{\frac{1}{m_2}}}^\infty u^{(m_1+1)m_2-1}e^{-u} du.
    \end{align*}
    Note that $u \geq \left(\frac{s_0}{a}\right)^{\frac{1}{m_2}} \geq \max\{(2(m_1+1)(m_2+1))^2, a\}$. In particular, $\log a \leq \log u$, $\log (2m_2)\leq \log u$,   
    and $(m_1+1)(m_2+1) \sqrt{u} \leq \frac{u}{2}$. 
    Hence
    \begin{align*}
        (m_1+1)\log a + \log(2m_2) +((m_1+1)m_2-1)\log u &\leq (m_1+1)(m_2+1)\log u \\ 
        &\leq (m_1+1)(m_2+1) \sqrt{u} \leq \frac{u}{2}.
    \end{align*}
    Thus, $a^{m_1+1}m_2 u^{(m_1+1)m_2-1} \leq \frac{1}{2}e^{\frac{u}{2}}$. It follows that 
    \begin{align*}
        a^{m_1+1}m_2\int_{(s_0/a)^{\frac{1}{m_2}}}^\infty u^{(m_1+1)m_2-1}e^{-u} du \leq a^{m_1+1}m_2\int_{(s_0/a)^{\frac{1}{m_2}}}^\infty \frac{1}{2}e^{-\frac{u}{2}} du = \exp\left(-\frac{1}{2}\left(\frac{s_0}{a}\right)^{\frac{1}{m_2}}\right).
    \end{align*}
    This completes the proof. 
\end{proof}

    We are now ready to prove \cref{lem:NUNUCexpectation}.
    \begin{proof}[Proof of \cref{lem:NUNUCexpectation}]
    Note that, by symmetry,
    \[
        \prob{|N^*| = s} \leq \prob{|N^*| \geq s} \leq 2 \prob{|B_{G}(u,k)| \geq \frac{s}{2}} \leq 2 k \exp\left(-\left(\frac{s}{2}\right)^{\frac{1}{k}}\right),
    \]
    where the last inequality follows from \cref{prop:kth_neigh_bound}.
    It follows that
    \begin{align} \label{eq:expec_N^2_bound}
        \expec{|N^*|^2 \ind{|N^*| > t_k}} &=  \sum_{s = t_k+1}^n s^2 \prob{|N^*| = s} 
        \leq \sum_{s = t_k+1}^n 2s^2 k \exp\left(-\left(\frac{s}{2}\right)^{\frac{1}{k}}\right) \nonumber\\
        &\leq 2k\exp\left(-2c^2k^2\right),
    \end{align}
    where the last inequality follows by \cref{prop:near_geometric_sum_2} with $a=2, m_1=2$ and $m_2=k$, since $t_k \coloneqq \tbound \geq 2(6(k+1))^{2k}$. This establishes \cref{i:N*large}.
    
    Fix a vertex $w \in V(G) \setminus \{u,v\}$ and note that
    \begin{align} \label{eq:expec_split}
            \expec{|N^* \cap (U_N \cup U_C)|} &\leq 2\prob{u \in U_N} + 2\prob{u \in U_C \setminus U_N} + (n-2) \prob{w \in N^* \cap U_N} \nonumber\\
            &\quad\quad + (n-2)\prob{w \in N^* \cap (U_C \setminus U_N)}.
      \end{align}
    We bound each of these four terms separately.

    To begin, we note that, for any vertex $x \in V(G)$, we have
    \[
    B_{G^+}(x,k) \subseteq B_{G}(x,k) \cup B_{G}(u,k) \cup B_{G}(v,k)
    \]
    and so by \cref{prop:kth_neigh_bound} for any $s \geq 3(7c)^k$,
    \begin{align} \label{eq:N_bound_x}
        \prob{|B_{G^+}(x,k)| \geq s} \leq 3\prob{|B_{G}(x,k)| \geq \frac{s}{3}} \leq 3k\exp\left(-\left(\frac{s}{3}\right)^{\frac{1}{k}}\right).
    \end{align}
    
    In particular,
    \begin{equation}\label{eq:term1UN}
    2\prob{u \in U_N} = 2\prob{|B_{G^+}(u,k)| > t_k} \leq 6k\exp\left(-\left(\frac{t_k}{3}\right)^{\frac{1}{k}}\right) \leq 6k\exp\left(-2c^2k^2\right),
    \end{equation}
    where the last inequality follows from the definition of $t_k$ in \cref{eq:t_k_def}. This bounds the first term of \eqref{eq:expec_split}.

    Furthermore,
    \begin{align}
        \prob{u \in U_C \setminus U_N} &= \prob{|B_{G^+}(u,k)| \leq t_k \text{ and } G^+[B_{G^+}(u,k)] \text{ contains a cycle}} \nonumber \\
        &\leq \prob{|B_{G^+}(u,k)| \leq t_k \text{ and } |E(G^+[B_{G^+}(u)],k)| \geq |B_{G^+}(u,k)|} \nonumber\\
        &\leq \prob{|B_{G^+}(u,k)| \leq t_k \text{ and }|E(G[B_{G^+}(u)],k)| \geq |B_{G^+}(u,k)|-1} \nonumber \\
        &\leq \sum_{s=3}^{t_k} \binom{n-2}{s-2} 2^{s^2} p^{s-1} =O\left(\frac{1}{n}\right) \label{e:eqterm2UN},
    \end{align}
 where we have used that $v \in B_{G^+}(u,k)$ since $uv \in E(G^+)$, bounding the second term of \eqref{eq:expec_split}.

  To bound the third term in \cref{eq:expec_split}, we note, recalling that $N^* \coloneqq B_{G}(u,k) \cup B_{G}(v,k)$, that
    \begin{align} \label{eq:prob_split}
            \prob{w \in N^* \cap U_N} &=  \prob{|B_{G^+}(w,k)| > t_k \text{ and } w \in N^*} \nonumber\\
            &\leq \prob{|B_{G}(w,k)| > \frac{t_k}{2} \text{ and } w \in N^*} + \prob{|N^*| > \frac{t_k}{2} \text{ and } w \in N^*}.
    \end{align}
For the first term in \eqref{eq:prob_split}, we note that
    \begin{align*} 
            &\prob{|B_{G}(w,k)| > \frac{t_k}{2} \text{ and } w \in N^*} = \prob{|B_{G}(w,k)| > \frac{t_k}{2} \text{ and } B_{G}(w,k) \cap \{u,v\} \neq \varnothing} \\
            &\quad\leq \sum_{s = \frac{t_k}{2}+1}^n \prob{|B_{G}(w,k)| = s} \condprob{B_{G}(w,k) \cap \{u,v\} \neq \varnothing}{|B_{G}(w,k)| = s}.
    \end{align*}
   However, conditional on $|B_G(w,k)| = s$, we have that $B_G(w,k) \setminus \{w\}$ is uniformly distributed in $\binom{V(G) \setminus \{w\}}{s-1}$. Hence, by \eqref{eq:N_bound_x}, we obtain
    \begin{align} \label{eq:prob_bound_1}
            \prob{|B_G(w,k)| > \frac{t_k}{2} \text{ and } w \in N^*} &\leq \sum_{s = \frac{t_k}{2} +1}^n \prob{|B_G(w,k)| = s} \frac{2 \binom{n-2}{s-2}}{\binom{n-1}{s-1}} \nonumber\\
            &\leq \sum_{s = \frac{t_k}{2}+1}^n \prob{|B_G(w,k)| \geq s}\frac{2(s-1)}{n-1} \nonumber\\
            &\leq \frac{2}{n-2} \sum_{s = \frac{t_k}{2}+1}^\infty ks\exp\left(-s^{\frac{1}{k}}\right)\nonumber\\
            &\leq  \frac{2k}{n-2} \exp\left(-2c^2k^2\right), 
    \end{align}
   where the last inequality follows by \cref{prop:near_geometric_sum_2} with $a=m_1=1$ and $m_2=k$, since $t_k \coloneqq \tbound \geq \max\{(4(k+1))^{2k},1\}$. 

    A similar calculation, noting that conditional on $|N^*| = s$, we have that $N^* \setminus \{u,v\}$ is uniformly distributed in $\binom{V(G) \setminus \{u,v\}}{s-2}$, leads to
    \begin{align} \label{eq:prob_bound_2}
            \prob{|N^*| > \frac{t_k}{2} \text{ and } w \in N^*}
            &\leq \sum_{s = \frac{t_k}{2}+1}^n \prob{|N^*| = s} \condprob{w \in N^*}{|N^*| = s}. \nonumber\\
            &\leq \sum_{s = \frac{t_k}{2}+1}^n \prob{|N^*| \geq s} \frac{\binom{n-3}{s-3}}{\binom{n-2}{s-2}} \nonumber\\
            &\leq \sum_{s = \frac{t_k}{2}+1}^n 2\prob{|B_G(u,k)| \geq \frac{s}{2}}\frac{s-2}{n-2} \nonumber\\
            &\quad \leq \frac{2}{n-2} \sum_{s = \frac{t_k}{2}+1}^n ks \exp\left(-\left(\frac{s}{2}\right)^{\frac{1}{k}}\right) \nonumber \\
            &\leq \frac{2k}{n-2} \exp\left(-2c^2k^2\right),
    \end{align}
    where the last inequality follows from \cref{prop:near_geometric_sum_2} with $a=2, m_1=1$ and $m_2=k$,  since $t_k \coloneqq \tbound \geq \max\{2(4(k+1))^{2k},2^{k+1}\}$.

    Hence, we can bound the third term in \cref{eq:expec_split}, using \cref{eq:prob_split,eq:prob_bound_1,eq:prob_bound_2}, as
    \begin{align} \label{eq:second_term_bound}
        &(n-2)\prob{w \in N^* \cap U_N} \leq 4k \exp\left(-2c^2k^2\right).
    \end{align}
    
    Finally, we bound the last term in \cref{eq:expec_split}. 
    Suppose that $w \in N^* \cap (U_C \setminus U_N)$. Since $w \in N^*$, $|B_{G^+}(w,k) \cap \{u,v\}| \neq \varnothing$ and since $w \in U_C$, we have that $G^+[B_{G^+}(w,k)]$ contains a cycle. 
    
    If $|B_{G^+}(w,k) \cap \{u,v\}| =1$, then $G^+[B_{G^+}(w,k)] = G[B_{G}(w,k)]$ is connected and has a cycle, thus $|E(G[B_{G^+}(w)],k)| \geq |B_{G^+}(w,k)|$. Let $\mathcal{E}_1$ be the event that $|E(G[B_{G^+}(w)],k)| \geq |B_{G^+}(w,k)|$.
    If instead $|B_{G^+}(w,k) \cap \{u,v\}| =2$, then $G^+[B_{G^+}(w,k)]$ is connected and has a cycle. Thus, $|E(G^+[B_{G^+}(w)],k)| \geq |B_{G^+}(w,k)]|$ which implies that $|E(G[B_{G^+}(w)],k)| \geq |B_{G^+}(w,k)|-1$. Let $\mathcal{E}_2$ be the event that $|E(G[B_{G^+}(w)],k)| \geq |B_{G^+}(w,k)|-1$.
    
    Hence,
    \begin{align} \label{eq:bound_third_term}
        \prob{w \in N^* \cap (U_C \setminus U_N)}
        &\leq \prob{|B_{G^+}(w,k) \cap \{u,v\}| =1 \text{ and } |B_{G^+}(w,k)| \leq t_k \text{ and } \mathcal{E}_1} \nonumber\\ 
        &\quad  + \prob{|B_{G^+}(w,k) \cap \{u,v\}| =2 \text{ and } |B_{G^+}(w,k)| \leq t_k \text{ and } \mathcal{E}_2} \nonumber\\
        &\leq \sum_{s = 3}^{t_k} 2\binom{n}{s-2} 2^{s^2} p^{s} +  \sum_{s = 3}^{t_k} \binom{n}{s-3} 2^{s^2} p^{s-1} \nonumber\\
        &= O\left(\frac{1}{n^2}\right).
    \end{align}

    Finally, by \cref{eq:expec_split,eq:term1UN,e:eqterm2UN,eq:second_term_bound,eq:bound_third_term},
    \begin{align*}
        \expec{|N^* \cap (U_N \cup U_C)|} &\leq 6k\exp\left(-2c^2k^2\right) + O\left(\frac{1}{n}\right) + 4k \exp\left(-2c^2k^2\right) + (n-2) \cdot  O\left(\frac{1}{n^2}\right)\nonumber\\
        &\quad \leq 10k \exp\left(-2c^2k^2\right) +O\left(\frac{1}{n}\right),
    \end{align*}
    establishing \cref{i:N*capUNUC}.
    \end{proof}

\section{Proof of Lemma \ref{lem:balls_in_bins}} \label{a:balls_in_bins}

    All asymptotics in this proof are as $N \rightarrow \infty$.
    Let $Y$ be the number of bins with at most $1$ ball and note that $Y = N-Z$ and so $\variance{Y} = \variance{Z}$.
    For $i \in [N]$, let \[I_i \coloneqq \ind{\text{Bin $i$ has at most $1$ ball}}, \quad\text{ so that }\quad Y = \sum_{i \in [N]}I_i.\]
    Hence
    \begin{align} \label{eq:ball_bin_var_split}
        \variance{Z} = \variance{Y} = \sum_{i \in [N]} \left( \expec{I_i} - \expec{I_i}^2 \right) + \sum_{i \neq j} \left( \expec{I_iI_j} - \expec{I_i}\expec{I_j} \right).
    \end{align}
    Note that, for each $i \in [N]$, we have
    \begin{align} \label{eq:expec_i}
        \expec{I_i} &= \left(1-\frac{1}{N}\right)^m + \frac{m}{N}\left(1-\frac{1}{N}\right)^{m-1} = \left(1-\frac{1}{N}\right)^m\left(1+\frac{m}{N}\left(1-\frac{1}{N}\right)^{-1}\right) \nonumber\\
        &= \left(1-\frac{1}{N}\right)^m\left(1+\frac{m}{N}\left(1+\frac{1}{N}\right)\right) +O(N^{-2}).
    \end{align}
    Let $i, j \in [N]$ with $i \neq j$. We have
    \begin{align} \label{eq:expec_squared_simple}
        \expec{I_i}\expec{I_j} = \expec{I_i}^2 = \left(1-\frac{1}{N}\right)^{2m}\left(1+\frac{2m}{N}\left(1+\frac{1}{N}\right)+\frac{m^2}{N^2}\left(1+\frac{2}{N}\right)\right) +O(N^{-2}).
    \end{align}
    Using that 
    \[
        \frac{\left(1-\frac{1}{N}\right)^{2m}}{\left(1-\frac{2}{N}\right)^m} = \left(1+\frac{\frac{1}{N^2}}{1-\frac{2}{N}}\right)^m = \left(1+\frac{1}{N(N-2)}\right)^m = 1+\frac{m}{N^2} +O(N^{-2}),
    \]
    we obtain
    \begin{align} \label{eq:expec_squared}
        \expec{I_i}\expec{I_j} &= \expec{I_i}^2 \nonumber\\
        &= \left(1-\frac{2}{N}\right)^m\left(1+\frac{m}{N^2}\right)\left(1+\frac{2m}{N}\left(1+\frac{1}{N}\right)+\frac{m^2}{N^2}\left(1+\frac{2}{N}\right)\right) +O(N^{-2}) \nonumber\\
        &= \left(1-\frac{2}{N}\right)^m\left(1+\frac{2m}{N} +\frac{3m}{N^2}+\frac{m^2}{N^2}+\frac{4m^2}{N^3}+\frac{m^3}{N^4}\right) +O(N^{-2}).
    \end{align}
    Moreover,
    \begin{align} \label{eq:expec_i_j}
        \expec{I_i I_j} &= \left(1-\frac{2}{N}\right)^m +  \frac{2m}{N}\left(1-\frac{2}{N}\right)^{m-1} +  \frac{m(m-1)}{N^2}\left(1-\frac{2}{N}\right)^{m-2} \nonumber\\ 
        &= \left(1-\frac{2}{N}\right)^m \left( 1+ \frac{2m}{N}\left(1-\frac{2}{N}\right)^{-1} +  \frac{m^2}{N^2} \left(1-\frac{1}{m}\right)\left(1-\frac{2}{N}\right)^{-2}\right) \nonumber\\
        &= \left(1-\frac{2}{N}\right)^m \left( 1+ \frac{2m}{N}\left(1+\frac{2}{N}\right) +  \frac{m^2}{N^2} \left(1-\frac{1}{m}\right)\left(1+\frac{4}{N}\right)\right) +O(N^{-2}) \nonumber\\
        &= \left(1-\frac{2}{N}\right)^m \left( 1+ \frac{2m}{N} + \frac{3m}{N^2} + \frac{m^2}{N^2} + \frac{4m^2}{N^3}\right) +O(N^{-2}).
    \end{align}
    From \cref{eq:expec_i_j,eq:expec_squared}, we obtain
    \begin{align*}
        \expec{I_i I_j} - \expec{I_i}\expec{I_j} = -\frac{m^3}{N^4}\left(1-\frac{2}{N}\right)^m +O(N^{-2}).
    \end{align*}
    Together with \cref{eq:ball_bin_var_split,eq:expec_i,eq:expec_squared_simple}, we have
    \begin{align*}
        \variance{Z} &= N\left(1-\frac{1}{N}\right)^m\left(1+\frac{m}{N}\right) - N\left(1-\frac{1}{N}\right)^{2m}\left(1+\frac{2m}{N}+\frac{m^2}{N^2}\right) \nonumber\\ 
        &\qquad- (N-1)\frac{m^3}{N^3}\left(1-\frac{2}{N}\right)^m  +O(1) \nonumber\\
        &= h\left(\frac{m}{N}\right)N +O(1).
    \end{align*}
    Finally $\variance{Z} \geq \frac{1}{3}\left(\frac{m}{N}\right)^2 N$, follows since $h(x) > \frac{1}{3}x^2$ for $0 < x < \frac{1}{100}$ (proof omitted).

\section{Proof of Lemma \ref{lem:Var_is_Kn_general}}\label{a:VarhLk}
    As in \cref{eq:Y_U_Def}, we can write $\widehat{M}_k = \sum_{U \in \cU_k} Y_U$, where  $\cU \coloneqq \{U \subseteq V(G) \colon 1 \leq |U| \leq t_k\}$,
    \[
        Y_U \coloneqq \sum_{v \in U} \sum_{\substack{(T,r) \in \mathcal{T} \\ v(T) = |U|}} \beta_{(T,r)} I(U,v,T,r),
    \]
    and $I(U,v,T,r)\coloneqq \mathds{1}_{\{B_G(v,k) = U\}} \mathds{1}_{\{(G[U],v) \cong (T,r)\}}$.
    We have
    \begin{align} \label{eq:var_sum_split}
            \variance{\widehat{M}_k} = &\sum_{U \in \cU_k} \variance{Y_U} + \sum_{\substack{U_1, U_2 \in \cU_k \\ U_1 \neq U_2}} \mathrm{cov}(Y_{U_1}, Y_{U_2}) \nonumber\\
            = &\sum_{U \in \cU_k} \variance{Y_U} + \sum_{\substack{U_1, U_2 \in \cU_k \\ U_1 \cap U_2 = \varnothing}} \mathrm{cov}(Y_{U_1}, Y_{U_2}) + \sum_{\substack{U_1, U_2 \in \cU_k \\ |U_1 \cap U_2| = 1}} \mathrm{cov}(Y_{U_1}, Y_{U_2})  \nonumber\\ 
            &\quad+\sum_{i =2}^{t_k} \sum_{\substack{j_1, j_2 \in \{i, \dots, t_k\} \\ (j_1, j_2) \neq (i,i)}} \sum_{\substack{U_1, U_2 \in \cU_k \\ |U_1| = j_1, |U_2| = j_2 \\ |U_1 \cap U_2| = i}} \mathrm{cov}(Y_{U_1}, Y_{U_2}).
    \end{align}

    We will show that each of these four sums is linear in $n$. 

    Let us first fix some $1 \leq j \leq t_k$ and $U \in \cU_k$ with $|U|=j$ and estimate $\variance{Y_U}$. Note that, since $p = \frac{c}{n}$, we have for any constants $a_1, a_2 >0$, that 
    \[
        (1-p)^{a_1 n + a_2} = e^{-a_1 c} + O(p).
    \]
    
    For $(T,r) \in \cT$, we denote by $\mathrm{aut}(T,r)$ the number of automorphisms of $T$ that map $r$ to itself. Let $\cT_j \coloneqq \{(T,r) \in \cT \colon v(T)=j\}$. We can calculate
    \begin{align}\label{eq:Y_U_expec}
        \expec{Y_U} &= 
        \sum_{v \in U} \sum_{\substack{(T,r) \in \mathcal{T} \\ v(T) = |U|}} \beta_{(T,r)} \prob{B_G(v,k) = U \text{ and } (G[U],v) \cong (T,r)} \nonumber\\
        &=\sum_{(T,r) \in \cT_j} \frac{\beta_{(T,r)}j!}{\mathrm{aut}(T,r)} p^{j-1} (1-p)^{(n-j)|B_T(r,k-1)| +\binom{j}{2}-j+1} \nonumber\\
        &= O(p^{j-1}),
    \end{align}
    where we used that, for $(T,r) \in \cT_j$,
    \[
         \left|\left\{E\subseteq \binom{[j]}{2} \colon (([j], E),1) \cong (T,r)\right\}\right|=\frac{(j-1)!}{\mathrm{aut}(T,r)} .
    \]

     On the other hand we can write
    \begin{align*}
        Y_U^2 &= \sum_{\substack{v_1 \in U \\ v_2 \in U}} \sum_{\substack{(T_1, r_1) \in \cT_j \\ (T_2, r_2) \in \cT_j}} \beta_{(T_1, r_1)}\beta_{(T_2, r_2)} I(U,v_1,T_1,r_2)I(U,v_2,T_2,r_2)\\
        &= \sum_{v \in U} \sum_{(T,r) \in \cT_j} \beta_{(T,r)}^2 I(U,v,T,r) + \sum_{\substack{v_1, v_2 \in U \\ v_2 \neq v_1}} \sum_{\substack{(T,r_1) \in \cT_j \\ r_2 \in V(T) \\ r_2 \neq r_1}} \beta_{(T, r_1)}\beta_{(T, r_2)}  I(U,v_1,T_1,r_2)I(U,v_2,T_2,r_2).
    \end{align*}

Hence, it follows that
    \begin{align}\label{eq:Y_U^2_expec}
        \expec{Y_U^2} &= \sum_{(T,r) \in \cT_j} \frac{\beta_{(T,r)}^2 j!}{\mathrm{aut}(T,r)}p^{j-1} (1-p)^{(n-j)|B_T(r,k-1)| +\binom{j}{2}-j+1} \nonumber\\
        &\quad + \sum_{(T,r_1) \in \cT_j} \sum_{\substack{r_2 \in V(T) \\ r_2 \neq r_1}} \frac{\beta_{(T,r_1)}\beta_{(T,r_2)} j!}{\mathrm{aut}(T,r_1,r_2)} p^{j-1} (1-p)^{(n-j)|B_T(r_1,k-1) \cup B_T(r_2,k-1)| + \binom{j}{2}-j+1} \nonumber\\
        &= p^{j-1} \sum_{(T,r) \in \cT_j} \left(\frac{\beta_{(T,r)}^2 j!}{\mathrm{aut}(T,r)} e^{-c|B_T(r,k-1)|} + \sum_{\substack{r' \in V(T) \\ r' \neq r}} \frac{\beta_{(T,r)}\beta_{(T,r')} j!}{\mathrm{aut}(T,r,r')} e^{-c|B_T(r,k-1) \cup B_T(r',k-1)|}\right) \nonumber\\
        &\quad+O(p^{j}) \nonumber\\
        &= p^{j-1} K^{(1)}_j(k,c) +O(p^j),
    \end{align}
    where $\mathrm{aut}(T,r,r')$ is the number of automorphisms that map $r$ to $r$ and $r'$ to $r'$ and we have set
    \begin{align*}
        K^{(1)}_j(k,c) \coloneqq  \sum_{(T,r) \in \cT_j} \left(\frac{\beta_{(T,r)}^2 j!}{\mathrm{aut}(T,r)} e^{-c|B_T(r,k-1)|} + \sum_{\substack{r' \in V(T) \\ r' \neq r}} \frac{\beta_{(T,r)}\beta_{(T,r')} j!}{\mathrm{aut}(T,r,r')} e^{-c|B_T(r,k-1) \cup B_T(r',k-1)|}\right).
    \end{align*}

Hence, by \eqref{eq:Y_U_expec} and \eqref{eq:Y_U^2_expec},
    \begin{align*}
        \variance{Y_U} = \expec{Y_U^2} - \expec{Y_U}^2 = p^{j-1} K^{(1)}_j(k,c) +O(p^j) +  O(p^{2j-2}) = p^{j-1} K^{(1)}_j(k,c) +O(p^j).
    \end{align*}
    Hence,
    \begin{align} \label{eq:K_1_sum}
        \sum_{U \in \cU_k} \variance{Y_U} &= \sum_{j = 2}^{t_k} \binom{n}{j} \left(p^{j-1} K^{(1)}_j(k,c) +O(p^j)\right) \nonumber\\
        &= n \sum_{j=2}^{t_k} \frac{c^{j-1}K^{(1)}_j(k,c)}{j!} +O(1) \nonumber\\
        &= n K^{(1)}(k,c) +O(1),
    \end{align}
    where we have set
    \begin{align*}
        K^{(1)}(k,c) \coloneqq \sum_{j=2}^{t_k} \frac{c^{j-1}K^{(1)}_j(k,c)}{j!}.
    \end{align*}

Next, let us fix some $1 \leq j_1, j_2 \leq t_k$ and $U_1, U_2 \in \cU_k$ with $|U_1| = j_1$, $|U_2| = j_2$, and $U_1 \cap U_2 = \varnothing$ and estimate $\mathrm{cov}(Y_{U_1}, Y_{U_2})$.
    We have
    \begin{align*}
        \mathrm{cov}(Y_{U_1}, Y_{U_2}) &= \sum_{\substack{v_1 \in U_1 \\ v_2 \in U_2}} \sum_{\substack{(T_1,r_1) \in \cT_{j_1} \\ (T_2, r_2) \in T_{j_2}}} \beta_{(T_1, r_1)}\beta_{(T_2, r_2)} \cdot \\ 
        &\quad \cdot\left(\expec{I(U_1,v_1,T_1,r_1) I(U_2,v_2,T_2,r_2)} - \expec{I(U_1,v_1,T_1,r_1)}\expec{I(U_2,v_2,T_2,r_2)} \right)\\
        &= \sum_{\substack{(T_1,r_1) \in \cT_{j_1} \\ (T_2, r_2) \in T_{j_2}}} \frac{\beta_{(T_1, r_1)} \beta_{(T_2, r_2)}j_1!\,j_2!}{\mathrm{aut}(T_1, r_1)\mathrm{aut}(T_2, r_2)} \cdot p^{j_1+j_2-2} (1-p)^{(n-j_1)|B_{T_1}(r_1,k-1)| +\binom{j_1}{2}-j_1 +1} \cdot \\
        &\quad \cdot (1-p)^{(n-j_2)|B_{T_2}(r_2,k-1)| +\binom{j_2}{2}-j_2 +1} \left((1-p)^{-|B_{T_1}(r_1,k-1)|\cdot |B_{T_2}(r_2,k-1)|} -1\right).
    \end{align*}
    Since 
    \[
        (1-p)^{-|B_{T_1}(r_1,k-1)|\cdot |B_{T_2}(r_2,k-1)|} -1 = |B_{T_1}(r_1,k-1)|\cdot |B_{T_2}(r_2,k-1)| \cdot p +O(p^2),
    \]
    we have
    \begin{align*}
        \mathrm{cov}(Y_{U_1}, Y_{U_2}) &= p^{j_1+j_2-1} \sum_{\substack{(T_1,r_1) \in \cT_{j_1} \\ (T_2, r_2) \in T_{j_2}}} \frac{\beta_{(T_1, r_1)} \beta_{(T_2, r_2)}j_1!\,j_2!}{\mathrm{aut}(T_1, r_1)\mathrm{aut}(T_2, r_2)}|B_{T_1}(r_1,k-1)|\cdot |B_{T_2}(r_2,k-1)| \cdot \\
        &\qquad \cdot e^{-c(|B_{T_1}(r_1,k-1)|+|B_{T_2}(r_2,k-1)|)} +O(p^{j_1+j_2}) \\
        &= p^{j_1 + j_2 -1} K^{(2)}_{j_1,j_2}(k,c) + O(p^{j_1 + j_2}),
    \end{align*}
    where we have set
    \begin{align*}
        K^{(2)}_{j_1,j_2}(k,c) &\coloneqq \sum_{\substack{(T_1,r_1) \in \cT_{j_1} \\ (T_2, r_2) \in T_{j_2}}} \frac{\beta_{(T_1, r_1)} \beta_{(T_2, r_2)}j_1!\,j_2!}{\mathrm{aut}(T_1, r_1)\mathrm{aut}(T_2, r_2)}|B_{T_1}(r_1,k-1)|\cdot |B_{T_2}(r_2,k-1)| \cdot \\ &\quad\quad\cdot e^{-c(|B_{T_1}(r_1,k-1)|+|B_{T_2}(r_2,k-1)|)}.
    \end{align*}
    It follows that 
    \begin{align} \label{eq:K_2_sum}
        \sum_{\substack{U_1, U_2 \in \cU_k \\ U_1 \cap U_2 = \varnothing}} \mathrm{cov}(Y_{U_1}, Y_{U_2}) &= \sum_{1 \leq j_1, j_2 \leq t_k} \binom{n}{j_1} \binom{n-j_1}{j_2} \left(p^{j_1 + j_2 -1} K^{(2)}_{j_1,j_2}(k,c) + O(p^{j_1 + j_2})\right) \nonumber\\
        &= n \sum_{1 \leq j_1, j_2 \leq t_k} \frac{c^{j_1+j_2-1} K^{(2)}_{j_1,j_2}(k,c)}{j_1! \, j_2!} +O(1) \nonumber\\
        &= n K^{(2)}(k,c) +O(1),
    \end{align}
    where we have set
    \begin{align*}
        K^{(2)}(k,c) \coloneqq \sum_{1 \leq j_1, j_2 \leq t_k} \frac{c^{j_1+j_2-1} K^{(2)}_{j_1,j_2}(k,c)}{j_1! \, j_2!}.
    \end{align*}

    Next, let us fix some $1 \leq j_1, j_2 \leq t_k$ and $U_1, U_2 \in \cU_k$ with $|U_1| = j_1$, $|U_2| = j_2$, $|U_1 \cap U_2| = 1$, and $U_1 \neq U_2$ and estimate  $\mathrm{cov}(Y_{U_1}, Y_{U_2})$. 
    Let $(T_1, r_1) \in \cT_{j_1}$ and $(T_2,r_2) \in \cT_{j_2}$ and suppose that $U_1 = B_G(v_1,k)$, $(U_1,v_1) \cong (T_1, r_1)$, $U_2 = B_G(v_2,k)$, and $(U_2,v_2) \cong (T_2, r_2)$. 
    Suppose first that $|U_1| = 1$. Then since $G[U_2]$ is connected we have that $B_G(v_1,k) \neq U_1$, a contradiction. Hence we may assume $|U_1| \geq 2$ and similarly $|U_2| \geq 2$.
    Let $x$ be the vertex in $U_1 \cap U_2$. Note that since $T_1$ and $T_2$ are connected, $x$ is adjacent to a vertex in both $U_1 \setminus U_2$ and $U_2 \setminus U_1$. In particular, since $U_1 = B_G(v_1,k)$ and $U_2 = B_G(v_2,k)$, it follows that $x \in \partial_G(v_1,k) \cap \partial_G(v_2,k)$. 
    Hence, by a similar calculation as before, noting that any automorphism of $T$ which maps $r$ to itself also maps $\partial_T(r,k)$ to itself,
    \begin{align*}
        &\mathrm{cov}(Y_{U_1}, Y_{U_2}) = \\
        &\sum_{\substack{(T_1, r_1) \in \cT_{j_1} \\ (T_2, r_2) \in \cT_{j_2}}} \frac{\beta_{(T_1, r_1)} \beta_{(T_2, r_2)}j_1! \, j_2!}{\mathrm{aut}(T_1,r_1)\mathrm{aut}(T_2,r_2)} \left(\frac{|\partial_{T_1}(r_1,k)| \cdot |\partial_{T_2}(r_2,k)|}{j_1 j_2} (1-p)^{-|B_{T_1}(r_1,k-1)|\cdot |B_{T_2}(r_2,k-1)|} -1 \right) \cdot \\ 
        &\qquad \cdot p^{j_1+j_2-2} (1-p)^{(n-j_1)|B_{T_1}(r_1,k-1)| + \binom{j_1}{2}-j_1+1} (1-p)^{(n-j_2)|B_{T_2}(r_2,k-1)| + \binom{j_2}{2}-j_2+1} \\
        &= p^{j_1 + j_2 -2} \sum_{\substack{(T_1, r_1) \in \cT_{j_1} \\ (T_2, r_2) \in \cT_{j_2}}} \frac{\beta_{(T_1, r_1)} \beta_{(T_2, r_2)}j_1! \, j_2!}{\mathrm{aut}(T_1,r_1)\mathrm{aut}(T_2,r_2)} \left(\frac{|\partial_{T_1}(r_1,k)| \cdot |\partial_{T_2}(r_2,k)|}{j_1 j_2} -1 \right) \cdot \\
        &\qquad \cdot e^{-c(|B_{T_1}(r_1,k-1)| + |B_{T_2}(r_2,k-1)|)}  +O(p^{j_1 + j_2 -1})\\
        &=p^{j_1 + j_2 -2}K^{(3)}_{j_1,j_2}(k,c) +  O(p^{j_1 + j_2 -1}) ,
    \end{align*}
    where we have set
    \begin{align*}
        &K^{(3)}_{j_1,j_2}(k,c) \coloneqq \\
        &\sum_{\substack{(T_1, r_1) \in \cT_{j_1} \\ (T_2, r_2) \in \cT_{j_2}}} \frac{\beta_{(T_1, r_1)} \beta_{(T_2, r_2)} j_1! \, j_2!}{\mathrm{aut}(T_1,r_1)\mathrm{aut}(T_2,r_2)} \left(\frac{|\partial_{T_1}(r_1,k)| \cdot |\partial_{T_2}(r_2,k)|}{j_1 j_2} -1 \right) \cdot e^{-c(|B_{T_1}(r_1,k-1)| + |B_{T_2}(r_2,k-1)|)}.
    \end{align*}
    It follows that 
    \begin{align} \label{eq:K_3_sum}
            \sum_{\substack{U_1, U_2 \in \cU_k \\ |U_1 \cap U_2| = 1}} \mathrm{cov}(Y_{U_1}, Y_{U_2}) &= \sum_{1 \leq j_1, j_2 \leq t_k} \binom{n}{j_1}j_1\binom{n-j_1}{j_2}\left(p^{j_1 + j_2 -2}K^{(3)}_{j_1,j_2}(k,c) +  O(p^{j_1 + j_2 -1})\right) \nonumber\\
            &= n\sum_{1 \leq j_1, j_2 \leq t_k} \frac{c^{j_1+j_2-2}j_1 K^{(3)}_{j_1,j_2}(k,c)}{j_1!\, (j_2-1)!} + O(1) \nonumber\\
            &= n K^{(3)}(k,c) +O(1),
    \end{align}
    where we have set
    \begin{align*}
        K^{(3)}(k,c) \coloneqq \sum_{1 \leq j_1, j_2 \leq t_k} \frac{c^{j_1+j_2-2}j_1 K^{(3)}_{j_1,j_2}(k,c)}{j_1!\, (j_2-1)!}.
    \end{align*}

    Finally, let us fix some $j_1, j_2 \in [t_k]$ and $2 \leq i \leq \min\{j_1, j_2\}$. Let $V_1 \coloneqq [j_1]$
    and $V_2 \coloneqq \{j_1-i +1, \dots, j_1 + j_2 - i\}$. For $(T_1, r_1), (T_2, r_2) \in \cT$ with $v(T_1) = j_1$ and $v(T_2) = j_2$, we let $\cF_i(T_1, r_1, T_2, r_2)$ be the set of tuples $(E_1, v_1, E_2, v_2)$ with $E_1 \subseteq \binom{V_1}{2}$, $v_1 \in V_1$, $E_2 \subseteq \binom{V_2}{2}$, and $v_2 \in V_2$ such that following hold with $H_1 = (V_1, E_1)$ and $H_2 = (V_2, E_2)$.
    \begin{enumerate}[label = \upshape{(\roman*)}]
        \item $(H_1, v_1) \cong (T_1, r_1)$ and $(H_2, v_2) \cong (T_2, r_2)$,
        \item $E_1 \cap \binom{V_1 \cap V_2}{2} = E_2 \cap \binom{V_1 \cap V_2}{2} = E_1 \cap E_2$,
        \item for all $w \in B_{H_1}(v_1,k-1) \cap V_1 \cap V_2$ with $N_{H_1}(w) \setminus V_2 \neq \varnothing$, we have $w \in \partial_{H_2}(v_2,k)$,
        \item for all $w \in B_{H_2}(v_2,k-1) \cap V_1 \cap V_2$ with $N_{H_2}(w) \setminus V_1 \neq \varnothing$, we have $w \in \partial_{H_1}(v_1,k)$, and
        \item $v_1 \neq v_2$.
    \end{enumerate}
    Let $U_1, U_2 \subseteq V(G)$ with $U_1 \neq U_2$, $|U_1| = j_1$, $|U_2| = j_2$, and $|U_1 \cap U_2| = i$. Without loss of generality assume that $U_1 = V_1$ and $U_2 = V_2$. Now note that, for $(T_1, r_1), (T_2, r_2) \in \cT$, $v_1 \in U_1$, and $v_2 \in U_2$, we have that if
    \[
U_i = B_G(v_i,k) \text{ and } (G[U_i],v_i) \cong (T_i, r_i) \quad \text{ for } i=1,2,
\]
then $(E(G[U_1]), v_1, E(G[U_2]), v_2) \in \cF_i(T_1, r_1, T_2, r_2)$.
    Furthermore, if $E_i = E(G[B_G(v_i,k)])$ for $i \in \{1,2\}$, then the converse also holds. Hence, we have
    \begin{align*}
        &\expec{Y_{U_1} Y_{U_2}} =  
        \sum_{\substack{(T_1, r_1) \in \cT_{j_1} \\ (T_2, r_2) \in \cT_{j_2}}} \beta_{(T_1,r_1)}\beta_{(T_2,r_2)} \cdot \\
        &\cdot \sum_{\substack{(E_1, v_1, E_2, v_2) \\ \in \cF_i(T_1,r_1,T_2,r_2)}} p^{j_1 + j_2 -2 - |E_1 \cap E_2|} (1-p)^{\binom{j_1+j_2-i}{2} - |E_1 \cup E_2| + |B_{H_1}(v_1,k-1) \cup B_{H_2}(v_2,k-1)| (n - j_1 - j_2 +i)}.
    \end{align*}
    It is easy to verify that the above sum is dominated by the contribution from the set of tuples $(E_1, v_1, E_2, v_2) \in \cF_i(T_1, r_1, T_2, r_2)$ with $|E_1 \cap E_2| = i-1$, which we denote by $\tcF_i(T_1, r_1, T_2, r_2)$. In particular, we have
    \begin{align*}
        \expec{Y_{U_1} Y_{U_2}} &= p^{j_1 + j_2 -i -1} \sum_{\substack{(T_1, r_1) \in \cT_{j_1} \\ (T_2, r_2) \in \cT_{j_2}}} \beta_{(T_1,r_1)}\beta_{(T_2,r_2)} \sum_{\substack{(E_1, v_1, E_2, v_2) \\ \in \tcF_i(T_1,r_1,T_2,r_2)}} e^{-c |B_{H_1}(v_1,k-1) \cup B_{H_2}(v_2,k-1)|} \\ &\quad\quad+ O(p^{j_1 + j_2 -i}) \\
        &= p^{j_1 + j_2 -i -1} K^{(4)}_{j_1, j_2, i}(k,c) + O(p^{j_1 + j_2 -i}),
    \end{align*}
    where we have set 
    \begin{align*}
        K^{(4)}_{j_1, j_2, i}(k,c) \coloneqq \sum_{\substack{(T_1, r_1) \in \cT_{j_1} \\ (T_2, r_2) \in \cT_{j_2}}} \beta_{(T_1,r_1)}\beta_{(T_2,r_2)} \sum_{\substack{(E_1, v_1, E_2, v_2) \\ \in \tcF_i(T_1,r_1,T_2,r_2)}} e^{-c |B_{H_1}(v_1,k-1) \cup B_{H_2}(v_2,k-1)|}.
    \end{align*}
    Since $i \geq 2$, we have $\expec{Y_{U_1}} \expec{Y_{U_2}} = O(p^{j_1 +j_2 -2}) = O(p^{j_1 + j_2 -i})$.
    Hence, $\mathrm{cov}(Y_{U_1}, Y_{U_2}) = O(p^{j_1+j_2-i}) + p^{j_1 + j_2 -i -1} K^{(4)}_{j_1, j_2, i}(k,c)$.
    It follows that 
    \begin{align} \label{eq:K_4_sum}
            &\sum_{i =2}^{t_k} \sum_{\substack{j_1, j_2 \in \{i, \dots, t_k\} \\ (j_1, j_2) \neq (i,i)}} \sum_{\substack{U_1, U_2 \in \cU_k \\ |U_1| = j_1, |U_2| = j_2 \\ |U_1 \cap U_2| = i}} \mathrm{cov}(Y_{U_1}, Y_{U_2}) \nonumber\\
            = &\sum_{i =2}^{t_k} \sum_{\substack{j_1, j_2 \in \{i, \dots, t_k\} \\ (j_1, j_2) \neq (i,i)}} \binom{n}{j_1} \binom{j_1}{i} \binom{n-i}{j_2} \left( p^{j_1 + j_2 -i -1} K^{(4)}_{j_1, j_2, i}(k,c) +O(p^{j_1 + j_2 -i})\right) \nonumber\\
            = &\sum_{i =2}^{t_k} \sum_{\substack{j_1, j_2 \in \{i, \dots, t_k\} \\ (j_1, j_2) \neq (i,i)}} n \cdot \frac{c^{j_1+j_2-i-1}\binom{j_1}{i}}{j_1! \, j_2!} K^{(4)}_{j_1, j_2, i}(k,c) +O(1) \nonumber\\
            = &K^{(4)}(k,c) n +O(1), 
    \end{align}
    where we have set
    \begin{align*}
        K^{(4)}(k,c) \coloneqq \sum_{i =2}^{t_k} \sum_{\substack{j_1, j_2 \in \{i, \dots, t_k\} \\ (j_1, j_2) \neq (i,i)}}\frac{c^{j_1+j_2-i-1}\binom{j_1}{i}}{j_1! \, j_2!} K^{(4)}_{j_1, j_2, i}(k,c).
    \end{align*}
    It now follows from \cref{eq:var_sum_split,,eq:K_1_sum,,eq:K_2_sum,,eq:K_3_sum,,eq:K_4_sum}, that 
    \begin{align*}
        \variance{\widehat{M}_k} = (K^{(1)}(k,c) + K^{(2)}(k,c) + K^{(3)}(k,c) + K^{(4)}(k,c))n +O(1).
    \end{align*}
    Hence, the statement of the lemma is satisfied with $$K(k,c,\bm{\beta}) \coloneqq K^{(1)}(k,c) + K^{(2)}(k,c) + K^{(3)}(k,c) + K^{(4)}(k,c).$$

\end{document}